\newdimen\proofrulebreadth \proofrulebreadth=.05em
\newdimen\proofdotseparation \proofdotseparation=1.25ex
\newdimen\proofrulebaseline \proofrulebaseline=2ex
\let\then\relax
\def\hfi{\hskip0pt plus.0001fil}
\mathchardef\squigto="3A3B
\newif\ifinsideprooftree\insideprooftreefalse
\newif\ifonleftofproofrule\onleftofproofrulefalse
\newif\ifproofdots\proofdotsfalse
\newif\ifdoubleproof\doubleprooffalse
\let\wereinproofbit\relax
\newdimen\shortenproofleft
\newdimen\shortenproofright
\newdimen\proofbelowshift
\newbox\proofabove
\newbox\proofbelow
\newbox\proofrulename
\def\shiftproofbelow{\let\next\relax\afterassignment\setshiftproofbelow\dimen0 }
\def\shiftproofbelowneg{\def\next{\multiply\dimen0 by-1 }%
\afterassignment\setshiftproofbelow\dimen0 }
\def\setshiftproofbelow{\next\proofbelowshift=\dimen0 }
\def\setproofrulebreadth{\proofrulebreadth}
\def\prooftree{
%
\ifnum  \lastpenalty=1
\then   \unpenalty
\else   \onleftofproofrulefalse
\fi
%
\ifonleftofproofrule
\else   \ifinsideprooftree
        \then   \hskip.5em plus1fil
        \fi
\fi
%
\bgroup
\setbox\proofbelow=\hbox{}\setbox\proofrulename=\hbox{}%
\let\justifies\proofover\let\leadsto\proofoverdots\let\Justifies\proofoverdbl
\let\using\proofusing\let\[\prooftree
\ifinsideprooftree\let\]\endprooftree\fi
\proofdotsfalse\doubleprooffalse
\let\thickness\setproofrulebreadth
\let\shiftright\shiftproofbelow \let\shift\shiftproofbelow
\let\shiftleft\shiftproofbelowneg
\let\ifwasinsideprooftree\ifinsideprooftree
\insideprooftreetrue
%
\setbox\proofabove=\hbox\bgroup$\displaystyle 
\let\wereinproofbit\prooftree
%
\shortenproofleft=0pt \shortenproofright=0pt \proofbelowshift=0pt
%
\onleftofproofruletrue\penalty1
}
\def\eproofbit{
%
\ifx    \wereinproofbit\prooftree
\then   \ifcase \lastpenalty
        \then   \shortenproofright=0pt  
        \or     \unpenalty\hfil         
        \or     \unpenalty\unskip       
        \else   \shortenproofright=0pt  
        \fi
\fi
%
\global\dimen0=\shortenproofleft
\global\dimen1=\shortenproofright
\global\dimen2=\proofrulebreadth
\global\dimen3=\proofbelowshift
\global\dimen4=\proofdotseparation
\global\count255=\proofdotnumber
%
$\egroup  
%
\shortenproofleft=\dimen0
\shortenproofright=\dimen1
\proofrulebreadth=\dimen2
\proofbelowshift=\dimen3
\proofdotseparation=\dimen4
\proofdotnumber=\count255
}
\def\proofover{
\eproofbit 
\setbox\proofbelow=\hbox\bgroup 
\let\wereinproofbit\proofover
$\displaystyle
}%
\def\proofoverdbl{
\eproofbit 
\doubleprooftrue
\setbox\proofbelow=\hbox\bgroup 
\let\wereinproofbit\proofoverdbl
$\displaystyle
}%
\def\proofoverdots{
\eproofbit 
\proofdotstrue
\setbox\proofbelow=\hbox\bgroup 
\let\wereinproofbit\proofoverdots
$\displaystyle
}%
\def\proofusing{
\eproofbit 
\setbox\proofrulename=\hbox\bgroup 
\let\wereinproofbit\proofusing
\kern0.3em$
}
\def\endprooftree{
\eproofbit 
  \dimen5 =0pt
%
\dimen0=\wd\proofabove \advance\dimen0-\shortenproofleft
\advance\dimen0-\shortenproofright
%
\dimen1=.5\dimen0 \advance\dimen1-.5\wd\proofbelow
\dimen4=\dimen1
\advance\dimen1\proofbelowshift \advance\dimen4-\proofbelowshift
%
\ifdim  \dimen1<0pt
\then   \advance\shortenproofleft\dimen1
        \advance\dimen0-\dimen1
        \dimen1=0pt
        \ifdim  \shortenproofleft<0pt
        \then   \setbox\proofabove=\hbox{%
                        \kern-\shortenproofleft\unhbox\proofabove}%
                \shortenproofleft=0pt
        \fi
\fi
%
\ifdim  \dimen4<0pt
\then   \advance\shortenproofright\dimen4
        \advance\dimen0-\dimen4
        \dimen4=0pt
\fi
%
\ifdim  \shortenproofright<\wd\proofrulename
\then   \shortenproofright=\wd\proofrulename
\fi
%
\dimen2=\shortenproofleft \advance\dimen2 by\dimen1
\dimen3=\shortenproofright\advance\dimen3 by\dimen4
%
\ifproofdots
\then
        \dimen6=\shortenproofleft \advance\dimen6 .5\dimen0
        \setbox1=\vbox to\proofdotseparation{\vss\hbox{$\cdot$}\vss}%
        \setbox0=\hbox{%
                \advance\dimen6-.5\wd1
                \kern\dimen6
                $\vcenter to\proofdotnumber\proofdotseparation
                        {\leaders\box1\vfill}$%
                \unhbox\proofrulename}%
\else   \dimen6=\fontdimen22\the\textfont2 
        \dimen7=\dimen6
        \advance\dimen6by.5\proofrulebreadth
        \advance\dimen7by-.5\proofrulebreadth
        \setbox0=\hbox{%
                \kern\shortenproofleft
                \ifdoubleproof
                \then   \hbox to\dimen0{%
                        $\mathsurround0pt\mathord=\mkern-6mu%
                        \cleaders\hbox{$\mkern-2mu=\mkern-2mu$}\hfill
                        \mkern-6mu\mathord=$}%
                \else   \vrule height\dimen6 depth-\dimen7 width\dimen0
                \fi
                \unhbox\proofrulename}%
        \ht0=\dimen6 \dp0=-\dimen7
\fi
%
\let\doll\relax
\ifwasinsideprooftree
\then   \let\VBOX\vbox
\else   \ifmmode\else$\let\doll=$\fi
        \let\VBOX\vcenter
\fi
\VBOX   {\baselineskip\proofrulebaseline \lineskip.2ex
        \expandafter\lineskiplimit\ifproofdots0ex\else-0.6ex\fi
        \hbox   spread\dimen5   {\hfi\unhbox\proofabove\hfi}%
        \hbox{\box0}%
        \hbox   {\kern\dimen2 \box\proofbelow}}\doll%
%
\global\dimen2=\dimen2
\global\dimen3=\dimen3
\egroup 
\ifonleftofproofrule
\then   \shortenproofleft=\dimen2
\fi
\shortenproofright=\dimen3
%
\onleftofproofrulefalse
\ifinsideprooftree
\then   \hskip.5em plus 1fil \penalty2
\fi
}
\newif\ifignore 
\newcommand{\auxproof}[1]{
\ifignore\mbox{}\newline
\textbf{PROOF:} \dotfill\newline
{\it #1}\mbox{}\newline
\textbf{ENDPROOF}\dotfill
\fi}
\newtheorem{theorem}{Theorem}
\newtheorem{lemma}[theorem]{Lemma}
\newtheorem{corollary}[theorem]{Corollary}
\newtheorem{definition}[theorem]{Definition}
\newenvironment{proof}[1][Proof]%
   { \begin{trivlist}%
     \item[\hskip \labelsep {\bfseries #1}]%
   }%
   { \end{trivlist}%
   }
\newcommand{\QEDbox}{\square}
\newcommand{\QED}{\hspace*{\fill}$\QEDbox$}
\newcommand{\after}{\mathrel{\circ}}
\newcommand{\klafter}{\mathrel{\bullet}} 
\newcommand{\cat}[1]{\ensuremath{\mathbf{#1}}}
\newcommand{\Cat}[1]{\ensuremath{\mathbf{#1}}}
\newcommand{\op}{\ensuremath{^{\mathrm{op}}}}
\newcommand{\idmap}{\ensuremath{\mathrm{id}}}
\newcommand{\id}{\idmap}
\newcommand{\support}{\ensuremath{\mathrm{supp}}}
\newcommand{\st}{\ensuremath{\mathsf{st}}\xspace}
\newcommand{\bc}{\ensuremath{\mathsf{bc}}\xspace}
\newcommand{\dst}{\ensuremath{\mathsf{dst}}\xspace}
\newcommand{\coo}{\ensuremath{\mathsf{co}}\xspace}
\newcommand{\sotimes}{\mathrel{\raisebox{.05pc}{$\scriptstyle \otimes$}}}
\newcommand{\Alg}{\textsl{Alg}\xspace}
\newcommand{\NNO}{\ensuremath{\mathbb{N}}}
\newcommand{\congrightarrow}{\mathrel{\stackrel{
           \raisebox{.5ex}{$\scriptstyle\cong\,$}}{
           \raisebox{0ex}[0ex][0ex]{$\rightarrow$}}}}
\newcommand{\powerset}{\mathcal{P}}
\newcommand{\leftScottint}{[{\kern-.3ex}[}
\newcommand{\rightScottint}{]{\kern-.3ex}]}
\newcommand{\Sets}{\Cat{Sets}\xspace}
\newcommand{\Mon}{\Cat{Mon}\xspace}
\newcommand{\CMon}{\Cat{CMon}\xspace}
\newcommand{\Mnd}{\Cat{Mnd}\xspace}
\newcommand{\CMnd}{\Cat{CMnd}\xspace}
\newcommand{\ACMnd}{\Cat{ACMnd}\xspace}
\newcommand{\IACMnd}{\Cat{IACMnd}\xspace}
\newcommand{\StMnd}{\Cat{StMnd}\xspace}
\newcommand{\SRng}{\Cat{SRng}\xspace}
\newcommand{\CSRng}{\Cat{CSRng}\xspace}
\newcommand{\ICSRng}{\Cat{ICSRng}\xspace}
\newcommand{\Law}{\Cat{Law}\xspace}
\newcommand{\SMLaw}{\Cat{SMLaw}\xspace}
\newcommand{\SMBLaw}{\Cat{SMBLaw}\xspace}
\newcommand{\DSMBLaw}{\Cat{DSMBLaw}\xspace}
\newcommand{\cotuple}[2]{\ensuremath{[ #1, #2 ]}}
\newcommand{\tuple}[2]{\ensuremath{\langle #1, #2 \rangle}}
\newcommand{\Kl}{\mathcal{K}{\kern-.2ex}\ell}
\newcommand{\Ev}{\mathcal{E}}
\newcommand{\Evc}{\mathcal{E}}
\newcommand{\Mod}{\mathcal{M}od}
\newcommand{\Evx}{\mathcal{A}d}
\newcommand{\Mat}{\mathcal{M}at}
\newcommand{\LM}{\mathcal{T}}
\newcommand{\set}[2]{\{#1\;|\;#2\}}
\newcommand{\setin}[3]{\{#1\in#2\;|\;#3\}}
\newcommand{\lamin}[3]{\lambda{#1\in#2}.\,#3}
\newcommand{\lam}[2]{\lambda{#1}.\,#2}
\newcommand{\pullback}[1][dr]{\save*!/#1-1.2pc/#1:(-1,1)@^{|-}\restore}
\newenvironment{bijectivecorrespondence}
  {\newif\ifbijnotfirst
   \global\bijnotfirstfalse
   \global\def\bijprev{}
   \normalsize
   \begin{tabular}{cl}}
  {\end{tabular}
   }
\newcommand{\correspondence}[2][]{%
  \ifbijnotfirst%
    \rule{0pt}{5.8pt}%
    \smash{\ensuremath{\infer={\hphantom{#2}}{\hphantom{\bijprev}}}} \\%
  \fi%
  \global\bijnotfirsttrue%
  \global\def\bijprev{#2}%
  \ensuremath{#2} & #1 \\%
  }
\title{Scalars, Monads, and Categories}
\author{\small\begin{tabular}{ccc}
{\large Dion Coumans}
& \qquad\qquad &
{\large Bart Jacobs} \\
Inst.\ for Mathematics, Astrophysics
& &
Inst.\ for Computing and \\
and Particle Physics (IMAPP) & &
Information Sciences (ICIS) \\
\url{www.math.ru.nl/~coumans}
& &
\url{www.cs.ru.nl/~bart}
\end{tabular} \\
{\small Radboud University Nijmegen, The Netherlands}}
\date{\small \today}
\begin{document}
\maketitle

\begin{abstract}
  The paper describes interrelations between: (1)~algebraic
  structure on sets of scalars, (2)~properties of monads associated
  with such sets of scalars, and (3)~structure in categories
  (esp.\ Lawvere theories) associated with these monads. These
  interrelations will be expressed in terms of ``triangles of
  adjunctions'', involving for instance various kinds of monoids
  (non-commutative, commutative, involutive) and semirings as
  scalars. It will be shown to which kind of monads and categories
  these algebraic structures correspond via adjunctions.
\end{abstract}

%

\section{Introduction}\label{IntroSec}

Scalars are the elements $s$ used in scalar multiplication $s\cdot v$,
yielding for instance a new vector for a given vector $v$. Scalars are
elements in some algebraic structure, such as a field (for vector
spaces), a ring (for modules), a group (for group actions), or a
monoid (for monoid actions).

A categorical description of scalars can be given in a monoidal
category $\cat{C}$, with tensor $\otimes$ and tensor unit $I$, as the
homset $\cat{C}(I,I)$ of endomaps on $I$. In~\cite{KellyL80} it is
shown that such homsets $\cat{C}(I,I)$ always form a commutative
monoid; in~\cite[\S3.2]{AbramskyC09} this is called the `miracle' of
scalars. More recent work in the area of quantum computation has led
to renewed interest in such scalars, see for
instance~\cite{AbramskyC04,AbramskyC09}, where it is shown that the
presence of biproducts makes this homset $\cat{C}(I,I)$ of scalars a
semiring, and that daggers $\dag$ make it involutive.  These are first
examples where categorical structure (a category which is monoidal or
has biproducts or daggers) gives rise to algebraic structure (a set
with a commutative monoid, semiring or involution structure).  Such
correspondences form the focus of this paper, not only those between
categorical and algebraic structure, but also involving a third
element, namely structure on endofunctors (especially monads).  Such
correspondences will be described in terms of triangles of
adjunctions.

To start, we describe the basic triangle of adjunctions that we shall
build on. At this stage it is meant as a sketch of the setting, and
not as an exhaustive explanation.  Let $\aleph_{0}$ be the category
with natural numbers $n\in\NNO$ as objects. Such a number $n$ is
identified with the $n$-element set
$\underline{n}=\{0,1,\ldots,n-1\}$. Morphisms $n\rightarrow m$ in
$\aleph_0$ are ordinary functions $\underline{n}\rightarrow
\underline{m}$ between these finite sets.  Hence there is a full and
faithful functor $\aleph_{0} \hookrightarrow \Sets$. The underline
notation is useful to avoid ambiguity, but we often omit it when no
confusion arises and write the number $n$ for the set $\underline{n}$.

\begin{figure}
\label{SetsTriangle}
$$
\vcenter{\xymatrix@R-0pc@C+.5pc{
& & \Sets\ar@/_2ex/ [ddll]_{\begin{array}{c}\scriptstyle A\mapsto \\[-.7pc] 
                            \scriptstyle A\times(-)\end{array}}
   \ar@/_2ex/ [ddrr]_(0.2){\begin{array}{c}\scriptstyle A\mapsto \\[-.7pc] 
                       \scriptstyle A\times(-)\end{array}\hspace*{-1.5pc}} \\
& \dashv\;\; & & \dashv & \\
\Sets^{\Sets}\ar @/_2ex/[rrrr]_{\textrm{restrict}}
   \ar@/_2ex/ [uurr]_(0.6){(-)(1)} & & & &  
   \Sets^{\aleph_0}\ar@/_2ex/ [uull]_{(-)(1)}
      \ar @/_2ex/[llll]_{\textrm{left Kan}}^{\raisebox{-.7pc}{$\bot$}}
}}
$$
\caption{Basic triangle of adjunctions.}
\end{figure}
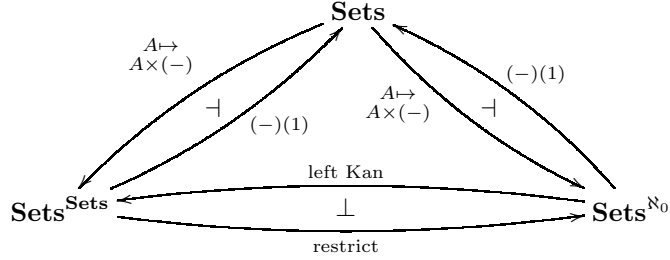

Now consider the triangle in Figure~\ref{SetsTriangle}, with functor
categories at the two bottom corners.  We briefly explain the arrows
(functors) in this diagram. The downward arrows
$\Sets\rightarrow\Sets^{\Sets}$ and $\Sets\rightarrow\Sets^{\aleph_0}$
describe the functors that map a set $A\in\Sets$ to the functor $X
\mapsto A\times X$. In the other, upward direction right adjoints are
given by the functors $(-)(1)$ describing ``evaluate at unit 1'', that
is $F\mapsto F(1)$. At the bottom the inclusion $\aleph_{0}
\hookrightarrow \Sets$ induces a functor $\Sets^{\Sets} \rightarrow
\Sets^{\aleph_0}$ by restriction: $F$ is mapped to the functor
$n\mapsto F(n)$. In the reverse direction a left adjoint is obtained
by left Kan extension~\cite[Ch.~X]{MacLane71}. Explicitly, this left
adjoint maps a functor $F\colon\aleph_{0}\rightarrow\Sets$ to the
functor $\mathcal{L}(F)\colon\Sets\rightarrow\Sets$ given by:
$$\begin{array}{rcl}
\mathcal{L}(F)(X) 
& = &
\Big(\coprod_{i\in\NNO}F(i)\times X^{i}\Big)/\!\sim,
\end{array}$$

\noindent where $\sim$ is the least equivalence relation such that,
for each $f\colon n\rightarrow m$ in $\aleph_0$,
$$\begin{array}{rcl}
\kappa_m(F(f)(a), v) 
& \sim &
\kappa_n(a, v\after f),
\qquad\mbox{where }a\in F(n)\mbox{ and }v\in X^{m}.
\end{array}$$

\auxproof{ 
We first prove the adjunction
  $\Sets\leftrightarrows\Sets^{\aleph_0}$.  It involves a bijective
  correspondence:
$$\begin{bijectivecorrespondence}
\correspondence[in $\Sets^{\aleph_0}$]
   {\xymatrix{A\times(-)\ar[r]^-{\sigma} & F}}
\correspondence[in \Sets]
   {\xymatrix{A\ar[r]_-{f} & F(1)}}
\end{bijectivecorrespondence}$$

\noindent Given a natural transformation $\sigma$ one defines
$\overline{\sigma}(a) = \sigma_{1}(a,0)$, where $0\in 1$. In the
other direction, given $f$ one takes, for $a\in A$ and $i\in n$,
$$\begin{array}{rcl}
\overline{f}_{n}(a,i)
& = &
F\Big(1\stackrel{i}{\rightarrow}n\Big)(f(a)) \;\in\; F(n).
\end{array}$$

\noindent It is not hard to see that $\overline{f}$ is natural, since
for $g\colon n\rightarrow m$ in $\aleph_0$ we get:
$$\begin{array}{rcl}
\big(\overline{f}_{m} \after A\times g\big)(a, i)
& = &
\overline{f}_{m}(a, g(i)) \\
& = &
F\big(1\stackrel{g(i)}{\rightarrow}m\big)(f(a)) \\
& = &
F\big(1\stackrel{i}{\rightarrow}n\stackrel{g}{\rightarrow} m\big)(f(a)) \\
& = &
F(g)\Big(F\big(1\stackrel{i}{\rightarrow}n\big)(f(a))\Big) \\
& = &
\big(F(g) \after \overline{f}_{n}\big)(a,i).
\end{array}$$

\noindent Further,
$$\begin{array}{rcl}
\overline{\overline{\sigma}}_{n}(a,i)
& = &
F\big(1\stackrel{i}{\rightarrow}n\big)(\overline{\sigma}(a)) \\
& = &
F\big(1\stackrel{i}{\rightarrow}n\big)(\sigma_{1}(a,0)) \\
& = &
\sigma_{n}((A\times i)(a, 0)) \\
& = &
\sigma_{n}(a, i) \\
\overline{\overline{f}}(a)
& = &
\overline{f}_{1}(a,0) \\
& = &
F\big(1\stackrel{0}{\rightarrow}1\big)(f(a)) \\
& = &
f(a).
\end{array}$$

We turn to the adjunction $\Sets^{\Sets} \leftrightarrows \Sets^{\aleph_0}$.
It involves:
$$\begin{bijectivecorrespondence}
\correspondence[in $\Sets^{\Sets}$]
   {\xymatrix{\mathcal{L}(F)\ar[r]^-{\sigma} & G}}
\correspondence[in $\Sets^{\aleph_0}$]
   {\xymatrix{F\ar[r]_-{\tau} & G}}
\end{bijectivecorrespondence}$$

\noindent where $\mathcal{L}(F)$ describes the left Kan extension described
above. Given $\sigma$ define $\overline{\sigma}_{n}\colon F(n)
\rightarrow G(n)$ by $\overline{\sigma}_{n}(a) =
\sigma_{n}([\kappa_{n}(a,\idmap_{n})])$, where $[\kappa_{n}(a,\idmap_{n})] 
\in \mathcal{L}(F)(n) = \big(\coprod_{i}F(i)\times n^{i}\big)/\!\sim$. This
yields a natural transformation, since for $f\colon n\rightarrow m$
in $\aleph_0$,
$$\begin{array}{rcl}
\big(G(f) \after \overline{\sigma}_{n}\big)(a)
& = &
G(f)\big(\sigma_{n}([\kappa_{n}(a,\idmap_{n})])\big) \\
& = &
\sigma_{m}\big(\mathcal{L}(F)(f)([\kappa_{n}(a,\idmap_{n})])\big) \\
& = &
\sigma_{m}([\kappa_{n}(a,f\after\idmap_{n})]) \\
& = &
\sigma_{m}([\kappa_{n}(a,\idmap_{m} \after f)]) \\
& = &
\sigma_{m}([\kappa_{m}(F(f)(a),\idmap_{m})]) \\
& = &
\big(\overline{\sigma}_{m} \after F(f)\big)(a).
\end{array}$$

\noindent In the other direction, given $\tau$ we take $\overline{\tau}_{X}
\colon \mathcal{L}(F)(X) \rightarrow G(X)$ by:
$$\begin{array}{rcl}
\overline{\tau}_{X}([\kappa_{i}(a,g)])
& = &
G\big(i\stackrel{g}{\rightarrow}X\big)(\tau_{i}(a)) \;\in\; G(X).
\end{array}$$

\noindent This yields a natural transformation since for $f\colon
X\rightarrow Y$ in $\Sets$ we have:
$$\begin{array}{rcl}
\big(G(f) \after \overline{\tau}_{X}\big)([\kappa_{i}(a, g)])
& = &
G(f)\Big(G\big(i\stackrel{g}{\rightarrow}X\big)(\tau_{i}(a))\Big) \\
& = &
\Big(G\big(i\stackrel{f\after g}{\rightarrow}Y\big)(\tau_{i}(a))\Big) \\
& = &
\overline{\tau}_{Y}([\kappa_{i}(a,f\after g)]) \\
& = &
\big(\overline{\tau}_{Y} \after \mathcal{L}(F)(f)\big)([\kappa_{i}(a,g)]).
\end{array}$$

\noindent Finally,
$$\begin{array}{rcl}
\overline{\overline{\sigma}}_{X}([\kappa_{i}(a,g)])
& = &
G\big(i\stackrel{g}{\rightarrow}X\big)(\overline{\sigma}_{i}(a)) \\
& = &
G\big(i\stackrel{g}{\rightarrow}X\big)(\sigma_{i}([\kappa_{i}(a,\idmap_{i})])) \\
& = &
\sigma_{n}(\mathcal{L}(F)(g)([\kappa_{i}(a,\idmap_{i})])) \\
& = &
\sigma_{n}([\kappa_{i}(a,g)]) \\
\overline{\overline{\tau}}_{n}(a)
& = &
\overline{\tau}_{n}([\kappa_{n}(a,\idmap_{n})]) \\
& = &
G\big(n\stackrel{\idmap}{\rightarrow}n\big)(\tau_{n}(a)) \\
& = &
\tau_{n}(a).
\end{array}$$
}

\noindent The adjunction on the left in Figure ~\ref{SetsTriangle} is
then in fact the composition of the other two. The adjunctions in
Figure~\ref{SetsTriangle} are not new. For instance, the one at the
bottom plays an important role in the description of analytic functors
and species~\cite{Joyal86}, see
also~\cite{Hasegawa02,AdamekV08,Curien08}. The category of presheaves
$\Sets^{\aleph_0}$ is used to provide a semantics for binding,
see~\cite{FiorePT99}. What is new in this paper is the systematic
organisation of correspondences in triangles like the one in
Figure~\ref{SetsTriangle} for various kinds of algebraic structures
(instead of sets).
\begin{itemize}
\item There is a triangle of adjunctions for monoids, monads, and
  Lawvere theories, see Figure~\ref{MonoidTriangleFig}.

\item This triangle restricts to commutative monoids, commutative
  monads, and symmetric monoidal Lawvere theories, see
  Figure~\ref{ComMonoidTriangleFig}.

\item There is also a triangle of adjunctions for commutative
  semirings, commutative additive monads, and symmetric monoidal
  Lawvere theories with biproducts, see Figure~\ref{CSRngTriangleFig}.

\item This last triangle restricts to involutive commutative
  semirings, involutive commutative additive monads, and dagger
  symmetric monoidal Lawvere theories with dagger biproducts, see
  Figure~\ref{ICSRngTriangleFig} below.
\end{itemize}

\noindent These four figures with triangles of adjunctions provide a
quick way to get an overview of the paper (the rest is just hard
work). The triangles capture fundamental correspondences between basic
mathematical structures. As far as we know they have not been made
explicit at this level of generality.

The paper is organised as follows. It starts with a section containing
some background material on monads and Lawvere theories. The triangle
of adjunctions for monoids, much of which is folklore, is developed in
Section~\ref{MonoidSec}.  Subsequently, Section~\ref{AMndSec} forms an
intermezzo; it introduces the notion of additive monad, and proves
that a monad $T$ is additive if and only if in its Kleisli category
$\Kl(T)$ coproducts form biproducts, if and only if in its
category $\Alg(T)$ of algebras products form
biproducts. These additive monads play a crucial role in
Sections~\ref{SemiringMonadSec} and~\ref{Semiringcatsec} which develop
a triangle of adjunctions for commutative semirings. Finally,
Section~\ref{InvolutionSec} introduces the refined triangle with
involutions and daggers.

The triangles of adjunctions in this paper are based on many detailed
verifications of basic facts. We have chosen to describe all
constructions explicitly but to omit most of these verifications,
certainly when these are just routine. Of course, one can continue and
try to elaborate deeper (categorical) structure underlying the
triangles. In this paper we have chosen not to follow that route, but
rather to focus on the triangles themselves.

\section{Preliminaries}\label{PrelimSec}

We shall assume a basic level of familiarity with category theory,
especially with adjunctions and monads. This section recalls some
basic facts and fixes notation. For background information we refer
to~\cite{Awodey06,Borceux94,MacLane71}.

In an arbitrary category \Cat{C} we write finite products as
$\times,1$, where $1\in\cat{C}$ is the final object. The projections
are written as $\pi_{i}$ and tupling as $\tuple{f_{1}}{f_{2}}$. Finite
coproducts are written as $+$ with initial object $0$, and with
coprojections $\kappa_i$ and cotupling $[f_{1},f_{2}]$. We write $!$,
both for the unique map $X \to 1$ and the unique map $0 \to X$. A
category is called distributive if it has both finite products and
finite coproducts such that functors $X\times(-)$ preserve these
coproducts: the canonical maps $0\rightarrow X\times 0$, and $(X\times
Y)+(X\times Z) \rightarrow X\times (Y+Z)$ are isomorphisms. Monoidal
products are written as $\otimes, I$ where $I$ is the tensor unit,
with the familiar isomorphisms: $\alpha\colon X\otimes (Y\otimes Z)
\congrightarrow (X\otimes Y)\otimes Z$ for associativity, $\rho\colon
X\otimes I\congrightarrow X$ and $\lambda\colon I\otimes
X\congrightarrow X$ for unit, and in the symmetric case also
$\gamma\colon X\otimes Y\congrightarrow Y\otimes X$ for swap. 

We write $\Mnd(\Cat{C})$ for the category of monads on a category
\Cat{C}. For convenience we write $\Mnd$ for $\Mnd(\Sets)$. Although
we shall use strength for monads mostly with respect to finite
products $(\times, 1)$ we shall give the more general definition
involving monoidal products $(\otimes, I)$. A monad $T$ is called
strong if it comes with a `strength' natural transformation $\st$ with
components $\st\colon T(X)\otimes Y\rightarrow T(X\otimes Y)$,
commuting with unit $\eta$ and multiplication $\mu$, in the sense that
$\st \after \eta\otimes\idmap = \eta$ and $\st \after \mu\otimes\idmap
= \mu \after T(\st) \after \st$. Additionally, for the familiar
monoidal isomorphisms $\rho$ and $\alpha$,
$$\hspace*{-1em}\xymatrix@C-1pc{
T(Y)\otimes I\ar[r]^-{\st}\ar[dr]_{\rho} & T(Y\otimes I)\ar[d]^{T(\rho)} 
&
T(X)\otimes (Y\otimes Z)\ar[rr]^-{\st} \ar[d]_-{\alpha} & & 
   T(X\otimes (Y\otimes Z)) \ar[d]^{T(\alpha)}  \\
& T(Y)
&
(T(X)\otimes Y)\otimes Z\ar[r]^-{\st\otimes\idmap}&
  T(X\otimes Y)\otimes Z\ar[r]^-{\st} & 
  T((X\otimes Y)\otimes Z)
}$$

\noindent Also, when the tensor $\otimes$ is a cartesian product
$\times$ we sometimes write these $\rho$ and $\alpha$ for the obvious
maps.

The category $\StMnd(\Cat{C})$ has monads with strength $(T,\st)$ as
objects. Morphisms are monad maps commuting with strength.  The
monoidal structure on \Cat{C} is usually clear from the context.

\begin{lemma}
\label{SetsStrengthLem}
Monads on \Sets are always strong w.r.t.\ finite products, in a canonical
way, yielding a functor $\Mnd(\Sets) = \Mnd \rightarrow \StMnd =
\StMnd(\Sets)$.
\end{lemma}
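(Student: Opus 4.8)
The plan is to exhibit an explicit strength natural transformation for an arbitrary monad $T$ on $\Sets$ and verify that it satisfies the required coherence conditions. Since $\Sets$ is cartesian, I use the monoidal structure $(\times, 1)$. For sets $X$ and $Y$, I would define $\st \colon T(X)\times Y \rightarrow T(X\times Y)$ by
$$\st(u, y) \;=\; T\big(\lam{x}{\tuple{x}{y}}\big)(u),$$
that is, for fixed $y\in Y$ one applies $T$ to the map $X \rightarrow X\times Y$ sending $x$ to $(x,y)$. This is the canonical choice, and it is well known that it works; the task is to check it.

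First I would verify naturality of $\st$ in both arguments, which amounts to the functoriality of $T$ applied to the evident commuting squares in $\Sets$. Next I would check compatibility with the unit $\eta$: the equation $\st \after (\eta\times\idmap) = \eta$ follows from naturality of $\eta$ with respect to $\lam{x}{\tuple{x}{y}}$. Then I would check compatibility with the multiplication $\mu$: the equation $\st \after (\mu\times\idmap) = \mu \after T(\st) \after \st$ follows from naturality of $\mu$ together with the fact that $T$ of the map $\lam{x}{\tuple{x}{y}}$ composed appropriately matches $T(\st)\after\st$ on the nose, using functoriality of $T$. After that come the two coherence diagrams: the triangle involving $\rho\colon T(Y)\times 1 \congrightarrow T(Y)$ holds because $\lam{y}{\tuple{y}{*}}$ is precisely $\rho^{-1}$ and $T(\rho)\after T(\rho^{-1}) = \idmap$; the pentagon-style diagram involving $\alpha$ holds because both composites are $T$ applied to the same reassociation map $X\times(Y\times Z) \rightarrow (X\times Y)\times Z$ with the appropriate parameters substituted, which again reduces to functoriality of $T$.

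Finally I would note that this assignment is functorial: a monad morphism $\varphi\colon T \Rightarrow S$ automatically commutes with the canonical strengths, since $\st^{S}\after(\varphi_X\times\idmap) = S(\lam{x}{\tuple{x}{y}})\after\varphi_X = \varphi_{X\times Y}\after T(\lam{x}{\tuple{x}{y}}) = \varphi_{X\times Y}\after\st^{T}$ by naturality of $\varphi$. Hence $(T,\st)$ lies in $\StMnd(\Sets)$ and the assignment $T\mapsto(T,\st)$, $\varphi\mapsto\varphi$ defines a functor $\Mnd\rightarrow\StMnd$. There is no real obstacle here: every step is a diagram chase that collapses to functoriality of $T$ or naturality of $\eta$, $\mu$, or $\varphi$; the only mild subtlety is bookkeeping the parameter $y$ (or the pair in the $\alpha$-diagram) correctly when unwinding the maps to which $T$ is applied, which is exactly the kind of routine verification the paper elsewhere chooses to suppress.
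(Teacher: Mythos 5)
Your proof is correct and follows exactly the paper's route: the same canonical strength $\st(u,y)=T(\lam{x}{\tuple{x}{y}})(u)$, the same reduction of every coherence condition to functoriality of $T$ and naturality of $\eta$, $\mu$, and the monad map, and the same observation that monad morphisms automatically commute with this strength, yielding functoriality. No discrepancies.
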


\begin{proof}
For every functor $T\colon\Sets\rightarrow\Sets$, there exists a
strength map $\st\colon T(X)\times Y \rightarrow T(X\times Y)$, namely
$\st(u,y) = T(\lam{x}{\tuple{x}{y}})(u)$. It makes the above diagrams
commute, and also commutes with unit and multiplication in case $T$ is
a monad. Additionally, strengths commute with natural transformations
$\sigma\colon T\rightarrow S$, in the sense that $\sigma \after \st =
\st \after (\sigma\times\idmap)$. \QED
\end{proof}

\auxproof{
For $u\in T(X)$ and $y\in Y$,
$$\begin{array}{rcl}
\big(\sigma \after \st\big)(u, y)
& = &
\sigma(T(\lam{x}{\tuple{x}{y}})(u)) \\
& = &
\big(\sigma \after T(\lam{x}{\tuple{x}{y}})\big)(u) \\
& = &
\big(S(\lam{x}{\tuple{x}{y}}) \after \sigma\big)(u) \\
& = &
\st(\sigma(u), y) \\
& = &
\big(\st \after (\sigma\times\idmap)\big)(u,y).
\end{array}$$
}

Given a general strength map $\st\colon T(X)\otimes Y \rightarrow
T(X\otimes Y)$ in a \textit{symmetric} monoidal category one can
define a swapped $\st'\colon X\otimes T(Y) \rightarrow T(X\otimes Y)$
as $\st' = T(\gamma) \after \st \after \gamma$, where $\gamma \colon
X\otimes Y \congrightarrow Y\otimes X$ is the swap map.  There are now
in principle two maps $T(X)\otimes T(Y) \rightrightarrows T(X\otimes
Y)$, namely $\mu \after T(\st') \after \st$ and $\mu \after T(\st)
\after \st'$. A strong monad $T$ is called commutative if these two
composites $T(X)\otimes T(Y) \rightrightarrows T(X\otimes Y)$ are the
same. In that case we shall write $\dst$ for this (single) map, which
is a monoidal transformation, see also~\cite{Kock71a}. The powerset
monad $\powerset$ is an example of a commutative monad, with $\dst
\colon \powerset(X)\times\powerset(Y)\rightarrow \powerset(X\times Y)$
given by $\dst(U,V) = U\times V$. Later we shall see other examples.

We write $\Kl(T)$ for the Kleisli category of a monad $T$, with
$X\in\Cat{C}$ as objects, and maps $X\rightarrow T(Y)$ in \Cat{C} as
arrows. For clarity we sometimes write a fat dot $\klafter$ for
composition in Kleisli categories, so that $g \klafter f = \mu \after
T(g) \after f$. The inclusion functor $\Cat{C}\rightarrow \Kl(T)$ is
written as $J$, where $J(X) = X$ and $J(f) = \eta \after f$. A map of
monads $\sigma\colon T\rightarrow S$ yields a functor
$\Kl(\sigma)\colon \Kl(T) \rightarrow \Kl(S)$ which is the identity on
objects, and maps an arrow $f$ to $\sigma\after f$. This functor
$\Kl(\sigma)$ commutes with the $J$'s. One obtains a functor $\Kl
\colon \Mnd(\cat{C}) \to \Cat{Cat}$, where $\cat{Cat}$ is the category
of (small) categories.

We will use the following standard result.

\begin{lemma}
\label{KleisliStructLem}
For $T\in\Mnd(\Cat{C})$, consider the generic statement ``if $\Cat{C}$
has $\diamondsuit$ then so does $\Kl(T)$ and $J\colon\Cat{C}
\rightarrow \Kl(T)$ preserves $\diamondsuit$'s'', where $\diamondsuit$ is
some property. This holds for:
{\renewcommand{\theenumi}{(\roman{enumi})}
\begin{enumerate}
\item $\diamondsuit$ = (finite coproducts $+, 0$), or in fact any colimits;

\item $\diamondsuit$ = (monoidal products $\otimes, I$), in case
the monad $T$ is commutative;

\end{enumerate}}
\end{lemma}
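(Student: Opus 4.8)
The plan is to handle the two cases separately, case~(i) being essentially formal and case~(ii) resting on the monoidal-transformation properties of $\dst$. For (i) the workhorse is a single identity: for a $\Cat{C}$-arrow $p\colon X\to Y$ and a Kleisli arrow $h\colon Y\to Z$ (that is, a $\Cat{C}$-arrow $h\colon Y\to T(Z)$) one has $h\klafter J(p)=\mu\after T(h)\after\eta\after p=\mu\after\eta\after h\after p=h\after p$, using naturality of $\eta$ and $\mu\after\eta=\idmap$. So post-composing a Kleisli arrow with a $J$-image is just ordinary post-composition of the underlying $\Cat{C}$-arrows. Consequently, if $D_{0}\colon\mathcal{I}\to\Cat{C}$ has a colimiting cocone $(\iota_{i}\colon D_{0}(i)\to L)_{i}$ in $\Cat{C}$, then a cocone in $\Kl(T)$ on $J\after D_{0}$ with vertex $Z$ is literally a cocone in $\Cat{C}$ on $D_{0}$ with vertex $T(Z)$, hence factors uniquely through $L$ in $\Cat{C}$, i.e.\ uniquely through $J(L)$ in $\Kl(T)$; thus $(J(\iota_{i}))_{i}$ is colimiting in $\Kl(T)$, and $J$, being a left adjoint (its right adjoint sends $X$ to $T(X)$), preserves it. For $\diamondsuit=$ (finite coproducts, initial object) the diagrams involved are discrete, resp.\ empty, and so automatically of the form $J\after D_{0}$ because $\Kl(T)$ and $\Cat{C}$ share the same objects: concretely the coproduct of $X,Y$ in $\Kl(T)$ is $X+Y$ with coprojections $J(\kappa_{i})$, and $J$ preserves cotupling since $J([f,g])=\eta\after[f,g]=[\eta\after f,\eta\after g]=[J(f),J(g)]$, the right-hand side being the Kleisli cotuple by the identity above.

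For (ii) define the tensor on $\Kl(T)$ to agree with $\otimes$ on objects, with unit object $J(I)=I$, and on Kleisli arrows $f\colon X\to X'$ and $g\colon Y\to Y'$ to be the composite $X\otimes Y\xrightarrow{f\otimes g}T(X')\otimes T(Y')\xrightarrow{\dst}T(X'\otimes Y')$. The crux --- and the step I expect to be the main obstacle --- is to check that this is a bifunctor $\Kl(T)\times\Kl(T)\to\Kl(T)$: preservation of identities is the unit law $\dst\after(\eta\otimes\eta)=\eta$ of the monoidal transformation $\dst$, while preservation of Kleisli composition is a diagram chase combining naturality of $\dst$ with its compatibility with multiplication, $\mu\after T(\dst)\after\dst=\dst\after(\mu\otimes\mu)$, and functoriality of $\otimes$ in $\Cat{C}$.

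Once bifunctoriality is in place the rest is routine. I will take the associativity and unit isomorphisms of $\Kl(T)$ to be the $J$-images $J(\alpha),J(\lambda),J(\rho)$, and in the symmetric case the swap to be $J(\gamma)$: these are isomorphisms because $J$ is a functor and $\alpha,\lambda,\rho,\gamma$ are; their naturality with respect to Kleisli arrows follows from the compatibility of $\dst$ with $\alpha,\lambda,\rho$, resp.\ with $\gamma$ (which is exactly where commutativity of $T$ is used); and the pentagon and triangle identities hold in $\Kl(T)$ because they hold in $\Cat{C}$ and are transported by the functor $J$. By construction $J$ is then strictly (symmetric) monoidal, so it preserves $\otimes,I$ on the nose. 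This is in essence the classical fact that a commutative --- equivalently symmetric monoidal --- monad makes its Kleisli category symmetric monoidal; cf.~\cite{Kock71a}.
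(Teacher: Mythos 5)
Your proposal is correct and follows essentially the same route as the paper: for part (ii) you use exactly the paper's tensor $\dst\after(f\otimes g)$ on Kleisli maps, reduce bifunctoriality to the monoidal-monad axioms for $\dst$ (unit law, naturality, compatibility with $\mu$), and obtain strictness of $J$ from the same one-line computation $\dst\after(\eta\otimes\eta)=\eta$; part (i), which the paper dismisses as ``obvious,'' is supplied by you with the standard details (the identity $h\klafter J(p)=h\after p$ and the left-adjointness of $J$). One remark in your favour: your restriction in (i) to colimits of diagrams of the form $J\after D_{0}$ (automatic for the discrete diagrams defining coproducts) is actually the correct level of generality, since the blanket claim that $\Kl(T)$ inherits \emph{all} colimits of $\Cat{C}$ fails for arbitrary diagrams --- for instance $\Kl(\powerset)$ is the category of sets and relations, which lacks coequalizers even though $\Sets$ has them.
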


\begin{proof}
Point \textit{(i)} is obvious; for \textit{(ii)}
one defines the tensor on morphisms in $\Kl(T)$ as:
$$\begin{array}{rcl}
\big(X\stackrel{f}{\rightarrow} T(U)\big) \otimes 
  \big(Y\stackrel{g}{\rightarrow} T(V)\big)
& = &
\big(X\otimes Y \stackrel{f\otimes g}{\longrightarrow} T(U)\otimes T(V)
   \stackrel{\dst}{\longrightarrow} T(U\otimes V)\big).
\end{array}$$

\noindent Then: $J(f)\otimes J(g) = \dst \after ((\eta \after
f)\otimes (\eta \after g)) = \eta \after (f\otimes g) = J(f\otimes
g)$. \QED
\end{proof}

\auxproof{
Proof of the fact that $\Kl(\sigma)$ preserves coproducts:

For $\kappa_1 \colon X \to X+Y$ in $\Kl(T)$,
	\[\Kl(\sigma)(\kappa_1) = \sigma_{X+Y} \after \eta^T \after \kappa_1 = \eta^S \after \kappa_1\]
as $\sigma$ commutes with $\eta$.

For $f\colon X \to Z$, $g\colon Y \to Z$ in $\Kl(T)$, i.e. $f\colon X \to T(Z)$, $g\colon Y \to T(Z)$ in $\cat{C}$, 
	\[\Kl(\sigma)(\cotuple{f}{g}) = \sigma_Z \after \cotuple{f}{g} = \cotuple{\sigma_Z \after f}{\sigma_Z \after g} = \cotuple{\Kl(\sigma)(f)}{\Kl(\sigma)(g)}.\]

The functor $\Kl(\sigma)$ preserves monoidal products since:
$$\begin{array}{rcl}
\Kl(\sigma)(f) \otimes \Kl(\sigma)(g)
& = &
\dst \after ((\sigma \after f) \otimes (\sigma \after g)) \\
& \stackrel{(*)}{=} &
\sigma \after \dst \after (f\otimes g) \\
& = &
\Kl(\sigma)(f\otimes g),
\end{array}$$

\noindent where the marked equality holds since $\sigma$ commutes
with strength:
$$\begin{array}{rcll}
\sigma \after \dst
& = &
\sigma \after \mu \after T(\st') \after \st \\
& = &
\mu \after \sigma \after T(\sigma) \after T(\st') \after \st 
   & \mbox{$\sigma$ is a map of monads} \\
& = &
\mu \after \sigma \after T(\st' \after \idmap\otimes\sigma) \after \st 
   & \mbox{$\sigma$ commutes with strength} \\
& = &
\mu \after S(\st') \after \sigma \after \st \after (\idmap\otimes\sigma) \\
& = &
\mu \after S(\st') \after \st \after (\sigma\otimes\idmap) 
   \after (\idmap\otimes\sigma) \\
& = &
\dst \after (\sigma\otimes\sigma).
\end{array}$$
	
}

As in this lemma we sometimes formulate results on monads in full
generality, \textit{i.e.}~for arbitrary categories, even though our
main results---see Figures~\ref{MonoidTriangleFig},
\ref{ComMonoidTriangleFig}, \ref{CSRngTriangleFig}
and~\ref{ICSRngTriangleFig}---only deal with monads on \Sets. These
results involve algebraic structures like monoids and semirings, which
we interpret in the standard set-theoretic universe, and not in
arbitrary categories. Such greater generality is possible, in
principle, but it does not seem to add enough to justify the
additional complexity.

Often we shall be interested in a ``finitary'' version of the Kleisli
construction, corresponding to the Lawvere
theory~\cite{Lawvere63a,HylandP07} associated with a monad. For a
monad $T\in\Mnd$ on $\Sets$ we shall write $\Kl_{\NNO}(T)$ for the
category with natural numbers $n\in\NNO$ as objects, regarded as
finite sets $\underline{n} = \{0,1,\ldots, n-1\}$. A map $f\colon
n\rightarrow m$ in $\Kl_{\NNO}(T)$ is then a function $\underline{n}
\rightarrow T(\underline{m})$. This yields a full inclusion
$\Kl_{\NNO}(T) \hookrightarrow \Kl(T)$. It is easy to see that a map
$f\colon n\rightarrow m$ in $\Kl_{\NNO}(T)$ can be identified with an
$n$-cotuple of elements $f_{i}\in T(m)$, which may be seen as $m$-ary
terms/operations.

By the previous lemma the category $\Kl_{\NNO}(T)$ has coproducts
given on objects simply by the additive monoid structure $(+, 0)$ on
natural numbers. There are obvious coprojections $n\rightarrow n+m$,
using $\underline{n+m} \cong \underline{n}+\underline{m}$.  The
identities $n+0 = n = 0+n$ and $(n+m)+k = n + (m+k)$ are in fact the
familiar monoidal isomorphisms. The swap map is an isomorphism $n+m
\cong m+n$ rather than an identity $n+m = m+n$.

In general, a Lawvere theory is a small category $\cat{L}$ with
natural numbers $n\in\NNO$ as objects, and $(+,0)$ on $\NNO$ forming
finite coproducts in $\cat{L}$. It forms a categorical version of a
term algebra, in which maps $n\rightarrow m$ are understood as
$n$-tuples of terms $t_i$ each with $m$ free variables. Formally a
Lawvere theory involves a functor $\aleph_{0}\rightarrow\cat{L}$ that
is the identity on objects and preserves finite coproducts ``on the
nose'' (up-to-identity) as opposed to up-to-isomorphism. A morphism of
Lawvere theories $F\colon\cat{L}\rightarrow\cat{L'}$ is a functor that
is the identity on objects and strictly preserves finite coproducts.
This yields a category \Law. 

\begin{corollary}
\label{Mnd2FCCatCor}
The finitary Kleisli construction $\Kl_{\NNO}$ for monads on $\Sets$,
yields a functor $\Kl_{\NNO}: \Mnd \rightarrow \Law$. \QED
\end{corollary}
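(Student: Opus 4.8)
The plan is to obtain everything by restricting the constructions of Lemma~\ref{KleisliStructLem} and of the functor $\Kl\colon\Mnd\to\Cat{Cat}$ from $\Kl$ to $\Kl_{\NNO}$, and then checking that the resulting data is \emph{strict}, as the notion of Lawvere theory demands.

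First, fix a monad $T$ on $\Sets$. The structure functor $\aleph_{0}\to\Kl_{\NNO}(T)$ I would take to be the restriction of $J\colon\Sets\to\Kl(T)$ along the inclusion $\aleph_{0}\hookrightarrow\Sets$; it is the identity on objects and sends $f$ to $\eta\after f$. By Lemma~\ref{KleisliStructLem}(i) the functor $J$ preserves finite coproducts, and from its (routine) proof the coprojections $\kappa_{i}\colon \underline n\to\underline{n+m}$ are sent to $\eta\after\kappa_{i}$, which are exactly the coprojections of the coproduct $n+m$ in $\Kl_{\NNO}(T)$, while cotuples are sent to cotuples. Since $(+,0)$ on $\NNO$, via the identifications $\underline{n+m}\cong\underline n+\underline m$, is a strictly associative and strictly unital coproduct in both $\aleph_{0}$ and $\Kl_{\NNO}(T)$ (as already observed just before the corollary), and the functor is the identity on objects sending coprojections to the chosen coprojections, it preserves finite coproducts on the nose. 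Hence $\Kl_{\NNO}(T)$ is a Lawvere theory.

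Next, given a map of monads $\sigma\colon T\to S$, I would define $\Kl_{\NNO}(\sigma)$ as the restriction of $\Kl(\sigma)\colon\Kl(T)\to\Kl(S)$ to the full subcategories on the finite sets; this is well defined because $\Kl(\sigma)$ is the identity on objects, and it sends a Kleisli map $f\colon\underline n\to T(\underline m)$ to $\sigma\after f\colon\underline n\to S(\underline m)$, again a map of $\Kl_{\NNO}(S)$. It is the identity on objects. It commutes with the structure functors $\aleph_{0}\to\Kl_{\NNO}(T)$ and $\aleph_{0}\to\Kl_{\NNO}(S)$, since $\Kl(\sigma)$ commutes with the inclusions $J$. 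And it strictly preserves finite coproducts: a coprojection $\eta^{T}\after\kappa_{i}$ is sent to $\sigma\after\eta^{T}\after\kappa_{i}=\eta^{S}\after\kappa_{i}$ because $\sigma$ commutes with the units, and cotuples are sent to cotuples because $\Kl(\sigma)$ acts by postcomposition with $\sigma$. Thus $\Kl_{\NNO}(\sigma)$ is a morphism of Lawvere theories.

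Finally, functoriality---$\Kl_{\NNO}(\idmap_{T})=\idmap$ and $\Kl_{\NNO}(\tau\after\sigma)=\Kl_{\NNO}(\tau)\after\Kl_{\NNO}(\sigma)$---is inherited by restriction from the functor $\Kl\colon\Mnd\to\Cat{Cat}$, and in any case is immediate from $(\tau\after\sigma)\after f=\tau\after(\sigma\after f)$ at the level of Kleisli maps. The only genuine work is the bookkeeping in the first two paragraphs: verifying that ``preserves finite coproducts'' in the bare categorical sense upgrades to ``strictly preserves $(+,0)$'' once one uses that $\aleph_{0}$ and $\Kl_{\NNO}(T)$ carry $(+,0)$ as a strict coproduct and that the relevant functors are the identity on objects. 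There is no substantive obstacle beyond this; the corollary is essentially a packaging of Lemma~\ref{KleisliStructLem} together with the earlier remarks on $\Kl_{\NNO}$.
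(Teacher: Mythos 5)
Your proposal is correct and is essentially the argument the paper intends: the corollary is stated with no written proof precisely because it is the packaging of Lemma~\ref{KleisliStructLem}(i) (coproducts in Kleisli categories, with $J$ and $\Kl(\sigma)$ preserving them by construction) together with the preceding remarks that $(+,0)$ on $\NNO$ gives strict coproducts in $\Kl_{\NNO}(T)$. Your explicit bookkeeping that ``preserves coproducts'' upgrades to ``preserves on the nose'' for identity-on-objects functors with the chosen coprojections is exactly the routine verification the authors omit.
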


\section{Monoids}\label{MonoidSec}

The aim of this section is to replace the category \Sets of sets at
the top of the triangle in Figure~\ref{SetsTriangle} by the category
\Mon of monoids $(M,\cdot,1)$, and to see how the corners at the
bottom change in order to keep a triangle of adjunctions. Formally,
this can be done by considering monoid objects in the three categories
at the corners of the triangle in Figure~\ref{SetsTriangle} (see
also~\cite{FiorePT99,Curien08}) but we prefer a more concrete
description. The results in this section, which are summarised in
Figure~\ref{MonoidTriangleFig}, are not claimed to be new, but are
presented in preparation of further steps later on in this paper.

\begin{figure}
$$\begin{array}{c}
{\xymatrix@R-.5pc@C+.5pc{
& & \Mon\ar@/_2ex/ [ddll]_{\cal A}\ar@/_2ex/ [ddrr]_(0.4){\Kl_{\NNO}\mathcal{A}} \\
& \dashv & & \dashv & \\
\Mnd\ar @/_2ex/[rrrr]_{\Kl_\NNO}
   \ar@/_2ex/ [uurr]_(0.6){\;{\Ev} \cong \mathcal{H}\Kl_{\NNO}} & & \bot & &  
   \Law\ar@/_2ex/ [uull]_{\mathcal{H}}\ar @/_2ex/[llll]_{\LM}
}} \\ \\[-1em]
\mbox{where}\quad
\left\{\begin{array}{ll}
{\cal A}(M) = M\times (-) & \mbox{action monad} \\
\Ev(T) = T(1) & \mbox{evaluation at singleton set 1} \\
\mathcal{H}(\Cat{L}) = \Cat{L}(1,1) & 
  \mbox{endo-homset of $1\in\Cat{L}$} \\
\Kl_{\NNO}(T) & \mbox{Kleisli category restricted to objects $n\in\NNO$} \\
\LM(\cat{L}) = T_{\cat{L}} & \mbox{monad associated with Lawvere theory \cat{L}.}
\end{array}\right.
\end{array}$$
\caption{Basic relations between monoids, monads and Lawvere theories.}
\label{MonoidTriangleFig}
\end{figure}

We start by studying the interrelations between monoids and monads.  In
principle this part can be skipped, because the adjunction on the left
in Figure~\ref{MonoidTriangleFig} between monoids and monads follows
from the other two by composition. But we do make this adjunction
explicit in order to completely describe the situation.

The following result is standard. We only sketch the proof.

\begin{lemma}
\label{Mon2MndLem}
Each monoid $M$ gives rise to a monad ${\cal A}(M) = M\times(-)\colon
\Sets \rightarrow \Sets$. The mapping $M\mapsto {\cal A}(M)$ yields
a functor $\Mon\rightarrow\Mnd$.
\end{lemma}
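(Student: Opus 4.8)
The plan is to verify directly that $\mathcal{A}(M) = M \times (-)$ is a monad on $\Sets$ and that the construction is functorial. For the monad structure, the unit $\eta_X \colon X \to M \times X$ is given by $x \mapsto (1, x)$, using the monoid unit, and the multiplication $\mu_X \colon M \times (M \times X) \to M \times X$ is given by $(m, (m', x)) \mapsto (m \cdot m', x)$, using the monoid multiplication. One then checks the three monad laws: $\mu \after \eta_{M \times X} = \id$ and $\mu \after (M \times \eta_X) = \id$ follow from the left and right unit laws of the monoid, and associativity of $\mu$ (i.e. $\mu \after \mu_{M\times X} = \mu \after (M \times \mu_X)$) follows from associativity of the monoid operation. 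These are all immediate.

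Next I would address functoriality. A monoid homomorphism $h \colon M \to N$ induces a natural transformation $\mathcal{A}(h) \colon M \times (-) \Rightarrow N \times (-)$ with components $(\mathcal{A}(h))_X = h \times \id_X \colon M \times X \to N \times X$. Naturality in $X$ is clear. One must check that this is a \emph{map of monads}, i.e. that it commutes with the units and multiplications: commutation with $\eta$ amounts to $h(1_M) = 1_N$, and commutation with $\mu$ amounts to $h(m \cdot_M m') = h(m) \cdot_N h(m')$ — precisely the two conditions defining a monoid homomorphism. Finally, $\mathcal{A}(\id_M) = \id$ and $\mathcal{A}(h' \after h) = \mathcal{A}(h') \after \mathcal{A}(h)$ are immediate from the corresponding facts for products in $\Sets$. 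This gives the functor $\Mon \to \Mnd$.

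Since the lemma statement itself says only that the proof is sketched, I would present just the definitions of $\eta$, $\mu$, and $\mathcal{A}(h)$, note that the monad laws correspond exactly to the monoid axioms and that the monad-map conditions correspond exactly to the homomorphism conditions, and leave the routine diagram chases to the reader (consistent with the paper's stated policy of omitting routine verifications). There is essentially no obstacle here: the only mild subtlety worth flagging is the bookkeeping in matching each monad/monad-morphism law with its counterpart monoid axiom, and confirming one is using the genuine (not opposite) multiplication so that $g \klafter f$ in $\Kl(\mathcal{A}(M))$ behaves as expected — but this is purely a matter of fixing conventions, not a real difficulty.
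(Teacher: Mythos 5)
Your proposal is correct and follows essentially the same route as the paper: the same unit $x \mapsto (1,x)$, the same multiplication $(s,(t,x)) \mapsto (s\cdot t, x)$, the same components $h \times \id$ for monoid homomorphisms, with the monad and monad-map laws reducing to the monoid axioms and homomorphism conditions. The only (inessential) difference is that the paper's sketch also records the canonical strength $\st((s,x),y) = (s,(x,y))$ and notes that $h \times \id$ commutes with it, which is not needed for the statement as phrased since $\Mnd$ here is the category of plain monads.
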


\begin{proof}
  For a monoid $(M,\cdot,1)$ the unit map $\eta \colon X\rightarrow M\times X =
  {\cal A}(M)$ is $x\mapsto (1,x)$. The multiplication $\mu \colon M\times
  (M\times X)\rightarrow M\times X$ is $(s,(t,x)) \mapsto (s\cdot
  t,x)$. The standard strength map $\st\colon (M\times X)\times Y
  \rightarrow M\times (X\times Y)$ is given by $\st((s,x),y) =
  (s,(x,y))$. Each monoid map $f\colon M\rightarrow N$ gives rise to a
  map of monads with components $f\times \idmap\colon M\times
  X\rightarrow N\times X$. These components commute with
  strength. \QED
\end{proof}

The monad $\mathcal{A}(M) = M\times(-)$ is called the `action monad',
as its category of Eilenberg-Moore algebras consists of $M$-actions
$M\times X\rightarrow X$ and their morphisms. The monoid elements act
as scalars in such actions. 

Conversely, each monad (on $\Sets$) gives rise to a monoid. In the
following lemma we prove this in more generality. For a category
$\cat{C}$ with finite products, we denote by
$\cat{Mon}(\cat{C})$ the category of monoids in $\cat{C}$,
\textit{i.e.}~the category of objects $M$ in $\cat{C}$ carrying a
monoid structure $1 \rightarrow M \leftarrow M \times M$ with
structure preserving maps between them.

\begin{lemma}
\label{Mnd2MonLem}
Each strong monad $T$ on a category \Cat{C} with finite products, gives rise to a monoid $\Ev(T) = T(1)$ in $\Cat{C}$. The
mapping $T \mapsto T(1)$ yields a functor $\StMnd(\Cat{C}) \to
\Mon(\Cat{C})$
\end{lemma}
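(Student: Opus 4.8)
The plan is to exhibit explicitly the monoid structure on $T(1)$ and check functoriality, exploiting the strength $\st$ to define the multiplication. First I would define the unit of the monoid $T(1)$ to be the monad unit $\eta \colon 1 \rightarrow T(1)$. For the multiplication, note that $1 \times 1 \cong 1$ (via $\rho$ or $\lambda$, which coincide here), so I need a map $T(1) \times T(1) \rightarrow T(1)$. The natural candidate is built from the strength: form $\st \colon T(1) \otimes 1 \rightarrow T(1 \otimes 1)$, precompose with $\idmap \otimes \eta \colon T(1) \otimes T(1) \rightarrow$... wait — more cleanly, take the composite $T(1) \times T(1) \xrightarrow{\st'} T(T(1) \times 1) \cong T(T(1)) \xrightarrow{T(\rho)} T T(1) \xrightarrow{\mu} T(1)$, where I use the swapped strength $\st'$ to absorb the first factor and the right-unit iso $\rho$ to discard the trivial $1$. (Equivalently one can use $\st$ on the other side; commutativity of $T$ is \emph{not} assumed here, so I must fix one of the two choices, say via $\st$ acting on the left argument, and be consistent.)

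The key steps, in order: (1) define $\eta\colon 1 \to T(1)$ and $m\colon T(1)\times T(1)\to T(1)$ as above; (2) verify the left and right unit laws for $m$, which reduce to the strength axioms relating $\st$ with $\eta$ (namely $\st \after \eta\otimes\idmap = \eta$) together with the coherence triangle for $\rho$ in the definition of strength; (3) verify associativity of $m$, which is the main computation — it unfolds into a diagram chase combining the strength axiom $\st \after \mu\otimes\idmap = \mu \after T(\st)\after\st$, the associativity pentagon-type coherence for $\st$ with $\alpha$ stated in the Preliminaries, and the monad associativity $\mu \after T\mu = \mu\after \mu T$; (4) for functoriality, given a monad map $\sigma\colon T\to S$ commuting with strength, show its component $\sigma_1\colon T(1)\to S(1)$ is a monoid homomorphism — it preserves the unit because $\sigma$ commutes with $\eta$, and preserves multiplication because $\sigma$ commutes with $\mu$ and with $\st$ (this is exactly the kind of computation displayed in the suppressed \texttt{auxproof} after Lemma~\ref{SetsStrengthLem}); (5) check that $\Ev$ respects identities and composition of monad maps, which is immediate since $(\idmap)_1 = \idmap$ and $(\tau\after\sigma)_1 = \tau_1\after\sigma_1$ componentwise.

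The main obstacle is step (3), the associativity law: one must carefully track how the two strength-reassociations (on the "outer" pair of arguments versus the "inner" pair) interact, and this is where the coherence triangle involving $\alpha$ from the definition of strong monad is needed — without it the two ways of bracketing $T(1)\times T(1)\times T(1)\to T(1)$ need not agree. In a symmetric monoidal setting one also checks that the choice of $\st$ versus $\st'$ in the definition of $m$ does not matter for \emph{this} particular object $1$ (since $1\times 1\cong 1$ is symmetric), but strictly speaking that is not required: fixing one convention suffices. I expect everything else to be routine diagram-chasing, and in keeping with the paper's stated policy I would present the constructions explicitly and omit the bulk of these verifications.
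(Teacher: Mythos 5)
Your proposal is correct and follows essentially the same route as the paper: unit $\eta_{1}$, multiplication built from strength followed by $T$ of a unit isomorphism and then $\mu$, with the unit/associativity laws reduced to the strength axioms and the $\rho$/$\alpha$ coherence diagrams, and functoriality from $\sigma$ commuting with $\eta$, $\mu$ and $\st$. The only difference is a convention: you use the swapped strength $\st'$, giving $\mu \after T(\rho)\after \st'$, which is exactly the ``swapped'' (opposite) multiplication the paper mentions immediately after the lemma, whereas the paper itself takes $\mu\after T(\pi_{2})\after\st$; as you correctly note, either fixed choice works, and your parenthetical suggestion that the two might be checked to agree on $1$ is not needed (they agree only for commutative monads, which is Lemma~\ref{CMnd2CMonLem}).
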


\begin{proof}
  For a strong monad $(T, \eta, \mu, \st)$, we define a multiplication
  on $T(1)$ by $\mu \after T(\pi_2) \after \st \colon T(1) \times T(1)
  \to T(1)$, with unit $\eta_1 \colon 1 \to T(1)$. Each monad map
  $\sigma\colon T \to S$ gives rise to a monoid map $T(1) \to S(1)$ by
  taking the component of $\sigma$ at $1$. \QED
\end{proof}

\auxproof{
We check the unit laws diagrammatically:
$$\xymatrix@C-.5pc{
T(1)\ar[r]^-{\cong}\ar@{=}[ddr] & 
T(1)\times 1\ar[r]^-{\idmap\times\eta_1}\ar[d]^{\st} &
T(1)\times T(1)\ar[d]^{\st}
&
T(1)\ar[r]^-{\cong}\ar@{=}[ddr] & 
1\times T(1)\ar[r]^-{\eta_{1}\times\idmap}\ar[dr]_-{\eta}\ar[dd]^{\pi_2} &
T(1)\times T(1)\ar[d]^{\st} \\
& 
T(1\times 1)\ar[r]^-{T(\idmap\times\eta)}\ar[d]^{T(\pi_{2})} &
T(1\times T(1))\ar[d]^{T(\pi_{2})}
&
&
&
T(1\times T(1))\ar[d]^{T(\pi_{2})} \\
&
T(1)\ar[r]^-{T(\eta)}\ar@{=}[dr] &
T^{2}(1)\ar[d]^{\mu}
& 
&
T(1)\ar[r]^-{\eta}\ar@{=}[dr] &
T^{2}(1)\ar[d]^{\mu} \\
&
&
T(1)
&
&
&
T(1)
}$$

The associativity of multiplication follows from the following diagram.
$$\xymatrix{
(T(1)\times T(1))\times T(1)\ar[r]^-{\st\times\idmap}\ar[d]_{\alpha}^{\cong} &
   T(1\times T(1))\times T(1)\ar[r]^-{T(\pi_{2})\times\idmap}\ar[d]^{\st} &
   T^{2}(1)\times T(1)\ar[r]^-{\mu}\ar[d]^{\st} &
   T(1)\times T(1)\ar[ddd]^{\st} \\
T(1)\times (T(1)\times T(1))\ar[dd]_{\idmap\times\st} & 
   T((1\times T(1))\times T(1))\ar[r]^-{T(\pi_{2}\times\idmap)}
      \ar[d]_{T(\alpha)}^{\cong} & 
   T(T(1)\times T(1))\ar[dd]^{T(\st)} & \\
& 
   T(1\times (T(1)\times T(1)))\ar[d]^{T(\idmap\times\st)} \\
T(1)\times T(1\times T(1))\ar[d]_{\idmap\times T(\pi_{2})}\ar[r]^-{\st} & 
   T(1\times T(1\times T(1)))\ar[d]_{T(\idmap\times\st)}\ar[r]^-{T(\pi_{2})} & 
   T^{2}(1\times T(1))\ar[r]^-{\mu}\ar[d]^{T^{2}(\pi_{2})} & 
   T(1\times T(1))\ar[d]^{T(\pi_{2})} \\
T(1)\times T^{2}(1)\ar[d]_{\idmap\times\mu}\ar[r]^-{\st} & 
   T(1\times T^{2}(1))\ar[d]^{T(\idmap\times\mu)}\ar[r]^-{T(\pi_{2})} & 
   T^{3}(1)\ar[r]^-{\mu}\ar[d]^{T(\mu)} & 
   T^{2}(1)\ar[d]^-{\mu} \\
T(1)\times T(1)\ar[r]_-{\st} & 
   T(1\times T(1))\ar[r]_-{T(\pi_{2})} & 
   T^{2}(1)\ar[r]_-{\mu} & T(1)
}$$
}

The swapped strength map $\st'$ gives rise to a swapped multiplication
on $T(1)$, namely $\mu \after T(\pi_1) \after \st' \colon T(1) \times
T(1) \to T(1)$, again with unit $\eta_1$. It corresponds to $(a,b)
\mapsto b\cdot a$ instead of $(a,b)\mapsto a\cdot b$ like in the
lemma. In case $T$ is a commutative monad, the two multiplications
coincide as we prove in Lemma \ref{CMnd2CMonLem}.

The functors defined in the previous two Lemmas~\ref{Mon2MndLem} and
\ref{Mnd2MonLem} form an adjunction. This result goes back
to~\cite{Wolff73}.

\begin{lemma}
\label{AdjMndMonLem}
The pair of functors $\mathcal{A} \colon \Mon \rightleftarrows \Mnd
\colon \Ev$ forms an adjunction $\mathcal{A} \dashv \Ev$, as on the
left in Figure~\ref{MonoidTriangleFig}.
\end{lemma}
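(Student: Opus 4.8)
The plan is to establish the adjunction $\mathcal{A}\dashv\Ev$ directly, by setting up a bijective correspondence, natural in $M$ and $T$, between monad maps $\mathcal{A}(M)=M\times(-)\Rightarrow T$ in $\Mnd$ and monoid maps $M\to\Ev(T)=T(1)$ in $\Mon$. In one direction, a monad map $\sigma\colon M\times(-)\Rightarrow T$ is sent to $\overline{\sigma}\colon M\to T(1)$ defined by $\overline{\sigma}(a)=\sigma_1(a,0)$, where $0\in\underline{1}$. In the other direction, a map $f\colon M\to T(1)$ is sent to the natural transformation $\overline{f}\colon M\times(-)\Rightarrow T$ with components $\overline{f}_X\colon M\times X\to T(X)$ given by $\overline{f}_X(a,x)=T(x)(f(a))$, reading $x$ as the map $\underline{1}\xrightarrow{x}X$; equivalently $\overline{f}_X=T(\pi_2)\after\st\after(f\times\idmap_X)$, using the canonical strength of Lemma~\ref{SetsStrengthLem}.

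First I would check that these assignments land where claimed. Naturality of $\overline{f}$ in $X$ is immediate from functoriality of $T$, since $T(g)\after T(x)=T(g\after x)=T(g(x))$. That $\overline{f}$ commutes with units uses that $f$ preserves the monoid unit together with naturality of $\eta^T$: $\overline{f}_X(1_M,x)=T(x)(f(1_M))=T(x)(\eta^T_1(0))=\eta^T_X(x)$. Dually, that $\overline{\sigma}$ preserves the monoid unit follows because $\sigma$ commutes with units and $\eta^{\mathcal{A}(M)}_1(0)=(1_M,0)$. Since $\Mnd=\Mnd(\Sets)$ and every monad on $\Sets$ carries its canonical strength with which all natural transformations commute (Lemma~\ref{SetsStrengthLem}), I need not separately impose compatibility with strength: a morphism in $\Mnd$ is automatically one in $\StMnd$, and the strength of $\mathcal{A}(M)$ used in Lemma~\ref{Mon2MndLem} is the canonical one, so everything is consistent.

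It then remains to verify that $\overline{f}$ commutes with multiplications and that $\overline{\sigma}$ is multiplicative for the monoid structure on $T(1)$ from Lemma~\ref{Mnd2MonLem} (namely $\mu\after T(\pi_2)\after\st$), that the two assignments are mutually inverse, and that the correspondence is natural. Mutual inverseness is short: $\overline{\overline{\sigma}}_X(a,x)=T(x)(\sigma_1(a,0))=\sigma_X\big((M\times x)(a,0)\big)=\sigma_X(a,x)$ by naturality of $\sigma$ applied to $x\colon\underline 1\to X$, and $\overline{\overline{f}}(a)=\overline{f}_1(a,0)=T(\idmap_1)(f(a))=f(a)$. Naturality in $M$ and $T$ is a routine diagram check. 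Given the bijection, the unit $\eta_M\colon M\to\mathcal{A}(M)(1)=M\times 1$ is the canonical isomorphism $a\mapsto(a,0)$, and the counit $\varepsilon_T\colon\mathcal{A}(T(1))=T(1)\times(-)\Rightarrow T$ is $\varepsilon_{T,X}=T(\pi_2)\after\st\colon T(1)\times X\to T(1\times X)\cong T(X)$.

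The main obstacle is compatibility with the multiplications, in both directions. For $\overline{f}$ one must show $\overline{f}_X\after\mu^{\mathcal{A}(M)}_X=\mu^T_X\after T(\overline{f}_X)\after\overline{f}_{M\times X}$; unfolding the monoid structure on $T(1)$ and using that $f$ is multiplicative reduces this to a diagram chase with the strength coherence axioms recalled in Section~\ref{PrelimSec} (in particular the associativity square for $\st$) together with naturality of $\st$, $\eta^T$ and $\mu^T$. Dually, multiplicativity of $\overline{\sigma}$ follows from the fact that $\sigma$ commutes with $\mu$ and with $\st$. These are the verifications we deem routine and omit in the style of the rest of the paper; everything else above is formal.
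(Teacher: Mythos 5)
Your proposal is correct and follows essentially the same route as the paper: the same bijective correspondence, with $\overline{\sigma}=\sigma_{1}\after\rho^{-1}$ (your $a\mapsto\sigma_{1}(a,0)$) and $\overline{f}_{X}=T(\lambda)\after\st\after(f\times\idmap)$ (your $T(\pi_2)\after\st\after(f\times\idmap)$), the same appeal to Lemma~\ref{SetsStrengthLem} for automatic compatibility of monad maps with the canonical strength, and the same deferral of the multiplicativity checks as routine. Your explicit verification of mutual inverseness and identification of the unit and counit match the paper's (omitted) computations.
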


\begin{proof}
For a monoid $M$ and a (strong) monad $T$ on \Sets there are
(natural) bijective correspondences:
$$\begin{bijectivecorrespondence}
  \correspondence[in \Mnd]{\xymatrix{\mathcal{A}(M)\ar[r]^-{\sigma} & T}}
  \correspondence[in \Mon]{\xymatrix{M\ar[r]_-{f} & T(1)}}
\end{bijectivecorrespondence}$$

\noindent Given $\sigma$ one defines a monoid map $\overline{\sigma}
\colon M\rightarrow T(1)$ as:
$$\xymatrix{
\overline{\sigma} = \Big(M\ar[r]^-{\rho^{-1}}_-{\cong} & 
M\times 1 = \mathcal{A}(M)(1)\ar[r]^-{\sigma_1} & T(1)\Big),
}$$

\noindent where $\rho^{-1} = \tuple{\idmap}{!}$ in this cartesian case. 
Conversely, given $f$ one gets a monad map $\overline{f}
\colon \mathcal{A}(M) \rightarrow T$ with components:
$$\xymatrix{
\overline{f}_{X} = \Big(M\times X\ar[r]^-{f\times\idmap} &
T(1)\times X\ar[r]^-{\st} & 
T(1\times X)\ar[r]^-{T(\lambda)}_-{\cong} & T(X)\Big),
}$$

\noindent where $\lambda = \pi_{2} \colon 1\times X\congrightarrow
X$. Straightforward computations show that these assignments indeed
give a natural bijective correspondence. \QED
\end{proof}

\auxproof{
We briefly check the bijective correspondences:
$$\begin{array}{rcll}
\overline{\overline{\sigma}}_{X}
& = &
T(\pi_{2}) \after \st \after (\overline{\sigma}\times\idmap) \\
& = &
T(\pi_{2}) \after \st \after (\sigma_{1}\times\idmap) \after
   (\tuple{\idmap}{!}\times\idmap) \\
& = &
T(\pi_{2}) \after \sigma_{1\times X} \after \st \after 
   (\tuple{\idmap}{!}\times\idmap)
   & \mbox{since $\sigma$ commutes with strength} \\
& = &
\sigma_{X} \after (M\times\pi_{2}) \after \st \after 
   (\tuple{\idmap}{!}\times\idmap)
   & \mbox{by naturality of $\sigma$} \\
& \stackrel{(*)}{=} &
\sigma_{X} \after \idmap \\
& = &
\sigma_{X},
\end{array}$$

\noindent where the marked equation holds since for $a\in M$
and $x\in X$
$$\begin{array}{rcl}
\big((M\times\pi_{2}) \after \st \after 
   (\tuple{\idmap}{!}\times\idmap)\big)(a,x)
& = &
\big((M\times\pi_{2}) \after \st\big)((a,*),x) \\
& = &
(M\times\pi_{2})(a,(*,x)) \\
& = &
(a,x).
\end{array}$$

\noindent Next,
$$\begin{array}{rcll}
\overline{\overline{f}}
& = &
\overline{f}_{1} \after \tuple{\idmap}{!} \\
& = &
T(\pi_{2}) \after \st \after (f\times\idmap)\after \tuple{\idmap}{!} \\
& = &
T(\pi_{1}) \after \st \after (f\times\idmap)\after \tuple{\idmap}{!}
   & \mbox{since $\pi_{1} = \pi_{2} \colon 1\times 1\rightarrow 1$} \\
& = &
\pi_{1} \after (f\times\idmap)\after \tuple{\idmap}{!}
   & \mbox{by~\eqref{StrengthMonoidal}} \\
& = &
f \after \pi_{1} \after \tuple{\idmap}{!} \\
& = &
f.
\end{array}$$

For $f \colon M \to T(1)$, $\overline{f}$ is a monad morphism $\mathcal{A}(M) \to T$:\\
\begin{itemize}
	\item Naturality: let $h \colon X \to Y$ be a set-map
	$$
	\begin{array}{rcl}
		T(h) \after \overline{f}_X &=&T(h) \after T(\pi_2) \after st \after f \times id \\
		&=&
		T(\pi_2) \after T(id \times h) \after st \after f \times id \\
		&=&
		T(\pi_2) \after st \after id \times h \after f \times id\\
		&=&
		T(\pi_2) \after st \after f\times id \after id \times h\\
		&=&
		\overline{f}_Y \after \mathcal{A}(M)(h)
	\end{array}
	$$
	
	\item Commutativity with $\eta$:
	\xymatrix{
	X \ar[r]^{\eta} \ar[d]_{\lambda^{-1}} & M \times X \ar[d]^{f \times id}\\
	1 \times X \ar[r]^{\eta \times id} \ar[dd]_{\lambda} \ar[rd]_{\eta} & T(1) \times X \ar[d]^{st} \\
	&T(1 \times X) \ar[d]^{T(\pi_2) = T(\lambda)}\\
	X \ar[r]_{\eta} &T(X)
	}
	
	\item Commutativity with $\mu$: \\
	\begin{sideways}
        \xymatrix{
	M\times(M\times X) \ar[dddddd]_{\mu}\ar[rr]^{id \times (f \times id)} && M \times (T(1) \times X) \ar[d]_{f \times id} \ar[rr]^{id \times \st} && M \times T(1 \times X) \ar[d]_{f \times id} \ar[r]^{id \times T(\pi_2)} & M \times T(X) \ar[d]^{f \times id}\\
	&& T(1)\times(T(1)\times X) \ar[rr]^{id \times st} \ar[dr]^{\st} \ar[dl]_{\alpha} && T(1) \times T(1 \times X) \ar[r]^{id \times T(\pi_2)} \ar[d]{\st} & T(1) \times T(X) \ar[d]^{\st}\\
	& (T(1) \times T(1))\times X \ar[d]_{\st} && T(1 \times (T(1) \times X)) \ar[ddl]^{T(\alpha)} \ar[ddd]^{T(\pi_2)} \ar[r]^{T(id \times \st)} & T(1 \times T(1 \times X)) \ar[r]^{T(id \times \pi_2)} \ar[ddd]_{T(\pi_2)} & T(1 \times T(X)) \ar[ddd]^{T(\pi_2)} \\
	& T(1 \times T(1)) \times X \ar[d]_{T(\pi_2) \times id}\ar[dr]^{\st} &&&& \\
	& T^2(1) \times X \ar[dd]_{\mu \times id} \ar[rrd]^{\st} & T((1 \times T(1)) \times X) \ar[dr]^{T(\pi_2 \times id)} &&& \\
	&&&T(T(1) \times X) \ar[r]^{T(\st)} & T^2(1 \times X) \ar[r]^{T^2(\pi_2)} \ar[d]_{\mu} & T^2(X) \ar[d]^{\mu}\\
	M \times X \ar[r]^{f \times id} & T(1) \times X \ar[rrr]^{\st} &&& T(1 \times X) \ar[r]^{T(\pi_2)}& T(X)
	}  
	\end{sideways}
\end{itemize}	

For $\sigma: \mathcal{A}(M) \to T$, $\overline{\sigma}$ is a monoid morphism:
\begin{itemize}
	\item Preservation of 1 follows from the fact that $\sigma$ commutes with $\eta_1$ \\
	\item Preservation of the multiplication: \\
	\xymatrix{
	M \times M \ar[r] \ar[ddd]_{\cdot} \ar[rdd] & (M\times 1) \times (M \times 1) \ar[d]_{\st} \ar[r]^{\sigma \times id} & T(1) \times (M\times 1) \ar[r]^{id \times \sigma} \ar[d]_{\st} & T(1) \times T(1) \ar[d]^{\st} \\
	&M \times (1 \times (M \times 1)) \ar[r]^{\sigma} \ar[d]_{id \times \pi_2} & T(1 \times (M \times 1)) \ar[d]_{T(\pi_2)} \ar[r]^{T(id \times \sigma)} & T(1 \times T(1)) \ar[d]^{T(\pi_2)}\\
	& M \times (M \times 1) \ar[r]^{\sigma} \ar[d]_{\mu} & T(M \times 1) \ar[r]^{T(\sigma)} & T^2(1) \ar[d]^{\mu}\\
	M \ar[r] & M \times 1 \ar[rr]^{\sigma} &&T(1)
	}
\end{itemize}

For naturality, consider:
$$\begin{bijectivecorrespondence}
  \correspondence[in \Mnd]{\xymatrix{\mathcal{A}(N)\ar[r]^-{\mathcal{A}(f)} & \mathcal{A}(M) \ar[r]^-{\sigma} & T \ar[r]^-{\tau} & S}}
  \correspondence[in \Mon]{\xymatrix{N\ar[r]_-{f} & M \ar[r]_-{\overline{\sigma}} & T(1) \ar[r]_-{\tau_1} & S(1)}}
\end{bijectivecorrespondence}$$

Then,
$$
\begin{array}{rcl}
	\overline{\tau \after \sigma \after \mathcal{A}(f)} &=& (\tau \after \sigma \after \mathcal{A}(f))_1 \after \tuple{id}{!} \\
	&=&
	\tau_1 \after \sigma_1 \after (\mathcal{A}(f))_1 \after \tuple{id}{!}\\
	&=&
	\tau_1 \after \sigma_1 \after f \times id \after \tuple{id}{!}\\
	&=&
	\tau_1 \after \sigma_1 \after \tuple{id}{!} \after f \\
	&=&
	\tau_1 \after \overline{\sigma} \after f
\end{array}
$$
}

Notice that, for a monoid $M$, the counit of the above adjunction is
the projection $\smash{(\Ev \after \mathcal{A})(M) = \mathcal{A}(M)(1)
  = M\times 1 \stackrel{\cong}{\rightarrow} M}$. Hence the adjunction
is a reflection.

We now move to the bottom of Figure \ref{MonoidTriangleFig}. The
finitary Kleisli construction yields a functor from the category of
monads to the category of Lawvere theories (Corollary
\ref{Mnd2FCCatCor}). This functor has a left adjoint, as is proven in
the following two standard lemmas.

\begin{lemma}
\label{GLaw2MndLem}
Each Lawvere theory $\cat{L}$, gives rise to a monad $T_{\cat{L}}$ on
$\Sets$, which is defined by
\begin{equation}
\label{LMEqn}
\begin{array}{rcl}
	T_{\cat{L}}(X) 
	& = &
	\Big(\coprod_{i\in\NNO}\cat{L}(1,i)\times X^{i}\Big)/\!\sim,
\end{array}
\end{equation}

\noindent where $\sim$ is the least equivalence relation such that,
for each $f\colon i\rightarrow m$ in $\aleph_{0} \hookrightarrow
\cat{L}$,
$$\begin{array}{rcl}
	\kappa_m (f \after g, v) 
	& \sim &
	\kappa_i(g, v\after f),
	\qquad\mbox{where }g\in \cat{L}(1,i)\mbox{ and }v\in X^{m}.
\end{array}$$

\noindent Finally, the mapping $\cat{L} \to T_{\cat{L}}$ yields a
functor $\LM\colon\Law \to \Mnd$.
\end{lemma}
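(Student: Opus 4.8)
The plan is to verify directly that the formula \eqref{LMEqn} defines a monad on $\Sets$, and then check functoriality of $\cat{L}\mapsto T_{\cat{L}}$. The key observation is that $T_{\cat{L}}$ is essentially the left Kan extension construction $\mathcal{L}(F)$ recalled in the Introduction, applied to the functor $F = \cat{L}(1,-)\colon\aleph_0\to\Sets$ restricted along $\aleph_0\hookrightarrow\cat{L}$ --- indeed the quotient relation here, $\kappa_m(f\after g, v)\sim\kappa_i(g,v\after f)$ for $f\colon i\to m$ in $\aleph_0$, is exactly the relation $\sim$ defining $\mathcal{L}(F)$, once one notes that $F(f)(g) = f\after g$ for $g\in\cat{L}(1,i)$. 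So on the level of the underlying functor there is nothing new; the point is that the extra structure of $\cat{L}$ (composition beyond $\aleph_0$) supplies the monad structure.

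First I would describe the unit $\eta_X\colon X\to T_{\cat{L}}(X)$ by $\eta_X(x) = [\kappa_1(\idmap_1, x)]$, using the identity morphism $\idmap_1\in\cat{L}(1,1)$ and viewing $x\in X$ as the map $1\to X$ picking out $x$. Next I would describe the multiplication $\mu_X\colon T_{\cat{L}}(T_{\cat{L}}(X))\to T_{\cat{L}}(X)$. An element of $T_{\cat{L}}(T_{\cat{L}}(X))$ is represented by $\kappa_i(g, w)$ with $g\in\cat{L}(1,i)$ and $w\colon i\to T_{\cat{L}}(X)$, i.e.\ an $i$-tuple of (classes of) elements $[\kappa_{n_j}(h_j, v_j)]$ for $j<i$. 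Choosing representatives, one forms $n = \sum_{j<i} n_j$, the cotuple map $[h_0,\dots,h_{i-1}]$ which composed with $g$ (using the coproduct structure $i = 1+\dots+1$ mapped into $\cat{L}$ and the universal property) yields a map $1\to n$ in $\cat{L}$, call it $\hat g$, and the combined valuation $\hat v\colon n\to X$ obtained from the $v_j$; then $\mu_X[\kappa_i(g,w)] = [\kappa_n(\hat g, \hat v)]$. The bulk of the work is: (a) checking this is well-defined, independent of the chosen representatives --- this is where the quotient relation $\sim$ and the coproduct-preservation property of $\aleph_0\to\cat{L}$ get used repeatedly; and (b) verifying the monad laws $\mu\after\eta_{T} = \idmap = \mu\after T(\eta)$ and $\mu\after\mu_T = \mu\after T(\mu)$, which reduce to associativity and unitality of composition in $\cat{L}$ together with coherence of the coproduct isomorphisms.

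For functoriality: a morphism $F\colon\cat{L}\to\cat{L}'$ in $\Law$ is the identity on objects and strictly preserves finite coproducts, so it restricts to maps $\cat{L}(1,i)\to\cat{L}'(1,i)$ compatible with precomposition by maps from $\aleph_0$ (since $F$ fixes $\aleph_0$ pointwise). Hence $\coprod_i F\times\idmap$ descends to the quotients, giving a natural transformation $T_F\colon T_{\cat{L}}\Rightarrow T_{\cat{L}'}$, and one checks it commutes with $\eta$ and $\mu$ using that $F$ preserves identities, composition, and the coproduct structure on the nose. Preservation of identities and composition of $\Law$-morphisms by $\LM$ is then immediate.

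I expect the main obstacle to be the well-definedness of $\mu$ and the verification of the associativity law: one must handle the bookkeeping of nested coproducts $\sum_j n_j$ versus the flattened index set, show the construction is insensitive both to the choice of representatives $(g,w)$ of an element of $T_{\cat{L}}(T_{\cat{L}}(X))$ and to the choice of representatives of the inner elements $w(j)$, and confirm that the two ways of re-associating a triple of nested sums agree after passing to $\sim$. All of this is routine but notationally heavy; since the lemma is flagged as standard and the paper's stated policy is to omit routine verifications, I would present the constructions of $\eta$, $\mu$, and $T_F$ explicitly and assert that the monad laws and functoriality follow by direct (diagram-chasing) computation, pointing to the analogy with $\mathcal{L}$ from the Introduction for the underlying-functor part.
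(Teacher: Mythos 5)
Your proposal is correct and follows essentially the same route as the paper: the same unit $[\kappa_1(\idmap_1,x)]$, the same multiplication obtained by flattening $\kappa_i(g,w)$ to $\kappa_n((h_0+\cdots+h_{i-1})\after g,[v_0,\ldots,v_{i-1}])$ with $n=\sum_j n_j$, and the same action of a $\Law$-morphism by post-composing the $\cat{L}(1,i)$-component, with the routine well-definedness and monad-law checks deferred exactly as the paper does. The added remark that the underlying functor is the left Kan extension $\mathcal{L}(\cat{L}(1,-))$ from the Introduction is accurate and a nice touch, though not used by the paper's proof.
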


\begin{proof}
For a Lawvere theory $\cat{L}$, the unit map $\eta \colon X \to
T_{\cat{L}}(X) = \Big(\coprod_{i\in\NNO}\cat{L}(1,i)\times
X^{i}\Big)/\!\sim$ is given by
$$
\begin{array}{rcl}
 x &\mapsto& [\kappa_1(id_1,x)].
\end{array}
$$  
The multiplication $\mu\colon
T_{\cat{L}}^2(X)\rightarrow T_{\cat{L}}(X)$ is given by:
$$\begin{array}{rcl}
\mu([\kappa_{i}(g,v)])
& = &
[\kappa_{j}((g_{0}+\cdots+g_{i-1})\after g, [v_{0},\ldots, v_{i-1}])] \\
& & \qquad \mbox{where }g\colon 1\rightarrow i, \mbox{ and } 
    v\colon i\rightarrow T_{\cat{L}}(X) \mbox{ is written as} \\
& & \qquad\qquad v(a) = \kappa_{j_{a}}(g_{a}, v_{a}), \mbox{ for }a<i, \\
& & \qquad \mbox{and } j = j_{0} + \cdots + j_{i-1}.
\end{array}$$

\noindent It is straightforward to show that this map $\mu$ is
well-defined and that $\eta$ and $\mu$ indeed define a monad structure
on $T_{\cat{L}}$.

For each morphism of Lawvere theories $F\colon \cat{L} \to \cat{K}$,
one may define a monad morphism $\LM(F) \colon T_{\cat{L}} \to
T_{\cat{K}}$ with components $\LM(F)_{X} \colon [\kappa_i(g,v)] \mapsto
[\kappa_i(F(g),v)]$. This yields a functor $\LM \colon \Law \to
\Mnd$. Checking the details is left to the reader.\QED
\end{proof}

\auxproof{
\begin{enumerate}
\item $\mu$ is well-defined\\
Let $f \colon i \to m$ in $\aleph_0$, $g \in \cat{L}(1,i)$, $v \colon m \to \LM(\cat{L})(X)$.\\
Write $v(a) = \kappa_{j_a}(h_a,v_a)$
$$
\begin{array}{rcll}
\lefteqn{\mu([\kappa_m(f \after g,v)])}\\ 
&=& [\kappa_{j_0 + j_{m-1}}((h_0 + \ldots h_{m-1}) \after (f \after g), [v_0, \ldots v_{m-1}])]\\
&=&
[\kappa_{j_0 + j_{m-1}}([\kappa_{f(0)}, \ldots \kappa_{f(i-1)}] \after (h_{f(0)} + \ldots h_{f(i-1)}) \after g), [v_0, \ldots v_{m-1}])]\\
&=&
[\kappa_{j_{f(0)} + \ldots +j_{f(i-1)}}((h_{f(0)} + \ldots h_{f(i-1)}) \after g, [v_0, \ldots v_{m-1}] \after [\kappa_{f(0)}, \ldots \kappa_{f(i-1)}])] &\text{eq.rel.}\\
&=&
[\kappa_{j_{f(0)} + \ldots +j_{f(i-1)}}((h_{f(0)} + \ldots h_{f(i-1)}) \after g, [v_{f(0)}, \ldots v_{f(m-1)}])]\\
&=&
\mu([\kappa_i(g,v \after f)])
\end{array}
$$
\item $\mu \after \eta \colon \LM(\cat{L})(X) \to \LM(\cat{L})(X) = id$\\
$$
\begin{array}{rcll}
(\mu \after \eta)([\kappa_i(g,v)]) &=& \mu([\kappa_1(\id_1,[\kappa_i(g,v)])]) \\
&=& [\kappa_i(g \after \id,v)] = [\kappa_i(g,v)]
\end{array}
$$
\item $\mu \after T(\eta) \colon \LM(\cat{L})(X) \to \LM(\cat{L})(X) = id$\\
$$
\begin{array}{rcll}
(\mu \after T\eta)([\kappa_i(g,v)]) &=& \mu([\kappa_i(g, \eta \after v)])\\
&=&
[\kappa_{1+\ldots+1}((\id+\ldots+\id)\after g, [v_0 + \ldots v_{i-1}])]\\
&=&
[\kappa_i(g,v)]
\end{array}
$$

\item $\mu \after T\mu = \mu \after \mu \colon \LM(\cat{L})^3(X) \to \LM(\cat{L})(X)$\\
Let $[\kappa_i(g,v)] \in \LM(\cat{L})^3(X)$, write
$$
\begin{array}{rcl}
	v(a) &=& [\kappa_{j_a}(g_a, v_a)] \,\text{where}\, g_a \colon 1 \to j_a \,\text{and}\, v_a \colon j_a \to \LM(\cat{L})(X)
\end{array}
$$
$$
\begin{array}{rcll}
	v_a(b) &=& [\kappa_{m^{a}_b}(h^a_b, w^{a}_b)]
\end{array}
$$	
$$
\begin{array}{rcll}
\lefteqn{(\mu \after T\mu)([\kappa_i(g,v)])}\\ 
&=& \mu([\kappa_i(g, \mu \after v)])\\
&=&
\mu([\kappa_i(g, \lambda a.[\kappa_{m^a_0+\ldots+m^a_{j_a-1}}((h^a_0+\ldots h^a_{j_a-1})\after g_a, [w^a_0,\ldots,w^a_{j_a-1}])])\\
&=&
[\kappa_{m^0_0+\ldots+m^0_{j_1-1} + \ldots +m^{i-1}_{j_{i-1}-1}}(((h^0_0 + \ldots h^0_{j_0-1})\after g_0 + \ldots)\after g, [[w^0_0, \ldots w^0_{j_0-1}], \ldots])\\
&=&
[\kappa_{m^0_0+\ldots+m^0_{j_1-1} + \ldots +m^{i-1}_{j_{i-1}-1}}(((h^0_0 + \ldots h^i_{j_i-1}) \after (g_0 + \ldots g_{i-1})\after g, [w^0_0, \ldots w^{i-1}_{j_{i-1}-1}])\\
&=&
\mu([\kappa_{j_0 + \ldots j_{i-1}}((g_0 + \ldots g_{i-1}) \after g, [v_0, \ldots v_{i-1}])])\\
&=&
(\mu \after \mu)([\kappa_i(g,v)])
\end{array}
$$
\item For $G \colon \cat{L} \to \cat{K}$, $\LM(G)$ is a monad morphism.\\
Preservation of $\eta$:
$$
\begin{array}{rcll}
(\LM(G) \after \eta)(x) &=& \LM(G)([\kappa_1(\id_1,x)])\\
&=& 
[\kappa_1(G(\id_1),x)]\\
&=&
[\kappa_1(\id_1,x)]\\
&=&
\eta(x)
\end{array}
$$
Commutates with $\mu$:
$$
\begin{array}{rcl}
\lefteqn{(\mu \after \LM(G) \after \LM(\cat{L})(\LM(G)))([\kappa_i(g,v)])}\\ 
&=& 
(\mu \after \LM(G))([\kappa_i(g,\LM(G)\after v)])\\ 
&=& 
\mu([\kappa_i(G(g),\LM(G)\after v)])\\ 
&=& 
[\kappa_{j_0+\ldots j_{i-1}}((G(g_0)+\ldots+G(g_{i-1}))\after G(g),[v_0,\ldots,v_{i-1}])]\\
&=&
[\kappa_{j_0+\ldots j_{i-1}}((G(g_0+\ldots+g_{i-1})\after g),[v_0,\ldots,v_{i-1}])]\\
&=&
\LM(G)([\kappa_{j_0+\ldots j_{i-1}}((g_0+\ldots+g_{i-1})\after g,[v_0,\ldots,v_{i-1}])])\\
&=&
(\LM(G)\after\mu)([\kappa_i(g,\LM(G)\after v)])
\end{array}
$$
Naturality:\\
For a map $h \colon X \to Y$ in $\Sets$, 
$$	\LM(\cat{L})(h) \colon \Big(\coprod_{i\in\NNO}\cat{L}(1,i)\times X^{i}\Big)/\!\sim \,\to \Big(\coprod_{i\in\NNO}\cat{L}(1,i)\times Y^{i}\Big)/\!\sim,$$ 
is given by
$$
	[\kappa_i(g,w)]  \mapsto  [\kappa_i(g,h \after w)].
$$
$$
\begin{array}{rcll}
(\LM(G)_Y \after \LM(\cat{L})(h))([\kappa_i(g,v)]) 
&=& 
(\LM(G)_Y([\kappa_i(g,h \after v)])\\
&=&
[\kappa_i(G(g),h \after v)]\\
&=&
\LM(\cat{K})(h)([\kappa_i(G(g),v)])\\
&=&
(\LM(\cat{K})(h) \after \LM(G)_X)([\kappa_i(g,v)])  
\end{array}
$$

\item Functoriality of $\LM$\\
$$
\begin{array}{rcll}
\LM(\id)_X([\kappa_i(g,v)]) &=& [\kappa_i(id(g),v)] = [\kappa_i(g,v)]
\end{array}
$$
and
$$
\begin{array}{rcll}
\LM(F \after G)_X([\kappa_i(g,v)]) = [\kappa_i((F \after G)(g),v)] = (\LM(F) \after \LM(G))([\kappa_i(g,v)])
\end{array}
$$
\end{enumerate}
}

\begin{lemma}
\label{AdjMndLvTLem}
The pair of functors $\LM \colon \Law \rightleftarrows \cat{Mnd}
\colon \Kl_{\NNO}$ forms an adjunction $\LM \dashv \Kl_{\NNO}$, as at
the bottom in Figure~\ref{MonoidTriangleFig}.
\end{lemma}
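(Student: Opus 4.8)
The plan is to establish, for every Lawvere theory $\cat{L}$ and every monad $T$ on $\Sets$, a natural bijective correspondence between monad maps $T_{\cat{L}} \rightarrow T$ in $\Mnd$ and morphisms of Lawvere theories $\cat{L} \rightarrow \Kl_{\NNO}(T)$ in $\Law$, in the same spirit as the proof of Lemma~\ref{AdjMndMonLem}.

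Given a morphism of Lawvere theories $F\colon\cat{L}\rightarrow\Kl_{\NNO}(T)$, define a candidate monad map $\overline{F}\colon T_{\cat{L}}\rightarrow T$ by
$$
\overline{F}_{X}\big([\kappa_{i}(g,v)]\big) \;=\; T(v)\big(F(g)\big) \;\in\; T(X),
$$
where $g\in\cat{L}(1,i)$, the value $F(g)\in\Kl_{\NNO}(T)(1,i)$ is read as an element of $T(\underline{i})$, and $v\in X^{i}$ is viewed as a function $\underline{i}\rightarrow X$. Conversely, given a monad map $\sigma\colon T_{\cat{L}}\rightarrow T$, define $\overline{\sigma}\colon\cat{L}\rightarrow\Kl_{\NNO}(T)$ to be the identity on objects, to send $g\in\cat{L}(1,m)$ to the Kleisli arrow $1\rightarrow m$ given by $\sigma_{m}\big([\kappa_{m}(g,\idmap_{m})]\big)\in T(m)$, and to send a general $g\colon n\rightarrow m$ in $\cat{L}$ to the cotuple $[\overline{\sigma}(g\after\kappa_{0}),\ldots,\overline{\sigma}(g\after\kappa_{n-1})]$ in $\Kl_{\NNO}(T)$, as forced by strict preservation of the coproduct $n=1+\cdots+1$.

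The verifications then run in the expected order. First, $\overline{F}$ is well defined on $\sim$-classes: since a morphism of Lawvere theories commutes with the structure functors out of $\aleph_{0}$, one has $F(f)=\eta\after f$ for every $f$ in $\aleph_{0}$, hence $F(f\after g)=T(f)\after F(g)$ by the monad law $\mu\after T(\eta)=\idmap$; this is precisely what sends both sides of $\kappa_{m}(f\after g,v)\sim\kappa_{i}(g,v\after f)$ to the same element $T(v\after f)(F(g))$. Next, $\overline{F}$ is natural in $X$ by functoriality of $T$, commutes with the unit at once, and commutes with the multiplication of $T_{\cat{L}}$ — the laborious point, since $\mu$ on $T_{\cat{L}}$ is built from coproducts in $\cat{L}$, so one must invoke that $F$ preserves these strictly. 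Dually, one checks that $\overline{\sigma}$ is a well-defined, identity-on-objects, strictly coproduct-preserving functor: functoriality reduces via the coproduct decomposition to arrows out of $1$, where preservation of composition follows from $\sigma$ commuting with $\mu$ together with the way $\mu$ on $T_{\cat{L}}$ encodes composition in $\cat{L}$, and preservation of identities from $\sigma$ commuting with $\eta$. Finally one verifies that the two assignments are mutually inverse: $\overline{\overline{\sigma}}=\sigma$ follows from naturality of $\sigma$ and the identity $T_{\cat{L}}(v)\big([\kappa_{i}(g,\idmap_{i})]\big)=[\kappa_{i}(g,v)]$, while $\overline{\overline{F}}=F$ follows because $T(\idmap_{m})=\idmap$ gives $\overline{\overline{F}}(g)=F(g)$ on arrows out of $1$, and a Lawvere-theory morphism is determined by its restriction to such arrows; naturality of the correspondence in $\cat{L}$ and $T$ is routine.

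The main obstacle I expect is bookkeeping rather than conceptual difficulty: reconciling the ``global'', coproduct-indexed description of $T_{\cat{L}}$ and of its multiplication with the ``pointwise'' description of composition in $\Kl_{\NNO}(T)$, and in particular checking compatibility with $\mu$ in both directions while tracking how the relation $\sim$ interacts with strict preservation of coproducts. Everything else is a direct diagram chase of the kind suppressed elsewhere in the paper. As a by-product one sees that the unit $\cat{L}\rightarrow\Kl_{\NNO}(T_{\cat{L}})$, sending $g\in\cat{L}(1,m)$ to $[\kappa_{m}(g,\idmap_{m})]$, is an isomorphism (the assignment $[\kappa_{i}(g,v)]\mapsto v\after g$ is a well-defined inverse on hom-sets), so the adjunction $\LM\dashv\Kl_{\NNO}$ is in fact a coreflection.
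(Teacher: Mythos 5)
Your proposal is correct and follows essentially the same route as the paper: the same bijective correspondence, with $\overline{F}_X([\kappa_i(g,v)]) = T(v)(F(g))$ in one direction and $f \mapsto \bigl(i \mapsto \sigma_m([\kappa_m(f\after\kappa_i,\idmap_m)])\bigr)$ in the other, and the same division of labour in the verifications. Your closing observation that the unit $\cat{L}\rightarrow\Kl_{\NNO}(T_{\cat{L}})$ is an isomorphism (via the well-defined inverse $[\kappa_i(g,v)]\mapsto v\after g$) is a correct bonus not stated in the paper.
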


\begin{proof}
For a Lawvere theory $\cat{L}$ and a monad $T$ there are
(natural) bijective correspondences:
$$\begin{bijectivecorrespondence}
  \correspondence[in \Mnd]{\xymatrix{\LM(\cat{L})\ar[r]^-{\sigma} & T}}
  \correspondence[in \Law]{\xymatrix{\cat{L}\ar[r]_-{F} & \Kl_{\NNO}(T)}}
\end{bijectivecorrespondence}$$

\noindent Given $\sigma$, one defines a $\cat{\Law}$-map
$\overline{\sigma} \colon \cat{L} \to \Kl_{\NNO}(T)$ which is the
identity on objects and sends a morphism $f \colon n \to m$ in
$\cat{L}$ to the morphism
$$
\xymatrix{
n \ar[rrr]^-{\lam{i<n}{[\kappa_m(f \after \kappa_i,\id_m)]}} &&& 
   \LM(\cat{L})(m) \ar[r]^-{\sigma_m} & T(m)
}$$

\noindent in $\Kl_{\NNO}(T)$.

Conversely, given $F$, one defines a monad morphism $\overline{F}$
with components $\overline{F}_X \colon \LM(\cat{L})(X) \to T(X)$
given, for $i \in \NNO$, $g \colon 1 \to i \in \cat{L}$ and $v \in
X^i$, by:
$$
\begin{array}{rcll}
	[\kappa_i(g,v)] &\mapsto& (T(v) \after F(g))(*),
\end{array}
$$
where $*$ is the unique element of $1$. \QED
\end{proof}

\auxproof{
\begin{enumerate}
\item $\overline{\sigma}$ is a morphism in $\Law$\\
\begin{itemize}
\item $\overline{\sigma}$ is a functor\\
For $n \in \NNO$, $i \in n$,
$$
\begin{array}{rcll}
\overline{\sigma}(\id_n)(i) &=& \sigma_n ([\kappa_n(\id_n \after \kappa_i, \id_n)])\\
&=&
\sigma_n([\kappa_n(\kappa_i, \id_n)])\\
&=&
\sigma_n([\kappa_1(\id_1, \id_n \after \kappa_i)])&\text{def. equivalence rel.}\\
&=&
\sigma_n([\kappa_1(\id_1, i)])\\
&=&
(\sigma_n \after \eta )(i)\\
&=&
\eta(i)&\text{$\sigma$ preserves $\eta$}\\
&=&
\id_n^{\Kl_{\NNO}(T)}(i)
\end{array}
$$
For $n \xrightarrow{f} m \xrightarrow{g} k$,
$$
\begin{array}{rcll}
(\overline{\sigma}(g) \klafter \overline{\sigma}(f))(i) 
&=&
(\mu \after T(\overline{\sigma}(g)) \after \overline{\sigma}(f))(i)\\
&=&
(\mu \after T(\overline{\sigma}(g) \after \sigma_m)(\kappa_m(f \after \kappa_i, \id_m))\\
&=&
(\mu \after \sigma_{T(k)} \after \LM(\cat{L})(\overline{\sigma}(g)))(\kappa_m(f \after \kappa_i, \id_m)) \\
&=&
(\mu \after \sigma_{T(k)})(\kappa_m(f \after \kappa_i, \overline{\sigma}(g)))\\
&=&
(\mu \after \sigma_{T(k)})(\kappa_m(f \after \kappa_i, \sigma_k \after \lambda j \in m [\kappa_k(g,\kappa_j,\id_k)]))\\
&=&
(\mu \after \sigma_{T(k)}\after \LM(\cat{L})(\sigma))(\kappa_m(f \after \kappa_i, \lambda j \in m [\kappa_k(g,\kappa_j,\id_k)]))\\
&=&
(\sigma_k \after \mu)(\kappa_m(f \after \kappa_i, \lambda j \in m [\kappa_k(g,\kappa_j,\id_k)]))\\
&=&
\sigma_k([\kappa_{\coprod_m k}\big((g\after\kappa_1 + \ldots + g \after \kappa_m)\after(f\after \kappa_i),[\id_k,\ldots \id_k]\big)\\
&=&
\sigma_k([\kappa_{\coprod_m k}\big((g+\ldots+g) \after (\kappa_1 + \ldots +\kappa_m)\after(f\after \kappa_i\big), \nabla)])\\
&=&
\sigma_k([\kappa_k\big(\nabla \after (g+\ldots+g) \after (\kappa_1 + \ldots +\kappa_m)\after(f\after \kappa_i\big), \id_k)])\\
&&
\text{equivalence relation}\\
&=&
\sigma_k([\kappa_k(g \after f \after \kappa_i, \id_k)])\\
&=&
\overline{\sigma}(g \after f)(i)
\end{array}
$$

\item $\overline{\sigma}$ preserves coproducts\\
Consider $\tilde{\kappa_1} \colon n \to n+m$ in $\cat{L}$
$$
\begin{array}{rcll}
\overline{\sigma}(\kappa_1)(i) &=& \sigma_{n+m}([\kappa_{n+m}(\tilde{\kappa_1} \after \kappa_i, \id_{n+m})])\\
&=&
\sigma_{n+m}([\kappa_1(\id_1, \tilde{\kappa_1}(i))]) &\text{equiv. rel.}\\
&=&
(\sigma_{n+m}\after \eta \after \tilde{\kappa_1})(i)\\
&=&
(\eta \after \tilde{\kappa_1}) (i)\\
&=&
\kappa_1^{\Kl_{\NNO}}(i)
\end{array}
$$
\end{itemize}

\item $\overline{F}\colon \LM(\cat{L}) \to T$ is monad morphism.\\
\begin{itemize}
\item $\overline{F}$ is well-defined.\\
Let $f \colon i \to m$ in $\aleph_0$, $g \colon 1 \to i$ in $\cat{L}$, $w\colon m \to X$ in $\Sets$. Then
$$
\begin{array}{rcll}
	T(w) \after F(f_{\cat{L}} \after g) &=& T(w) \after (F(f_{\cat{L}}) \klafter F(g))\\
	&=&
	T(w) \after \mu \after T(F(f_{\cat{L}})) \after F(g)\\
	&=&
	T(w) \after \mu \after T(\eta) \after T(f) \after F(g)\\
	&=&
	T(w) \after T(f) \after F(g)\\  
	&=&	
	T(w \after f) \after F(g)
\end{array}
$$
\item Naturality of $\overline{F}$.\\
Let $h \colon X \to Y$,
$$
\begin{array}{rcll}
(T(h) \after \overline{F}_X)([\kappa_i(g,w)]) &=& (T(h) \after T(w) \after F(g))(*)\\
&=&
(T(h \after w) \after F(g))(*)\\
&=&
\overline{F}_Y([\kappa_i(g, h \after w)])\\
&=&
(\overline{F}_Y \after \LM(\cat{L})(h))([\kappa_i(g,w)])
\end{array}
$$
\item Preservation of $\eta$
$$
\begin{array}{rcll}
(\overline{F}_X \after \eta)(x) &=& \overline{F}_X([\kappa_1(\id_1,x)])\\
&=&
(T(x) \after F(\id_1))(*) \\
&=&
(T(x) \after \eta_1)(*) &(\eta_1 = \id_1^{\Kl_{\NNO}})\\
&=&
\eta(x) &\text{naturality of $\eta$}
\end{array}
$$
\item Preservation of $\mu$\\
Let $[\kappa_i(g,v)] \in \LM(\cat{L})^2(X) = \coprod_i \cat{L}(1,i) \times \LM(\cat{L})(X)^i$ and write, for $a < i$, $v(a) = [\kappa_{j_a}(g_a,v_a)]$, then
$$
\begin{array}{rcl}
\lefteqn{(\overline{F}_X \after \mu)([\kappa_i(g,v)])}\\
&=& \overline{F}_X([\kappa_{j_0+\ldots+j_{i-1}}((g_0 + \ldots + g_{i-1}) \after g,[v_0,\ldots,v_{i-1}])])\\
&=&
(T([v_0,\ldots,v_{i-1}])\after F((g_0 + \ldots + g_{i-1}) \after g))(*)\\
&=&
(T([v_0,\ldots,v_{i-1}])\after F(g_0 + \ldots + g_{i-1}) \klafter F(g))(*)\\
&=&
(T([v_0,\ldots,v_{i-1}])\after \mu^T \after T(F(g_0 + \ldots + g_{i-1})) \after F(g))(*)\\
&=&
(T([v_0,\ldots,v_{i-1}])\after \mu^T \after T(F([\kappa_0 \after g_0 + \ldots + \kappa_{i-1} \after g_{i-1}])) \after F(g))(*)\\
&=&
(T([v_0,\ldots,v_{i-1}])\after \mu^T \after T([F(\kappa_0) \klafter F(g_0) + \ldots + F(\kappa_{i-1}) \klafter F(g_{i-1})]) \after F(g))(*)\\
&=&
(T([v_0,\ldots,v_{i-1}])\after \mu^T \after T([\mu \after T(F(\kappa_0)) \after F(g_0) + \ldots ]) \after F(g))(*)\\
&=&
(T([v_0,\ldots,v_{i-1}])\after \mu^T \after T([\mu \after T(\eta \after \kappa_0) \after F(g_0) + \ldots ]) \after F(g))(*)\\
&=&
(T([v_0,\ldots,v_{i-1}])\after \mu^T \after T([T(\kappa_0) \after F(g_0) + \ldots + T(\kappa_{i-1}) \after F(g_{i-1})]) \after F(g))(*)\\
&=&
(T([v_0,\ldots,v_{i-1}])\after \mu^T \after T([T(\kappa_0) + \ldots + T(\kappa_{i-1})] \after (F(g_0) + \ldots F(g_{i-1}))) \after F(g))(*)\\
&=&
(T([v_0,\ldots,v_{i-1}])\after \mu^T \after T([T(\kappa_0) + \ldots + T(\kappa_{i-1})] \after (F(g_0) + \ldots F(g_{i-1}))) \after F(g))(*)\\
&=&
(T([v_0,\ldots,v_{i-1}])\after \mu^T \after T([T(\kappa_0) + \ldots + T(\kappa_{i-1})] \after (F(g_0) + \ldots F(g_{i-1}))) \after F(g))(*)\\
&=&
(\mu^T \after T^2([v_0,\ldots,v_{i-1}]) \after T([T(\kappa_0) + \ldots + T(\kappa_{i-1})] \after (F(g_0) + \ldots F(g_{i-1}))) \after F(g))(*)\\
&=&
(\mu^T \after T(T([v_0,\ldots,v_{i-1}]) \after [T(\kappa_0) + \ldots + T(\kappa_{i-1})] \after (F(g_0) + \ldots F(g_{i-1}))) \after F(g))(*)\\
&=&
(\mu^T \after T([T(v_0),\ldots,T(v_{i-1})] \after (F(g_0) + \ldots F(g_{i-1}))) \after F(g))(*)\\
&=&
(\mu^T \after T([T(v_0) \after F(g_0),\ldots,T(v_{i-1}) \after F(g_{i-1})]) \after F(g))(*)\\
&=&
(\mu^T \after T([* \mapsto (T(v_0) \after F(g_0))(*),\ldots,\star \mapsto (T(v_{i-1}) \after F(g_{i-1}))(*)]) \after F(g))(*)\\
&=&
(\mu^T \after T(\overline{F}_X \after [* \mapsto [\kappa_{j_0}(g_0,v_0)],\ldots,\star \mapsto [\kappa_{j_{i-1}}(g_{i-1},v_{i-1})]]) \after F(g))(*)\\
&=&
(\mu^T \after T(\overline{F}_X \after [* \mapsto v(0),\ldots,\star \mapsto v(i-1)]) \after F(g))(*)\\
&=&
(\mu^T \after T(\overline{F}_X \after v) \after F(g))(*)\\
&=&
(\mu^T \after \overline{F}_{T(X)}([\kappa_i(g,\overline{F}_X \after v)])\\
&=&
(\mu^T \after \overline{F}_{T(X)} \after \LM(\cat{L})(\overline{F}_X)([\kappa_i(g,v)])\\
\end{array}
$$ 
\end{itemize}
\item $\overline{\overline{\sigma}} = \sigma$\\
$\sigma_X \colon \LM(\cat{L})(X) = \coprod_i \cat{L}(1,i) \times X^i \to T(X)$
$$
\begin{array}{rcll}
\overline{\overline{\sigma}}([\kappa_i(g,w)]) &=& (T(w) \after \overline{\sigma}(g))(*)\\
&=&
(T(w) \after \sigma_i)([\kappa_i(g \after \kappa_0,\id)])\\
&=&
(T(w) \after \sigma_i)([\kappa_i(g,\id)])&(\kappa_0 \colon 1 \to 1 = \id)\\
&=&
(\sigma_X \after \LM(\cat{L})(w))([\kappa_i(g,\id)])\\
&=&
\sigma_X([\kappa_i(g,w)])
\end{array}
$$ 
\item $\overline{\overline{F}} = F$\\
$F \colon \cat{L} \to \Kl_{\NNO}(T)$. Let $f \colon n \to m$ in $\cat{L}$.
Then $\overline{\overline{F}}(f) \colon n \to T(m)$.
$$
\begin{array}{rcll}
\overline{\overline{F}}(f)(i) &=& \overline{F}_m([\kappa_m(f \after \kappa_i),\id_m])\\
&=&
(T(\id_m) \after F(f \after \kappa_i))(*)\\
&=&
(F(f) \klafter F(\kappa_i))(*)\\
&=&
(\mu \after T(F(f)) \after \eta \after \kappa_i)(*)\\
&=&
(\mu \after \eta \after F(f))(i)\\
&=&
F(f)(i)
\end{array}
$$
\item Naturality\\
For naturality consider:
$$\begin{bijectivecorrespondence}
  \correspondence[in \Mnd]{\xymatrix{\LM(\cat{K})\ar[r]^-{\LM(G)} & \LM(\cat{L}) \ar[r]^-{\sigma} & T \ar[r]^-{\tau} & S}}
  \correspondence[in \Law]{\xymatrix{\cat{K}\ar[r]_-{G} & \cat{L} \ar[r]_-{\overline{\sigma}} & \Kl_{\NNO}(T) \ar[r]_-{\Kl_{\NNO}(\tau)} & \Kl_{\NNO}(S)}}
\end{bijectivecorrespondence}$$
Let $f \colon n \to m$ in $\cat{K}$
\end{enumerate}
$$
\begin{array}{rcll}
\overline{\tau \after \sigma \after \LM(G)}(f)(i)
&=&
(\tau \after \sigma \after \LM(G))_m([\kappa_m(f \after \kappa_i^{\cat{K}},\id_m)])\\
&=&
(\tau_m \after \sigma_m)([\kappa_m(G(f \after \kappa_i^{\cat{K}}), \id_m)])\\
&=&
(\tau_m \after \sigma_m)([\kappa_m(G(f) \after \kappa_i^{\cat{L}}, \id_m)])\\
&=&
(\Kl_{\NNO}(\tau) \after \sigma_m)([\kappa_m(G(f) \after \kappa_i^{\cat{L}}, \id_m)])\\
&=&
(\Kl_{\NNO}(\tau) \after \overline{\sigma})(G(f))(i)\\
&=&
(\Kl_{\NNO}(\tau) \after \overline{\sigma} \after G)(f)(i)\\
\end{array}
$$
}

Finally, we consider the right-hand side of Figure \ref{MonoidTriangleFig}.
For each category $\cat{C}$ and object $X$ in $\cat{C}$, the homset
$\cat{C}(X,X)$ is a monoid, where multiplication is given by
composition with the identity as unit. The mapping $\cat{L} \mapsto
\mathcal{H}(\cat{L}) = \cat{L}(1,1)$, defines a functor
$\Law \to \Mon$. This functor is right adjoint to the
composite functor $\Kl_{\NNO} \circ \mathcal{A}$.

\begin{lemma}
\label{AdjCatMonLem}
The pair of functors $\Kl_{\NNO} \after \mathcal{A} \colon
\Mon \rightleftarrows \Law \colon \mathcal{H}$ forms an
adjunction $\Kl_{\NNO} \after \mathcal{A} \dashv \mathcal{H}$, as
on the right in Figure~\ref{MonoidTriangleFig}.
\end{lemma}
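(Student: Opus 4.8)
The plan is to establish, naturally in the monoid $M$ and the Lawvere theory $\cat{L}$, a bijective correspondence
$$
\frac{\Kl_{\NNO}(\mathcal{A}(M)) \xrightarrow{\ F\ } \cat{L} \ \text{ in } \Law}
     {M \xrightarrow{\ f\ } \cat{L}(1,1) = \mathcal{H}(\cat{L}) \ \text{ in } \Mon}
$$
in the same spirit as the proofs of Lemmas~\ref{AdjMndMonLem} and~\ref{AdjMndLvTLem}. The first thing to record is a concrete description of the domain: in $\Kl_{\NNO}(\mathcal{A}(M))$ a morphism $\phi\colon n\to m$ is a function $\underline{n}\to M\times\underline{m}$, hence amounts to an underlying function $h\colon\underline{n}\to\underline{m}$ together with an $n$-tuple of scalars $s_{0},\dots,s_{n-1}\in M$, via $\phi(i)=(s_{i},h(i))$. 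Writing $J\colon\aleph_{0}\hookrightarrow\Kl_{\NNO}(\mathcal{A}(M))$ for the canonical inclusion and, for $s\in M$, $\langle s\rangle\colon 1\to 1$ for the ``scalar'' endomap $0\mapsto(s,0)$, one checks that $\phi=J(h)\klafter\big(\langle s_{0}\rangle+\dots+\langle s_{n-1}\rangle\big)$, using the coproducts of $\Kl_{\NNO}(\mathcal{A}(M))$ supplied by Lemma~\ref{KleisliStructLem}; moreover $\langle 1\rangle=\idmap_{1}$ and composition of scalar endomaps recovers the multiplication of $M$ (up to order).

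In the forward direction, given $F$ one notes that, since it is the identity on objects and strictly preserves finite coproducts, $F\after J$ is forced to be the structure functor $\aleph_{0}\to\cat{L}$, so by the displayed decomposition $F$ is completely determined by its restriction to endomaps of $1$, and this restriction is a monoid homomorphism $\overline{F}\colon M\to\cat{L}(1,1)$. Conversely, given $f\colon M\to\cat{L}(1,1)$, let $\overline{f}$ be the identity on objects and send $\phi\colon n\to m$ with data $(h,(s_{i})_{i})$ to $\widehat{h}\after\big(f(s_{0})+\dots+f(s_{n-1})\big)$, where $\widehat{h}$ is the image of $h$ under $\aleph_{0}\hookrightarrow\cat{L}$ and $+$ is the coproduct in $\cat{L}$. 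One then has to check that $\overline{f}$ is a functor that is the identity on objects and strictly preserves coproducts, that $\overline{\overline{F}}=F$ and $\overline{\overline{f}}=f$ (both immediate from the constructions), and that both directions are natural in $M$ and in $\cat{L}$.

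Almost all of this is bookkeeping; the single step with genuine content is the verification that $\overline{f}$ respects Kleisli composition. This comes down to the computation that, for $\phi\colon n\to m$ with data $(h,(s_{i})_{i})$ and $\psi\colon m\to k$ with data $(k',(t_{j})_{j})$, the composite $\psi\klafter\phi\colon n\to k$ has underlying function $k'\after h$ and scalar tuple $\big(s_{i}\cdot t_{h(i)}\big)_{i}$, together with the evident compatibilities of cotupling and coproducts of morphisms in $\cat{L}$ and the multiplicativity of $f$. I expect this — keeping track of how scalars get multiplied along a composite and reassembled through the coproducts — to be the main (though still routine) obstacle; everything else is unwinding of definitions. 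One should also pin down the monoid structure on $\cat{L}(1,1)$ consistently with the order of Kleisli composition, i.e.\ up to the standard opposite-monoid bookkeeping, which is harmless and does not affect the adjunction. Finally, one might hope to obtain the statement by composing the already-established adjunctions $\mathcal{A}\dashv\Ev$ and $\LM\dashv\Kl_{\NNO}$ with the identifications $\mathcal{H}\after\Kl_{\NNO}\cong\Ev$ and $\mathcal{H}\cong\Ev\after\LM$; but since $\Kl_{\NNO}$ enters here as a right adjoint rather than a left one, that shortcut does not go through directly, so the explicit correspondence above is the cleaner route.
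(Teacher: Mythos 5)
Your proposal is correct and follows essentially the same route as the paper: the same bijective correspondence, with $\overline{F}(s)=F(\langle s\rangle)$ and $\overline{f}(\phi)=\widehat{h}\after(f(s_0)+\cdots+f(s_{n-1}))=[\kappa_{h(i)}\after f(s_i)]_{i<n}$, which is exactly the paper's cotuple formula. Your decomposition $\phi=J(h)\klafter(\langle s_0\rangle+\cdots+\langle s_{n-1}\rangle)$, the identification of the scalar-multiplication bookkeeping in the composition check as the only step with content, and the observation that the triangle-composition shortcut is unavailable here are all accurate.
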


\begin{proof}
For a monoid $M$ and a Lawvere theory $\cat{L}$ there are
(natural) bijective correspondences:
$$\begin{bijectivecorrespondence}
  \correspondence[in \Law]{\xymatrix{\Kl_{\NNO}\mathcal{A}(M)\ar[r]^-{F} & \cat{L}}}
  \correspondence[in \Mon]{\xymatrix{M\ar[r]_-{f} & \mathcal{H}(\cat{L})}}
\end{bijectivecorrespondence}$$

\noindent Given $F$ one defines a monoid map $\overline{F}
\colon M\rightarrow \mathcal{H}(\cat{L}) = \cat{L}(1,1)$ by 
$$s \mapsto F(1 \xrightarrow{\tuple{\lam{x}{s}}{!}} M \times 1).$$ 
Note that $1 \xrightarrow{\tuple{\lam{x}{s}}{!}} M \times 1 =
\mathcal{A}(M)(1)$ is an endomap on $1$ in
$\Kl_{\NNO}\mathcal{A}(M)$. Since $F$ is the identity on objects it
sends this endomap to an element of $\cat{L}(1,1)$.

Conversely, given a monoid map $f\colon M\rightarrow \cat{L}(1,1)$ one
defines a \Law-map $\overline{f} \colon \Kl_{\NNO}\mathcal{A}(M)
\rightarrow \cat{L}$. It is the identity on objects and sends a
morphism $h \colon n \to m$ in $\Kl_{\NNO}\mathcal{A}(M)$,
\textit{i.e.} $h \colon n \to M \times m$ in $\Sets$, to the morphism
$$\xymatrix@C+1pc{
\overline{f}(h) = \Big(n 
   \ar[rr]^-{\big[\kappa_{h_{2}(i)} \after f(h_{1}(i))\big]_{i < n}} 
   &&m \Big)}.
$$

\noindent Here we write $h(i)\in M\times m$ as pair $(h_{1}(i),
h_{2}(i))$.  We leave further details to the reader. \QED
\end{proof}

\auxproof{
\begin{enumerate}
\item $\overline{F} \colon M \to \cat{L}(1,1)$ is monoid morphism.\\
For $s \in M$, define $\underline{s} = 1 \xrightarrow{\tuple\lam{x}{s}{!}} M \times 1$.
$$
\begin{array}{rcll}
\overline{F}(s) \cdot \overline{F}(t) &=& F(\underline{s}) \cdot F(\underline{t})\\
&=&
F(\underline{t}) \after F(\underline{s})\\
&=&
F(\underline{t} \after \underline{s})\\
&=&
F(\mu \after (\id \cdot \underline{t}) \after \underline{s})\\
&=&
F(\underline{s \cdot t}) = \overline{F}(s \cdot t).
\end{array}
$$
and 
$$
\overline{F}(1) = F(\underline{1}) = F(\id_1) = \id_{F(1)} = \id_I.
$$
\item $\overline{f}\colon (\Kl_{\NNO} \after \mathcal{A})(M) \to \cat{L}$ is a morphism in \Law.

First we show that $\overline{f}$ is a functor.\\
Let $h \colon n \to M\times m$ and $g \colon m \to M \times p$ be morphisms in $(\Kl_{\NNO} \after \mathcal{A})(M)$. 
$$
\begin{array}{rcl}
g \klafter h &=& n \xrightarrow{h} M \times m \xrightarrow{\id \times g} M \times (M \times p) \xrightarrow{\mu} M \times p\\
&=&
i \mapsto (h_1(i),h_2(i)) \mapsto (h_1(i), (g_1(h_2(i)), g_2(h_2(i)))) \mapsto (h_1(i) \cdot g_1(h_2(i)), g_2(h_2(i)))
\end{array}
$$
Using this, we see that
$$
\begin{array}{rcl}
\overline{f}(g) \after \overline{f}(h) &=& \big[\kappa_{g_{2}(j)} \after f(g_{1}(j))\big]_{j \in m}\after \big[\kappa_{h_{2}(i)} \after f(h_{1}(i))\big]_{i \in n}\\
&=&
\big[\big[\kappa_{g_{2}(j)} \after f(g_{1}(j))\big]_{j \in m}\after \kappa_{h_{2}(i)} \after f(h_{1}(i))\big]_{i \in n}\\
&=&
\big[\kappa_{g_{2}(h_2(i))} \after f(g_{1}(h_2(i))) \after f(h_{1}(i))\big]_{i \in n}\\
&=&
\big[\kappa_{g_{2}(h_2(i))} \after f(h_{1}(i)\cdot g_{1}(h_2(i)))\big]_{i \in n}\\
&=&
\big[\kappa_{(g\after h)_2(i)} \after f((g\after h)_2(i))\big]_{i \in n}\\
\end{array}
$$
$\overline{f}$ also preserves the identity: $\id \colon n \to M \times n, i \mapsto (1,i)$ and therefore
$$
\begin{array}{rcl}
\overline{f}(\id_n) &=& \big[\kappa_{\id_{2}(i)} \after f(\id_{1}(i))\big]_{i \in n}\\
&=&
\big[\kappa_i \after f(1)\big]_{i \in n}\\
&=&
\big[\kappa_i \after \id\big]_{i \in n}\\
&=&
\id
\end{array}
$$
So $\overline{f}$ is a functor.

$\overline{f}$ preserves the coproduct structure as the canonical map
$$
\overline{f}(n) + \overline{f}(m) \xrightarrow{\tuple{\overline{f}(\kappa_1)}{\overline{f}(\kappa_2)}} \overline{f}(n+m)
$$
is the identity map, and therefore certainly an isomorphism.

\item $\overline{\overline{f}} = f$
$$
\overline{\overline{f}}(s) = \overline{f}(\underline{s}) = f(\underline{s}_1(*)) = f(s)
$$

\item $\overline{\overline{F}}=F$
It is clear that $\overline{\overline{F}}=F$ on objects. Now let $h \colon n \to M \times m$.
$$
\begin{array}{rcll}
\overline{\overline{F}}(h) &=& \big[\kappa_{h_{2}(i)} \after \overline{F}(h_{1}(i))\big]_{i \in n}\\
&=&
\big[\kappa_{h_{2}(i)} \after F(\underline{h_{1}(i)})\big]_{i \in n}\\
&=&
\big[F(\kappa_{h_{2}(i)}) \after F(\underline{h_{1}(i)})\big]_{i \in n}&\text{$F$ preserves coproducts}\\
&=&
\big[F(\kappa_{h_{2}(i)} \after \underline{h_{1}(i)})\big]_{i \in n}\\
&=&
\big[F(h \after \kappa_i)\big]_{i \in n}&\text{(1)}\\
&=&
F(h)
\end{array}
$$
where (1) relies on the fact that
$$
\begin{array}{rcl}
\kappa_{h_{2}(i)} \after \underline{h_{1}(i)} &=& 1 \xrightarrow{\underline{h_{1}(i)}} M\times `1 \xrightarrow{\id \times \kappa_{h_{2}(i)}} M \times (M \times m) \xrightarrow{\mu} M \times m \\
&&
* \mapsto (h_1(i),*) \mapsto (h_1(i),(1,h_2(i))) \mapsto (h_1(i),h_2(i)) = h(i)
\end{array}
$$

\item Naturality\\
Consider:
$$\begin{bijectivecorrespondence}
  \correspondence[in \Law]{\xymatrix{\Kl_{\NNO}\mathcal{A}(N)\ar[r]^-{\Kl_{\NNO}\mathcal{A}(f)} & \Kl_{\NNO}\mathcal{A}(M) \ar[r]^-{F} & \cat{L} \ar[r]^-{G} & \cat{K}}}
  \correspondence[in \Mon]{\xymatrix{N\ar[r]_-{f} & M \ar[r]_-{\overline{F}} & \cat{L}(1,1) \ar[r]_-{\mathcal{H}(G)} & \cat{K}(1,1)}}
\end{bijectivecorrespondence}$$
To prove: $\overline{G \after F \after \Kl_{\NNO}\mathcal{A}(f)} = \mathcal{H}(G) \after \overline{F} \after f$.
$$
\begin{array}{rcll}
\overline{G \after F \after \Kl_{\NNO}\mathcal{A}(f)}(s) 
&=&
(G \after F \after \Kl_{\NNO}\mathcal{A}(f))(\underline{s})\\
&=&
(G \after F)((f \times \id) \after \underline{s}) &\text{definition $\Kl_{\NNO}\mathcal{A}$}\\
&=&
(G \after F)(\underline{f(s)})\\
&=&
(\mathcal{H}(G)\after \overline{F}\after f)(s)
\end{array}
$$
\end{enumerate}
}

Given a monad $T$ on $\Sets$, $\mathcal{H}\Kl_{\NNO}(T) =
\Kl_{\NNO}(T)(1,1) = \Sets(1, T(1))$ is a monoid, where the multiplication is
given by
$$
(1\xrightarrow{a} T(1))\cdot (1 \xrightarrow{b} T(1)) = 
\big( 1 \xrightarrow{a} T(1) \xrightarrow{T(b)} T^2(1) 
\xrightarrow{\mu} T(1)\big).
$$

\noindent The functor $\Ev: \Mnd(\cat{C}) \to \Mon(\cat{C})$, defined
in Lemma \ref{Mnd2MonLem} also gives a multiplication on
$\Sets(1,T(1)) \cong T(1)$, namely $\mu \after T(\pi_2) \after \st
\colon T(1) \times T(1) \to T(1)$. These two multiplications coincide
as is demonstrated in the following diagram,
$$
\xymatrix@C+1pc{
	1 \ar[r]_-{a}\ar @/^4ex/ [rrr]^-{\tuple{a}{b}} & 
   T(1) \ar[r]^-{\rho^{-1}} \ar[rd]^{T(\rho^{-1})} 
  \ar@/_{6ex}/[ddrr]_{T(b)} & 
  T(1) \times 1 \ar[r]_-{\idmap \times b} \ar[d]^{\st} & 
  T(1) \times T(1) \ar[d]^{\st}\ar @/^2ex/ [ddr]^{\cdot} & \\
	&& T(1 \times 1) \ar[r]_-{T(\idmap \times b)} & T(1 \times T(1)) 
   \ar[d]^{T(\lambda)} & \\
	&&&T^2(1) \ar[r]^{\mu}&T(1)
}
$$	

\noindent In fact, $\Ev \cong \mathcal{H}\Kl_{\NNO}$, which completes
the picture from Figure \ref{MonoidTriangleFig}.

\subsection{Commutative monoids}\label{ComMonoidSubsec}

In this subsection we briefly summarize what will change in the triangle
in Figure~\ref{MonoidTriangleFig} when we restrict ourselves to
commutative monoids (at the top). This will lead to commutative
monads, and to tensor products. The latter are induced by
Lemma~\ref{KleisliStructLem}. The new situation is described in
Figure~\ref{ComMonoidTriangleFig}. For the adjunction between
commutative monoids and commutative monads we start with the following
basic result.

\begin{lemma}
\label{CMnd2CMonLem}
Let $T$ be a commutative monad on a category \Cat{C} with finite
products. The monoid $\Ev(T) = T(1)$ in $\Cat{C}$ from
Lemma~\ref{Mnd2MonLem} is then commutative.
\end{lemma}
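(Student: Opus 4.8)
The plan is to work with the explicit monoid structure on $T(1)$ from Lemma~\ref{Mnd2MonLem}, where the multiplication is $m = \mu \after T(\pi_2) \after \st \colon T(1)\times T(1) \to T(1)$ (with unit $\eta_1$), together with the ``opposite'' operation $m' = \mu \after T(\pi_1) \after \st'$ mentioned in the remark following that lemma. First I would record that $m' = m \after \gamma$, where $\gamma\colon T(1)\times T(1)\congrightarrow T(1)\times T(1)$ is the swap: using $\st' = T(\gamma)\after\st\after\gamma$ and $\pi_1\after\gamma = \pi_2$ one gets $m' = \mu \after T(\pi_1)\after T(\gamma)\after\st\after\gamma = \mu\after T(\pi_2)\after\st\after\gamma = m\after\gamma$. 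Hence commutativity of the monoid $T(1)$ is exactly the equation $m = m'$, and that is what I would prove.

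To prove $m = m'$ I would factor each of $m$ and $m'$ through the two composites $T(1)\times T(1)\rightrightarrows T(1\times 1)$, call them $d_{\ell} = \mu\after T(\st')\after\st$ and $d_{r} = \mu\after T(\st)\after\st'$, that appear in the definition of a commutative monad (instantiated at $X=Y=1$). The key input is the strength unit laws: in the cartesian case the $\rho$-law from the preliminaries reads $T(\pi_1)\after\st = \pi_1$, and its swapped counterpart (derived from $\st' = T(\gamma)\after\st\after\gamma$) reads $T(\pi_2)\after\st' = \pi_2$; applied with the terminal object $1$ these let me rewrite the projection $\pi_2\colon 1\times T(1)\to T(1)$ occurring in $m$ as $T(\pi)\after\st'$ and the projection $\pi_1\colon T(1)\times 1\to T(1)$ occurring in $m'$ as $T(\pi)\after\st$, where $\pi\colon 1\times 1\to 1$ is the unique map to the terminal object (so $\pi=\pi_1=\pi_2$ there, and there is really only one map $T(\pi)$). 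Substituting and pushing $\mu$ past $T^2(\pi)$ by naturality then yields
$$
m \;=\; T(\pi)\after\mu\after T(\st')\after\st \;=\; T(\pi)\after d_{\ell}
\qquad\text{and}\qquad
m' \;=\; T(\pi)\after\mu\after T(\st)\after\st' \;=\; T(\pi)\after d_{r}.
$$
Since $T$ is a commutative monad we have $d_{\ell} = d_{r}$ (both are the map $\dst$), hence $m = T(\pi)\after d_{\ell} = T(\pi)\after d_{r} = m'$, so $T(1)$ is commutative.

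The argument is short once set up; the only real obstacle is bookkeeping. One must track which strength ($\st$ or $\st'$) and which projection (out of $1\times 1$, out of $1\times T(1)$, or out of $T(1)\times 1$) appears at each step, check that the instance $\st_{X,T(Y)}$ versus $\st_{X,Y}$ matches the one in the definition of commutativity, and note that the swapped strength unit law (the $\lambda$-law for $\st'$) is not stated verbatim in the preliminaries but follows from the stated $\rho$-law and the symmetry $\gamma$. Beyond that, the verification uses only naturality of $\mu$, these unit coherences, and the defining equation $d_{\ell}=d_{r}$ of a commutative monad.
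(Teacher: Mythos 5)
Your proof is correct and follows essentially the same route as the paper's: both insert the strength unit laws $T(\rho)\after\st=\rho$ and the derived $T(\lambda)\after\st'=\lambda$ at the terminal object, use naturality of $\mu$, and invoke the commutativity equation $\mu\after T(\st')\after\st=\mu\after T(\st)\after\st'$. The paper packages this as one chain of equalities showing $m\after\gamma=m$, while you factor $m$ and $m'$ separately as $T(\pi)$ postcomposed with the two sides of that equation; the mathematical content is identical, and your bookkeeping of the various strength instances checks out.
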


\begin{proof}
  Recall that the multiplication on $T(1)$ is given by $\mu \after
  T(\lambda) \after \st \colon T(1) \times T(1) \to T(1)$ and
  commutativity of the monad $T$ means $\mu \after T(\st') \after \st
  = \mu \after T(\st) \after \st'$ where $\st' = T(\gamma) \after \st
  \after \gamma$, for the swap map $\gamma$, see
  Section~\ref{PrelimSec}. Then:
$$\hspace*{-2em}\begin{array}[b]{rcl}
\mu \after T(\lambda) \after \st \after \gamma
& = &
\mu \after T(T(\lambda) \after \st') \after \st \after \gamma \\
& = &
T(\lambda) \after \mu \after T(\st') \after \st \after \gamma \\
& = &
T(\rho) \after \mu \after T(\st) \after \st' \after \gamma 
   \quad\mbox{by commutativity of \rlap{$T$,}}\\
& & \qquad\mbox{and because $\rho = \lambda
   \colon 1\times 1\rightarrow 1$} \\
& = &
\mu \after T(T(\rho) \after \st \after \gamma) \after \st \\
& = &
\mu \after T(\rho \after \gamma) \after \st \\
& = &
\mu \after T(\lambda) \after \st.
\end{array}\eqno{\QEDbox}$$
\end{proof}

The proof of the next result is easy and left to the reader.

\begin{lemma}
\label{CMon2CMndLem}
A monoid $M$ is commutative (Abelian) if and only if the associated
monad $\mathcal{A}(M) = M\times (-)\colon \Sets\rightarrow\Sets$ is
commutative (as described in Section~\ref{PrelimSec}). \QED
\end{lemma}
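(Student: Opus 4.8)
The plan is to make everything fully explicit for the action monad $T = \mathcal{A}(M) = M\times(-)$ and to reduce the definition of commutativity of a monad to the identity $s\cdot t = t\cdot s$ in $M$. First I would recall from the proof of Lemma~\ref{Mon2MndLem} that the canonical strength of $T$ is $\st\colon (M\times X)\times Y \to M\times(X\times Y)$ with $\st((s,x),y) = (s,(x,y))$, that $\mu(s,(t,z)) = (s\cdot t, z)$ and $\eta(x) = (1,x)$. Then, following Section~\ref{PrelimSec}, I would compute the swapped strength $\st' = T(\gamma)\after\st\after\gamma$, which works out to $\st'\colon X\times(M\times Y)\to M\times(X\times Y)$ with $\st'(x,(t,y)) = (t,(x,y))$.

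Next I would compute the two composites $T(X)\otimes T(Y)\rightrightarrows T(X\otimes Y)$ whose equality \emph{is} the definition of commutativity; concretely these are maps $(M\times X)\times(M\times Y)\rightrightarrows M\times(X\times Y)$. A direct substitution chase gives, for $s,t\in M$, $x\in X$, $y\in Y$,
$$\big(\mu \after T(\st') \after \st\big)\big((s,x),(t,y)\big) = (s\cdot t,(x,y)), \qquad \big(\mu \after T(\st) \after \st'\big)\big((s,x),(t,y)\big) = (t\cdot s,(x,y)).$$

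Finally I would conclude in both directions. If $M$ is commutative, then $s\cdot t = t\cdot s$ always, so the two right-hand sides above agree for all $X,Y$, hence $\mathcal{A}(M)$ is a commutative monad. Conversely, if $\mathcal{A}(M)$ is commutative, I would instantiate the (equal) natural transformations at $X = Y = 1$ and use $M\times 1\cong M$: the two maps above then become $(s,t)\mapsto s\cdot t$ and $(s,t)\mapsto t\cdot s$ on all of $M\times M$, so their equality forces $s\cdot t = t\cdot s$ for every $s,t\in M$, i.e.\ $M$ is Abelian. There is no real obstacle here: the argument is a short computation, and the only subtlety is that the reverse implication needs a nonempty test object, which is exactly what the singleton $1$ supplies (equivalently, one simply reads commutativity off the component of the two transformations at $(1,1)$).
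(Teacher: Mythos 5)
Your computation is correct and is exactly the argument the paper intends (the paper leaves this proof to the reader as "easy"): the two composites evaluate to $(s\cdot t,(x,y))$ and $(t\cdot s,(x,y))$, and instantiating at $X=Y=1$ gives the converse. The remark about needing a nonempty test object is a nice touch, though unobjectionable here since $1$ always works.
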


Next, we wish to define an appropriate category \SMLaw of Lawvere
theories with symmetric monoidal structure $(\otimes, I)$. In order to
do so we need to take a closer look at the category $\aleph_0$
described in the introduction. Recall that $\aleph_0$ has $n\in\NNO$
as objects whilst morphisms $n\rightarrow m$ are functions
$\underline{n} \rightarrow \underline{m}$ in \Sets, where, as
described earlier $\underline{n} = \{0,1,\ldots,n-1\}$. This category
$\aleph_0$ has a monoidal structure, given on objects by
multiplication $n\times m$ of natural numbers, with $1\in\NNO$ as
tensor unit. Functoriality involves a (chosen) coordinatisation, in
the following way. For $f\colon n\rightarrow p$ and $g\colon
m\rightarrow q$ in $\aleph_{0}$ one obtains $f\otimes g \colon n\times
m\rightarrow p\times q$ as a function:
$$\begin{array}{rcl}
f\otimes g
& = &
\coo_{p,q}^{-1} \after (f\times g) \after \coo_{n,m}
   \;\colon\; \underline{n\times m}\longrightarrow \underline{p\times q},
\end{array}$$

\noindent where $\coo$ is a coordinatisation function
$$\xymatrix@C+.5pc{
\underline{n\times m} = \{0,\ldots, (n\times m)-1\}
   \ar[r]^-{\textsf{co}_{n,m}}_-{\cong} &
   \{0,\ldots, n-1\} \times \{0,\ldots, m-1\} = 
   \underline{n}\times\underline{m},
}$$

\noindent given by
\begin{equation}
\label{CooEqn}
\begin{array}{rcl}
\coo(c) = (a,b)
& \Leftrightarrow &
c = a\cdot m + b.
\end{array}
\end{equation}

\noindent We may write the inverse $\coo^{-1} \colon \overline{n}
\times\overline{m} \rightarrow \overline{n\times m}$ as a small
tensor, as in $a\sotimes b = \coo^{-1}(a,b)$. Then: $(f\otimes
b)(a\sotimes b) = f(a)\sotimes g(b)$. The monoidal isomorphisms in
$\aleph_0$ are then obtained from $\Sets$, as in
$$\xymatrix{
\gamma^{\aleph_0} = \Big(\underline{n\times m}\ar[r]^-{\coo} &
   \underline{n}\times\underline{m}\ar[r]^-{\gamma^{\Sets}} &
   \underline{m}\times\underline{n}\ar[r]^-{\coo^{-1}} & 
   \underline{m\times n}\Big).
}$$

\noindent Thus $\gamma^{\aleph_0}(a\sotimes b) = b\sotimes
a$. Similarly, the associativity map $\alpha^{\aleph_0} \colon n\otimes
(m\otimes k) \rightarrow (n\otimes m)\otimes k$ is determined as
$\alpha^{\aleph_0}(a\sotimes (b\sotimes c)) = (a\sotimes b)\sotimes
c$.  The maps $\rho\colon n\times 1\rightarrow n$ in $\aleph_0$ are
identities.

\auxproof{
Formally,
$$\begin{array}{rcl}
\alpha^{\aleph_0}
& = &
\coo_{n\times m,k}^{-1} \after (\coo_{n,m}^{-1}\times\idmap) \after
   \alpha^{\Sets} \after (\idmap{}\times\coo_{m,k})\after \coo_{n,m\times k}.
\end{array}$$
}

This tensor $\otimes$ on $\aleph_0$ distributes over sum: the
canonical distributivity map $(n\otimes m)+(n\otimes k) \rightarrow
n\otimes (m+k)$ is an isomorphism. Its inverse maps $a\sotimes b \in
n\otimes (m+k)$ to $a\sotimes b \in \underline{n\times m}$ if $b<m$,
and to $a\sotimes (b-m)\in\underline{n\times k}$ otherwise. 

We thus define the objects of the category \SMLaw to be symmetric
monoidal Lawvere theories $\cat{L}\in\Law$ for which the map
$\aleph_{0}\rightarrow\cat{L}$ strictly preserves the monoidal
structure that has just been described via multiplication $(\times,
1)$ of natural numbers; additionally the coproduct structure must be
preserved, as in \Law. Morphisms in \SMLaw are morphisms in \Law that
strictly preserve this tensor structure. We note that for
$\cat{L}\in\SMLaw$ we have a distributivity $n\otimes m + n\otimes k
\congrightarrow n\otimes (m+k)$, since this isomorphism lies in the
range of the functor $\aleph_{0}\rightarrow\cat{L}$.

By Lemma~\ref{KleisliStructLem} we know that the Kleisli category
$\Kl(T)$ is symmetric monoidal if $T$ is commutative. In order to see
that also the finitary Kleisli category $\Kl_{\NNO}(T)\in\Law$ is
symmetric monoidal, we have to use the coordinatisation map described
in~\eqref{CooEqn}. For $f\colon n\rightarrow p$ and $g\colon m
\rightarrow q$ in $\Kl_{\NNO}(T)$ we then obtain $f\otimes g \colon
n\times m\rightarrow p\times q$ as
$$\xymatrix@C-.3pc{
f\otimes g = \Big(\underline{n\times m}\ar[r]^-{\coo} &
   \underline{n}\times\underline{m}\ar[r]^-{f\times g} &
   T(\underline{p})\times T(\underline{q})\ar[r]^-{\dst} &
   T(\underline{p}\times\underline{q})\ar[r]^-{T(\coo^{-1})} &
   T(\underline{p\times q})\Big).
}$$

We recall from~\cite{KellyL80} (see
also~\cite{AbramskyC04,AbramskyC09}) that for a monoidal category
$\cat{C}$ the homset $\cat{C}(I,I)$ of endomaps on the tensor unit
forms a commutative monoid. This applies in particular to Lawvere
theories $\cat{L}\in\SMLaw$, and yields a functor $\mathcal{H}\colon
\SMLaw \rightarrow \Mon$ given by $\mathcal{H}(\cat{L}) =
\cat{L}(1,1)$, where $1\in\cat{L}$ is the tensor unit. Thus we almost
have a triangle of adjunctions as in
Figure~\ref{ComMonoidTriangleFig}.  We only need to check the
following result.

\auxproof{ Starting from the middle $I$ on the left, $t\after s\colon
  I\rightarrow I$ is the upper path $t \after \rho^{-1} \after \lambda
  \after s$ to the middle $I$ on the right. Similarly $s\after t$ is
  the lower path.
$$\xymatrix@C13ex@R-1ex{
    I \ar_-{\lambda}^-{\cong}[r] & I \otimes I \ar@{=}[r] & I \otimes
    I \ar^-{\idmap \otimes t}[d] \ar^-{\cong}_-{\rho^{-1}}[r] & I \ar^-{t}[d] \\
    I \ar^-{\cong}_-{\lambda=\rho}[r] \ar^-{s}[u] \ar_-{t}[d] & I
    \otimes I \ar^-{s \otimes \idmap}[u] 
    \ar_-{\idmap \otimes t}[d] \ar^-{s \otimes t}[r] & I \otimes I
    \ar^-{\cong}_-{\lambda^{-1}=\rho^{-1}}[r] & I \\
    I \ar_-{\rho}^-{\cong}[r] & I \otimes I \ar@{=}[r] & I \otimes I
    \ar_-{s \otimes \idmap}[u] \ar^-{\cong}_-{\lambda^{-1}}[r] & I. \ar_-{s}[u]
  }$$
}

\begin{lemma}
\label{LMCommLem}
The functor $\LM\colon\Law\rightarrow\Mnd$ defined in~\eqref{LMEqn}
restricts to $\SMLaw \rightarrow \CMnd$. Further, this restriction is
left adjoint to $\Kl_{\NNO}\colon \CMnd\rightarrow \SMLaw$.
\end{lemma}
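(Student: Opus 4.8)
The plan is to prove the two assertions in turn, and to reduce the adjunction claim to the restriction claim together with two easy ``fullness'' observations. First I would show $\LM(\cat{L}) = T_{\cat{L}}$ is commutative when $\cat{L}\in\SMLaw$, by writing down the double strength explicitly. Equip $T_{\cat{L}}$ with the canonical strength of Lemma~\ref{SetsStrengthLem}, so that $\st([\kappa_i(g,v)],y) = [\kappa_i(g,\lam{a}{(v(a),y)})]$ and dually for $\st'$. I claim that
$\dst([\kappa_i(g,v)],[\kappa_j(h,w)]) = [\kappa_{i\cdot j}(g\otimes h,\, v\sotimes w)]$,
where $g\otimes h\colon 1\rightarrow i\cdot j$ is the tensor of $\cat{L}$ (using $1\otimes 1 = 1$ and $i\otimes j = i\cdot j$) and $v\sotimes w = (v\times w)\after\coo$, i.e.\ $(v\sotimes w)(a\sotimes b) = (v(a),w(b))$. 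To see that this is $\mu\after T_{\cat{L}}(\st')\after\st$, I would unfold that composite and apply the multiplication formula of Lemma~\ref{GLaw2MndLem} (all inner operations equal $h\colon 1\rightarrow j$); the result is $[\kappa_{i\cdot j}((h+\cdots+h)\after g,\, v\sotimes w)]$, so the point is the identity $(h+\cdots+h)\after g = g\otimes h$, which follows from bifunctoriality of $\otimes$ together with the fact that $\idmap_i\otimes h$ is the $i$-fold copairing $h+\cdots+h$ (restrict along each coprojection $\kappa_a\colon 1\rightarrow i$ and use $\kappa_a\otimes\idmap_j$). For the other composite $\mu\after T_{\cat{L}}(\st)\after\st'$ a symmetric computation produces $[\kappa_{j\cdot i}(h\otimes g,\, (v\sotimes w)\after\gamma^{\aleph_0}_{j,i})]$; here I would invoke naturality of the symmetry $\gamma$ of $\cat{L}$ at $g$ and $h$, together with $\gamma^{\cat{L}}_{1,1} = \idmap_1$, to get $\gamma^{\aleph_0}_{i,j}\after(g\otimes h) = h\otimes g$, and then apply the defining relation of $T_{\cat{L}}$ for the $\aleph_0$-map $\gamma^{\aleph_0}_{i,j}\colon i\cdot j\rightarrow j\cdot i$ to rewrite this as $[\kappa_{i\cdot j}(g\otimes h,\, v\sotimes w)]$. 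Hence the two composites coincide, $T_{\cat{L}}$ is commutative with $\dst$ as displayed, and since a $\SMLaw$-morphism $F$ preserves $\otimes$ the monad map $\LM(F)$ preserves $\dst$; thus $\LM$ restricts to $\SMLaw\rightarrow\CMnd$.

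For the adjunction I would first record that $\Kl_{\NNO}$ restricts to $\CMnd\rightarrow\SMLaw$: by Lemma~\ref{KleisliStructLem}(ii) and the coordinatised tensor $f\otimes g = T(\coo^{-1})\after\dst\after(f\times g)\after\coo$ described before that lemma, $\Kl_{\NNO}(T)$ is a symmetric monoidal Lawvere theory when $T$ is commutative, the inclusion $\aleph_0\hookrightarrow\Kl_{\NNO}(T)$ is strictly monoidal because $\coo$ and the structure isomorphisms of $\aleph_0$ are reproduced on the nose, and $\Kl_{\NNO}(\sigma)$ preserves $\otimes$ since $\sigma$ preserves $\dst$. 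It then remains to see that the natural bijection $\sigma\leftrightarrow\overline{\sigma}$ of Lemma~\ref{AdjMndLvTLem} restricts to a bijection between $\dst$-preserving monad maps $\LM(\cat{L})\rightarrow T$ and tensor-preserving $\Law$-maps $\cat{L}\rightarrow\Kl_{\NNO}(T)$; using the formula for $\dst$ on $\LM(\cat{L})$ just obtained, one checks that $\overline{\sigma}$ preserves $\otimes$ exactly when $\sigma$ does. In fact this last check can be bypassed: a $\Law$-morphism between symmetric monoidal Lawvere theories automatically preserves the coordinatised tensor, since on morphisms that tensor is assembled from the $\aleph_0$-structure, composition and cotupling (all preserved by $\Law$-maps), and a monad map on $\Sets$ between commutative monads automatically preserves $\dst$, being compatible with the canonical strength by Lemma~\ref{SetsStrengthLem}. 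Hence $\SMLaw$ and $\CMnd$ are full subcategories of $\Law$ and $\Mnd$, closed under $\LM$ and $\Kl_{\NNO}$ by the first paragraph, so the adjunction $\LM\dashv\Kl_{\NNO}$ of Lemma~\ref{AdjMndLvTLem} restricts to them automatically.

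The main obstacle is the first paragraph: pinning down the correct formula for $\dst$ and then carrying out the coordinatisation bookkeeping that identifies $(h+\cdots+h)\after g$ with $g\otimes h$ and transposes the second iterated-strength composite through $\gamma^{\aleph_0}$, keeping~\eqref{CooEqn} in hand throughout. Everything afterwards --- well-definedness of the displayed $\dst$, the monoidal-functor laws for $\overline{\sigma}$, and naturality --- is a direct but lengthy verification, which I would relegate to the reader in the style of the surrounding lemmas.
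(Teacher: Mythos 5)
Your argument for the commutativity of $T_{\cat{L}}$ is essentially the paper's own: the same explicit formula for $\dst$ in terms of $g\otimes h$ and the coordinatisation $\coo$, the same reduction of $\mu\after T_{\cat{L}}(\st')\after\st$ to the identity $(h+\cdots+h)\after g=g\otimes h$ (the paper phrases this via a ``left distributivity'' map in $\aleph_0$ being the identity, you via $\idmap_i\otimes h=h+\cdots+h$; these are the same fact), and the same use of naturality of $\gamma$ plus the defining equivalence relation of $T_{\cat{L}}$ for the second composite. Where you genuinely diverge is the adjunction-restriction step: the paper verifies by hand that the unit $\cat{L}\rightarrow\Kl_{\NNO}(\LM(\cat{L}))$ strictly preserves tensors, whereas you observe that $\SMLaw$ and $\CMnd$ are \emph{full} subcategories of $\Law$ and $\Mnd$ --- because the coordinatised tensor on morphisms is generated by the $\aleph_0$-image, cotupling and composition, and because monad maps on $\Sets$ automatically commute with the canonical strength and hence with $\dst$ --- so that the adjunction of Lemma~\ref{AdjMndLvTLem} restricts with no further computation. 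Both fullness claims are correct (the first needs the strictness of the $\aleph_0$-inherited structure, e.g.\ $\kappa_a\otimes\kappa_b=\kappa_{a\sotimes b}$ and trivial unitors, which you implicitly use; the second is exactly the last sentence of Lemma~\ref{SetsStrengthLem} combined with the definition of $\dst$), so your route buys a shorter, more conceptual finish at the price of two small structural observations the paper leaves implicit.
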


\begin{proof}
For $\cat{L}\in\SMLaw$ we define a map
$$\xymatrix@R-1.8pc{
\LM(\cat{L})(X)\times \LM(\cat{L})(Y)\ar[rr]^-{\dst} & &
   \LM(\cat{L})(X\times Y) \\
\big([\kappa_{i}(g,v)], [\kappa_{j}(h,w)]\big)\ar@{|->}[rr] & &
   [\kappa_{i\times j}(g\otimes h, (v\times w) \after \coo_{i,j})],
}$$

\noindent where $g\colon 1\rightarrow i$ and $h\colon 1\rightarrow j$
in $\cat{L}$ yield $g\otimes h\colon 1 = 1\otimes 1 \rightarrow
i\otimes j = i\times j$, and $\coo$ is the coordinatisation
function~\eqref{CooEqn}. Then one can show that both $\mu \after
\LM(\cat{L})(\st') \after \st$ and $\mu \after \LM(\cat{L})(\st)
\after \st'$ are equal to $\dst$. This makes $\LM(\cat{L})$ a
commutative monad.

In order to check that the adjunction $\LM \dashv \Kl_{\NNO}$
restricts, we only need to verify that the unit $\cat{L} \rightarrow
\Kl_{\NNO}(\LM({\cat{L}}))$ strictly preserves tensors.  This is
easy. \QED

\auxproof{
We shall use the following formulation of multiplication $\mu\colon
\LM(\cat{L})(\LM(\cat{L})(X))\rightarrow \LM(\cat{L})(X)$:
$$\begin{array}{rcl}
\mu([\kappa_{i}(g,v)])
& = &
[\kappa_{j}((g_{0}+\cdots+g_{i-1})\after g, [v_{0},\ldots, v_{i-1}])] \\
& & \qquad \mbox{where }g\colon 1\rightarrow i, \mbox{ and } 
    v\colon i\rightarrow \LM(\cat{L})(X) \mbox{ is written as} \\
& & \qquad\qquad v(a) = [\kappa_{j_{a}}(g_{a}, v_{a})], \mbox{ for }a<i, \\
& & \qquad \mbox{and } j = j_{0} + \cdots + j_{i-1}.
\end{array}$$

\noindent We note that the strength map $\st\colon \LM(\cat{L})(X)
\times Y \rightarrow \LM(\cat{L})(X\times Y)$ is given by:
$$\begin{array}{rcl}
[(\kappa_{i}(g, v), y)]
& \longmapsto &
[\kappa_{i}(g, \lam{a<i}{(g(a), y)})].
\end{array}$$

We use that the following ``left distributivity'' map is an identity
in $\aleph_0$.
\begin{equation}
\label{LeftDistrEqn}
\xymatrix@R-2pc@C-1pc{
\mathsf{ld} = \Big(i\times j = j+\cdots+j\ar[r]^-{=} &
   1\otimes j+\cdots+ 1\otimes j\ar[r]^-{\cong} &
   (1+\cdots+1)\otimes j = i\otimes j\Big)
}
\end{equation}

\noindent Thus, omitting equivalence brackets,
$$\begin{array}{rcl}
\lefteqn{\big(\mu \after \LM(\cat{L})(\st') \after \st)
   (\kappa_{i}(g,v), \kappa_{j}(h,w))} \\
& = &
\big(\mu \after \LM(\cat{L})(\st')\big)
   (\kappa_{i}(g, \lam{a<i}{\kappa_{i}(v(a), \kappa_{j}(h,w))})\big) \\
& = &
\mu\big(\kappa_{i}(g, \lam{a<i}{\st'(\kappa_{i}(v(a), \kappa_{j}(h,w)))})\big) \\
& = &
\mu\big(\kappa_{i}(g, \lam{a<i}{\kappa_{j}(h, 
   \lam{b<j}{(v(a),w(b))})})\big) \\
& = &
\kappa_{i\times j}((h+\cdots+h)\after g, [\lam{b<j}{(v(a),w(b))}]_{a<i}) \\
& = &
\kappa_{i\times j}((h+\cdots+h)\after g, (v\times w)\after \coo_{i,j}) \\
& = &
\kappa_{i\times j}(\mathsf{ld} \after (h+\cdots+h)\after g, 
   (v\times w)\after \coo_{i,j}) \\
& & \qquad \mbox{because this $\mathsf{ld}$ from~\eqref{LeftDistrEqn} 
    is the identity in $\aleph_0$} \\
& = &
\kappa_{i\times j}(g\otimes h, (v\times w)\after \coo_{i,j}) 
   \qquad \mbox{see below} \\
& = &
\dst(\kappa_{i}(g,v), \kappa_{j}(h,w)).
\end{array}$$

\noindent We still need to check:
$$\xymatrix@C+1.8pc{
1\ar[r]^-{g}\ar[dd]_{\lambda^{-1}}^{=} &
   i = 1+\cdots+1\ar[r]^-{h+\cdots+h}\ar[d]^{\lambda^{-1}+\cdots+\lambda^{-1}}
      \ar @/_12ex/ [dd]_{\lambda^{-1}} &
   j+\cdots+j\ar[d]_{\lambda^{-1}+\cdots+\lambda^{-1}}^{=}
      \ar @/^12ex/[dd]^{\mathsf{ld}} \\
& 
   1\otimes 1+\cdots+ 1\otimes 1\ar[d]^{[\kappa_{a}\otimes\idmap]_{a<i}}_{\cong}
      \ar[r]^-{\idmap\otimes h+\cdots+ \idmap{}\otimes h} & 
   1\otimes j+\cdots+ 1\otimes j\ar[d]_{[\kappa_{a}\otimes\idmap]_{a<i}}^{\cong} \\
1\otimes 1\ar[r]^-{g\otimes\idmap} & 
   (1+\cdots+1)\otimes 1\ar[r]^-{\idmap\otimes h} & 
   (1+\cdots+1)\otimes j\rlap{$\;=i\times j$}
}$$

\noindent Next, for the other composite we get:
$$\begin{array}{rcl}
\lefteqn{\big(\mu \after \LM(\cat{L})(\st) \after \st')
   (\kappa_{i}(g,v), \kappa_{j}(h,w))} \\
& = &
\big(\mu \after \LM(\cat{L})(\st)\big)
   (\kappa_{j}(h, \lam{b<j}{\kappa_{i}(g,v), w(b)})\big) \\
& = &
\mu\big(\kappa_{j}(h, \lam{b<j}{\st(\kappa_{i}(g,v), w(b))})\big) \\
& = &
\mu\big(\kappa_{j}(h, \lam{b<j}{\kappa_{i}(g, 
   \lam{a<i}{(v(a),w(b))})})\big) \\
& = &
\kappa_{j\times i}((g+\cdots+g)\after h, [\lam{a<i}{(v(a),w(b))}]_{b<j}) \\
& = &
\kappa_{j\times i}((g+\cdots+g)\after h, (v\times w) \after \gamma^{\Sets} 
   \after \coo_{j,i}) \\
& = &
\kappa_{j\times i}((g+\cdots+g)\after h, (v\times w) \after 
   \coo_{i,j} \after \coo_{i,j}^{-1}  \after \gamma^{\Sets} 
   \after \coo_{j,i}) \\
& = &
\kappa_{j\times i}((g+\cdots+g)\after h, (v\times w) \after 
   \coo_{i,j} \after \gamma^{\aleph_{0}}) \\
& = &
\kappa_{i\times j}(\gamma^{\aleph_{0}} \after (g+\cdots+g)\after h, 
   (v\times w) \after \coo_{i,j}) \\
& = &
\kappa_{i\times j}(\gamma^{\aleph_{0}} \after \mathsf{ld} \after 
   (g+\cdots+g)\after h, (v\times w) \after \coo_{i,j}) \\
& = &
\kappa_{i\times j}(\gamma^{\aleph_{0}} \after (h\otimes g), 
   (v\times w) \after \coo_{i,j}) \\
& = &
\kappa_{i\times j}((g\otimes h) \after \gamma^{\aleph_{0}},
   (v\times w) \after \coo_{i,j}) \\
& = &
\kappa_{i\times j}(g\otimes h, (v\times w) \after \coo_{i,j}) \\
& & \qquad \mbox{because $\gamma^{\aleph_{0}}\colon 1\otimes 1\rightarrow 
   1\otimes 1$ is the identity} \\
& = &
\dst(\kappa_{i}(g,v), \kappa_{j}(h,w)).
\end{array}$$

For the adjunction $\LM\colon \SMLaw \leftrightarrows \CMnd \colon
\Kl_{N}$ we only need to check that the unit $\eta\colon \cat{L}
\rightarrow \Kl_{\NNO}(\LM(\cat{L}))$ is a map in \SMLaw. On
objects it is of course the identity. It sends a morphism
$f\colon n\rightarrow p$ in \cat{L} to the map
$\eta(f)\colon n\rightarrow T_{\cat{L}}(p)$ given by:
$$\begin{array}{rcl}
\eta(f)(i)
& = &
\kappa_{p}(f\after \kappa_{i}, \idmap_{p}),
\end{array}$$

\noindent see the proof of Lemma~\ref{AdjMndLvTLem}. We need to check
that it preserves tensor. For an additional map $g\colon m\rightarrow
q$ in \cat{L} we get, for $i<n\times m$, say $i = a\sotimes b = a\cdot
m + b$, for $a<n$ and $b<m$,
$$\begin{array}{rcl}
\big(\eta(f)\otimes\eta(g)\big)(i)
& = &
\big(T_{\cat{L}}(\coo_{p,q}^{-1}) \after \dst \after (\eta(f)\times \eta(g)) 
   \after \coo_{n,m}\big)(i) \\
& = &
T_{\cat{L}}(\coo_{p,q}^{-1})(\dst(\eta(f)(a), \eta(g)(b))) \\
& = &
T_{\cat{L}}(\coo_{p,q}^{-1})(\dst(\kappa_{p}(f\after\kappa_{a}, \idmap_{p}), 
   \kappa_{q}(g \after \kappa_{b}, \idmap_{q}))) \\
& = &
T_{\cat{L}}(\coo_{p,q}^{-1})(\kappa_{p\times q}(
   (f\after\kappa_{a})\otimes (g\after\kappa_{b}), \coo_{p,q})) \\
& = &
\kappa_{p\times q}((f\otimes g) \after (\kappa_{a}\otimes\kappa_{b}), 
   \coo_{p,q}^{-1} \after \coo_{p,q}) \\
& \smash{\stackrel{*}{=}} &
\kappa_{p\times q}((f\otimes g)\after \kappa_{i}, \idmap_{p\times q})  \\
& = &
\eta(f\otimes g)(i)
\end{array}$$

\noindent where the marked equation holds because in $\aleph_0$
(and hence in $\cat{L}$),
$$\begin{array}{rcl}
\kappa_{a}\otimes \kappa_{b}
& = &
\coo_{n,m}^{-1} \after (\kappa_{a}\times \kappa_{b}) \after \coo_{1,1} \\
& = &
\sotimes\; \after (\kappa_{a}\times \kappa_{b}) \after \coo_{1,1} \\
& = &
\kappa_{i}.
\end{array}$$
}
\end{proof}

\begin{figure}
$$\xymatrix@R-.5pc@C+.5pc{
& & \CMon\ar@/_2ex/ [ddll]_{\cal A}\ar@/_2ex/ [ddrr]_(0.4){\Kl_{\NNO}\mathcal{A}} \\
& \dashv & & \dashv & \\
\CMnd\ar @/_2ex/[rrrr]_{\Kl_\NNO}
   \ar@/_2ex/ [uurr]_(0.6){\;{\Ev} \cong \mathcal{H}\Kl_{\NNO}} & & \bot & &  
   \SMLaw\ar@/_2ex/ [uull]_{\mathcal{H}}\ar @/_2ex/[llll]_{\LM}
}$$
\caption{Commutative version of Figure~\ref{MonoidTriangleFig}, with
  commutative monoids, commutative monads and symmetric monoidal
  Lawvere theories.}
\label{ComMonoidTriangleFig}
\end{figure}
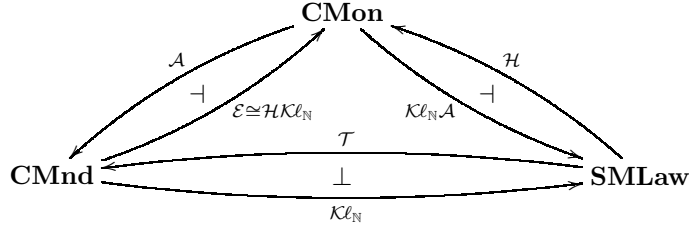

\section{Additive monads}\label{AMndSec}

Having an adjunction between commutative monoids and commutative
monads (Figure~\ref{ComMonoidTriangleFig}) raises the question whether
we may also define an adjunction between commutative semirings and some
specific class of monads. It will appear that so-called additive
commutative monads are needed here. In this section we will define and
study such additive (commutative) monads and see how they relate to
biproducts in their Kleisli categories and categories of algebras.

We consider monads on a category $\cat{C}$ with both
finite products and coproducts.  If, for a monad
$T$ on $\cat{C}$, the object $T(0)$ is final---\textit{i.e.}~satisfies
$T(0) \cong 1$---then $0$ is both initial and final in the Kleisli
category $\Kl(T)$. Such an object that is both initial and final is
called a \emph{zero object}. 

Also the converse is true, if $0 \in \Kl(T)$ is a zero object, then
$T(0)$ is final in $\cat{C}$. Although we don't use this in the
remainder of this paper, we also mention a related result on
the category of Eilenberg-Moore algebras. The proofs are simple and
are left to the reader.

\begin{lemma}\label{zerolem}
For a monad $T$ on a category $\cat{C}$ with finite products $(\times,
1)$ and coproducts $(+,0)$, the following statements are equivalent.
\begin{enumerate}
\renewcommand{\theenumi}{(\roman{enumi})}
\item $T(0)$ is final in $\cat{C}$;
\item $0 \in \Kl(T)$ is a zero object;
\item $1 \in \Alg(T)$ is a zero object. \QED
\end{enumerate}
\end{lemma}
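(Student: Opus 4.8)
The plan is to prove the two equivalences (i)~$\Leftrightarrow$~(ii) and (i)~$\Leftrightarrow$~(iii) separately, in each case reducing the assertion about a zero object to the single condition $T(0)\cong 1$, which is exactly~(i).

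For (i)~$\Leftrightarrow$~(ii): observe first that $0$ is \emph{always} initial in $\Kl(T)$, since a Kleisli map $0\rightarrow X$ is a $\cat{C}$-map $0\rightarrow T(X)$ and there is exactly one such (alternatively, $J\colon\cat{C}\rightarrow\Kl(T)$ preserves colimits by Lemma~\ref{KleisliStructLem}). Hence $0\in\Kl(T)$ is a zero object iff it is moreover final in $\Kl(T)$, i.e.\ iff $\Kl(T)(X,0)=\cat{C}(X,T(0))$ is a singleton for every $X\in\cat{C}$; quantifying over all $X$, this is precisely the statement that $T(0)$ is final in $\cat{C}$. This direction is thus a direct unpacking of definitions, with nothing to check about the monad structure.

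For (i)~$\Leftrightarrow$~(iii): write $F\dashv U\colon\Alg(T)\rightarrow\cat{C}$ for the Eilenberg--Moore adjunction. As $U$ creates limits, the final object of $\Alg(T)$ lies over the terminal object of $\cat{C}$; concretely it is $(1,{!}\colon T(1)\rightarrow 1)$, the unique algebra structure on $1$ (indeed any map with codomain $1$ is trivially a $T$-algebra morphism into it). This is what is meant by ``$1\in\Alg(T)$'', and it is always final. Dually, $F$ is a left adjoint, so the free algebra $F(0)=(T(0),\mu_0)$ is always initial in $\Alg(T)$. Hence (iii) holds iff $F(0)\cong(1,{!})$ in $\Alg(T)$. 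Applying the functor $U$, such an isomorphism gives $T(0)\cong 1$ in $\cat{C}$; conversely, since $1$ is terminal in $\cat{C}$, any isomorphism $T(0)\cong 1$ in $\cat{C}$ is automatically an isomorphism of algebras $F(0)\cong(1,{!})$. Therefore (iii) is equivalent to $T(0)\cong 1$, i.e.\ to~(i).

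The argument is essentially bookkeeping about the initial and final objects of Kleisli and Eilenberg--Moore categories; the only point requiring a moment's care is matching isomorphisms in $\cat{C}$ with isomorphisms of $T$-algebras in part (iii), and there the terminal object $1$ removes the difficulty, because every morphism with codomain $(1,{!})$ is uniquely determined and automatically a morphism of algebras.
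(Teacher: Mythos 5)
Your proof is correct and follows essentially the same route as the paper's (which is left to the reader there): $0$ is always initial in $\Kl(T)$ and the free algebra $F(0)=(T(0),\mu_0)$ is always initial in $\Alg(T)$ while $(1,!)$ is always final, so each condition reduces to $T(0)\cong 1$. Your explicit check that a $\cat{C}$-isomorphism $T(0)\cong 1$ lifts to an isomorphism of algebras is a point the paper's sketch glosses over, and it is handled correctly.
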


\auxproof{
(i) $\Rightarrow$ (ii):

For each object $X$, there is a unique map $0 \to T(X)$ in $\cat{C}$
(i.e. $0 \to X$ in $\Kl(T)$), by initiality of $0$. Furthermore there
is a unique map $X \to T(0)$ in $\cat{C}$ (i.e. $X \to 0$ in
$\Kl(T)$), as by (i) $T(0)$ is final.\\

(ii) $\Rightarrow$ (i):

As 0 is a zero object in $\Kl(T)$, there is for each object $X \in
\cat{C}$, a unique map $X \to 0$ in $\Kl(T)$ (i.e. a unique map $X \to
T(0)$ in $\cat{C}$) so $T(0)$ is final.\\

(i) $\Rightarrow$ (iii):

As the free construction $\cat{C} \to \Alg(T), X \mapsto (T^2(X)
\xrightarrow{\mu} T(X))$ preserves coproducts $(T^2(0)
\xrightarrow{\mu} T(0))$ is initial in $\Alg(T)$ By (i), $T(0) = 1$,
so $1$ is initial in $\Alg(T)$. Clearly $1$ is also final, so it is a
zero object.\\

(iii) $\Rightarrow$ (i):

As $T(0)$ is initial in $\Alg(T)$ and (by assumption) 1 is a
zero-object in $\Alg(T)$ (hence in particular also initial), $T(0)
\cong 1$ and $T(0)$ is final.  
}

A zero object yields, for any pair of objects $X, Y$, a unique ``zero
map'' $0_{X,Y}\colon X \to 0 \to Y$ between them. In a Kleisli
category $\Kl(T)$ for a monad $T$ on $\cat{C}$, this zero map $0_{X,Y}
\colon X \to Y$ is the following map in $\cat{C}$
\begin{equation}
\label{KleisliZeroMap}
\xymatrix{
 0_{X,Y} = 
   \Big(X\ar[r]^-{!_X} & 1 \cong T(0) \ar[r]^-{T(!_Y)} & T(Y)\Big).
}
\end{equation}

\auxproof{
We check that this map is the composite of the two unique maps
$X\dashrightarrow 0 \dashrightarrow Y$ in $\Kl(T)$, where
$X\dashrightarrow 0$ is $X \stackrel{!}{\rightarrow} 1 \congrightarrow 
T(0)$, and $0\dashrightarrow Y$ is $!_{T(Y)}$, as in:
$$
\xymatrix{
X \ar[r]^{!} & 1 \ar[r]^-{\cong} &T(0) \ar[rr]^-{T(!_{TY})} \ar[rd]_-{T(!_{Y})} && T^2(Y) \ar[r]^-{\mu} & T(Y) \\
&&&T(Y) \ar[ru]^-{T(\eta)} \ar[rru]_-{\idmap}
}
$$
} 

\noindent For convenience, we make some basic properties of this zero
map explicit.

\begin{lemma}
\label{zeroproplem}
Assume $T(0)$ is final, for a monad $T$ on $\cat{C}$. The resulting
zero maps $0_{X,Y} \colon X \to T(Y)$ from~\eqref{KleisliZeroMap} make
the following diagrams in \cat{C} commute
$$\xymatrix@C-0.2pc@R-0.5pc{
X\ar[r]^-{0}\ar[dr]_{0} & T^{2}(Y)\ar[d]^{\mu}
  & T(X)\ar[l]_-{T(0)}\ar[dl]^{0} 
 & 
X\ar[r]^-{0}\ar[d]_{f}\ar[dr]_-{0} & T(Y)\ar[d]^{T(f)} 
 & 
X\ar[r]^-{0}\ar[dr]_{0} & T(Y)\ar[d]^{\sigma_Y}  
\\
& T(Y) &
 & 
Y\ar[r]_-{0} & T(Z) &
& S(Y)
}$$

\noindent where $f\colon Y\rightarrow Z$ is a map in \cat{C} and
$\sigma\colon T\rightarrow S$ is a map of monads. \QED
\end{lemma}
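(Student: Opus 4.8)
The plan is to reduce every commutation in the three diagrams to the two universal properties that are available here: $0$ is initial in $\cat{C}$, so any two parallel maps out of $0$ agree, and, by hypothesis, $T(0)$ is final, so any two parallel maps into $T(0)$ agree. Writing $u\colon 1\congrightarrow T(0)$ for the isomorphism used in~\eqref{KleisliZeroMap}, unwinding that definition gives $0_{X,Y} = T(!_Y)\after u\after !_X$, so each square becomes an equality of two composites built from maps $!$, from $u$, from their images under $T$, and from $\eta$, $\mu$ (and $\sigma$); in each case the two sides will be seen to agree because one composite factors through a map into a final object or out of an initial object.

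First I would record a few one-line equalities that are pure instances of uniqueness: $g\after !_Y = !_Z$ for $g\colon Y\to Z$ (maps out of $0$); $!_X\after h = !_W$ for $h\colon W\to X$ (maps into $1$); $!_{TY} = \eta_Y\after !_Y$ (maps out of $0$); and $\mu_0\after T(u)\after T(!_X) = u\after !_{TX}$ (two maps into the final object $T(0)$). Then the middle square is immediate: $T(g)\after 0_{X,Y} = T(g\after !_Y)\after u\after !_X = T(!_Z)\after u\after !_X = 0_{X,Z}$, and precomposition $0_{X,Y}\after h = T(!_Y)\after u\after !_W = 0_{W,Y}$ goes the same way, so the zero maps are natural (indeed dinatural) in both arguments. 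For the left-hand diagram I would use $!_{TY}=\eta_Y\after !_Y$ together with the monad identity $\mu\after T(\eta)=\idmap$ to get $\mu\after 0_{X,TY} = 0_{X,Y}$, and naturality of $\mu$ at $!_Y$ (to rewrite $\mu_Y\after T^{2}(!_Y)$ as $T(!_Y)\after\mu_0$) followed by the last displayed equality to get $\mu\after T(0_{X,Y}) = 0_{TX,Y}$. For the right-hand diagram, naturality of the monad map $\sigma$ at $!_Y$ gives $\sigma_Y\after T(!_Y) = S(!_Y)\after\sigma_0$, whence $\sigma_Y\after 0^{T}_{X,Y} = S(!_Y)\after\sigma_0\after u\after !_X = 0^{S}_{X,Y}$.

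I do not expect a genuine obstacle; the only care needed is bookkeeping — using the single chosen isomorphism $u$ consistently throughout and invoking naturality of $\mu$, $\eta$ and $\sigma$ at the right arrow. The one point worth flagging is the monad-map square: its bottom edge $0^{S}_{X,Y}$ is defined only once $S(0)$ is also final, so that should be carried as a standing hypothesis or stated explicitly; granting it, $\sigma_0\after u$ is automatically the canonical isomorphism $1\congrightarrow S(0)$, since any two maps $1\to S(0)$ coincide, which is exactly what the final step above needs.
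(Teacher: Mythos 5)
Your proof is correct and follows essentially the same route as the paper's own (reader-delegated) verification: unfold $0_{X,Y}=T(!_Y)\after u\after !_X$ and reduce each square to initiality of $0$, finality of $1$ and of $T(0)$, the unit law $\mu\after T(\eta)=\idmap$, and naturality of $\mu$ and $\sigma$ at $!_Y$. Your remark that the third diagram presupposes $S(0)$ final is accurate but not a defect — the paper makes the same implicit assumption simply by writing a zero map into $S(Y)$.
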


\auxproof{
We write $\langle\rangle$ for the unique map 
$X\rightarrow 1\congrightarrow T(0)$ to the final object $0$ in
$\Kl(T)$ in:
$$\begin{array}{rcll}
\mu \after 0_{X,TY}
& = &
\mu \after T(!_{T(Y)}) \after \langle\rangle \\
& = &
\mu \after T(\eta) \after T(!_{Y}) \after \langle\rangle \\
& = &
T(!_{Y}) \after \langle\rangle \\
& = &
0_{X,Y} \\
\mu \after T(0_{X,Y}) 
& = &
\mu_{Y} \after T(T(!_{Y}) \after \langle\rangle) \\
& = &
T(!_{Y}) \after \mu_{0} \after \langle\rangle \\
& = &
T(!_{Y}) \after \langle\rangle 
   & \mbox{since $T(0)$ is final} \\
& = &
0_{T(X), Y} \\
0_{Y,Z} \after f
& = &
T(!_{Z}) \after \langle\rangle \after f \\
& = &
T(!_{Z}) \after \langle\rangle \\
& = &
0_{X,Z} \\
T(f) \after 0_{X,Y}
& = &
T(f) \after T(!_{Y}) \after \langle\rangle \\
& = &
T(!_{Z}) \after \langle\rangle \\
& = &
0_{X,Z} \\
\sigma_{Y} \after 0_{X,Y}
& = &
\sigma_{Y} \after T(!_{Y}) \after \langle\rangle \\
& = &
S(!_{Y}) \after \sigma_{0} \after \langle\rangle \\
& = &
S(!_{Y}) \after \langle\rangle 
   & \mbox{by uniqueness of maps to 1 in} \\
& & & T(0) \stackrel{\sigma_0}{\rightarrow} S(0) \congrightarrow 1 \\
& = &
0_{X,Y}.
\end{array}$$
}

Still assuming that $T(0)$ is final, the zero
map~\eqref{KleisliZeroMap} enables us to define a canonical map
\begin{equation}
\label{bcDef}
\xymatrix@C+1pc{
\bc \stackrel{\textrm{def}}{=} \Big(T(X+Y)\ar[rr]^-{
    \tuple{\mu \after T(p_1)}{\mu \after T(p_2)}} 
& & T(X) \times T(Y)\Big),
}
\end{equation}

\noindent where
\begin{equation}
\label{kleisliprojdef}
\xymatrix@C-0.5pc{
p_{1} \stackrel{\textrm{def}}{=} 
   \Big(X+Y \ar[rr]^-{\cotuple{\eta}{0_{Y,X}}} & & T(X)\Big),
&
p_{2} \stackrel{\textrm{def}}{=} 
   \Big(X+Y \ar[rr]^-{\cotuple{0_{X,Y}}{\eta}} & & T(Y)\Big).
}
\end{equation}

\noindent Here we assume that the underlying category \cat{C} has both
finite products and finite coproducts. The abbreviation ``\bc'' stands
for ``bicartesian'', since this maps connects the coproducts and
products. The auxiliary maps $p_{1},p_{2}$ are sometimes called
projections, but should not be confused with the (proper) projections
$\pi_{1},\pi_{2}$ associated with the product $\times$ in \cat{C}.

We continue by listing a series of properties of this map \bc that
will be useful in what follows.

\begin{lemma}
\label{bcproplem}
In the context just described, the map $\bc \colon T(X+Y) \to
T(X)\times T(Y)$ in~\eqref{bcDef} has the following properties.
{\renewcommand{\theenumi}{(\roman{enumi})}
\begin{enumerate}
\item\label{natbcprop} This \bc is a natural transformation,
and it commutes with any monad map $\sigma\colon T\rightarrow S$, as in:
$$\xymatrix@C-.5pc{
T(X+Y)\ar[r]^-{\bc}\ar[d]|{T(f+g)} & T(X)\times T(Y)\ar[d]|{T(f)\times T(g)} 
&
T(X+Y)\ar[r]^-{\bc}\ar[d]|{\sigma_{X+Y}} 
   & T(X)\times T(Y)\ar[d]|{\sigma_{X}\times\sigma_{Y}}
\\
T(U+V)\ar[r]^-{\bc} & T(U)\times T(V)
&
S(X+Y)\ar[r]^-{\bc} & S(X)\times S(Y)
}$$

\item\label{natmonoidalbcprop} It also commutes with the monoidal
  isomorphisms (for products and coproducts in $\cat{C}$):
$$\begin{array}{c}
\xymatrix@C-.5pc{
T(X+0)\ar[r]^-{\bc}\ar[dr]_{T(\rho)}^{\cong} & T(X)\times T(0)\ar[d]^{\rho}_{\cong}
&
T(X+Y)\ar[r]^-{\bc}\ar[d]_{T(\cotuple{\kappa_2}{\kappa_1})}^{\cong}
   & T(X)\times T(Y)\ar[d]^{\tuple{\pi_2}{\pi_1}}_{\cong}
\\
& T(X)
&
T(Y+X)\ar[r]^-{\bc} & T(Y)\times T(X)
} \\
\\[-1em]
\xymatrix@C-.5pc{
T((X+Y)+Z) \ar[r]^-{\bc} \ar[d]_-{T(\alpha)}^-{\cong} & T(X+Y)\times T(Z) \ar[r]^-{\bc \times id} & (T(X)\times T(Y))\times T(Z) \ar[d]^-{\alpha}_-{\cong}
\\
T(X+(Y+Z)) \ar[r]^-{\bc} & T(X)\times T(Y+Z) \ar[r]^-{id \times \bc} & T(X)\times (T(Y) \times T(Z))
}
\end{array}$$

\item\label{comEtaMubcprop} The map \bc interacts with $\eta$ and
  $\mu$ in the following manner:
$$
\xymatrix@C-.5pc{
X+Y \ar[d]_-{\eta} \ar[dr]^-{\tuple{p_1}{p_2}} &\\
T(X+Y) \ar[r]_-{\bc} & T(X) \times T(Y)
}
$$
$$\hspace*{-1em}\xymatrix@C-.5pc{
T^2(X+Y) \ar[r]^-{\mu} \ar[d]_-{T(\bc)}& T(X+Y) \ar[dd]^{\bc} 
\hspace*{-1em} & \hspace*{-1em}
T(T(X) + T(Y)) \ar[r]^-{\bc} \ar[d]|{T(\cotuple{T(\kappa_1)}{T(\kappa_2)})}& T^2(X) \times T^2(Y) \ar[dd]^{\mu \times \mu}
\\
T(T(X)\times T(Y)) \ar[d]|{{\tuple{T(\pi_1)}{T(\pi_2)}}} & 
\hspace*{-1em} & \hspace*{-1em}
T^2(X+Y) \ar[d]_-{\mu}&
\\
T^2(X) \times T^2(Y)\ar[r]^-{\mu \times \mu}& T(X) \times T(Y)
\hspace*{-1em} & \hspace*{-1em}
T(X+Y) \ar[r]^-{\bc} & T(X) \times T(Y)
}
$$

\item\label{comStbcprop} If $\cat{C}$ is a distributive category,
  $\bc$ commutes with strength $\st$ as follows:
$$\hspace*{-1em}\xymatrix@C-1pc{
T(X+Y)\times Z \ar[r]^-{\bc \times \id} \ar[d]_-{\st}& 
(T(X) \times T(Y))\times Z \ar[r]^-{\mathit{dbl}} & 
(T(X) \times Z) \times (T(Y) \times Z) \ar[d]^-{\st \times \st}
\\
T((X+Y) \times Z) \ar[r]^-{\cong} &T((X\times Z) + (Y\times Z)) \ar[r]^-{\bc} &T(X \times Z) \times T(Y \times Z)
}
$$ 

\noindent where $\mathit{dbl}$ is the ``double'' map
$\tuple{\pi_1 \times \id}{\pi_2 \times \id}\colon (A\times B)\times C
\rightarrow (A\times C)\times (B\times C)$.
\end{enumerate}
}
\end{lemma}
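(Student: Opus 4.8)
The plan is to treat all four items uniformly. Since $\bc = \tuple{\mu\after T(p_1)}{\mu\after T(p_2)}$, every displayed diagram splits into its component landing in $T(X)$ and its component landing in $T(Y)$, and for each component it suffices to establish a small identity about the auxiliary maps $p_1,p_2$ of~\eqref{kleisliprojdef}. Those identities are then obtained by cotupling, using only the monad laws, naturality of $\eta$, $\mu$ and $\st$, and the properties of zero maps collected in Lemma~\ref{zeroproplem}. By symmetry in $X$ and $Y$ I will only ever treat the $T(X)$-component, and I will suppress routine verifications, in the spirit of the paper.

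For item~\ref{natbcprop} I would first observe, using naturality of $\eta$ and the identities $T(h)\after 0 = 0$, $0\after g = 0$ and $\sigma\after 0 = 0$ from Lemma~\ref{zeroproplem}, that $T(h)\after p_1^{X,Y} = p_1^{X',Y'}\after(h+g)$ and $\sigma_X\after p_1^{T} = p_1^{S}$; composing with $\mu\after T(-)$ and using naturality gives the two squares. For item~\ref{natmonoidalbcprop} the relevant small identities are $p_1^{X,0} = \eta_X\after\rho$ (because $0$ is initial, so $0_{0,X} = \,!$), $p_1^{Y,X}\after\cotuple{\kappa_2}{\kappa_1} = p_2^{X,Y}$, and the evident cotupling identity underlying the associativity diagram; applying $\mu\after T(-)$, together with $\mu\after T(\eta) = \idmap$ in the unit case, yields each diagram.

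For item~\ref{comEtaMubcprop} the triangle is immediate from $\mu\after T(p_k)\after\eta = \mu\after\eta\after p_k = p_k$. For the two $\mu$-squares I would push $T(p_1)$ across $\mu$ by naturality and use associativity of $\mu$ to rewrite $\mu_X\after T(p_1)\after\mu$ as $\mu_X\after T\bigl(\mu_X\after T(p_1)\bigr)\after(\cdots)$, then recognise $\mu_X\after T(p_1) = \pi_1\after\bc$; for the right-hand square one additionally needs $\mu_X\after T(p_1^{X,Y})\after\cotuple{T(\kappa_1)}{T(\kappa_2)} = \cotuple{\idmap}{0_{T(Y),X}} = \mu_X\after p_1^{T(X),T(Y)}$, which follows from $\mu\after T(0) = 0$ and $\mu\after 0 = 0$.

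The real work is item~\ref{comStbcprop}, and it is the step I expect to be the main obstacle. Using the strength law $\st\after(\mu\times\idmap) = \mu\after T(\st)\after\st$ and naturality of $\st$, I would peel $\mu$ and one layer of $\st$ off the first component of the outer rectangle, reducing it to the single equation $\st_{X,Z}\after(p_1^{X,Y}\times\idmap_Z) = p_1^{X\times Z,\,Y\times Z}\after\mathit{dist}$, where $\mathit{dist}$ is the distributivity isomorphism. Since $\cat{C}$ is distributive, $\kappa_1\times\idmap_Z$ and $\kappa_2\times\idmap_Z$ form a coproduct cocone for $(X+Y)\times Z$, so it suffices to check this after precomposing with each. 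Precomposition with $\kappa_1\times\idmap_Z$ reduces to the strength unit law $\st\after(\eta\times\idmap) = \eta$. Precomposition with $\kappa_2\times\idmap_Z$ reduces to the claim that strength carries a zero map to a zero map, $\st_{X,Z}\after(0_{Y,X}\times\idmap_Z) = 0_{Y\times Z,\,X\times Z}$. I would prove this by writing $0_{Y,X} = T(!_X)\after c\after{!_Y}$ with $c\colon 1\cong T(0)$, pushing $T(!_X)\times\idmap_Z$ through $\st$ by naturality to obtain $T(!_X\times\idmap_Z)\after\bigl(\st_{0,Z}\after(c\times\idmap_Z)\after(!_Y\times\idmap_Z)\bigr)$, observing that $T(0\times Z)$ is terminal because $0\times Z\cong 0$ in a distributive category, so the bracketed map is the unique map $Y\times Z\to T(0\times Z)$, namely $0_{Y\times Z,\,0\times Z}$, and finally invoking naturality of zero maps to get $T(!_X\times\idmap_Z)\after 0_{Y\times Z,\,0\times Z} = 0_{Y\times Z,\,X\times Z}$. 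The only genuinely delicate point in the whole argument is exactly this use of distributivity to collapse $T(0\times Z)$ to a terminal object; everything else is bookkeeping with the monad and strength axioms.
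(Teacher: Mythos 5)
Your proposal is correct and follows essentially the same route as the paper's own (omitted) verification: reduce each square componentwise via $\bc=\tuple{\mu\after T(p_1)}{\mu\after T(p_2)}$ to identities about $p_1,p_2$ proved by cotupling, naturality and Lemma~\ref{zeroproplem}, and for part~(iv) reduce to $\st\after(p_i\times\idmap)=p_i\after\mathit{dist}^{-1}$ checked on the two coproduct components, using that $T(0\times Z)\cong T(0)\cong 1$ is terminal by distributivity. You have correctly isolated the one genuinely non-routine step (strength sends zero maps to zero maps), which is exactly the point the paper's detailed computation also singles out.
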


\begin{proof}
These properties are easily verified, using Lemma~\ref{zeroproplem}
and the fact that the projections $p_i$ are natural, both in $\cat{C}$
and in $\Kl(T)$. \QED
\end{proof}

\auxproof{
Naturality of the projections in $\cat{C}$: for $f\colon X\rightarrow U$ and
$g\colon Y\rightarrow V$ one has:
$$\begin{array}{rcl}
p_{1} \after (f+g)
& = &
\cotuple{\eta}{0} \after (f+g) \\
& = &
\cotuple{\eta \after f}{0 \after g} \\
& = &
\cotuple{T(f) \after \eta}{T(f) \after 0} \\
& = &
T(f) \after \cotuple{\eta}{0} \\
& = &
T(f) \after p_{1}.
\end{array}$$

\noindent Similarly, for $f\colon X\rightarrow T(U)$ and $g\colon
Y\rightarrow T(V)$ one has naturality wrt.\ the coproducts in
$\Kl(T)$ since:
$$\begin{array}{rcl}
p_{1} \klafter (f+g) 
& = &
\mu \after T(\cotuple{\eta}{0}) \after 
   \cotuple{T(\kappa_{1}) \after f}{T(\kappa_{2}) \after g} \\
& = &
\mu \after \cotuple{T(\eta) \after f}{T(0) \after g} \\
& = &
\cotuple{\mu \after T(\eta) \after f}{\mu \after T(0) \after g} \\
& = &
\cotuple{\mu \after \eta \after f}{0 \klafter g} \\
& = &
\cotuple{\mu \after T(f) \after \eta}{0} \\
& = &
\cotuple{\mu \after T(f) \after \eta}{\mu \after T(f) \after 0}
\\
& = &
\mu \after T(f) \after \cotuple{\eta}{0} \\
& = &
f \klafter p_{1}
\end{array}$$

For naturality of \bc assume $f\colon X\rightarrow U$ and
$g\colon Y\rightarrow V$. Then:
$$\begin{array}{rcl}
(T(f)\times T(g)) \after \bc
& = &
(T(f) \times T(g)) \after \tuple{\mu \after T(p_{1})}{\mu \after T(p_{2})} \\
& = &
\tuple{T(f) \after \mu \after T(p_{1})}{T(g) \after \mu \after T(p_{2})} \\
& = &
\tuple{\mu \after T(T(f) \after p_{1})}{\mu \after T(T(g) \after p_{2})} \\
& = &
\tuple{\mu \after T(p_{1} \after (f+g))}{\mu \after T(p_{2} \after (f+g))} \\
& = &
\tuple{\mu \after T(p_{1})}{\mu \after T(p_{2})} \after T(f+g) \\
& = &
\bc \after T(f+g).
\end{array}$$

The map $\bc$ commutes with a monad map $\sigma$ since:
$$\xymatrix@C+1pc{
	T(X+Y) \ar[rr]^-{\tuple{T(\cotuple{\eta^T}{0})}{T(\cotuple{0}{\eta^T})}} \ar[rrd]|{\tuple{T(\cotuple{\eta^S}{0})}{T(\cotuple{0}{\eta^S})}} \ar[dd]_{\sigma} && T^2(X) \times T^2(Y) \ar[r]^{\mu \times \mu} \ar[d]^{T\sigma \times T\sigma} & T(X) \times T(Y) \ar[dd]^{\sigma \times \sigma} \\
	&& TS(X) \times TS(Y) \ar[d]^{\sigma_{S(X)} \times \sigma_{S(Y)}} &\\
	S(X+Y) \ar[rr]^-{\tuple{S(\cotuple{\eta^S}{0})}{S(\cotuple{0}{\eta^S})}} && S^2(X) \times S^2(Y) \ar[r]^{\mu \times \mu} & S(X) \times S(Y)
}$$

Commutation with $\rho$'s:
$$\begin{array}{rcl}
\pi_{1} \after \bc
& = &
\pi_{1} \after \tuple{\mu \after T(p_{1})}{\mu \after T(p_{2})} \\
& = &
\mu \after T(\cotuple{\eta}{0_{0,X}}) \\
& = &
\mu \after T(\cotuple{\eta}{\eta \after \,!_{X}}) \\
& = &
\mu \after T(\eta \after \cotuple{\idmap}{!}) \\
& = &
\cotuple{\idmap}{!}.
\end{array}$$

Commutation with swap maps:
$$\begin{array}{rcl}
\tuple{\pi_2}{\pi_1} \after \bc
& = &
\tuple{\pi_2}{\pi_1} \after \tuple{\mu \after T(p_{1})}{\mu \after T(p_{2})} \\
& = &
\tuple{\mu \after T(p_{2})}{\mu \after T(p_{1})} \\
& = &
\tuple{\mu \after T(\cotuple{0}{\eta})}{\mu \after T(\cotuple{\eta}{0})} \\
& = &
\tuple{\mu \after T(\cotuple{\eta}{0} \after \cotuple{\kappa_2}{\kappa_1})}
   {\mu \after T(\cotuple{0}{\eta} \after \cotuple{\kappa_2}{\kappa_1})} \\
& = &
\tuple{\mu \after T(p_{1})}{\mu \after T(p_{2})} \after 
   T(\cotuple{\kappa_2}{\kappa_1}) \\
& = &
\bc \after T(\cotuple{\kappa_2}{\kappa_1})
\end{array}$$

Commutation with $\alpha$':
$$\begin{array}{rcl}
\lefteqn{\tuple{\pi_{1} \after \pi_{1}}{
   \tuple{\pi_{2}\after \pi_{1}}{\pi_2}} \after (\bc\times\idmap) \after \bc} \\
& = &
\tuple{\pi_{1} \after \bc \after \pi_{1}}{
   \tuple{\pi_{2} \after \bc \after \pi_{1}}{\pi_{2}}} \after \bc \\
& = &
\tuple{\mu \after T(p_{1}) \after \pi_{1} \after \bc}{
   \tuple{\mu \after T(p_{2}) \after \pi_{1} \after \bc}
   {\pi_{2} \after \bc}}  \\
& = &
\tuple{\mu \after T(p_{1}) \after \mu \after T(p_{1})}{
   \tuple{\mu \after T(p_{2}) \after \mu \after T(p_{1})}
   {\mu \after T(p_{2})}}  \\
& = &
\tuple{\mu \after \mu \after T(T(p_{1}) \after p_{1})}{
   \tuple{\mu \after \mu \after T(T(p_{2}) \after p_{1})}
   {\mu \after T(p_{2})}}  \\
& = &
\tuple{\mu \after T(\mu) \after T(T(p_{1}) \after \cotuple{\eta}{0})}{
   \tuple{\mu \after T(\mu) \after T(T(p_{2}) \after \cotuple{\eta}{0})}
   {\mu \after T(p_{2})}}  \\
& = &
\tuple{\mu \after T(\mu \after \cotuple{\eta \after p_{1}}{0})}{
   \tuple{\mu \after T(\mu \after \cotuple{\eta\after p_{2}}{0})}
   {\mu \after T(p_{2})}}  \\
& = &
\tuple{\mu \after T(\cotuple{p_{1}}{0})}{
   \tuple{\mu \after T(\cotuple{p_{2}}{0})}
   {\mu \after T(p_{2})}}  \\
& \stackrel{(*)}{=} &
\tuple{\mu \after T(\cotuple{p_1}{0})}
         {\tuple{\mu \after T(\cotuple{\cotuple{0}{\eta}}{0})}
              {\mu \after T(\cotuple{\cotuple{0}{0}}{\eta})}} \\
& = &
\tuple{\mu \after T(\cotuple{p_1}{0})}
         {\tuple{\mu \after T(\mu) \after 
             T(\cotuple{\cotuple{0}{\eta\after\eta}}{0 \after \eta})}
              {\mu \after T(\mu) \after 
             T(\cotuple{\cotuple{0}{0\after\eta}}{\eta\after\eta})}} \\
& = &
\tuple{\mu \after T(\cotuple{p_1}{0})}
         {\tuple{\mu \after \mu \after T^{2}(p_{1})}
              {\mu \after \mu \after T^{2}(p_{2})}
     \after T(\cotuple{\cotuple{0}{T(\kappa_{1}) \after \eta}}
        {T(\kappa_{2}) \after \eta})} \\
& = &
\tuple{\mu \after T(\cotuple{p_1}{0})}
         {\tuple{\mu \after T(p_{1}) \after \mu}{\mu \after T(p_{2}) \after \mu}
     \after T(\cotuple{\cotuple{0}{T(\kappa_{1}) \after \eta}}
        {T(\kappa_{2}) \after \eta})} \\
& = &
\tuple{\mu \after T(\cotuple{p_1}{0})}
         {\bc \after \mu \after T(\cotuple{\cotuple{0}{\eta\after\kappa_{1}}}
        {\eta\after\kappa_{2}})} \\
& = &
(\idmap\times\bc) \after 
   \tuple{\mu \after T(
      \cotuple{\cotuple{\eta}{0\after\kappa_{1}}}
        {0\after\kappa_{2}})}
         {\mu \after T(
      \cotuple{\cotuple{0}{\eta\after\kappa_{1}}}
        {\eta\after\kappa_{2}})} \\
& = &
(\idmap\times\bc) \after \\
& & \quad
   \tuple{\mu \after T(p_{1} \after 
      \cotuple{\cotuple{\kappa_1}{\kappa_{2}\after\kappa_{1}}}
        {\kappa_{2}\after\kappa_{2}})}
         {\mu \after T(p_{2} \after 
      \cotuple{\cotuple{\kappa_1}{\kappa_{2}\after\kappa_{1}}}
        {\kappa_{2}\after\kappa_{2}})} \\
& = &
(\idmap\times\bc) \after \bc \after 
   T(\cotuple{\cotuple{\kappa_1}{\kappa_{2}\after\kappa_{1}}}
     {\kappa_{2}\after\kappa_{2}}).
\end{array}$$

\noindent The marked equation $\stackrel{(*)}{=}$ use $[0,0] = 0$,
which holds since $[0,0] \after \kappa_{i} = 0 = 0 \after \kappa_{i}$.

Commutation with $\eta$:
$$\begin{array}{rcl}
\bc \after \eta
& = &
\tuple{\mu \after T(p_{1})}{\mu \after T(p_{2})} \after \eta \\
& = &
\tuple{\mu \after T(p_{1}) \after \eta }{\mu \after T(p_{2}) \after \eta } \\
& = &
\tuple{\mu \after \eta \after p_{1}}{\mu \after \eta \after p_{2}} \\
& = &
\tuple{p_1}{p_2}.
\end{array}$$

Commutation with $\mu$, involving two diagrams:
$$\begin{array}{rcl}
\bc \after \mu
& = &
\tuple{\mu \after T(p_{1})}{\mu \after T(p_{2})} \after \mu \\
& = &
\tuple{\mu \after T(p_{1}) \after \mu}{\mu \after T(p_{2})  \after \mu} \\
& = &
\tuple{\mu \after \mu \after T^{2}(p_{1})}
   {\mu \after \mu \after T^{2}(p_{2})} \\
& = &
\tuple{\mu \after T(\mu) \after T^{2}(p_{1})}
   {\mu \after T(\mu) \after T^{2}(p_{2})} \\
& = &
\tuple{\mu \after T(\mu \after T(p_{1}))}{\mu \after T(\mu \after T(p_{2}))} \\
& = &
\mu\times\mu \after \tuple{T(\mu \after T(p_{1}))}{T(\mu \after T(p_{2}))} \\
& = &
\mu\times\mu \after \tuple{T(\pi_{1})}{T(\pi_{2})} \after 
   T(\tuple{\mu \after T(p_{1})}{\mu \after T(p_{2})}) \\
& = &
\mu\times\mu \after \tuple{T(\pi_{1})}{T(\pi_{2})} \after T(\bc).
\end{array}$$

\noindent And similarly,
$$\begin{array}{rcl}
\lefteqn{\bc \after \mu \after T(\cotuple{T(\kappa_{1})}{T(\kappa_{2})})} \\
& = &
\tuple{\mu \after T(p_{1})}{\mu \after T(p_{2})} \after 
   \mu \after T(\cotuple{T(\kappa_{1})}{T(\kappa_{2})}) \\
& = &
\tuple{\mu \after T(p_{1}) \after \mu \after 
   T(\cotuple{T(\kappa_{1})}{T(\kappa_{2})})}
 {\mu \after T(p_{2}) \after \mu \after 
   T(\cotuple{T(\kappa_{1})}{T(\kappa_{2})})} \\
& = &
\tuple{\mu \after \mu \after T^{2}(p_{1}) \after
   T(\cotuple{T(\kappa_{1})}{T(\kappa_{2})})}
 {\mu \after \mu \after T^{2}(p_{2}) \after 
   T(\cotuple{T(\kappa_{1})}{T(\kappa_{2})})} \\
& = &
\tuple{\mu \after T(\mu) \after 
   T(\cotuple{T(p_{1}\after\kappa_{1})}{T(p_{1}\after\kappa_{2})})}
 {\mu \after T(\mu) \after 
   T(\cotuple{T(p_{2} \after \kappa_{1})}{T(p_{2}\after\kappa_{2})})} \\
& = &
\tuple{\mu \after T(\mu \after \cotuple{T(\eta)}{T(0)})}
 {\mu \after T(\mu \after \cotuple{T(0)}{T(\eta)})} \\
& = &
\tuple{\mu \after T(\cotuple{\mu \after T(\eta)}{\mu \after T(0)})}
 {\mu \after T(\cotuple{\mu \after T(0)}{\mu \after T(\eta)})} \\
& = &
\tuple{\mu \after T(\cotuple{\mu \after \eta}{\mu \after 0})}
 {\mu \after T(\cotuple{\mu \after 0}{\mu \after \eta})} \\
& = &
\tuple{\mu \after T(\mu) \after T(\cotuple{\eta}{0})}
 {\mu \after T(\mu) \after T(\cotuple{0}{\eta})} \\
& = &
\tuple{\mu \after \mu \after T(p_{1})}
 {\mu \after \mu \after T(p_{2})} \\
& = &
\mu\times\mu \after \tuple{\mu \after T(p_{1})}{\mu \after T(p_{2})} \\
& = &
\mu\times\mu \after \bc.
\end{array}$$

Finally, for commutation of the diagram with the doubling map $\mathit{dbl}$,
we first note that in a distributive category the following diagram
commutes.
$$\xymatrix{
(X+Y)\times Z\ar[rr]^-{p_{1}\times\idmap} & & T(X)\times Z\ar[d]^{\st} \\
(X\times Z)+(Y\times Z)\ar[u]^{\mathit{dist}}_{\cong}\ar[rr]^-{p_1}
  & & T(X\times Z)
}$$

\noindent where $\mathit{dist} =
\cotuple{\kappa_{1}\times\idmap}{\kappa_{2}\times\idmap}$ is the
canonical distribution map. Commutation of this diagram holds because:
$$\begin{array}{rcl}
\st \after p_{1}\times\idmap \after \mathit{dist} 
& = &
\cotuple{\st \after p_{1}\times\idmap \after \kappa_{1}\times\idmap}
   {\st \after p_{1}\times\idmap \after \kappa_{2}\times\idmap} \\
& = &
\cotuple{\st \after \eta\times\idmap}{\st \after 0\times\idmap} \\
& \stackrel{(*)}{=} &
\cotuple{\eta}{0} \\
& = &
p_{1},
\end{array}$$

\noindent where the marked equation holds because $\st \after
0\times\idmap = 0$, which follows from distributivity $0\times Z \cong
0$, so that $T(0\times Z)$ is final in:
$$\xymatrix@C+1pc{
Y\times Z\ar[r]^-{!\times\idmap}\ar@ /^5ex/[rr]^-{0\times\idmap}\ar[d]_{!} & 
   T(0)\times Z\ar[r]^-{T(!)\times\idmap}\ar[d]^{\st} & 
   T(X)\times Z\ar[d]^{\st} \\
T(0)\ar[r]_-{\cong}\ar@ /_5ex/[rr]_-{T(!)} & 
   T(0\times Z)\ar[r]_-{T(!\times\idmap)} & T(X\times Z)
}$$

\noindent Now we can prove that the diagram in
Lemma~\ref{bcproplem}.\ref{comStbcprop} commutes:
$$\begin{array}{rcl}
\lefteqn{\st\times\st \after \mathit{dbl} \after \bc\times\idmap} \\
& = &
\st\times\st \after \tuple{\pi_{1}\times\idmap}{\pi_{2}\times\idmap}
   \after \bc\times\idmap \\
& = &
\tuple{\st \after (\mu \after T(p_{1}))\times\idmap}
     {\st \after (\mu \after T(p_{2})) \times\idmap} \\
& = &
\tuple{\mu \after T(\st) \after \st \after T(p_{1})\times\idmap}
     {\mu \after T(\st) \after \st \after T(p_{2}) \times\idmap} \\
& = &
\tuple{\mu \after T(\st) \after T(p_{1}\times\idmap) \after \st}
     {\mu \after T(\st) \after T(p_{2} \times\idmap) \after \st} \\
& = &
\tuple{\mu \after T(p_{1} \after \mathit{dist}^{-1})}
   {\mu \after T(p_{2} \after \mathit{dist}^{-1})} \after \st\\
& = &
\bc \after T(\mathit{dist}^{-1}) \after \st.
\end{array}$$
}

The definition of the map $\bc$ also makes sense for arbitrary
set-indexed (co)products (see~\cite{Jacobs10a}), but here we only
consider finite ones. Such generalised $\bc$-maps also satisfy
(suitable generalisations of) the properties in Lemma~\ref{bcproplem}
above.

We will study monads for which the canonical map $\bc$ is an
isomorphism. Such monads will be called `additive monads'.

\begin{definition}
  A monad $T$ on a category $\cat{C}$ with finite products $(\times,
  1)$ and finite coproducts $(+,0)$ will be called \emph{additive} if
  $T(0)\cong 1$ and if the canonical map $\bc\colon T(X+Y) \to T(X)
  \times T(Y)$ from~\eqref{bcDef} is an isomorphism.
\end{definition}

We write $\cat{AMnd}(\cat{C})$ for the category of additive monads on
$\cat{C}$ with monad morphism between them, and similarly
$\cat{ACMnd}(\cat{C})$ for the category of additive and commutative
monads on $\cat{C}$.

A basic result is that additive monads $T$ induce a commutative monoid
structure on objects $T(X)$. This result is sometimes taken as
definition of additivity of monads
(\textit{cf.}~\cite{GoncharovSM09}).

\begin{lemma}
\label{AdditiveMonadMonoidLem}
\label{Mnd2MonLem2}
Let $T$ be an additive monad on a category $\cat{C}$ and $X$ an object
of $\cat{C}$. There is an addition $+$ on $T(X)$ given by
$$
\xymatrix{
+ \stackrel{\textrm{def}}{=} \Big(T(X) \times T(X) \ar[r]^-{\bc^{-1}} &
   T(X+X) \ar[r]^-{T(\nabla)} & T(X)\Big),
}
$$

\noindent where $\nabla = \cotuple{\idmap}{\idmap}$. Then:
{\renewcommand{\theenumi}{(\roman{enumi})}
\begin{enumerate}
\item this $+$ is commutative and associative,
\item and has unit $0_{1,X}: 1 \to T(X)$;
\item this monoid structure is preserved by maps $T(f)$ as well as by
  multiplication $\mu$;
\item the mapping $(T,X) \mapsto (T(X), +, 0_{1,X})$ yields a
  functor $\Evx \colon \cat{AMnd}(\cat{C}) \times \cat{C} \to
  \cat{CMon}(\cat{C})$.
\end{enumerate}}
\end{lemma}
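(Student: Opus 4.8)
The plan is to derive all four clauses by transporting structure along the isomorphism $\bc\colon T(X+Y)\congrightarrow T(X)\times T(Y)$, using only the catalogue of properties of $\bc$ in Lemma~\ref{bcproplem} and the zero-map identities in Lemma~\ref{zeroproplem}. The organising observation is that $+ = T(\nabla)\after\bc^{-1}$ is the transport, across $\bc^{-1}$ and then $T(\nabla)$, of the ``fold'' $\nabla$ on the coproduct side, so that each monoid axiom for $+$ turns, once the coherences of $\bc$ are used to push $\bc^{-1}$ through the relevant monoidal isomorphisms, into a purely formal coproduct identity for $\nabla$ (and $!$).

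For \emph{(i)}: commutativity comes from the swap coherence in Lemma~\ref{bcproplem}\ref{natmonoidalbcprop}, which gives $\bc^{-1}\after\tuple{\pi_2}{\pi_1} = T(\cotuple{\kappa_2}{\kappa_1})\after\bc^{-1}$, together with $\nabla\after\cotuple{\kappa_2}{\kappa_1}=\nabla$; so $+\after\tuple{\pi_2}{\pi_1} = T(\nabla\after\cotuple{\kappa_2}{\kappa_1})\after\bc^{-1} = +$. Associativity comes from naturality of $\bc$ (Lemma~\ref{bcproplem}\ref{natbcprop}), used to slide $T(\nabla)\times\idmap$ and $\idmap\times T(\nabla)$ through $\bc^{-1}$, plus the $\alpha$-coherence of $\bc$; after these moves both $+\after(+\times\idmap)\after\alpha^{-1}$ and $+\after(\idmap\times +)$ reduce to $T(t)\after\bc^{-1}\after(\idmap\times\bc^{-1})$ with $t\colon X+(X+X)\to X$ the common fold $\nabla\after(\idmap+\nabla) = \nabla\after(\nabla+\idmap)\after\alpha^{-1}$, a coproduct coherence. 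For \emph{(ii)}: I would first compute, directly from~\eqref{bcDef}--\eqref{kleisliprojdef} and the identities $p_1\after\kappa_2 = 0_{X,X}$, $p_2\after\kappa_2 = \eta$, $\mu\after T(0_{X,X}) = 0_{T(X),X}$ (Lemma~\ref{zeroproplem}) and $\mu\after T(\eta)=\idmap$, that $\bc\after T(\kappa_2) = \tuple{0_{T(X),X}}{\idmap}$; since also $0_{1,X}\after{!} = 0_{T(X),X}$, this gives $(0_{1,X}\times\idmap)\after\tuple{!}{\idmap} = \bc\after T(\kappa_2)$, hence $+\after(0_{1,X}\times\idmap)\after\lambda^{-1} = T(\nabla)\after T(\kappa_2) = \idmap$, i.e.\ $0_{1,X}$ is a left unit and, by (i), a two-sided unit.

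For \emph{(iii)}: preservation by $T(f)$ is immediate from naturality of $\bc$ and $f\after\nabla = \nabla\after(f+f)$ for the additive part, and from Lemma~\ref{zeroproplem} ($T(f)\after 0_{1,X} = 0_{1,Y}$) for the unit. Preservation by $\mu$ is the one genuinely delicate point: I would use the multiplication coherence of $\bc$ in Lemma~\ref{bcproplem}\ref{comEtaMubcprop}, rewritten as $\bc^{-1}\after(\mu\times\mu) = \mu\after T(\cotuple{T(\kappa_1)}{T(\kappa_2)})\after\bc^{-1}$, together with naturality of $\mu$ and the identity $\nabla_{T(X)} = T(\nabla)\after\cotuple{T(\kappa_1)}{T(\kappa_2)}$, to compute $+\after(\mu\times\mu) = T(\nabla)\after\mu\after T(\cotuple{T(\kappa_1)}{T(\kappa_2)})\after\bc^{-1} = \mu\after T^2(\nabla)\after T(\cotuple{T(\kappa_1)}{T(\kappa_2)})\after\bc^{-1} = \mu\after T(\nabla_{T(X)})\after\bc^{-1} = \mu\after +$, the unit part again being furnished by Lemma~\ref{zeroproplem} ($\mu\after 0_{1,T(X)} = 0_{1,X}$). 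For \emph{(iv)}: on objects set $\Evx(T,X) = (T(X),+,0_{1,X})$, a commutative monoid by (i)--(ii); on a morphism $(\sigma\colon T\to S,\,f\colon X\to Y)$ set $\Evx(\sigma,f) = S(f)\after\sigma_X = \sigma_Y\after T(f)$. That this is a morphism of commutative monoids follows from Lemma~\ref{zeroproplem} for the units ($\sigma_X\after 0_{1,X} = 0_{1,X}$ and $S(f)\after 0_{1,X} = 0_{1,Y}$) and, for $+$, from a short chase combining naturality of $\sigma$, the commutation of $\bc$ with $\sigma$ (Lemma~\ref{bcproplem}\ref{natbcprop}), naturality of $\bc$, and $f\after\nabla = \nabla\after(f+f)$; functoriality of $\Evx$ is then immediate from naturality of monad maps and functoriality of $S\mapsto S(f)$.

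The main obstacle is clause \emph{(iii)} for $\mu$: it is the only step that is not purely formal, since it genuinely uses the interaction of $\bc$ with $\mu$ and requires massaging a codiagonal on $T(X)$ into the shape $T(\nabla)\after\cotuple{T(\kappa_1)}{T(\kappa_2)}$ before the coherence applies; the associativity part of \emph{(i)} is the next most laborious, needing the $\alpha$-coherence of $\bc$ and a coproduct-coherence identity, but is otherwise mechanical, and all remaining verifications are one-line diagram chases.
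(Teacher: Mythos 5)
Your proposal is correct and follows essentially the same route as the paper: all four clauses are obtained by transporting the codiagonal along $\bc^{-1}$ and invoking exactly the coherences of Lemma~\ref{bcproplem} (swap, $\alpha$, naturality, and the $\mu$-square with $\cotuple{T(\kappa_1)}{T(\kappa_2)}$) together with the zero-map identities of Lemma~\ref{zeroproplem}, which is precisely how the paper's (largely omitted) verification proceeds. The only cosmetic difference is in the unit law, where you compute $\bc\after T(\kappa_2)=\tuple{0}{\idmap}$ directly instead of using the $\rho$-coherence of $\bc$ as the paper does; both are one-line equivalents.
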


\begin{proof}
The first three statements follow by the properties of $\bc$ from
Lemma~\ref{bcproplem}. For instance, $0$ is a (right) unit for $+$ as
demonstrated in the following diagram.
$$\xymatrix{
T(X) \ar[rr]^-{\rho^{-1}}_-{\cong}\ar[drr]^(0.6){\cong}_-{T(\rho^{-1})} & & 
   T(X)\times T(0) \ar[rr]^-{\idmap \times T(!)} \ar[d]^-{\bc^{-1}} & & 
   T(X)\times T(X) \ar[d]^-{\bc^{-1}}\ar@ /^10ex/[dd]^{+}
\\
& & T(X+0) \ar[rr]^-{T(\idmap +\, !)} \ar[drr]_-{T(\rho)}^-{\cong} & & 
   T(X+X) \ar[d]^-{T(\nabla)}
\\
&&&&T(X)
}
$$
Regarding (iv) we define, for a pair of morphisms $\sigma: T \to S$ in $\cat{AMnd}(\cat{C})$ and $f\colon X \to Y$ in $\cat{C}$, 
$$\Evx((\sigma, f)) = \sigma \after T(f) \colon T(X) \to S(Y),$$ which
is equal to $S(f) \after \sigma$ by naturality of
$\sigma$. Preservation of the unit by $\Evx((\sigma, f))$ follows from
Lemma \ref{zeroproplem}. The following diagram demonstrates
that addition is preserved.
$$
\xymatrix{
T(X) \times T(X) \ar[rr]^-{T(f)\times T(f)} \ar[d]_-{\bc^{-1}} && T(Y)\times T(Y) \ar[r]^{\sigma \times \sigma} \ar[d]^-{\bc^{-1}} & S(Y)\times S(Y) \ar[dd]^-{\bc^{-1}}
\\
T(X+X) \ar[rr]^-{T(f+f)} \ar[rrd]_-{\sigma} \ar[dd]_-{T(\nabla)} && T(Y+Y) \ar[rd]^-{\sigma}
\\
&&S(X+X) \ar[r]^-{S(f+f)} \ar[d]_-{S(\nabla)} & S(Y+Y) \ar[d]^-{S(\nabla)}
\\
T(X) \ar[rr]^-{\sigma} && S(X) \ar[r]^-{S(f)} & S(Y)
}
$$ 

\noindent where we use point~\ref{natbcprop} of Lemma \ref{bcproplem}
and the naturality of $\sigma$. It is easily checked that this mapping
defines a functor.\QED
\end{proof}

\auxproof{
Commutativity holds since $+ \after \tuple{\pi_2}{\pi_1} = +$, see:
$$
\xymatrix{
T(X)\times T(Y) \ar[r]^-{\bc^{-1}} \ar[d]_-{\tuple{\pi_2}{\pi_1}} & 
   T(X+X) \ar[d]_-{T(\cotuple{\kappa_2}{\kappa_1})} \ar[rd]^-{T(\nabla)}
\\
T(X)\times T(X) \ar[r]_-{\bc^{-1}} & T(X+X) \ar[r]_-{T(\nabla)} & T(X)
}
$$

\noindent Associativity:
$$
\xymatrix{
(T(X)\times T(X))\times T(X) \ar[r]^-{\alpha} \ar[d]_-{\bc^{-1} \times\idmap} & 
   T(X) \times (T(X)\times T(X)) \ar[r]^-{\idmap \times \bc^{-1}} & 
   T(X) \times T(X+X) \ar[d]^-{\bc^{-1}} \ar[r]^-{\idmap \times T(\nabla)} & 
   T(X) \times T(X) \ar[d]^-{\bc^{-1}}
\\
T(X+X) \times T(X) \ar[d]_-{T(\nabla) \times \idmap} \ar[r]^-{\bc^{-1}} & 
   T((X+X)+X) \ar[r]^-{\alpha'} \ar[d]^-{T(\nabla +\idmap)} & 
   T(X+(X+X)) \ar[r]^-{T(\idmap+\nabla)} & T(X+X) \ar[d]^-{T(\nabla)}
\\
T(X)\times T(X) \ar[r]^-{\bc^{-1}} & T(X+X) \ar[rr]^-{T(\nabla)} && T(X)
}
$$

\noindent Preservation by maps $T(f)$:
$$\begin{array}{rcl}
+ \after (T(f) \times T(f))
& = &
T(\nabla) \after \bc^{-1} \after (T(f)\times T(f)) \\
& = &
T(\nabla) \after T(f+f) \after \bc^{-1} \\
& = &
T(f) \after T(\nabla) \after \bc^{-1} \\
& = &
T(f) \after +
\end{array}$$

\noindent Similarly for $\mu$,
$$\begin{array}{rcl}
+ \after (\mu\times\mu) 
& = &
T(\nabla) \after \bc^{-1} \after (\mu\times\mu) \\
& = &
T(\nabla) \after \mu \after T(\cotuple{T(\kappa_{1})}{T(\kappa_{2})})
   \after \bc^{-1} \\
& = &
\mu \after T^{2}(\nabla) \after 
   T(\cotuple{T(\kappa_{1})}{T(\kappa_{2})}) \after \bc^{-1} \\
& = &
\mu \after 
   T(T(\nabla) \after \cotuple{T(\kappa_{1})}{T(\kappa_{2})}) \after \bc^{-1} \\
& = &
\mu \after 
   T(\cotuple{\idmap}{\idmap}) \after \bc^{-1} \\
& = &
\mu \after +.
\end{array}$$
}

\auxproof{
Direct proof of the fact that T(X) with the given addition is a commutative monoid:

To show that $0_{1,X}$ indeed serves as a unit, consider:
$$
\xymatrix{
T(X) \ar[r]^-{\rho^{-1}} & T(X)\times 1 \ar[r]^-{id \times !^{-1}} & T(X)\times T(0) \ar[rr]^-{id \times T(!)} \ar[d]_-{\bc^{-1}}&& T(X)\times T(X) \ar[d]^-{\bc^{-1}}
\\
&&T(X+0) \ar[rr]^-{T(id + !)} \ar[drr]_-{T(\cotuple{id}{!})} && T(X+X) \ar[d]^-{T(\nabla)}
\\
&&&&T(X)
}
$$
Note that
$$
\begin{array}{rcl}
\rho \after \id\times ! \after \bc &=& \pi_1 \after \tuple{\mu \after T(p_1)}{!}\\
	&=&
\mu \after T(\cotuple{\eta}{0})\\
	&=&
\mu \after T(\eta) \after T(\cotuple{id}{!})\\
	&=&
T(\cotuple{id}{!}),
\end{array}
$$
which proves the right identity law. The left identity law is shown similarly. 

Associativity:
$$
\xymatrix{
(T(X)\times T(X))\times T(X) \ar[r]^-{\alpha} \ar[d]_-{\bc^{-1} \times id} & T(X) \times (T(X)\times T(X)) \ar[r]^-{\idmap \times \bc^{-1}} & T(X) \times T(X+X) \ar[d]^-{\bc^{-1}} \ar[r]^-{id \times T(\nabla)} & T(X) \times T(X) \ar[d]^-{\bc^{-1}}
\\
T(X+X) \times T(X) \ar[d]_-{T(\nabla) \times id} \ar[r]^-{\bc^{-1}} & T((X+X)+X) \ar[r]^-{\alpha'} \ar[d]^-{T(\nabla +id)} & T(X+(X+X)) \ar[r]^-{T(id+\nabla)} & T(X+X) \ar[d]^-{T(\nabla)}
\\
T(X)\times T(X) \ar[r]^-{\bc^{-1}} & T(X+X) \ar[rr]^-{T(\nabla)} && T(X)
}
$$
Commutativity:
$$
\xymatrix{
T(X)\times T(Y) \ar[r]^-{\bc^{-1}} \ar[d]_-{\tuple{\pi_2}{\pi_1}} & T(X+X) \ar[d]_-{T(\cotuple{\kappa_2}{\kappa_1})} \ar[rd]^-{T(\nabla)}
\\
T(X)\times T(X) \ar[r]^-{\bc^{-1}} & T(X+X) \ar[r]^-{T(\nabla)} & T(X)
}
$$
}

By Lemma \ref{KleisliStructLem}, for a monad $T$ on a category
$\cat{C}$ with finite coproducts, the Kleisli construction yields a
category $\Kl(T)$ with finite coproducts. Below we will prove that,
under the assumption that $\cat{C}$ also has products, these
coproducts form biproducts in $\Kl(T)$ if and only if $T$ is
additive. Again, as in Lemma \ref{zerolem}, a related result holds for
the category $\Alg(T)$.

\begin{definition}
\label{biprodcatdef}
A \emph{category with biproducts} is a category $\cat{C}$ with a zero
object $0 \in \cat{C}$, such that, for any pair of objects $A_1, A_2
\in \cat{C}$, there is an object $A_1 \oplus A_2 \in \cat{C}$ that is
both a product with projections $\pi_i: A_1 \oplus A_2 \to A_i$ and a
coproduct with coprojections $\kappa_i: A_i \to A_1 \oplus A_2$, such
that
$$\begin{array}{rcl}
\pi_j \after \kappa_i & = & \left\{
\begin{array}{ll}
	\idmap_{A_i} & \text{if }\, i = j \\
	0_{A_i, A_j} & \text{if }\, i \ne j.
\end{array} \right.
\end{array}$$
\end{definition}

\begin{theorem}
\label{AMnd2BCat}
For a monad $T$ on a category $\cat{C}$ with finite products $(\times,
1)$ and coproducts $(+,0)$, the following are equivalent.
\begin{enumerate}
\renewcommand{\theenumi}{(\roman{enumi})}
\item $T$ is additive;
\item the coproducts in $\cat{C}$ form biproducts in 
the Kleisli category $\Kl(T)$;
\item the products in $\cat{C}$ yield biproducts in 
the category of Eilenberg-Moore algebras $\Alg(T)$.
\end{enumerate}
\end{theorem}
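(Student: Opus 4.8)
The plan is to prove the two equivalences $(i)\Leftrightarrow(ii)$ and $(i)\Leftrightarrow(iii)$ separately, since the roles of $\Kl(T)$ and $\Alg(T)$ are not quite symmetric. For $(i)\Leftrightarrow(ii)$ I would work directly inside $\Kl(T)$, which carries the coproducts $(+,0)$ of $\cat{C}$ by Lemma~\ref{KleisliStructLem}. A biproduct structure on these coproducts first of all needs $0$ to be a zero object in $\Kl(T)$, and by Lemma~\ref{zerolem} this is \emph{exactly} the condition $T(0)\cong 1$; so in both directions one may assume it and hence work with the zero maps. What remains is to see that the coproduct $X+Y$ of $\Kl(T)$ is simultaneously a product iff $\bc$ is an isomorphism.

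In more detail, assuming $T(0)\cong 1$, the coproduct $X+Y$ in $\Kl(T)$ has coprojections $\eta\after\kappa_{i}$, and I would take the Kleisli maps $p_{1},p_{2}$ of~\eqref{kleisliprojdef} as candidate projections. Straight from the definition of the $p_{i}$ as cotuples one gets $p_{j}\klafter(\eta\after\kappa_{i})=\idmap$ if $i=j$ and $=0$ otherwise, so $(X+Y,\eta\after\kappa_{i},p_{i})$ satisfies the equations of Definition~\ref{biprodcatdef}; thus it is a biproduct precisely when $X+Y$ with the $p_{i}$ is a product in $\Kl(T)$. Since $\tuple{p_{1}\klafter w}{p_{2}\klafter w}=\bc\after w$ for every $w$ (immediate from~\eqref{bcDef}), the product universal property of $X+Y$ says exactly that $\bc\after(-)\colon\cat{C}(Z,T(X+Y))\to\cat{C}(Z,T(X)\times T(Y))$ is a bijection for all $Z$, \textit{i.e.}~that $\bc$ is an isomorphism (Yoneda). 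Conversely any biproduct structure carried by the coproduct $X+Y$ is forced to have the $p_{i}$ as its projections, since a morphism out of a coproduct is determined by its composites with the coprojections; so nothing is lost, and $(i)\Leftrightarrow(ii)$ follows.

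For $(i)\Leftrightarrow(iii)$ the direction $(iii)\Rightarrow(i)$ is again a direct inspection of free algebras. The terminal object of $\Alg(T)$ is $(1,\,!)$ while the initial object is the free algebra on $0$, namely $(T(0),\mu_{0})$; a zero object forces these to agree, so $T(0)\cong 1$. Writing $F\colon\cat{C}\to\Alg(T)$ for the free functor, the product $FX\times FY$ has underlying object $T(X)\times T(Y)$, and by $(iii)$ it is also a coproduct of $FX$ and $FY$; since $F$ preserves coproducts, the canonical comparison $F(X+Y)\to FX\times FY$ is then an isomorphism. A short computation---using that $\bc$ is a morphism of algebras (the $\mu$\nobreakdash-diagram of Lemma~\ref{bcproplem}.\ref{comEtaMubcprop}) and that the zero morphism $FX\to FY$ of free algebras is $\mu\after T(0_{X,Y})$---identifies this comparison map with $\bc$. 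Hence $\bc$ is an isomorphism and $T$ is additive.

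The remaining direction $(i)\Rightarrow(iii)$ is where the real work lies. Assuming $T$ additive, Lemma~\ref{AdditiveMonadMonoidLem} gives each $T(X)$ a natural commutative monoid structure $(+,0_{1,X})$ that is preserved by the maps $T(f)$ and by $\mu$. I would transport this along the structure map: for an Eilenberg--Moore algebra $(A,a)$ set $+_{A}=a\after(+)\after(\eta_{A}\times\eta_{A})\colon A\times A\to A$ and $0_{A}=a\after 0_{1,A}\colon 1\to A$, then check that this makes $A$ a commutative monoid, that every algebra morphism preserves it, and that composition in $\Alg(T)$ is bilinear for the induced monoid structures on homsets---\textit{i.e.}~that $\Alg(T)$ is enriched over $\CMon$. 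A category with finite products that is $\CMon$\nobreakdash-enriched automatically has biproducts: the coprojections $A\to A\times B$ are $\tuple{\idmap}{0}$ and $\tuple{0}{\idmap}$, and cotupling is $\cotuple{f}{g}=f\after\pi_{1}+g\after\pi_{2}$; this yields $(iii)$. I expect the verification that the transported structure on a \emph{non-free} algebra obeys the monoid laws and interacts correctly with composition to be the main obstacle---it is the single point requiring a genuine diagram chase, and it rests squarely on the two compatibility clauses of Lemma~\ref{AdditiveMonadMonoidLem}(iii) (preservation of $+$ by the $T(f)$ and by $\mu$).
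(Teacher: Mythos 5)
Your proposal is correct and follows essentially the same route as the paper's proof: the same projections $p_i$ and the identity $\bc\after w=\tuple{p_1\klafter w}{p_2\klafter w}$ drive $(i)\Leftrightarrow(ii)$, the same transported monoid structure $+_A=a\after(+)\after(\eta\times\eta)$ drives $(i)\Rightarrow(iii)$ (the paper just instantiates the $\CMon$-enrichment argument by writing the coprojections $\tuple{\idmap}{0}$ and cotuples $+_A\after(f\times g)$ explicitly), and your canonical comparison $F(X+Y)\to FX\times FY$ is exactly the paper's $\cotuple{T(\kappa_1)}{T(\kappa_2)}$ argument read in the other direction. The only cosmetic differences are your Yoneda phrasing of the product universal property and the explicit appeal to Lemma~\ref{zerolem} for the zero-object bookkeeping.
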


Here we shall only use this result for Kleisli categories, but we
include the result for algebras for completeness.

\begin{proof}
  First we assume that $T$ is additive and show that $(+,0)$ is a
  product in $\Kl(T)$. As projections we take the maps $p_i$
  from~\eqref{kleisliprojdef}. For Kleisli maps $f\colon Z\rightarrow
  T(X)$ and $g\colon Z\rightarrow T(Y)$ there is a tuple via the map
  \bc, as in
$$\xymatrix{
\tuple{f}{g}_{\Kl} \stackrel{\textrm{def}}{=} \Big(Z \ar[r]^-{\tuple{f}{g}} & 
   T(X) \times T(Y) \ar[r]^-{\bc^{-1}} & T(X + Y)\Big).
}$$

\noindent One obtaines $p_{1} \klafter \tuple{f}{g}_{\Kl} = \mu \after
T(p_{1}) \after \bc^{-1} \after \tuple{f}{g} = \pi_{1} \after \bc
\after \bc^{-1} \after \tuple{f}{g} = \pi_{1} \after \tuple{f}{g} =
f$. Remaining details are left to the reader.

Conversely, assuming that the coproduct $(+,0)$ in $\cat{C}$ forms a biproduct in $\Kl(T)$, we have to show that the bicartesian map $\bc \colon T(X+Y) \to T(X) \times T(Y)$ is an isomorphism.
As $+$ is a biproduct, there exist projection maps $q_i \colon X_1+X_2 \to X_i$ in $\Kl(T)$ satisfying
$$\begin{array}{rcl}
q_j \klafter \kappa_i & = & \left\{
\begin{array}{ll}
	\idmap_{X_i} & \text{if }\, i = j \\
	0_{X_i, X_j} & \text{if }\, i \ne j.
\end{array} \right.
\end{array}$$ 

\noindent From these conditions it follows that $q_i = p_i$, where
$p_i$ is the map defined in~\eqref{kleisliprojdef}.  The ordinary
projection maps $\pi_i \colon T(X_1) \times T(X_2) \to T(X_i)$ are
maps $T(X_1) \times T(X_2) \to X_i$ in $\Kl(T)$. Hence, as + is a
product, there exists a unique map $h \colon T(X_1) \times T(X_2) \to
X_1 + X_2$ in $\Kl(T)$, \textit{i.e.} $h \colon T(X_1) \times T(X_2)
\to T(X_1 + X_2)$ in $\cat{C}$, such that $p_1 \klafter h = \pi_1$ and
$p_2 \klafter h = \pi_2$. It is readily checked that this map $h$ is
the inverse of $\bc$.

To prove the equivalence of $\textit{(i)}$ and $\textit{(iii)}$, first
assume that the monad $T$ is additive. In the category $\Alg(T)$ of
algebras there is the standard product
$$\xymatrix@C-0.5pc{
\Big(T(X)\ar[r]^-{\alpha} & X\Big)\times\Big(T(Y)\ar[r]^-{\beta} & Y\Big)
\stackrel{\textrm{def}}{=}
\Big(T(X\times Y)\ar[rrr]^-{\tuple{\alpha\after T(\pi_{1})}
   {\beta\after T(\pi_{2})}} & & & X\times Y\Big).
}$$

\noindent In order to show that $\times$ also forms a coproduct in
$\Alg(T)$, we first show that for an arbitrary algebra $\gamma\colon
T(Z)\rightarrow Z$ the object $Z$ carries a commutative monoid
structure. We do so by adapting the structure $(+,0)$ on $T(Z)$ from
Lemma~\ref{AdditiveMonadMonoidLem} to $(+_{Z}, 0_Z)$ on $Z$ via
$$\begin{array}{rcl}
+_{Z} 
& \stackrel{\textrm{def}}{=} &
\xymatrix{
   \Big(Z\times Z\ar[r]^-{\eta\times\eta} & 
   T(Z)\times T(Z)\ar[r]^-{+} & T(Z)\ar[r]^-{\gamma} & Z\Big)
} \\[-.3pc]
0_{Z} 
& \stackrel{\textrm{def}}{=} &
\xymatrix{\Big(1 \ar[r]^-{0} & 
   T(Z)\ar[r]^-{\gamma} & Z\Big)
}
\end{array}$$

\noindent This monoid structure is preserved by homomorphisms of
algebras.  Now, we can form coprojections $k_{1} = \tuple{\idmap}{0_{Y}
  \after\;!} \colon X\rightarrow X\times Y$, and a cotuple of algebra
homomorphisms $\smash{(TX\stackrel{\alpha}{\rightarrow}X)
  \stackrel{f}{\longrightarrow} (TZ\stackrel{\gamma}{\rightarrow}Z)}$
and $\smash{(TY\stackrel{\beta}{\rightarrow}X)
  \stackrel{g}{\longrightarrow} (TZ\stackrel{\gamma}{\rightarrow}Z)}$
given by
$$\xymatrix@C-.2pc{
\cotuple{f}{g}_{\Alg} \stackrel{\textrm{def}}{=} 
   \Big(X\times Y\ar[r]^-{f\times g} & Z\times Z\ar[r]^-{+_Z} & Z\Big).
}$$

\noindent Again, remaining details are left to the reader. 

Finally, to show that \textit{(iii)} implies \textit{(i)}, consider
the algebra morphisms:
$$\xymatrix@C-1pc{
\Big(T^2(X_i) \ar[r]^-{\mu} & T(X_i)\Big)\ar[rr]^-{T(\kappa_i)} & &
   \Big(T^2(X_1 + X_2)\ar[r]^-{\mu} &T(X_1 +X_2)\Big).
}$$ 

\noindent The free functor $\cat{C}\rightarrow \Alg(T)$ preserves
coproducts, so these $T(\kappa_{i})$ form a coproduct diagram in
$\Alg(T)$. As $\times$ is a coproduct in $\Alg(T)$, by assumption, the
cotuple $\cotuple{T(\kappa_1)}{T(\kappa_2)} \colon T(X_1) \times
T(X_2) \to T(X_1+X_2)$ in $\Alg(T)$ is an isomorphism. The
coprojections $\ell_{i}\colon T(X_{i}) \rightarrow T(X_{1})\times
T(X_{2})$ satisfy $\ell_{1} =
\tuple{\pi_{1}\after\ell_{1}}{\pi_{2}\after\ell_{2}} =
\tuple{\id}{0}$, and similarly, $\ell_{2} = \tuple{0}{\id}$.  Now we
compute:
$$\begin{array}{rcl}
\bc \after \cotuple{T(\kappa_1)}{T(\kappa_2)} \after \ell_{1}
& = &
\tuple{\mu\after T(p_{1})}{\mu\after T(p_{2})} \after T(\kappa_{1}) \\
& = &
\tuple{\mu \after T(p_{1} \after \kappa_{1})}
   {\mu \after T(p_{2} \after \kappa_{1})} \\
& = &
\tuple{\mu \after T(\eta)}{\mu \after T(0)} \\
& = &
\tuple{\id}{0} \\
& = &
\ell_{1}.
\end{array}$$

\noindent Similarly, $\bc \after \cotuple{T(\kappa_1)}{T(\kappa_2)}
\after \ell_{2} = \ell_{2}$, so that $\bc \after
\cotuple{T(\kappa_1)}{T(\kappa_2)} = \id$, making $\bc$ an
isomorphism. \QED
\end{proof}

\auxproof{
\textbf{Proof (i) implies (ii) and (i) implies (iii):}

Unicity of the product map:\\
Suppose $h \colon Z \to X + Y$ s.t. $p_i \klafter h = f_i$. To prove: $h = \tuple{f}{g}_{\Kl}$, that is,
$$
	\bc^{-1} \after \tuple{f_1}{f_2} = h.
$$
This is equivalent to
$$
	\tuple{p_1 \klafter h}{p_2 \klafter h} = \bc \after h.
$$ 
This is shown as follows:
$$
\begin{array}{rcl}
	\bc \after h &=& \tuple{\mu \after T(p_1)}{\mu \after T(p_2)} \after h \\
	&=&
	\tuple{\mu \after T(p_1) \after h}{\mu \after T(p_2) \after h} \\
	&=&
	\tuple{p_1 \klafter h}{p_2 \klafter h}
\end{array}
$$
Properties of the composition of projections with coprojections
$k_{i} = \eta \after \kappa_{i}$
$$
\begin{array}{rcl}
	p_1 \klafter k_1 &=& \mu \after T(p_1) \after \eta \after 	\kappa_1 \\
&=&
	\mu \after \eta \after p_1 \after \kappa_1\\
&=&
	\cotuple{\eta}{0} \after {\kappa_1}\\
&=&
	\eta \,\,(=id_{\Kl})	
\end{array}
$$

$$
\begin{array}{rcl}
	p_1 \klafter k_2 &=& \mu \after T(p_1) \after \eta \after \kappa_2 \\
&=&	
	\cotuple{\eta}{0} \after \kappa_2\\
&=&
	0
\end{array}
$$

In the category of algebras we first prove that $(+_{\gamma}, 0_{\gamma})$ is a
commutative monoid. We first check that they are homomorphisms of
algebras. For $0_{\gamma}$ this hold because $T(0)$ is final and
the map $0\colon T(0)\rightarrow T(Z)$ is $T(!_{Z})$, so that:
$$\xymatrix{
T^{2}(0)\ar[d]_{\mu_{0} = \;!}\ar[r]^-{T(0)} & 
   T^{2}(Z)\ar[d]_{\mu_Z}\ar[r]^-{T(\gamma)} & T(Z)\ar[d]^{\gamma} \\
T(0)\ar[r]^-{0} & T(Z)\ar[r]^-{\gamma} & Z
}$$

\noindent Similarly, the addition $+_\gamma$ is a homomorphism of
algebras $\gamma\times\gamma\rightarrow\gamma$, as shown in:
$$\begin{array}{rcl}
\lefteqn{+_{\gamma} \after (\gamma\times\gamma)} \\
& = &
\gamma \after + \after \eta\times\eta \after
    \tuple{\gamma\after T(\pi_{1})}{\gamma\after T(\pi_{2})} \\
& = &
\gamma \after + \after \eta\times\eta \after \gamma\times\gamma \after
    \tuple{T(\pi_{1})}{T(\pi_{2})} \\
& = &
\gamma \after + \after T(\gamma)\times T(\gamma) \after \eta\times\eta \after 
    \tuple{T(\pi_{1})}{T(\pi_{2})} \\
& = &
\gamma \after T(\gamma) \after + \after \eta\times\eta \after
    \tuple{T(\pi_{1})}{T(\pi_{2})} \\
& = &
\gamma \after \mu \after + \after \eta\times\eta \after
    \tuple{T(\pi_{1})}{T(\pi_{2})} \\
& = &
\gamma \after + \after \mu\times\mu \after \eta\times\eta \after
    \tuple{T(\pi_{1})}{T(\pi_{2})} \\
& = &
\gamma \after + \after \mu\times\mu \after T(\eta)\times T(\eta) \after
    \tuple{T(\pi_{1})}{T(\pi_{2})} \\
& = &
\gamma \after \mu \after + \after 
    \tuple{T(\eta \after \pi_{1})}{T(\eta \after \pi_{2})} \\
& = &
\gamma \after \mu \after + \after 
    \tuple{T(\pi_{1} \after \eta\times\eta)}{T(\pi_{2} \after \eta\times\eta)} \\
& = &
\gamma \after \mu \after  T(\nabla) \after \bc^{-1} \after 
    \tuple{T(\pi_{1})}{T(\pi_{2})} \after T(\eta\times\eta) \\
& = &
\gamma \after \mu \after  T^{2}(\nabla) \after 
   T(\cotuple{T(\kappa_{1})}{T(\kappa_{2})} \after \bc^{-1} \after 
    \tuple{T(\pi_{1})}{T(\pi_{2})} \after T(\eta\times\eta) \\
& = &
\gamma \after T(\nabla) \after \mu \after 
      T(\cotuple{T(\kappa_{1})}{T(\kappa_{2})} \after \bc^{-1} \after 
    \tuple{T(\pi_{1})}{T(\pi_{2})} \after T(\eta\times\eta) \\
& = &
\gamma \after T(\nabla) \after \bc^{-1} \after \mu\times\mu \after
    \tuple{T(\pi_{1})}{T(\pi_{2})} \after T(\eta\times\eta) \\
& & \qquad\mbox{by Lemma~\ref{bcproplem}} \\
& = &
\gamma \after T(\nabla) \after \mu \after T(\bc^{-1}) \after 
   T(\eta\times\eta) \\
& & \qquad\mbox{again by Lemma~\ref{bcproplem}} \\
& = &
\gamma \after \mu \after T^{2}(\nabla) \after T(\bc^{-1}) \after 
   T(\eta\times\eta) \\
& = &
\gamma \after T(\gamma) \after T(T(\nabla) \after \bc^{-1} \after 
   \eta\times\eta) \\
& = &
\gamma \after T(\gamma \after + \after \eta\times\eta) \\
& = &
\gamma \after T(+_{\gamma}).
\end{array}$$

We continue with the unit law $+_{\gamma} \after
(\idmap\times 0_{\gamma}) = \pi_{1} \colon Z\times T(0) \rightarrow Z$,
where we omit the isomorphism $T(0)\congrightarrow 1$ for convenience
(or simply take $T(0)$ as final object).
$$\begin{array}{rcl}
+_{\gamma} \after (\idmap\times 0_{\gamma})
& = &
\gamma \after + \after \eta\times\eta \after 
   \idmap\times \gamma \after \idmap\times T(!_{Z}) \\
& = &
\gamma \after + \after \idmap\times T(\gamma) \after
    \eta\times\eta \after \idmap\times T(!_{Z}) \\
& = &
\gamma \after + \after T(\gamma)\times T(\gamma) \after
    T(\eta)\times\eta \after \eta\times T(!_{Z}) \\
& & \qquad \mbox{since } \gamma \after \eta = \idmap \\
& = &
\gamma \after + \after T(\gamma)\times T(\gamma) \after
    \eta\times\eta \after \eta\times T(!_{Z}) \\
& & \qquad \mbox{since } T(\eta) \after \eta = \eta \after \eta \\
& = &
\gamma \after T(\gamma) \after + \after 
    \eta\times\eta \after \eta\times T(!_{Z}) \\
& = &
\gamma \after \mu \after + \after 
    \eta\times\eta \after \eta\times T(!_{Z}) \\
& = &
\gamma \after + \after \mu\times\mu \after 
    \eta\times\eta \after \eta\times T(!_{Z}) \\
& = &
\gamma \after + \after \idmap{}\times T(!_{Z}) \after \eta\times\idmap \\
& = &
\gamma \after \pi_{1} \after \eta \times \idmap \\
& & \qquad \mbox{by the unit law for $+$} \\
& = &
\gamma \after \eta \after \pi_{1} \\
& = &
\pi_{1}
\end{array}$$

\noindent Commutativity is easy:
$$\begin{array}{rcl}
+_{\gamma} \after \tuple{\pi_{2}}{\pi_{1}} 
& = &
\gamma \after + \after \eta\times\eta \after \tuple{\pi_{2}}{\pi_{1}} \\
& = &
\gamma \after + \after \tuple{\eta\after\pi_{2}}{\eta\after\pi_{1}} \\
& = &
\gamma \after + \after 
   \tuple{\pi_{2} \after \eta\times\eta}{\pi_{1}\after\eta\times\eta} \\
& = &
\gamma \after + \after \tuple{\pi_{2}}{\pi_{1}} \after \eta\times\eta \\
& = &
\gamma \after + \after \eta\times\eta 
   \qquad\mbox{by commutativity of $+$ on $T(Z)$} \\
& = &
+_{\gamma}.
\end{array}$$

\noindent Associativity also holds via associativity of $+$ on $T(Z)$,
using $\alpha =
\tuple{\tuple{\pi_1}{\pi_{1}\after\pi_{2}}}{\pi_{2}\after\pi_{2}}
\colon Z\times (Z\times Z)\rightarrow (Z\times Z)\times Z$. It
requires a bit more work.
$$\begin{array}{rcl}
\lefteqn{+_{\gamma} \after (+_{\gamma}\times\idmap) \after \alpha} \\
& = &
\gamma \after + \after \eta\times\eta \after 
   ((\gamma \after + \after \eta\times\eta)\times\idmap) \after \alpha \\
& = &
\gamma \after + \after ((\eta \after \gamma \after +)\times\idmap)
   \after ((\eta\times\eta)\times\eta) \after \alpha \\
& = &
\gamma \after + \after (T(\gamma) \after \eta)\times (T(\gamma) \after T(\eta))
   \after (+\times\idmap) \after \alpha \after (\eta\times(\eta\times\eta)) \\
& = &
\gamma \after T(\gamma) \after + \after (\eta\times T(\eta))
   \after (+\times\idmap) \after \alpha \after (\eta\times(\eta\times\eta)) \\
& = &
\gamma \after \mu \after + \after (\eta\times T(\eta))
   \after (+\times\idmap) \after \alpha \after (\eta\times(\eta\times\eta)) \\
& = &
\gamma \after + \after (\mu\times\mu) \after (\eta\times T(\eta))
   \after (+\times\idmap) \after \alpha \after (\eta\times(\eta\times\eta)) \\
& = &
\gamma \after + \after (+\times\idmap) \after \alpha \after 
   (\eta\times(\eta\times\eta)) \\
& = &
\gamma \after + \after (\idmap\times +) \after (\eta\times(\eta\times\eta)) 
   \qquad\mbox{by associativity of +} \\
& = &
\gamma \after + \after (\mu\times\mu) \after (T(\eta)\times \eta)
    \after (\idmap\times +) \after (\eta\times(\eta\times\eta)) \\
& = &
\gamma \after \mu \after + \after (T(\eta)\times \eta)
    \after (\idmap\times +) \after (\eta\times(\eta\times\eta)) \\
& = &
\gamma \after T(\gamma) \after + \after (T(\eta)\times \eta)
    \after (\idmap\times +) \after (\eta\times(\eta\times\eta)) \\
& = &
\gamma \after + \after (T(\gamma) \after T(\eta))\times (T(\gamma) \after \eta)
    \after (\idmap\times +) \after (\eta\times(\eta\times\eta)) \\
& = &
\gamma \after + \after (\idmap\times(\eta \after \gamma)) \after 
   (\idmap\times +) \after (\eta\times(\eta\times\eta)) \\
& = &
\gamma \after + \after (\eta\times\eta) \after 
   (\idmap\times(\gamma\after + \after \eta\times\eta)) \\
& = &
+_{\gamma} \after (\idmap\times +_{\gamma}).
\end{array}$$

Preservation by algebra homomorphism
$\smash{(TZ\stackrel{\gamma}{\rightarrow}Z)
  \stackrel{h}{\longrightarrow} (TW\stackrel{\delta}{\rightarrow}W)}$
follows from:
$$\begin{array}{rcl}
+_{\delta} \after (h\times h)
& = &
\delta \after + \after \eta\times\eta \after h\times h \\
& = &
\delta \after + \after T(h)\times T(h) \after \eta\times\eta  \\
& = &
\delta \after T(h) \after + \after \eta\times\eta  \\
& = &
h \after \gamma \after + \after \eta\times\eta  \\
& = &
h \after +_{\gamma} \\
h \after 0_{\gamma} 
& = &
h \after \gamma \after 0 \\
& = &
\delta \after T(h) \after 0 \\
& = &
\delta \after 0 \\
& = &
0_{\delta}.
\end{array}$$

Now we can prove the cotuple laws:
$$\begin{array}{rcl}
\cotuple{f}{g}_{\Alg} \after k_{1}
& = &
+_{\gamma} \after f\times g \after \tuple{\idmap}{0_{\beta} \after \;!} \\
& = & 
+_{\gamma} \after f\times g \after \idmap\times 0_{\beta} \after 
   \tuple{\idmap}{!} \\
& = & 
+_{\gamma} \after \idmap\times 0_{\gamma} \after f\times \idmap \after
   \tuple{\idmap}{!} \\
& = & 
\pi_{1} \after f\times \idmap \after \tuple{\idmap}{!} \\
& = & 
f \after \pi_{1} \after \tuple{\idmap}{!} \\
& = &
f.
\end{array}$$

\noindent What we forgot to check is that $\cotuple{f}{g}_{\Alg}$
is a homomorphism of algebra $\alpha\times\beta \rightarrow \gamma$.
This holds since:
$$\begin{array}{rcl}
\cotuple{f}{g}_{\Alg} \after (\alpha\times\beta)
& = &
+_{\gamma} \after f\times g \after 
   \tuple{\alpha \after T(\pi_{1})}{\beta \after T(\pi_{2})} \\
& = &
+_{\gamma} \after f\times g \after \alpha\times\beta \after
   \tuple{T(\pi_{1})}{T(\pi_{2})} \\
& = &
+_{\gamma} \after \gamma\times \gamma \after T(f)\times T(g) \after
   \tuple{T(\pi_{1})}{T(\pi_{2})} \\
& = &
+_{\gamma} \after \gamma\times \gamma \after 
   \tuple{T(f\after \pi_{1})}{T(g\after \pi_{2})} \\
& = &
+_{\gamma} \after \gamma\times \gamma \after 
   \tuple{T(\pi_{1}) \after T(f\times g)}{T(\pi_{2}) \after T(f\times g)} \\
& = &
+_{\gamma} \after \gamma\times \gamma \after 
   \tuple{T(\pi_{1})}{T(\pi_{2})} \after T(f\times g) \\
& = &
+_{\gamma} \after 
   \tuple{\gamma \after T(\pi_{1})}{\gamma \after T(\pi_{2})} \after 
   T(f\times g) \\
& = &
\gamma \after T(+_{\gamma}) \after T(f\times g) \\
& & \qquad\mbox{since $+_\gamma$ is a homomorphism of algebras} \\
& = &
\gamma \after T(\cotuple{f}{g}_{\Alg})
\end{array}$$

\noindent If $h\colon X\times Y\rightarrow Z$ is also an algebra
homomorphism satisfying $h \after k_{1} = f$ and $h\after k_{2} = g$,
then:
$$\begin{array}{rcl}
\lefteqn{\cotuple{f}{g}_{\Alg}} \\
& = &
+_{\gamma} \after (f\times g) \\
& = &
\gamma \after + \after \eta\times\eta \after 
   (h\after k_{1})\times (h\after k_{2}) \\
& = &
\gamma \after + \after T(h\after k_{1})\times T(h\after k_{2}) \after 
   \eta\times\eta \\
& = &
\gamma \after T(h) \after + \after T(k_{1})\times T(k_{2}) \after 
   \eta\times\eta \\
& = &
h \after \tuple{\alpha\after T(\pi_{1})}{\beta \after T(\pi_{2})} \after 
   + \after \\
& & \qquad
   T(\idmap\times(\beta\after T(!)) \after \rho^{-1})\times 
      T((\alpha\after T(!))\times \idmap\after \lambda^{-1}) 
   \after \eta\times\eta \\
& = &
h \after \alpha\times\beta \after \tuple{T(\pi_{1})}{T(\pi_{2})} \after 
   + \after \\
& & \qquad
   T((\alpha\after\eta)\times(\beta\after T(!)) \after \rho^{-1})\times 
      T((\alpha\after T(!))\times (\beta\after\eta)\after \lambda^{-1}) 
   \after \eta\times\eta \\
& = &
h \after \alpha\times\beta \after \tuple{T(\pi_{1})}{T(\pi_{2})} \after 
   + \after T(\alpha\times\beta)\times T(\alpha\times\beta) \after \\
& & \qquad
   T(\eta\times T(!)\after \rho^{-1})\times 
       T(T(!)\times \eta \after \lambda^{-1}) \after \eta\times\eta \\
& = &
h \after \alpha\times\beta \after \tuple{T(\pi_{1})}{T(\pi_{2})} \after 
   T(\alpha\times\beta) \after + \after \\
& & \qquad
   T(\eta\times T(!)\after \rho^{-1})\times 
       T(T(!)\times \eta \after \lambda^{-1}) \after \eta\times\eta \\
& = &
h \after \alpha\times\beta \after T(\alpha)\times T(\beta) \after 
   \tuple{T(\pi_{1})}{T(\pi_{2})} \after T(\nabla) \after \bc^{-1} \after \\
& & \qquad
   T(\eta\times T(!)\after \rho^{-1})\times 
       T(T(!)\times \eta \after \lambda^{-1}) \after \eta\times\eta \\
& = &
h \after \alpha\times\beta \after \mu\times \mu \after 
   \tuple{T(\pi_{1})}{T(\pi_{2})} \after T(\nabla) \after \\
& & \qquad
   T\Big(\big(\eta\times T(!)\after \rho^{-1}\big) + 
       \big(T(!)\times \eta \after \lambda^{-1}\big)\Big)
   \after \bc^{-1} \after \eta\times\eta \\
& \stackrel{(*)}{=} &
h \after \alpha\times\beta \after \mu\times \mu \after 
   \tuple{T(p_{1})}{T(p_{2})} \after \bc^{-1} \after \eta\times\eta \\
& = &
h \after \alpha\times\beta \after  
   \tuple{\mu \after T(p_{1})}{\mu \after T(p_{2})} \after 
   \bc^{-1} \after \eta\times\eta \\
& = &
h \after \alpha\times\beta \after \bc \after 
   \bc^{-1} \after \eta\times\eta \\
& = &
h \after \alpha\times\beta \after \eta\times\eta \\
& = &
h,
\end{array}$$

\noindent where the marked equation $\stackrel{(*)}{=}$ holds because:
$$\begin{array}{rcl}
\lefteqn{\pi_{1} \after \nabla \after 
  \Big(\big(\eta\times T(!)\after \rho^{-1}\big) + 
       \big(T(!)\times \eta \after \lambda^{-1}\big)\Big)} \\
& = &
\pi_{1} \after 
  \big[\eta\times T(!)\after \rho^{-1}, \;
       T(!)\times \eta \after \lambda^{-1}\big] \\
& = &
\big[\pi_{1} \after \eta\times T(!)\after \rho^{-1}, \;
       \pi_{1} \after T(!)\times \eta \after \lambda^{-1}\big] \\
& = &
\big[\eta \after \pi_{1} \after \rho^{-1}, \;
       T(!) \after \pi_{1} \after \lambda^{-1}\big] \\
& = &
\cotuple{\eta}{T(!) \after \;!} \qquad \mbox{since } \\
& & \qquad
\xymatrix{\big(T(X)\ar[r]^-{\rho^{-1}}_-{\cong} & 
   T(X)\times T(0)\ar[r]^-{\pi_{1}}_-{\cong} & T(X)\big) = \idmap_{X} } \\
& & \qquad
\xymatrix{\big(T(Y)\ar[r]^-{\lambda^{-1}}_-{\cong} & 
   T(0)\times T(Y)\ar[r]^-{\pi_{1}}_-{\cong} & T(0)\big) = \; !_{T(Y)} } \\
& = &
\cotuple{\eta}{0} \\
& = &
p_{1}.
\end{array}$$

\textbf{proof of (ii) impies (i) and (iii) implies (i)}

\renewcommand{\theenumi}{(\roman{enumi})}
\begin{enumerate}
\item Kleisli category.\\
First we show that $q_i = p_i \colon X_1+X_2 \to T(X_i)$.
$$
\begin{array}{rcll}
q_1 \after \kappa_1^{\Sets} &=& \mu \after \eta \after q_1 \after \kappa_1^{\Sets}\\
&=&
\mu \after Tq_1 \after \eta \after \kappa_1^{\Sets}\\
&=&
q_1 \klafter \kappa_1^{\Kl}\\
&=&
\id^{\Kl} = \eta
\end{array}
$$
And similarly $q_1 \after \kappa_2^{\Sets} = q_1 \klafter \kappa_2^{\Kl} = 0$, hence $q_1 = \cotuple{\eta}{0} = p_1$.

Let $h$ be the map defined in the proof above. Then $\bc \after h = \id$, as
$$
\begin{array}{rcll}
\bc \after h &=& \tuple{\mu \after T(p_1)}{\mu \after T(p_2)} \after h\\
&=&
\tuple{\mu \after T(p_1)\after h}{\mu \after T(p_2)\after h}\\
&=&
\tuple{p_1 \klafter h}{p_2 \klafter h} \\
&=& 
\tuple{\pi_1}{\pi_2} = \id
\end{array}
$$
Also $h \after \bc = \idmap \colon T(X_1+X_2) \to T(X_1+X_2)$. To prove this, note that $h \after \bc$ is a map $T(X_1+X_2) \to X_1 + X_2$ in $\Kl(T)$. Composing with the projections in $\Kl(T)$ yields:
$$
p_1 \klafter (h \after \bc) = \mu \after Tp_1 \after h \after \bc = (p_1 \klafter h) \after \bc = \pi_1 \after \bc
$$ 
and similarly $p_2 \klafter (h \after \bc) = \pi_2 \after \bc$. Hence
$$
h \after \bc = \tuple{\pi_1 \after \bc}{\pi_2 \after \bc}_{\Kl} = \tuple{\mu \after Tp_1}{\mu \after Tp_2} = \id,
$$
as $p_i \klafter \idmap = \mu \after Tp_i$.

So $h$ is the inverse of $\bc$.

\item $\Alg(T)$\\

As, by assumption $\times$ is a biproduct in $\Alg(T)$, there exists
coprojections $l_i \colon (T(A) \to A) \to (T(A\times B) \to A\times
B)$. By definition of the biproduct, $\pi_1 \after l_1 = \id$ and
$\pi_2 \after l_1 = 0$. The map $0 \colon A \to B$ in $\Alg(T)$ is the
unique map $A \xrightarrow{!} 1 \xrightarrow{!} B$. Considering the
diagram
$$
\xymatrix{
T(1) \ar[r]^{\cong} \ar[d]_{!} & T^2(0) \ar[d]^{\mu} \ar[r]^{T^2(!)} & T^2(B) \ar[r]^{T(\beta)} \ar[d]^{\mu} & T(B) \ar[d]^{\beta}\\
1 \ar[r]^{\cong} & T(0) \ar[r]^-{T(!)} & T(B) \ar[r]^-{\beta} &B
}
$$
we see that $\beta \after T(!) \colon 1 \to B$ is an algebra morphism, and hence it is the unique map $1 \to B$. So 
$$
l_1 = \tuple{\id}{\beta \after T(!) \after !}
$$
Write $g = \cotuple{T(\kappa_1)}{T(\kappa_2)} \colon T(X_1) \times T(X_2) \to T(X_1+X_2)$. We prove that $g$ is the inverse of $\bc$. 

First we show that $\bc$ is a morphism in $\Alg(T)$. We have to prove
that the following diagram commutes:
$$
\xymatrix{
T^2(X_1 + X_2) \ar[r]^-{T\bc} \ar[d]_{\mu} & T(T(X_1) \times T(X_2)) \ar[d]^-{\tuple{\mu \after T\pi_1}{\mu \after T\pi_2}}\\
T(X_1+X_2) \ar[r]^-{\bc} & T(X_1) \times T(X_2)
}
$$
$$
\begin{array}{rcll}
\tuple{\mu \after T\pi_1}{\mu \after T\pi_2} \after T\bc &=& \tuple{\mu \after T\mu \after T^2p_1}{\mu \after T\mu \after T^2p_2}\\
&=&
\tuple{\mu \after \mu \after T^2p_1}{\mu \after \mu \after T^2p_2}\\
&=&
\tuple{\mu \after Tp_1 \after \mu}{\mu \after Tp_2 \after \mu}\\
&=&
\tuple{\mu \after Tp_1}{\mu \after Tp_2} \after \mu = \bc \after \mu
\end{array}
$$

Now we consider $\bc \after g \colon T(X_1) \times T(X_2) \to T(X_1) \times T(X_2)$. The coprojection $l_1 \colon T(X_1) \to T(X_1) \times T(X_2)$ is given by 
$$
l_1 = \tuple{\id}{\mu \after T(!) \after !} = \tuple{id}{T(!)\after !}
$$
Note that:
$$
\begin{array}{rcll}
\bc \after g \after l_1 &=& \bc \after T\kappa_1\\
&=&
\tuple{\mu \after T(\cotuple{\eta}{0})}{\mu \after T(\cotuple{0}{\eta})} \after T\kappa_1 &\text{where}\,0=X_1 \xrightarrow{!} T(0) \xrightarrow{T(!)} T(X_2)\\
&=&
\tuple{\mu \after T\eta}{\mu \after T(0)}\\
&=&
\tuple{\id}{\mu \after T^2(!) \after T(!)}\\
&=&
\tuple{\id}{T(!) \after \mu \after T(!)}\\
&=&
\tuple{\id}{T(!) \after !} = l_1 &\text{$\mu \after T(!)$ maps into the final object}\\
\end{array}
$$
Similarly $\bc \after g \after l_2 = l_2$. Hence, by the property of the coproduct, $\bc \after g = \id$.

To prove that $g \after \bc = \idmap \colon T(X_1+X_2) \to T(X_1) + T(X_2)$, first note that the free construction
$$
F \colon \cat{C} \leftrightarrows \Alg(T) \colon U,
$$
where $F(X) = T^2(X) \xrightarrow{\mu} X$, preserves coproducts. 
Hence $T^2(X_1 + X_2) \xrightarrow{\mu} T(X_1+X_2)$ is the coproduct of $T^2(X_1) \xrightarrow{\mu} T(X_1)$ and $T^2(X_2) \xrightarrow{\mu} T(X_2)$ with coprojection $T\kappa_1$ and $T\kappa_2$. 
$$
\begin{array}{rcll}
g \after \bc \after T\kappa_1 &=& g \after \tuple{\mu \after \eta}{\mu \after T(0)}\\
&=&
g \after \tuple{\id}{\mu \after T(0)}\\
&=&
g \after l_1 = T\kappa_1
\end{array}
$$
Similarly $g \after \bc \after T\kappa_2 = T\kappa_2$. Hence, by the property of the coproduct, $g\after\bc =\id$. So $g$ is the inverse of $\bc$.
\end{enumerate}
}

It is well-known (see for instance~\cite{KellyL80,AbramskyC04}) that a
category with finite biproducts $(\oplus, 0)$ is enriched over
commutative monoids: each homset carries a commutative monoid
structure $(+,0)$, and this structure is preserved by pre- and
post-composition. The addition operation $+$ on homsets is obtained
as
\begin{equation}
\label{HomsetPlus}
\xymatrix{
f+g \stackrel{\textrm{def}}{=} \Big(X\ar[r]^-{\tuple{\idmap}{\idmap}} &
   X\oplus X\ar[r]^-{f\oplus g} &
   Y\oplus Y\ar[r]^-{\cotuple{\idmap}{\idmap}} & Y\Big).
}
\end{equation}

\noindent The zero map is neutral element for this addition.  One can
also describe a monoid structure on each object $X$ as
\begin{equation}
\label{ObjMon}
\xymatrix{
X\oplus X\ar[r]^-{\cotuple{\idmap}{\idmap}} & X &
   0.\ar[l]_-{0}
}
\end{equation}

\noindent We have just seen that the Kleisli category of an additive
monad has biproducts, using the addition operation from
Lemma~\ref{AdditiveMonadMonoidLem}. When we apply the sum
description~\eqref{ObjMon} to such a Kleisli category its biproducts,
we obtain precisely the original addition from
Lemma~\ref{AdditiveMonadMonoidLem}, since the codiagonal $\nabla =
\cotuple{\idmap}{\idmap}$ in the Kleisli category is given $T(\nabla)
\after \bc^{-1}$.

\subsection{Additive commutative monads}

In the remainder of this section we focus on the category
$\ACMnd(\cat{C})$ of monads that are both additive and commutative on
a distributive category \cat{C}. As usual, we simply write $\ACMnd$ for
$\ACMnd(\Sets)$. For $T \in \cat{ACMnd}(\cat{C})$, the Kleisli
category $\Kl(T)$ is both symmetric monoidal---with $(\times,1)$ as
monoidal structure, see Lemma~\ref{KleisliStructLem}---and has
biproducts $(+,0)$. Moreover, it is not hard to see that this monoidal
structure distributes over the biproducts via the canonical map
$(Z\times X)+(Z\times Y)\rightarrow Z\times (X+Y)$ that can be lifted
from \cat{C} to $\Kl(T)$.

We shall write $\SMBLaw \hookrightarrow \SMLaw$ for the category of
symmetric monoidal Lawvere theories in which $(+,0)$ form not only
coproducts but biproducts. Notice that a projection $\pi_{1}\colon
n+m\rightarrow n$ is necessarily of the form $\pi_{1} =
\cotuple{\id}{0}$, where $0 \colon m\rightarrow n$ is the zero map
$m\rightarrow 0 \rightarrow n$. The tensor $\otimes$ distributes over
$(+,0)$ in \SMBLaw, as it already does so in \SMLaw. Morphisms in
\SMBLaw are functors that strictly preserve all the structure.

The following result extends Corollary~\ref{Mnd2FCCatCor}.

\begin{lemma}
The (finitary) Kleisli construction on a monad yields a functor
$\Kl_{\NNO} \colon \ACMnd \rightarrow \SMBLaw$.
\end{lemma}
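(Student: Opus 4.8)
The plan is to deduce the statement by combining three facts already at our disposal: that $\Kl_{\NNO}$ restricts to a functor $\Kl_{\NNO}\colon\CMnd\to\SMLaw$ (Lemma~\ref{LMCommLem}), that for an additive monad the coproducts of the base become biproducts in the Kleisli category (Theorem~\ref{AMnd2BCat}), and the elementary behaviour of zero maps under monad morphisms (Lemma~\ref{zeroproplem}). In other words, the only real work is to see that nothing leaves the finitary subcategory $\Kl_{\NNO}(T)\hookrightarrow\Kl(T)$.

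\emph{On objects.} Let $T\in\ACMnd$, so $T$ is a commutative and additive monad on $\Sets$. Since $T$ is commutative, $\Kl_{\NNO}(T)$ is already an object of $\SMLaw$, with tensor $(\times,1)$ obtained from the coordinatisation map~\eqref{CooEqn}. Since $T$ is additive we have $T(0)\cong 1$, so $0$ is a zero object of $\Kl(T)$, and by Theorem~\ref{AMnd2BCat} the coproduct $(+,0)$ forms a biproduct in $\Kl(T)$, with projections the maps $p_1,p_2$ from~\eqref{kleisliprojdef}. Now the full inclusion $\Kl_{\NNO}(T)\hookrightarrow\Kl(T)$ picks out the finite sets $\underline{n}$ and is closed under finite coproducts, which on objects are given by addition of natural numbers ($\underline{n}+\underline{m}\cong\underline{n+m}$); the zero object $0$ and the projection maps $p_1\colon n+m\to T(n)$, $p_2\colon n+m\to T(m)$ are again maps between finite sets, hence already live in $\Kl_{\NNO}(T)$. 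Therefore $(+,0)$ is a biproduct in $\Kl_{\NNO}(T)$, so $\Kl_{\NNO}(T)\in\SMBLaw$ (the distributivity of $\otimes$ over $(+,0)$ required in $\SMBLaw$ is inherited from $\SMLaw$, as noted in the text).

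\emph{On morphisms.} For a monad morphism $\sigma\colon T\to S$ in $\ACMnd$ we already know, from $\Kl_{\NNO}\colon\CMnd\to\SMLaw$, that $\Kl_{\NNO}(\sigma)$ is a functor which is the identity on objects and strictly preserves finite coproducts and the tensor $(\times,1)$. It remains to check that it strictly preserves the product side of the biproducts. Being the identity on objects it preserves the zero object $0$ on the nose, and it preserves zero maps: for finite sets $X,Y$ we have $\Kl_{\NNO}(\sigma)(0_{X,Y})=\sigma_Y\after 0_{X,Y}=0_{X,Y}$ by Lemma~\ref{zeroproplem} (using that $\sigma_0\colon T(0)\to S(0)$ is a map between two final objects). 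Since on a biproduct $n+m$ the projection $\pi_1$ is forced to be $\cotuple{\idmap}{0}$, equivalently the map $p_1$ of~\eqref{kleisliprojdef} built only from $\eta$ and a zero map (and likewise $\pi_2=p_2$), preservation of coproducts, of $\eta$ (the Kleisli identities), and of zero maps forces $\Kl_{\NNO}(\sigma)$ to send $p_i$ to $p_i$; hence it strictly preserves products too, and is a morphism in $\SMBLaw$. Functoriality on identities and composites is inherited from $\Kl_{\NNO}\colon\Mnd\to\Law$ (Corollary~\ref{Mnd2FCCatCor}).

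The only point requiring care — and hence the main obstacle, modest as it is — is the verification that the abstract biproduct structure on $\Kl(T)$ genuinely cuts down to $\Kl_{\NNO}(T)$: one must observe that the biproduct object $\underline{n}+\underline{m}$ is (isomorphic to) $\underline{n+m}$ and that the structure maps $p_1,p_2$ and the zero object are finitary, so no data escapes the subcategory. Everything else is bookkeeping on top of Lemma~\ref{LMCommLem}, Theorem~\ref{AMnd2BCat} and Lemma~\ref{zeroproplem}.
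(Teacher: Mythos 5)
Your proposal is correct and follows essentially the same route as the paper: the paper's proof simply invokes Theorem~\ref{AMnd2BCat} for the biproduct structure on $\Kl_{\NNO}(T)$ and notes that $\Kl_{\NNO}(\sigma)$ preserves it, exactly the two points you verify (your additional checks that the structure maps $p_i$ and the zero maps stay in the finitary subcategory and are preserved because $\sigma$ commutes with $\eta$ and with zero maps are the details the paper leaves implicit).
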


\begin{proof}
It follows from Theorem~\ref{AMnd2BCat} that $(+,0)$ form biproducts
in $\Kl_{\NNO}(T)$, for $T$ an additive commutative monad (on
\Sets). This structure is preserved by functors $\Kl_{\NNO}(\sigma)$,
for $\sigma\colon T\rightarrow S$ in \ACMnd. \QED

\auxproof{
$$
\begin{array}[b]{rcl}
\Kl_{\NNO}(\sigma)(k_{1})
& = &
\sigma \after \eta \after \kappa_{i} \\
& = &
\eta \after \kappa_{i} \\
& = &
k_{i} \\
\Kl_{\NNO}(\sigma)(p_{1})
& = &
\sigma \after \cotuple{\eta}{0} \\
& = &
\cotuple{\sigma\after\eta}{\sigma\after 0} \\
& = &
\cotuple{\eta}{0} \\
& = &
p_{1} \\
\Kl_{\NNO}(\sigma)(\cotuple{f}{g}_{\Kl})
& = &
\sigma \after \cotuple{f}{g} \\
& = &
\cotuple{\sigma\after f}{\sigma \after g} \\
& = &
\cotuple{\Kl_{\NNO}(\sigma)(f)}{\Kl_{\NNO}(\sigma)(g)}_{\Kl} \\
\Kl_{\NNO}(\sigma)\tuple{f}{g}_{\Kl}) 
& = &
\sigma \after \bc^{-1} \after \tuple{f}{g} \\
& = &
\bc^{-1} \after (\sigma\times\sigma) \after \tuple{f}{g} \\
& = &
\bc^{-1} \after \tuple{\sigma\after f}{\sigma \after g} \\
& = &
\tuple{\Kl_{\NNO}(\sigma)(f)}{\Kl_{\NNO}(\sigma)(g)}_{\Kl}.
\end{array}
$$
}
\end{proof}

We have already seen in Lemma \ref{LMCommLem} that the functor $\LM
\colon \Law\rightarrow\Mnd$ defined in Lemma \ref{GLaw2MndLem}
restricts to a functor between symmetric monoidal Lawvere theories and
commutative monads. We now show that it also restricts to a functor
between symmetric monoidal Lawvere theories with biproducts and
commutative additive monads. Again, this restriction is left adjoint
to the finitary Kleisli construction.

\begin{lemma}
\label{LMCSRngLem}
The functor $\LM\colon\SMLaw\rightarrow\CMnd$ from Lemma
\ref{LMCommLem} restricts to $\SMBLaw \rightarrow \ACMnd$. Further,
this restriction is left adjoint to the finitary Kleisli construction
$\Kl_{\NNO}\colon \ACMnd\rightarrow \SMBLaw$.
\end{lemma}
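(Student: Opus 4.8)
The plan splits into two parts. First I would show that $\LM$ restricts to a functor $\SMBLaw\to\ACMnd$; then I would observe that the adjunction $\LM\dashv\Kl_{\NNO}$ of Lemma~\ref{LMCommLem} restricts for purely formal reasons, since $\Kl_{\NNO}$ already restricts to $\ACMnd\to\SMBLaw$ by the lemma immediately preceding this one, and both inclusions $\ACMnd\hookrightarrow\CMnd$ and $\SMBLaw\hookrightarrow\SMLaw$ are full.

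For the first part, fix $\cat{L}\in\SMBLaw$. By Lemma~\ref{LMCommLem} the monad $T_{\cat{L}}=\LM(\cat{L})$ is commutative, so only additivity is left to check. That $T_{\cat{L}}(0)\cong 1$ is immediate from the colimit formula~\eqref{LMEqn}: the summand $\cat{L}(1,i)\times 0^{i}$ is empty for $i>0$ and equals $\cat{L}(1,0)\times 1$ for $i=0$, and $\cat{L}(1,0)$ is a singleton because $0$ is a zero object, hence final, in $\cat{L}\in\SMBLaw$. For the map $\bc$ I would use the standard natural isomorphism $T_{\cat{L}}(n)\cong\cat{L}(1,n)$ on finite sets (sending $h\colon 1\to n$ to $[\kappa_{n}(h,\idmap_{n})]$, with inverse $[\kappa_{i}(g,v)]\mapsto v\after g$, where $v\colon i\to n$ is read in $\aleph_{0}\hookrightarrow\cat{L}$; this is part of the usual isomorphism $\Kl_{\NNO}(T_{\cat{L}})\cong\cat{L}$ of Lawvere theories, straightforward from the equivalence relation). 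Under this identification the auxiliary projections $p_{1},p_{2}$ of~\eqref{kleisliprojdef}, viewed as Kleisli maps $m+k\to m$ and $m+k\to k$, are exactly the biproduct projections $\cotuple{\idmap}{0}$ of $\cat{L}$, so the component $\bc\colon T_{\cat{L}}(m+k)\to T_{\cat{L}}(m)\times T_{\cat{L}}(k)$ becomes the canonical comparison map $\cat{L}(1,m+k)\to\cat{L}(1,m)\times\cat{L}(1,k)$. Since $m+k$ is a biproduct, hence in particular a product, in $\cat{L}$, this comparison is a bijection, so $\bc$ is invertible on all finite sets; this is the step where the hypothesis $\cat{L}\in\SMBLaw$ rather than merely $\cat{L}\in\SMLaw$ is used. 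Finally, $T_{\cat{L}}$ is finitary and finite products in $\Sets$ commute with filtered colimits, so the functors $(X,Y)\mapsto T_{\cat{L}}(X+Y)$ and $(X,Y)\mapsto T_{\cat{L}}(X)\times T_{\cat{L}}(Y)$ are both finitary; a natural transformation between finitary functors that is invertible on finite sets is invertible, so $\bc$ is an isomorphism and $T_{\cat{L}}\in\ACMnd$.

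For the second part, I would note that $\ACMnd\hookrightarrow\CMnd$ is full since morphisms on either side are plain monad maps, and that $\SMBLaw\hookrightarrow\SMLaw$ is full as well: a morphism of $\SMLaw$ between objects of $\SMBLaw$ is the identity on objects and strictly preserves the coproduct structure and the object $0$, hence preserves each zero map $0_{k,m}\colon k\to 0\to m$ and therefore the biproduct projection $\pi_{1}=\cotuple{\idmap}{0_{k,m}}$, so it preserves biproducts and lies in $\SMBLaw$. Since $\LM$ maps $\SMBLaw$ into $\ACMnd$ (first part) and $\Kl_{\NNO}$ maps $\ACMnd$ into $\SMBLaw$ (preceding lemma), the natural bijection $\CMnd(\LM(\cat{L}),T)\cong\SMLaw(\cat{L},\Kl_{\NNO}(T))$ from Lemma~\ref{LMCommLem} restricts, by fullness of the two inclusions, to a natural bijection $\ACMnd(\LM(\cat{L}),T)\cong\SMBLaw(\cat{L},\Kl_{\NNO}(T))$ for $\cat{L}\in\SMBLaw$ and $T\in\ACMnd$, which is exactly the claimed adjunction $\LM\dashv\Kl_{\NNO}$.

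The hard part will be the verification in the first part that $\bc$ is an isomorphism: identifying the finite-set components of $\bc$ with the product comparison maps of $\cat{L}$ (which is where the biproduct hypothesis really enters) and then promoting this to all sets using finitariness of $T_{\cat{L}}$. The computation $T_{\cat{L}}(0)\cong1$ and the formal restriction of the adjunction are routine.
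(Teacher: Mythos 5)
Your proposal is correct, but it reaches the key conclusion --- that $\bc\colon T_{\cat{L}}(X+Y)\to T_{\cat{L}}(X)\times T_{\cat{L}}(Y)$ is invertible --- by a genuinely different route than the paper. The paper writes down a global inverse directly on representatives: given $[\kappa_{i}(g,v)]$ with $v\colon i\to X+Y$, it pulls back $v$ along the two coprojections to split $i$ as $i_{X}+i_{Y}$ and $v$ as $v_{X}+v_{Y}$, uses the biproduct structure of $\cat{L}$ to write $g=\tuple{g_{X}}{g_{Y}}$, and checks that $([\kappa_{i}(g,v)],[\kappa_{j}(h,w)])\mapsto[\kappa_{i+j}(\tuple{g}{h},v+w)]$ is inverse to $\bc$. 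You instead identify $T_{\cat{L}}(n)\cong\cat{L}(1,n)$ on finite sets, recognize $\bc$ there as the product-comparison map $\cat{L}(1,m+k)\to\cat{L}(1,m)\times\cat{L}(1,k)$ (a bijection precisely because $m+k$ is a biproduct), and then promote invertibility to arbitrary sets by finitariness of $T_{\cat{L}}$ together with the commutation of finite products with filtered colimits in $\Sets$. Both arguments invoke the biproduct hypothesis at the same moment (forming the pairing $\tuple{g_{X}}{g_{Y}}$ versus inverting the comparison map), so neither is cheating; yours trades the paper's explicit combinatorics on equivalence classes for a standard but slightly more abstract Kan-extension argument, and makes it clearer exactly where $\SMBLaw$ rather than $\SMLaw$ is needed. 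Your treatment of the adjunction is also cleaner: the paper proposes to check that the unit preserves the product structure, whereas you observe that both inclusions $\ACMnd\hookrightarrow\CMnd$ and $\SMBLaw\hookrightarrow\SMLaw$ are full (the latter because a coproduct-preserving identity-on-objects functor automatically preserves zero maps and hence the projections $\cotuple{\idmap}{0}$), so the hom-set bijection restricts for purely formal reasons. The only point where you lean on an unproved (though entirely standard) fact is the identification of Kleisli composition in $\Kl_{\NNO}(T_{\cat{L}})$ with composition in $\cat{L}$ under the isomorphism $T_{\cat{L}}(n)\cong\cat{L}(1,n)$; spelling that out would put your argument on the same footing of detail as the paper's.
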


\begin{proof}
First note that $T_{\cat{L}}(0)$ is final: 
$$
\begin{array}{rclclcl}
T_{\cat{L}}(0) &=& \textstyle{\coprod_i}\,\cat{L}(1,i) \times 0^i &\cong& \cat{L}(1,0) \times 0^0 &\cong& 1,
\end{array}
$$ 

\noindent where the last isomorphism follows from the fact that
$(+,0)$ is a biproduct in $\cat{L}$ and hence $0$ is terminal.  The
resulting zero map $0_{X,Y} \colon X \to T(Y)$ is given by
$$
\begin{array}{rcl}
x &\mapsto& [\kappa_0(!\colon 1 \to 0, ! \colon 0 \to Y)].
\end{array}
$$

To prove that the bicartesian map $\bc \colon T_{\cat{L}}(X+Y) \to
T_{\cat{L}}(X) \times T_{\cat{L}}(Y)$ is an isomorphism, we introduce
some notation. For $[\kappa_i(g,v)] \in T_{\cat{L}}(X+Y)$, where $g
\colon 1 \to i$ and $v \colon i \to X+Y$, we form the pullbacks (in
\Sets)
$$\xymatrix@R1.5pc{
i_{X}\ar[d]_{v_X}\ar[r]\pullback & i\ar[d]^-{v} & 
   i_{Y}\ar[d]^{v_Y}\ar[l]\pullback[dl] \\
X\ar[r]^-{\kappa_{1}} & X+Y & Y\ar[l]_-{\kappa_{2}}
}$$

\noindent By universality of coproducts we can write $i = i_{X} + i_{Y}$
and $v = v_{X}+v_{Y} \colon i_{X}+i_{Y} \rightarrow X+Y$. Then we
can also write $g = \tuple{g_X}{g_Y} \colon 1\rightarrow i_{X}+i_{Y}$.
Hence, for $[k_i(g,v)] \in T_{\cat{L}}(X+Y)$,
\begin{equation}
\label{LawMndbcEqn}
\begin{array}{rcl}
\bc([\kappa_i(g,v)]) 
& = & 
\big([\kappa_{i_X}(g_{X}, v_{X})], \, [\kappa_{i_Y}(g_{Y}, v_{Y})]\big).
\end{array}
\end{equation}



\noindent It then easily follows that the map $T_{\cat{L}}(X) \times
T_{\cat{L}}(Y) \to T_{\cat{L}}(X+Y)$ defined by
$$
\begin{array}{rcll}
([\kappa_i(g,v)], [\kappa_j(h,w)])& \mapsto & [\kappa_{i+j}(\langle g, h\rangle,v+w)]
\end{array}
$$
is the inverse of $\bc$.

Checking that the unit of the adjunction
$\LM\colon\SMLaw\leftrightarrows\CMnd\colon \Kl_{\NNO}$ preserves the
product structure is left to the reader. This proves that also the
restricted functors form an adjunction.\QED
\end{proof}

\auxproof{
To see that the map $\bc = \tuple{\mu \after T_{\cat{L}}p_1}{\mu \after T_{\cat{L}}p_2}$ is indeed given as above, note that for $[\kappa_i(g,v)] \in T_{\cat{L}}(X+Y)$
$$
(\mu \after T([\eta,0]))([\kappa_i(g,v)]) = \mu([\kappa_i(g, [\eta,0] \after v)]) 
$$
and
$$
([\eta,0] \after v)(a) = \left\{
\begin{array}{ll}
	\kappa_1([\id,a]) & \text{if}\, v(a) \in X \\
	\kappa_0([!,!]) & \text{otherwise}.
\end{array} \right.
$$

Write $f$ for the map $T(X) \times T(Y) \to T(X+Y)$, $([\kappa_i(g,v)], [\kappa_j(h,w)]) \mapsto [\kappa_{i+j}(\langle g, h\rangle,v+w)]$. Claim: $f$ is the inverse of $\bc$.
$$
\begin{array}{rcll}
(f \after \bc)([\kappa_i(g,v)]) &=& [\kappa_{i_X+i_Y}\big(\tuple{\langle \pi_{x_0}, \ldots, \pi_{x_{i_X-1}}\rangle \after g}{\langle \pi_{y_0}, \ldots, \pi_{y_{i_Y-1}}\rangle \after g},\\
&&
v \after [\kappa_{x_0}, \ldots \kappa_{x_{i_X-1}}] + v \after [\kappa_{y_0}, \ldots \kappa_{y_{i_Y-1}}]\big)]\\
&=&
[\kappa_i\big(\langle \pi_{x_0}, \ldots, \pi_{x_{i_X-1}}, \pi_{y_0}, \ldots, \pi_{y_{i_Y-1}}\rangle \after g,\\
&&
v \after[\kappa_{x_0}, \ldots \kappa_{x_{i_X-1}},\kappa_{y_0}, \ldots \kappa_{y_{i_Y-1}}]\big)]\\
&=&
[\kappa_i(g,v \after[\kappa_{x_0}, \ldots \kappa_{x_{i_X-1}},\kappa_{y_0}, \ldots \kappa_{y_{i_Y-1}}] \after \langle \pi_{x_0}, \ldots, \pi_{x_{i_X-1}}, \pi_{y_0}, \ldots, \pi_{y_{i_Y-1}}\rangle)]\\
&=&
[\kappa_i(g,v)]   
\end{array}
$$  

$$
\begin{array}{rcll}
(\bc\after f)([\kappa_i(g,v)], [\kappa_j(h,w)])
&=&
\bc([\kappa_{i+j}(\langle g, h\rangle,v+w)])\\
&=&
\big([\kappa_i(\langle \pi_0, \ldots \pi_{i-1} \rangle \after \langle g, h\rangle, (v+w) \after [\kappa_0, \ldots \kappa_{i-1}])],\\
&&
[\kappa_j(\langle \pi_i, \ldots \pi_{i+j} \rangle \after \langle g, h\rangle, (v+w) \after [\kappa_i, \ldots \kappa_{i+j}])]\big)\\
&=&
([\kappa_i(g,v)], [\kappa_j(h,w)]) 
\end{array}
$$
Hence, $\bc$ is an isomorphism.

To show that $\eta = \overline{\id_{T_{\cat{L}}}} \colon \cat{L} \to (\Kl_{\NNO}\after\LM)(\cat{L})$ preserves the product structure consider $n+m \xrightarrow{\pi_1^{\cat{L}}} n$ in $\cat{L}$. We will show that $\overline{\id}(\pi_1^{\cat{L}}) \colon n+m \to T_{\cat{L}}(n) = \pi_1^{\Kl_{\NNO}(T_{\cat{L}})} = [\eta, 0]$ (where this last $\eta$ is the unit of the monad $T_{\cat{L}}$). For $i \in n$,
$$
\begin{array}{rcll}
\overline{\id}(\pi_1^{\cat{L}})(i) &=& [\kappa_n(\pi_1^{\cat{L}}\after\kappa_i, \id_n)]\\
&=&
[\kappa_1(\id, \pi_1 \after \kappa_i)] &\text{eq. rel}\\
&=&
[\kappa_1(\id, i)]\\
&=&
\eta(i)
\end{array}
$$
For $i \in m$,
$$
\begin{array}{rcll}
\overline{\id}(\pi_1^{\cat{L}})(i) &=& [\kappa_n(\pi_1^{\cat{L}}\after\kappa_i, \id_n)]\\
&=&
[\kappa_n(\pi_1^{\cat{L}}\after\kappa_2 \after \kappa_{i-n}, \id_n)] &(\text{where}\, \kappa_{i-n} \colon 1 \to m)\\
&=&
[\kappa_n(0_{m,n} \after \kappa_{i-n}, \id_n)] \\
&=& 
[\kappa_n(! \after ! \after \kappa_{i-n}, \id_n)]\\
&=&
[\kappa_0(! \after \kappa_{i-n}, \id_n \after !)]\\
&=&
[\kappa_0(!,!)]\\
&=&
0_{m,n}(i)
\end{array}
$$
}

In the next two sections we will see how additive commutative monads
and symmetric monoidal Lawvere theories with biproducts relate to
commutative semirings.

\section{Semirings and monads}\label{SemiringMonadSec}

This section starts with some clarification about semirings and
modules. Then it shows how semirings give rise to certain
``multiset'' monads, which are both commutative and additive.  It is
shown that the ``evaluate at unit 1''-functor yields a map in the
reverse direction, giving rise to an adjunction, as before.

A commutative semiring in \Sets consists of a set $S$ together with
two commutative monoid structures, one additive $(+,0)$ and one
multiplicative $(\cdot, 1)$, where the latter distributes over the
former: $s\cdot 0 = 0$ and $s\cdot (t+r) = s\cdot t + s\cdot r$. For
more information on semirings, see~\cite{Golan99}. Here we only
consider commutative ones. Typical examples are the natural
numbers $\NNO$, or the non-negative rationals $\mathbb{Q}_{\geq 0}$, or
the reals $\mathbb{R}_{\geq 0}$.

One way to describe semirings categorically is by considering the
additive monoid $(S,+,0)$ as an object of the category \Cat{CMon} of
commutative monoids, carrying a multiplicative monoid structure
$\smash{I \stackrel{1}{\rightarrow} S \stackrel{\cdot}{\leftarrow}
  S\otimes S}$ in this category \Cat{CMon}. The tensor guarantees that
multiplication is a bihomomorphism, and thus distributes over
additions.

In the present context of categories with finite products we do not
need to use these tensors and can give a direct categorical
formulation of such semirings, as a pair of monoids $\smash{1
  \stackrel{0}{\rightarrow} S \stackrel{+}{\leftarrow} S\times S}$ and
$\smash{1 \stackrel{1}{\rightarrow} S \stackrel{\cdot}{\leftarrow}
  S\times S}$ making the following distributivity diagrams commute.
$$\xymatrix@R-.5pc{
S\times 1\ar[r]^-{\idmap\times 0}\ar[d]_-{!} & 
   S\times S\ar[d]^{\cdot}
&
(S\times S)\times S\ar[r]^-{\mathit{dbl}}\ar[d]_{+\times\idmap} & 
   (S\times S)\times (S\times S)\ar[r]^-{\cdot\times\cdot} & 
   S\times S\ar[d]^{+} \\
1 \ar[r]^-{0}& S
&
S\times S\ar[rr]^-{\cdot} & & S
}$$

\noindent where $\mathit{dbl} =
\tuple{\pi_{1}\times\idmap}{\pi_{2}\times\idmap}$ is the doubling map
that was also used in Lemma~\ref{bcproplem}. With the obvious notion
of homomorphism between semirings this yields a category
$\CSRng(\Cat{C})$ of (commutative) semirings in a category \Cat{C}
with finite products.

Associated with a semiring $S$ there is a notion of module over
$S$. It consists of a commutative monoid $(M,0,+)$ together with a
(multiplicative) action $\star\colon S\times M\rightarrow M$ that is
an additive bihomomorphism, that is, the action preserves the additive
structure in each argument separately. We recall that the properties
of an action are given categorically by $\star \after (\cdot \times
\idmap) = \star \after (\idmap \times \star) \after \alpha^{-1} \colon
(S\times S) \times M \to M$ and $\star \after (1\times\idmap) = \pi_2
\colon 1 \times M \to M$. The fact that $\star$ is an additive
bihomomorphism is expressed by
$$\xymatrix@R-.5pc@C-1pc{
S\times (M\times M)\ar[r]^-{\mathit{dbl'}}\ar[dd]_{\idmap\times +} & 
   (S\times M)\times (S\times M)\ar[d]^-{\star\times\star} &
   (S\times S)\times M\ar[l]_-{\mathit{dbl}}\ar[dd]^{+\times\idmap} \\
&  M\times M\ar[d]^{+} \\
S\times M\ar[r]^-{*} & M & S\times M\ar[l]_-{*}
}$$

\noindent where $\mathit{dbl}'$ is the obvious duplicator of $S$.
Preservation of zeros is simply $\star \after (0\times \idmap) = 0
\after \pi_{1} \colon 1\times M\rightarrow M$ and $\star \after
(\idmap\times 0) = 0 \after \pi_{2} \colon S\times 1\rightarrow M$.

\auxproof{
$$\xymatrix{
1\times M\ar[r]^-{0\times\idmap}\ar[d]_{\pi_1} &
   S\times M\ar[d]^{\star} &
   S\times 1\ar[l]_-{\idmap\times 0}\ar[d]^{\pi_{2}} \\
1\ar[r]^-{0} & M & 1\ar[l]_-{0}
}$$
}

We shall assemble such semirings and modules in one category
$\Mod(\Cat{C})$ with triples $(S, M, \star)$ as objects, where
$\star\colon S\times M\rightarrow M$ is an action as above.  A
morphism $(S_{1},M_{1},\star_{1})\rightarrow (S_{2},M_{2},\star_{2})$
consists of a pair of morphisms $f\colon S_{1}\rightarrow S_{2}$
and $g\colon M_{1} \rightarrow M_{2}$ in \Cat{C} such that $f$
is a map of semirings, $f$ is a map of monoids, and the actions
interact appropriately: $\star_{2} \after (f\times g) = g
\after \star_{1}$.

\subsection{From semirings to monads}

To construct an adjunction between semirings and additive commutative
monads we start by defining, for each commutative semiring $S$, the
so-called multiset monad on $S$ and show that this monad is both
commutative and additive.

\begin{definition}\label{MultisetDef}
  For a semiring $S$, define a ``multiset'' functor
  $M_{S}\colon\Sets\rightarrow\Sets$ on a set $X$ by
$$\begin{array}{rcl}
M_{S}(X)
& = &
\set{\varphi\colon X\rightarrow S}{\support(\varphi)\mbox{ is finite}},
\end{array}$$

\noindent where $\support(\varphi) = \setin{x}{X}{\varphi(x) \neq 0}$
is called the support of $\varphi$. For a function $f\colon
X\rightarrow Y$ one defines $M_{S}(f) \colon M_{S}(X) \rightarrow
M_{S}(Y)$ by:
$$\begin{array}{rcl}
M_{S}(f)(\varphi)(y)
& = &
\sum_{x\in f^{-1}(y)}\varphi(x).
\end{array}$$
\end{definition}

\noindent Such a multiset $\varphi\in M_{S}(X)$ may be written as
formal sum $s_{1}x_{1}+\cdots+s_{k}x_{k}$, where $\support(\varphi) =
\{x_{1}, \ldots, x_{k}\}$ and $s_{i} = \varphi(x_{i})\in S$ describes
the ``multiplicity'' of the element $x_{i}$. In this notation one can
write the application of $M_S$ on a map $f$ as
$M_{S}(f)(\sum_{i}s_{i}x_{i}) = \sum_{i}s_{i}f(x_{i})$. Functoriality
is then obvious.

\begin{lemma}
\label{CSRng2CAMndLem}
For each semiring $S$, the multiset functor
$M_S$ forms a commutative and additive monad, with unit and multiplication:
$$\xymatrix@R-1.8pc{
X\ar[r]^-{\eta} & M_{S}(X)
& &
M_{S}(M_{S}(X))\ar[r]^-{\mu} & M_{S}(X) \\
x \ar@{|->}[r] & 1x
& &
\sum_{i}s_{i}\varphi_{i} \ar@{|->}[r] &
  \lamin{x}{X}{\sum_{i}s_i\varphi_{i}(x)}.
}$$
\end{lemma}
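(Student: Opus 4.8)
The plan is to check, in order, that $(M_{S},\eta,\mu)$ is a monad on \Sets, that this monad is commutative, and that it is additive; combining the three gives $M_{S}\in\ACMnd$. Every verification is a routine manipulation of formal sums $\varphi = s_{1}x_{1}+\cdots+s_{k}x_{k}$ (equivalently, finitely supported functions $X\to S$), the only inputs being the semiring axioms of $S$: associativity, commutativity and units of $+$ and of $\cdot$, and distributivity of $\cdot$ over $+$ (in particular $s\cdot 0 = 0$). Functoriality of $M_{S}$ is already given.

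First I would treat the monad laws. Naturality of $\eta$ is immediate, and naturality of $\mu$ says only that the weighted-sum formula for $\mu$ is compatible with reindexing a formal sum along a function. For the unit laws: $\mu\after\eta_{M_{S}(X)}=\idmap$ because $\eta_{M_{S}(X)}(\varphi)=1\varphi$ and $\mu(1\varphi)(x)=1\cdot\varphi(x)=\varphi(x)$; and $\mu\after M_{S}(\eta_{X})=\idmap$ because $M_{S}(\eta_{X})(\sum_{i}s_{i}x_{i})$ is the multiset on $M_{S}(X)$ assigning $s_{i}$ to the singleton $1x_{i}$ and $0$ elsewhere, which $\mu$ sends back to $\sum_{i}s_{i}x_{i}$ using $s\cdot 1=s$. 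Associativity $\mu\after\mu_{M_{S}(X)}=\mu\after M_{S}(\mu_{X})$ is the one slightly laborious identity: writing an element of $M_{S}^{3}(X)$ as $\sum_{i}s_{i}\big(\sum_{j}t_{ij}\varphi_{ij}\big)$, both routes evaluate at $x\in X$ to the same sum $\sum_{i,j}s_{i}\cdot t_{ij}\cdot\varphi_{ij}(x)$, differing only in how the triple products are bracketed, which is immaterial by associativity of $\cdot$ and distributivity over $+$; commutativity and associativity of $+$ are what make the bookkeeping of supports and of repeated terms harmless.

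Next I would establish commutativity. By Lemma~\ref{SetsStrengthLem} the canonical strength is $\st(\varphi,y)=M_{S}(\lam{x}{\tuple{x}{y}})(\varphi)$, so $\st(\sum_{i}s_{i}x_{i},y)=\sum_{i}s_{i}\tuple{x_{i}}{y}$ and, dually, $\st'(x,\sum_{j}t_{j}y_{j})=\sum_{j}t_{j}\tuple{x}{y_{j}}$. Unwinding the two composites $M_{S}(X)\times M_{S}(Y)\rightrightarrows M_{S}(X\times Y)$ one finds that $\mu\after M_{S}(\st')\after\st$ sends $(\sum_{i}s_{i}x_{i},\sum_{j}t_{j}y_{j})$ to $\sum_{i,j}(s_{i}\cdot t_{j})\tuple{x_{i}}{y_{j}}$, whereas $\mu\after M_{S}(\st)\after\st'$ sends it to $\sum_{i,j}(t_{j}\cdot s_{i})\tuple{x_{i}}{y_{j}}$; these agree precisely because multiplication in $S$ is commutative. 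Thus $M_{S}$ is a commutative monad, with $\dst(\sum_{i}s_{i}x_{i},\sum_{j}t_{j}y_{j})=\sum_{i,j}(s_{i}\cdot t_{j})\tuple{x_{i}}{y_{j}}$.

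Finally, additivity. That $M_{S}(0)\cong 1$ is clear: the only finitely supported function $\emptyset\to S$ is the empty one. The real content is that $\bc\colon M_{S}(X+Y)\to M_{S}(X)\times M_{S}(Y)$ from~\eqref{bcDef} is an isomorphism, and I expect this identification to be the main (though still modest) obstacle, since it requires chasing the definitions~\eqref{KleisliZeroMap}, \eqref{kleisliprojdef} and~\eqref{bcDef} rather than merely computing in $S$. Concretely, $p_{1}=\cotuple{\eta}{0_{Y,X}}$ sends $x\in X$ to the singleton $1x$ and sends every element of $Y$ to the zero multiset, so $\mu\after M_{S}(p_{1})$ sends $\varphi\in M_{S}(X+Y)$ to its restriction $\varphi\after\kappa_{1}\in M_{S}(X)$ (the $Y$-weight that accumulates on the zero multiset is annihilated by $\mu$ since $s\cdot 0=0$), and symmetrically for $p_{2}$; hence $\bc(\varphi)=\tuple{\varphi\after\kappa_{1}}{\varphi\after\kappa_{2}}$. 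This is visibly a bijection, with inverse $(\varphi,\psi)\mapsto\cotuple{\varphi}{\psi}$, finiteness of supports being preserved both ways because $\support(\cotuple{\varphi}{\psi})=\support(\varphi)+\support(\psi)$. Therefore $M_{S}$ is additive, and putting the three parts together, $M_{S}\in\ACMnd$.
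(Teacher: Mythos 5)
Your proposal is correct and follows essentially the same route as the paper: the paper leaves the monad laws to the reader and simply records the maps $\dst(\varphi,\psi)=\lam{(x,y)}{\varphi(x)\cdot\psi(y)}$ and $\bc(\chi)=(\chi\after\kappa_{1},\chi\after\kappa_{2})$, which are exactly the formulas you derive by unwinding the definitions. Your additional detail (e.g.\ that the $Y$-weight accumulating on the zero multiset is killed by $s\cdot 0=0$ in the computation of $\mu\after M_{S}(p_{1})$) is a correct filling-in of what the paper omits.
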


\begin{proof}
The verification that $M_S$ with these $\eta$ and $\mu$ indeed forms
a monad is left to the reader. We mention that for commutativity and
additivity the relevant maps are given by:
$$\xymatrix@C-1pc@R-1.8pc{
M_{S}(X)\times M_{S}(Y)\ar[r]^-{\dst} & M_{S}(X\times Y)
&
M_{S}(X+Y)\ar[r]^-{\bc} & M_{S}(X)\times M_{S}(Y) \\
(\varphi,\psi)\ar@{|->}[r] & \lam{(x,y)}{\varphi(x)\cdot\psi(y)}
&
\chi\ar@{|->}[r] & (\chi\after\kappa_{1}, \chi\after\kappa_{2}).
}$$

\noindent Clearly, $\bc$ is an isomorphism, making $M_S$ additive. \QED
\end{proof}

\begin{lemma}\label{SRng2CAMndProp}
The assignment $S \mapsto M_S$ yields a functor $\mathcal{M} \colon
\CSRng \to \ACMnd$.
\end{lemma}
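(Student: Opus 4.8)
The plan is to define $\mathcal{M}$ on a morphism by post-composition and then to check, one structure at a time, that this produces a well-defined monad morphism and that the assignment is functorial; everything else has already been done in Lemma~\ref{CSRng2CAMndLem}. So let $f\colon S\to T$ be a homomorphism of commutative semirings and define $\mathcal{M}(f) = M_f\colon M_S\to M_T$ with components $(M_f)_X\colon M_S(X)\to M_T(X)$ given by $(M_f)_X(\varphi) = f\after\varphi$, that is, $\sum_i s_i x_i\mapsto\sum_i f(s_i)\,x_i$. This is well defined since $f(0)=0$ forces $\support(f\after\varphi)\subseteq\support(\varphi)$, which is finite.

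First I would check naturality: for $g\colon X\to Y$ in $\Sets$ both $M_T(g)\after(M_f)_X$ and $(M_f)_Y\after M_S(g)$ send $\varphi$ to the multiset $y\mapsto\sum_{x\in g^{-1}(y)}f(\varphi(x))$, where for the second composite one uses that $f$ preserves the finite sum $\sum_{x\in g^{-1}(y)}\varphi(x)$. Compatibility with the units is immediate from $f(1)=1$, since $(M_f)_X(1x)=f(1)x=1x$. Compatibility with the multiplications comes down, for $\Phi=\sum_i s_i\varphi_i\in M_S(M_S(X))$, to the identity
\[
f\Big(\textstyle\sum_i s_i\,\varphi_i(x)\Big)\;=\;\textstyle\sum_i f(s_i)\,f(\varphi_i(x))\qquad(x\in X),
\]
which holds because $f$ is additive and multiplicative: the left-hand side is $\big((M_f)_X\after\mu\big)(\Phi)$ evaluated at $x$, and the right-hand side is $\big(\mu\after(M_f)_{M_T(X)}\after M_S((M_f)_X)\big)(\Phi)$ evaluated at $x$. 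Hence each $M_f$ is a morphism of monads, and therefore a morphism in $\ACMnd$, whose morphisms are just monad maps (one may also observe directly from the formulas in Lemma~\ref{CSRng2CAMndLem} that $M_f$ commutes with $\dst$ and with $\bc$).

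Functoriality is then immediate: $(M_{\id_S})_X=\id_{M_S(X)}$ and $(M_{g\after f})_X=(M_g)_X\after(M_f)_X$ because each component is post-composition, so $\mathcal{M}\colon\CSRng\to\ACMnd$ is a functor. The only step that is not purely formal is the multiplication square, and it reduces in one line to $f$ being a semiring homomorphism; so there is no real obstacle here --- the lemma is bookkeeping on top of Lemma~\ref{CSRng2CAMndLem}.
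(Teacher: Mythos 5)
Your proposal is correct and follows exactly the paper's approach: $\mathcal{M}(f)$ is defined componentwise by post-composition, $\sum_i s_i x_i \mapsto \sum_i f(s_i)x_i$, and the paper likewise leaves the monad-morphism and functoriality checks to the reader, which you have carried out correctly (the multiplication square indeed reduces to $f$ preserving $+$ and $\cdot$).
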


\begin{proof}
  Every semiring homomorphism $f \colon S \to R$, gives rise to a
  monad morphism $\mathcal{M}(f) \colon M_S \to M_R$ with components
  defined by $\mathcal{M}(f)_X (\sum_{i}s_{i}x_{i}) =
  \sum_{i}f(s_{i})x_{i}$. It is left to the reader to check that
  $\mathcal{M}(f)$ is indeed a monad morphism. \QED
\end{proof}

\auxproof{
$\mathcal{M}(f)$ is a monad morphism:
\begin{itemize}
\item Naturality: let $g \colon X \to Y$ in $\Sets$,
$$
\begin{array}{rcl}
\mathcal{M}(f)_Y \after M_S(g)(\sum_{i}s_{i}x_{i}) &=& (\sum_{i}f(s_{i})g(x_{i}))\\
&=&
M_R(g) \after \mathcal{M}(f)_X(\sum_{i}s_{i}x_{i})
\end{array}
$$
\item Commutativity with $\eta$:
$$
\begin{array}{rcl}
\mathcal{M}(f) \after \eta^S (x) &=& \mathcal{M}(f)(1^S x) \\
&=&
f(1)x = 1^R x \\
&=& 
\eta^R(x)
\end{array}
$$
\item Commutativity with $\mu$:
$$
\begin{array}{rcl}
\lefteqn{\textstyle\mu \after \mathcal{M}(f)_{M_R(X)} \after 
   M_S(\mathcal{M}(f)_X)(\sum_{i}s_{i}(\sum_{j}t_{ij}x_{j}))} \\
&=& 
\mu \after \mathcal{M}(f)_{M_T(X)}(\sum_{i}s_{i}(\sum_{j}f(t_{ij})x_{j}))\\
&=&
\mu(\sum_{i}f(s_{i})(\sum_{j}f(t_{ij})x_{j}))\\
&=&
\sum_{j}(\sum_i^R f(s_{i})f(t_{ij}))x_{j}\\
&=&
\sum_{j}f(\sum_i^R s_{i}t_{ij})x_{j}\\
&=&
\mathcal{M}(f)_X(\sum_i^T s_{i}t_{ij}x_{j})\\
&=&
\mathcal{M}(f)_X \after \mu (\sum_{i}s_{i}(\sum_{j}f(t_{ij})x_{j}))
\end{array}
$$ 
\end{itemize}
Functoriality of $\mathcal{M}$:\\
For $S \xrightarrow{f} T \xrightarrow{g} R$,
$$
\mathcal{M}(g)_X \after \mathcal{M}(f)_X (\sum_i s_i x_i) = \sum_i g(f(s_i)) x_i = \mathcal{M}(g \after f)(\sum_i s_i x_i)
$$
For $S \xrightarrow{id} S$,
$$
\mathcal{M}(id)(\sum_i s_i x_i) = \sum_i id(s_i) x_i = \sum_i s_i x_i
$$
}

For a semiring $S$, the category $\Alg(M_{S})$ of algebras of the
multiset monad $M_S$ is (equivalent to) the category
$\Mod_{S}(\Cat{C}) \hookrightarrow \Mod(\Cat{C})$ of modules over
$S$. This is not used here, but just mentioned as background
information.


\subsection{From monads to semirings}
A commutative additive monad $T$ on a category $\cat{C}$ gives rise to
two commutative monoid structures on $T(1)$, namely the multiplication
defined in Lemma \ref{CMnd2CMonLem} and the addition defined in Lemma
\ref{Mnd2MonLem2} (considered for $X=1$). In case the category
$\cat{C}$ is distributive these two operations turn $T(1)$ into a semiring.

\begin{lemma}
\label{CAMnd2CSRngLem}
Each commutative additive monad $T$ on a distributive category
$\cat{C}$ with terminal object $1$ gives rise to a semiring $\Evc(T)
= T(1)$ in $\cat{C}$. The mapping $T \mapsto \Evc(T)$ yields a functor
$\ACMnd(\cat{C}) \to \CSRng(\cat{C})$.
\end{lemma}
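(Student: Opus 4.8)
The plan is to put on the object $T(1)$ the two commutative monoid structures that are already available from earlier results, verify the two distributivity squares stated just before the lemma, and then read off functoriality from the functors $\Ev$ and $\Evx$.

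First I would record the two monoids. Since $T$ is commutative, Lemma~\ref{CMnd2CMonLem} equips $T(1)$ with a commutative \emph{multiplicative} monoid $(\cdot,1)$, where $\cdot = \mu \after T(\pi_2) \after \st$ and the unit is $\eta_1$; this is the value at $T$ of the functor $\Ev\colon \StMnd(\cat{C}) \to \Mon(\cat{C})$ of Lemma~\ref{Mnd2MonLem}. Since $T$ is additive, Lemma~\ref{AdditiveMonadMonoidLem} applied at the object $X = 1$ equips $T(1)$ with a commutative \emph{additive} monoid $(+,0)$, where $+ = T(\nabla) \after \bc^{-1}$ and $0 = 0_{1,1}\colon 1 \to T(1)$; this is the value at $(T,1)$ of the functor $\Evx\colon \ACMnd(\cat{C}) \times \cat{C} \to \CMon(\cat{C})$. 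What remains is the annihilation law $\cdot \after (\idmap \times 0) = 0 \after {!}$ and the distributivity law $\cdot \after (+ \times \idmap) = +\after (\cdot\times\cdot) \after \mathit{dbl}$ on $(T(1)\times T(1))\times T(1)$.

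The annihilation law needs only finite products and is a short chase: by naturality of $\st$ in its first argument one moves $0_{1,1}$ outward, the strength axiom $T(\rho)\after\st = \rho$ (with $1\times 1 \cong 1$) collapses the middle, and one is left with $\mu \after T(0_{1,1}) \after \pi_1$, which by Lemma~\ref{zeroproplem} equals $0_{T(1),1}\after\pi_1 = 0_{1,1}\after{!}$, since $0_{T(1),1}$ and $0_{1,1}$ both factor through the terminal object as $T({!})\after{!}$. The distributivity law is the real work and I expect it to be the main obstacle, being the only point at which the two operations, one built from $\st$ and one from $\bc$, genuinely have to be mixed. The key ingredient is part~\ref{comStbcprop} of Lemma~\ref{bcproplem}: \emph{because $\cat{C}$ is distributive}, the map $\bc$ commutes with strength through the doubling map $\mathit{dbl}$ and the distributivity isomorphism of $\cat{C}$, and this is exactly where the hypothesis on $\cat{C}$ enters. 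Concretely, starting from $\cdot \after ((T(\nabla)\after\bc^{-1})\times\idmap)$ one uses naturality of $\st$ to absorb $T(\nabla)$, pushes $\bc^{-1}$ past the strength via part~\ref{comStbcprop}, reorganises the $T$-layers using the $\eta$- and $\mu$-compatibility of $\bc$ from parts~\ref{natbcprop} and~\ref{comEtaMubcprop}, and arrives at $T(\nabla)\after\bc^{-1}\after(\cdot\times\cdot)\after\mathit{dbl} = +\after(\cdot\times\cdot)\after\mathit{dbl}$; the mirror law $\cdot\after(\idmap\times +) = +\after(\cdot\times\cdot)\after\mathit{dbl}'$ then follows from commutativity of $\cdot$ (Lemma~\ref{CMnd2CMonLem}). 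A more conceptual gloss, valid when $\cat{C}$ is well-pointed (in particular for $\cat{C} = \Sets$): by Theorem~\ref{AMnd2BCat} the Kleisli category $\Kl(T)$ has biproducts, hence is enriched in commutative monoids with bilinear composition, while by Lemma~\ref{KleisliStructLem} it is symmetric monoidal with tensor unit $1$, so its endo-homset $\Kl(T)(1,1)$ is a commutative semiring, whose structure transported to the object $T(1)$ is exactly the $(\cdot,1)$ and $(+,0)$ above.

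Finally, functoriality. A morphism $\sigma\colon T\to S$ in $\ACMnd(\cat{C})$ induces the component $\sigma_1\colon T(1)\to S(1)$, which is a homomorphism of multiplicative monoids by functoriality of $\Ev$ (Lemma~\ref{Mnd2MonLem}) and a homomorphism of additive monoids by functoriality of $\Evx$ (Lemma~\ref{AdditiveMonadMonoidLem}, applied to the pair $(\sigma,\idmap_1)$, for which $\Evx(\sigma,\idmap_1) = \sigma_1$). A map of $\cat{C}$ respecting both monoid structures of a semiring is by definition a semiring homomorphism, so $\Evc(\sigma) \stackrel{\textrm{def}}{=} \sigma_1$ is a well-defined map in $\CSRng(\cat{C})$, and functoriality of $\Evc$ is inherited from that of $\Ev$. \QED
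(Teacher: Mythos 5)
Your proposal is correct and follows essentially the same route as the paper: both monoid structures on $T(1)$ are imported from Lemmas~\ref{CMnd2CMonLem} and~\ref{AdditiveMonadMonoidLem}, distributivity is verified by mixing $\st$ with $\bc$ via parts~\ref{natbcprop}, \ref{comEtaMubcprop} and~\ref{comStbcprop} of Lemma~\ref{bcproplem} (with the distributivity of $\cat{C}$ entering precisely through part~\ref{comStbcprop}), and functoriality is read off from $\Ev$ and $\Evx$ via $\Evc(\sigma)=\sigma_{1}$. The only small divergence is the annihilation law: you establish $s\cdot 0=0$ by naturality of $\st$ in its parameter (i.e.\ second, not first) argument together with Lemma~\ref{zeroproplem}, whereas the paper establishes $0\cdot s=0$ using $0\times X\cong 0$; either suffices since the multiplication is commutative.
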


\begin{proof}
  For a commutative additive monad $T$ on $\cat{C}$, addition on
  $T(1)$ is given by $T(\nabla) \after \bc^{-1} \colon T(1) \times
  T(1) \to T(1)$ with unit $0_{1,1} \colon 1 \to T(1)$ as in
  Lemma~\ref{Mnd2MonLem2}, the multiplication is given by $\mu \after
  T(\lambda) \after \st \colon T(1) \times T(1) \to T(1)$ with unit
  $\eta_1 \colon 1 \to T(1)$ as in Lemma \ref{CMnd2CMonLem}.

It was shown in the lemmas just mentioned that both addition and
multiplication define a commutative monoid structure on $T(1)$. The
following diagram proves distributivity of multiplication over
addition.
$$
\xymatrix{
(T(1) \times T(1))\times T(1) \ar[r]^-{\bc^{-1} \times id} \ar[d]_-{dbl} & T(1+1)\times T(1) \ar[r]^-{T(\nabla)\times id} \ar[d]_-{\st} & T(1) \times T(1) \ar[d]^-{\st}
\\
(T(1) \times T(1))\times (T(1)\times T(1)) \ar[d]_-{\st\times \st} & T((1+1) \times T(1)) \ar[r]^-{T(\nabla \times id)} \ar[d]_-{\cong} & T(1\times T(1)) \ar[ddd]^-{T(\lambda)}
\\
T(1\times T(1)) \times T(1\times T(X)) \ar[r]^-{\bc^{-1}} \ar[d]_-{T(\lambda) \times T(\lambda)} & T(1\times T(1) + 1 \times T(1)) \ar[d]_-{T(\lambda + \lambda}
\\
T^2(1) \times T^2(1) \ar[dd]_-{\mu\times\mu} \ar[r]^-{\bc^{-1}}& T(T(1)+T(1)) \ar[rd]^-{T(\nabla)} \ar[d]_-{T(\cotuple{T(\kappa_1)}{T(\kappa_2)})}
\\
&T^2(1+1) \ar[r]^-{T^2(\nabla)} \ar[d]_-{\mu} & T^2(1) \ar[d]^-{\mu}
\\
T(1)\times T(1) \ar[r]^-{\bc^{-1}} & T(1+1) \ar[r]^-{T(\nabla)} & T(1)
}
$$

\noindent Here we rely on Lemma~\ref{bcproplem} for the commutativity
of the upper and lower square on the left.

In a distributive category $0 \cong 0 \times X$, for every object
$X$. In particular $T(0 \times T(1)) \cong T(0) \cong 1$ is final.
This is used to obtain commutativity of the upper-left square of the
following diagram proving $0 \cdot s = 0$:
$$
\xymatrix{
T(1) \ar[r]^-{\cong} \ar[d]^-{!} & T(0) \times T(1) \ar[r]^-{T(!) \times id} \ar[d]^-{\st} & T(1) \times T(1) \ar[d]^-{\st}
\\
T(0) \ar@/_{6ex}/[ddrr]_{T(!)} \ar[r]^-{\cong} \ar[rd]^{T(!)} & T(0 \times T(1)) \ar[d]^-{T(\lambda)} \ar[r]^-{T(!\times id)} & T(1 \times T(1)) \ar[d]^-{T(\lambda)}
\\
&T^2(1) \ar[r]^{id} & T^2(1) \ar[d]^-{\mu}
\\
&&T(1)
}
$$ For a monad morphism $\sigma \colon T \to S$, we define
$\Evc(\sigma) = \sigma_1 \colon T(1) \to S(1)$. By Lemma
\ref{Mnd2MonLem}, $\sigma_1$ commutes with the multiplicative
structure. As $\sigma_1 = T(id) \after \sigma_1 = \Evx((\sigma,id))$,
it follows from Lemma \ref{Mnd2MonLem2} that $\sigma_1$ also commutes
with the additive structure and is therefore a
$\CSRng$-homomorphism.\QED
\end{proof}

\subsection{Adjunction between monads and semirings}
The functors defined in the Lemmas \ref{SRng2CAMndProp} and
\ref{CAMnd2CSRngLem}, considered on $\cat{C} = \Sets$, form an
adjunction $\mathcal{M} \colon \CSRng \leftrightarrows
\ACMnd \colon \Evc$. To prove this adjunction we first show that
each pair $(T,X)$, where $T$ is a commutative additive monad on a
category $\cat{C}$ and $X$ an object of $\cat{C}$, gives rise to a
module on $\cat{C}$ as defined at the beginning of this section.

\begin{lemma}
\label{Mnd2ModLem}
Each pair $(T,X)$, where $T$ is a commutative additive monad on a
category $\cat{C}$ and $X$ is an object of $\cat{C}$, gives rise to a
module $\Mod(T,X) = (T(1), T(X), \star)$. Here $T(1)$ is the
commutative semiring defined in Lemma \ref{CAMnd2CSRngLem} and $T(X)$
is the commutative monoid defined in Lemma \ref{Mnd2MonLem2}. The
action map is given by $\star = T(\lambda) \after dst \colon T(1)
\times T(X) \to T(X)$. The mapping $(T,X) \mapsto \Mod(T,X)$ yields a
functor $\ACMnd(\cat{C}) \times \cat{C} \to \Mod(\cat{C})$.
\end{lemma}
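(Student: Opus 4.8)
The plan is to check two things: that $\star = T(\lambda)\after\dst\colon T(1)\times T(X)\to T(X)$ is a genuine module action over the semiring $T(1)$, and that $(T,X)\mapsto\Mod(T,X)$ is functorial. Recall that $T(1)$ is a commutative semiring by Lemma~\ref{CAMnd2CSRngLem} (with multiplicative monoid as in Lemma~\ref{CMnd2CMonLem}) and $T(X)$ is the commutative monoid of Lemma~\ref{Mnd2MonLem2}, so the substantive content is that $\star$ obeys the multiplicative action laws, is an additive bihomomorphism, and preserves zeros. For the multiplicative laws I would argue conceptually. Write $\varepsilon\colon\mathcal{A}(T(1))\to T$ for the counit of the adjunction $\mathcal{A}\dashv\Ev$ of Lemma~\ref{AdjMndMonLem}; by the formula for $\overline{f}$ there with $f=\idmap$, its components are $\varepsilon_{Z}=T(\pi_{2})\after\st\colon T(1)\times Z\to T(Z)$. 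A short manipulation of the strength coherences (using $\st'=T(\gamma)\after\st\after\gamma$, the identity $\lambda\after\gamma=\rho$ on $X\times 1$ and the strength unit law $T(\rho)\after\st=\rho$) gives $T(\lambda)\after\st'=\pi_{2}\colon 1\times T(X)\to T(X)$, and feeding this into $\dst=\mu\after T(\st')\after\st$ yields $\star = T(\lambda)\after\dst = \mu_{X}\after\varepsilon_{T(X)}$. Since $\varepsilon$ is a morphism of monads, restriction along it turns the free algebra $\mu_{X}\colon T^{2}(X)\to T(X)$ into an $\mathcal{A}(T(1))$-algebra whose structure map is exactly this $\star$; because the Eilenberg--Moore algebras of $\mathcal{A}(M)$ are precisely the $M$-actions (the remark after Lemma~\ref{Mon2MndLem}), the algebra laws for $\mathcal{A}(T(1))$ say verbatim that $\star\after(\idmap\times\star)\after\alpha^{-1}=\star\after(\cdot\times\idmap)$ and $\star\after(\eta_{1}\times\idmap)=\pi_{2}$, the required action axioms.

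The additive part is what I expect to be the main obstacle, since it is the only place where a genuinely new diagram chase is needed; it is, however, entirely parallel to work already done. The addition on $T(X)$ (as on $T(1)$) is $T(\nabla)\after\bc^{-1}$ and the zero is $0_{1,X}$ (Lemma~\ref{Mnd2MonLem2}), so the identities to verify are $\star\after(+\times\idmap)=+\after(\star\times\star)\after\mathit{dbl}$, its mirror in the second variable, and $\star\after(0_{1,1}\times\idmap)=0_{1,X}\after\pi_{2}$, $\star\after(\idmap\times 0_{1,X})=0_{1,X}\after\pi_{1}$. I would obtain these from exactly the chases that establish distributivity of $\cdot$ over $+$ and the absorption law $0\cdot s=0$ in the proof of Lemma~\ref{CAMnd2CSRngLem} --- which is precisely the case $X=1$ --- now run with $X$ in place of one of the copies of $1$. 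The ingredients are the naturality and monoidal properties of $\bc$ from Lemma~\ref{bcproplem}, distributivity of $\cat{C}$ (so that $0\times T(1)\cong 0$ and the canonical distribution maps are isomorphisms), and the fact that $\dst$ is a symmetric monoidal transformation, the last of which lets one move between the two arguments via $\star = T(\rho)\after\dst_{X,1}\after\gamma$ and thereby deduce the second-variable statements from the first. (Conceptually this records that the scalars $\Kl(T)(1,1)=T(1)$ act by precomposition on each $\Kl(T)(1,X)=T(X)$ in the $\CMon$-enriched symmetric monoidal category $\Kl(T)$; see Theorem~\ref{AMnd2BCat} and Lemma~\ref{KleisliStructLem}.)

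Finally, for functoriality: given $\sigma\colon T\to S$ in $\ACMnd(\cat{C})$ and $f\colon X\to Y$ in $\cat{C}$, I would send $(\sigma,f)$ to the pair $\bigl(\sigma_{1},\,\sigma_{Y}\after T(f)\bigr) = \bigl(\Evc(\sigma),\,\Evx(\sigma,f)\bigr)$. The first component is a homomorphism of semirings $T(1)\to S(1)$ by Lemma~\ref{CAMnd2CSRngLem}, the second a homomorphism of commutative monoids $T(X)\to S(Y)$ by Lemma~\ref{Mnd2MonLem2}, and the compatibility $\star^{S}\after\bigl(\sigma_{1}\times(\sigma_{Y}\after T(f))\bigr)=(\sigma_{Y}\after T(f))\after\star^{T}$ follows by a short chase from naturality of $\sigma$ and of $\dst$ together with the fact that a monad map commuting with strength commutes with $\dst$, i.e.\ $\sigma\after\dst=\dst\after(\sigma\times\sigma)$ (the computation after Lemma~\ref{KleisliStructLem}). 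Preservation of identities is immediate, and preservation of composition reduces to naturality of $\sigma$, so $\Mod$ is a functor $\ACMnd(\cat{C})\times\cat{C}\to\Mod(\cat{C})$ as claimed.
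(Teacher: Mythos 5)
Your proof is correct, and for the multiplicative action axioms it takes a genuinely different, more conceptual route than the paper. The paper's own argument (largely suppressed in the printed text) observes that $\star = T(\lambda)\after\dst = \mu\after T(\pi_{2})\after\st$ and then verifies all of the module axioms by direct diagram chases using Lemma~\ref{bcproplem}. You derive the same identity $\star=\mu_{X}\after\varepsilon_{T(X)}$, but then recognise $\varepsilon=\overline{\idmap_{T(1)}}$ as the counit of $\mathcal{A}\dashv\Ev$ and obtain the unit and associativity axioms for $\star$ for free, by restricting the free algebra $\mu_{X}\colon T^{2}(X)\to T(X)$ along the monad map $\varepsilon\colon\mathcal{A}(T(1))\to T$ and using that Eilenberg--Moore algebras of $\mathcal{A}(M)$ are exactly $M$-actions; this replaces the two longest chases in the paper's verification by a one-line restriction-of-scalars argument. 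For the additive bihomomorphism and zero laws you fall back on essentially the same computations as the paper (the distributivity and absorption diagrams of Lemma~\ref{CAMnd2CSRngLem} with one copy of $1$ replaced by $X$, together with symmetry of $\dst$ to pass between the two arguments), and your treatment of functoriality --- $\Mod(\sigma,f)=(\sigma_{1},\sigma_{Y}\after T(f))$, with compatibility following from naturality of $\sigma$ and $\sigma\after\dst=\dst\after(\sigma\times\sigma)$ --- coincides with the paper's. The only point worth flagging is that Lemma~\ref{AdjMndMonLem} is stated for monads on $\Sets$, whereas the present lemma concerns a general distributive $\cat{C}$; however, the two facts you actually invoke (that $T(\lambda)\after\st$ is a map of monads $\mathcal{A}(T(1))\to T$, and the identification of $\mathcal{A}(M)$-algebras with $M$-actions) are established by purely diagrammatic arguments valid in any category with finite products, so this is a cosmetic rather than a substantive gap.
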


\begin{proof}
Checking that $\star$ defines an appropriate action requires some work
but is essentially straightforward, using the properties from Lemma
\ref{bcproplem}. For a pair of maps $\sigma \colon T \to S$ in
$\ACMnd(\cat{C})$ and $g \colon X \to Y$ in $\cat{C}$, we define a map
$\Mod(\sigma,g)$ by
$$
\xymatrix{(T(1), T(X), \star^T) \ar[rr]^-{(\sigma_1, \sigma_Y \after T(g))} && (S(1), S(Y), \star^S).}
$$ 

\noindent Note that, by naturality of $\sigma$, one has $\sigma_Y
\after T(g) = S(g) \after \sigma_X$. It easily follows that this
defines a $\Mod(\cat{C})$-map and that the assignment is
functorial.\QED
\end{proof}

\auxproof{First note that $\star$ may also be described as follows.
	$$
	\begin{array}{rcl}
	\star &=& T(\pi_2) \after \dst\\
	&=&
	T(\pi_2) \after \mu \after T(\st') \after \st\\
	&=&
	\mu \after T^2(\pi_2) \after T(\st') \after \st\\
	&=&
	\mu \after T(\pi_2) \after \st
	\end{array} 
	$$
	
\begin{itemize}
\item $\star$ defines an action:
\begin{enumerate}
	\item $ s \star 0 = 0$:
	$$
	\xymatrix{
	T(1) \ar[r]^-{\tuple{id}{!^{-1}}} \ar[dd]_-{!} & T(1)\times T(0) \ar[r]^-{T(id) \times T(!)} \ar[d]_-{dst} &T(1) \times T(X) \ar[d]^-{\dst}
	\\
	&T(1\times 0) \ar[r]^-{T(id\times !)} \ar[d]_-{T(\pi_2)} & T(1\times X) \ar[d]^-{T(\pi_2)}
	\\
	1 \ar[r]^-{\cong} & T(0) \ar[r]^-{T(!)} & T(X)
	}
	$$
	The left square commutes as $T(0)$ is terminal. The upper right square commutes by naturality of $\dst$.
	
	\item $0 \star x = 0$:
	$$
	\xymatrix{
	T(X) \ar[d]_-{!} \ar[r]^-{\cong} & T(0) \times T(X) \ar[d]^-{\st} \ar[r]^-{T(!) \times \id} & T(1) \times T(X) \ar[d]^-{\st}
	\\
	T(0) \ar@/_{6ex}/[ddrr]_{T(!)} \ar[r]^-{T(\cong)} \ar[rd]_-{T(!)} & T(0 \times T(X)) \ar[d]^-{T(\pi_2)} \ar[r]^-{T(!\times\id)} & T(1\times T(X)) \ar[d]^-{T(\pi_2)}\\
	&T^2(X) \ar[r]^-{id} & T^2(X) \ar[d]^-{\mu}\\
	&&T(X)}
	$$
	The upper left square commutes as the category $\cat{C}$ is assumed to be distributive and therefore $T(0 \times T(X)) \cong T(0) \cong 1$ is terminal. 
	
	\item $1 \star x = x$:
	$$
	\begin{array}{rcll}
	T(\pi_2) \after \dst \after (\eta \times \id) 
	&=&
	T(\pi_2) \after \mu \after T(\st) \after \st' \after (\eta \times \id)\\
	&=&
	T(\pi_2) \after \mu \after T(\st) \after T(\eta \times \id) \after \st' &\text{naturality of $\st'$}\\
	&=&
	T(\pi_2) \after \mu \after T(\eta) \after \st'\\
	&=&
	T(\pi_2) \after \st'\\
	&=&
	\pi_2 \colon 1 \times T(1) \to T(1)
	\end{array}
	$$
	
	\item $s \star (x+y) = s\star x + s\star y$
	$$
	\xymatrix{
	T(1) \times (T(X)\times T(X)) \ar[r]^-{\idmap \times \bc^{-1}} \ar[d]^-{dbl} & T(1) \times T(X+X) \ar[d]^-{\st'} \ar[r]^-{\idmap \times T(\nabla)} & T(1) \times T(X) \ar[dd]^-{\st'}
	\\
	(T(1)\times T(X))\times(T(1)\times T(X)) \ar[d]_-{\st'\times\st'} & T(T(1) \times (X+X)) \ar[dr]^-{T(id \times \nabla)} \ar[d]^-{\cong}
	\\
	T(T(1) \times X)\times T(T(1)\times X) \ar[r]^-{\bc^{-1}} \ar[d]_-{T(\st)\times T(\st)} & T(T(1) \times X + T(1) \times X) \ar[r]^-{T(\nabla)} \ar[d]^-{T(\st + \st)}& 
	T(T(1)\times X) \ar[dd]^-{T(\st)}
	\\
	T^2(1\times X) \times T^2(1\times X) \ar[r]^-{\bc^{-1}} \ar[dd]_-{\mu\times\mu} & T(T(1\times X)+T(1\times X)) \ar[rd]^-{T(\nabla)} \ar[d]_-{T(\cotuple{T(\kappa_1)}{T(\kappa_2)})} 
	\\
	&T^2(1 \times X + 1 \times X) \ar[r]^-{T^2(\nabla)} \ar[d]^-{\mu} &	T^2(1\times X) \ar[d]^-{\mu}
	\\
	T(1 \times X)\times T(1\times X) \ar[d]_-{T(\pi_2)\times T(\pi_2)} \ar[r]^-{\bc^{-1}} & T(1\times X + 1\times X) \ar[d]^-{T(\pi_1+\pi_2)} \ar[r]^-{T(\nabla)} &T(1\times X) \ar[d]^-{T(\pi_2)}
	\\
	T(X)\times T(X) \ar[r]^-{\bc^{-1}} & T(X+X) \ar[r]^-{T(\nabla)} &T(X)}
	$$
	\item $(s+t)\star x = (s\star x) + (t\star x)$
	$$
	\xymatrix{
	(T(1)\times T(1))\times T(X) \ar[d]_{dbl'} \ar[r]^{\bc^{-1}\times \id} & T(1+1)\times T(X) \ar[d]^-{\st} \ar[r]^-{T(\nabla) \times \id} & T(1) \times T(X) \ar[d]^-{\st}
	\\
	(T(1)\times T(X))\times(T(1)\times T(X)) \ar[d]_-{\st\times\st} & T((1+1)\times T(X)) \ar[d]^-{T(\cong)} \ar[r]^-{T(\nabla \times \id)} & T(1 \times T(X)) \ar[dd]^-{T(\pi_2)}
	\\
	T(1\times T(X)) \times T(1\times T(X)) \ar[d]_-{T(\pi_2)\times T(\pi_2)} \ar[r]^-{\bc^{-1}} &T(1\times T(X) + 1\times T(X)) \ar[d]^-{T(\pi_2 + \pi_2)}
	\\
	T^2(X) \times T^2(X) \ar[r]^-{\bc^{-1}} \ar[dd]_-{\mu \times \mu} & T(T(X)+T(X)) \ar[r]^-{T(\nabla)} \ar[d]_-{T(\cotuple{T(\kappa_1)}{T(\kappa_2)})} & T^2(X) \ar[dd]^-{\mu}
	\\
	&T^2(X+X)\ar[d]^-{\mu} \ar[ru]_-{T^2(\nabla)}
	\\
	T(X)\times T(X) \ar[r]^-{bc^{-1}} & T(X+X) \ar[r]^-{T(\nabla)} &T(X)
	}
	$$
	
	\item $(s\cdot t) \star x = s\star(t \star x)$
	$$
	\begin{sideways}
	\xymatrix{
	(T(1)\times T(1)) \times T(X) \ar[rr]^-{\st\times\id} \ar[d]^-{\alpha^{-1}} && T(1\times T(1)) \times T(X) \ar[d]^-{\st} \ar[r]^-{T(\pi_2) \times \id} & T^2(1) \times T(X) \ar[r]^-{\mu \times \id} \ar[d]^-{\st} & T(1) \times T(X) \ar[dd]^-{\st}
	\\
	T(1)\times (T(1)\times T(X)) \ar[d]^-{\id\times \st} \ar[r]^-{\st} & T(1\times(T(1)\times T(X))) \ar[r]^-{T(\alpha)} \ar[d]^-{T(\idmap \times\st)} \ar@/_{4ex}/[rr]_{T(\pi_2)} & T((1\times T(1)) \times T(X)) \ar[r]^-{T(\pi_2\times \id)} & T(T(1)\times T(X))\ar[d]^-{T(\st)}
	\\
	T(1)\times T(1\times T(X)) \ar[d]^-{\idmap \times T(\pi_2)} \ar[r]^-{\st} & T(1\times T(1\times T(X))) \ar[d]^-{T(\idmap \times T(\pi_2))} && T^2(1 \times T(X)) \ar[r]^-{\mu} \ar[d]^-{T^2(\pi_2)} & T(1\times T(X)) \ar[d]^-{T(\pi_2)}
	\\
	T(1) \times T^2(X) \ar[r]^-{\st} \ar[d]^-{\idmap \times\mu} & T(1\times T^2(X)) \ar[d]^-{T(\idmap \times \mu)} && T^3(X) \ar[r]^-{\mu} \ar[d]^-{T(\mu)} & T^2(X) \ar[d]^-{\mu}
	\\
	T(1)\times T(X) \ar[r]^-{\st} & T(1\times T(X)) \ar[rr]^-{T(\pi_2)} && T^2(X) \ar[r]^-{\mu} & T(X)}
	\end{sideways}
	$$ 
\end{enumerate}

\item $\Mod(\sigma,g) \colon \Mod(T,X) \to \Mod(S,Y)$ is a map of modules.

In Lemma \ref{CAMnd2CSRngLem} we have shown already that $\sigma_1$ is
a $\SRng$-morphism. The map $\sigma_Y \after T(g)$ is a
$\cat{Mon}(\cat{C})$-map as it preserves $0$:
	$$
	\xymatrix{
	1 \ar[r]^-{\cong} \ar[rd]_-{\cong} & T(0) \ar[r]^-{T(!)} \ar[d]_-{\sigma_0} & T(X) \ar[d]^-{\sigma_X} \ar[r]^-{T(g)} & T(Y) \ar[d]^-{\sigma_Y}
	\\
	&S(0) \ar[r]^-{S(!)} \ar@/_{4ex}/[rr]_{S(!)}& S(X) \ar[r]^-{S(g)} & S(Y)
	}
	$$
and the addition:
	$$
	\begin{array}{rcl}
	\sigma_Y \after T(g) \after T(\nabla) \after \bc^{-1} 
	&=&
	S(g) \after \sigma_X \after T(\nabla) \after \bc^{-1}\\
	&=&
	S(g) \after S(\nabla) \after \sigma_{X+X} \after \bc^{-1}\\
	&=&
	S(g) \after S(\nabla) \after \bc^{-1} \after (\sigma_X \times \sigma_X) \\
	&=&
	S(\nabla) \after S(g+g) \after \bc^{-1} \after (\sigma_X \times \sigma_X)\\
	&=& 
	S(\nabla) \after \bc^{-1} \after (S(g) \times S(g)) \after (\sigma_X \times \sigma_X)\\
	&=&
	S(\nabla) \after \bc^{-1} \after (S(g) \after \sigma_X)\times (S(g) \after \sigma_X)\\
	&=&
	S(\nabla) \after \bc^{-1} \after (\sigma_Y \after T(g))\times (\sigma_Y \after T(g))
	\end{array}
	$$
Furthermore the pair preserves the action:
	$$
	\xymatrix{
	T(1)\times T(X) \ar[r]^{\sigma_1 \times \sigma_X} \ar[d]^-{\dst} \ar@/_{8ex}/[dd]_{\star} & S(1) \times S(X) \ar[d]_-{\dst} \ar[r]^-{\idmap \times S(g)} & S(1) \times S(Y) \ar[d]_-{\dst} \ar@/^{8ex}/[dd]^{\star}
	\\
	T(1\times X) \ar[r]^-{\sigma_{1\times X}} \ar[d]^-{T(\pi_2)} & S(1\times X) \ar[r]^-{S(\idmap \times g)} \ar[d]_-{S(\pi_2)} & S(1\times Y) \ar[d]_-{S(\pi_2)}
	\\
	T(X) \ar[r]^-{\sigma_X} & S(X) \ar[r]^-{S(g)} & S(Y)
	}
	$$
\item Functoriality

$\Mod$ preserves the identity map, as
	$$
	\begin{array}{rcl}
	\Mod(\id_{(T,X)}) &=& \Mod((\id_T,\id_X))\\
	&=&
	((\id_T)_1, (\id_T)_Y \after T(\id_X))\\
	&=&
	(\id_{T(1)}, \id_{T(X)})
	\end{array}
	$$
And it preserves the composition as, for $(T,X) \xrightarrow{(\tau, f)} (S, Y) \xrightarrow{(\sigma, g)} (R,Z)$,
	$$
	\begin{array}{rcl}
	\Mod(\sigma, g) \after Mod(\tau, f) 
	&=&
	(\sigma_1, \sigma_Z \after S(g)) \after (\tau_1, \tau_Y \after T(f))\\
	&=&
	(\sigma_1 \after \tau_1, \sigma_Z \after S(g) \after \tau_Y \after T(f))\\
	&=&
	((\sigma \after \tau)_1, \sigma_Z \after \tau_Z \after T(g) \after T(f))\\
	&=&
	((\sigma \after \tau)_1, (\sigma \after \tau)_Z \after T(g \after f))\\
	&=&
	\Mod(\sigma \after \tau, g \after f)
	\end{array}
	$$
	
\end{itemize}
}

\begin{lemma}
\label{AdjCSR2ACMLem}
The pair of functors $\mathcal{M} \colon \CSRng \leftrightarrows
\ACMnd \colon \Evc$ forms an adjunction, $\mathcal{M} \dashv
\Evc$.
\end{lemma}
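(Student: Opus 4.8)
The plan is to exhibit directly the natural bijective correspondence underlying $\mathcal{M} \dashv \Evc$, between monad morphisms $M_S \to T$ in $\ACMnd$ and semiring homomorphisms $S \to T(1)$ in $\CSRng$, exactly in the style of the earlier adjunctions in this section.

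In one direction, given a monad morphism $\sigma \colon M_S \to T$, one uses the natural isomorphism $M_S(1) \cong S$ (a multiset on a one-element set is just a scalar) and sets $\overline{\sigma} = \big(S \stackrel{\cong}{\to} M_S(1) \stackrel{\sigma_1}{\to} T(1)\big)$. That this is a homomorphism of multiplicative monoids is Lemma~\ref{Mnd2MonLem} (all monads on $\Sets$ being canonically strong, and $\sigma$ commuting with the canonical strength by Lemma~\ref{SetsStrengthLem}); that it also respects the additive structure is Lemma~\ref{Mnd2MonLem2}, since $\sigma_1 = \Evx((\sigma, \idmap_1))$. Hence $\overline{\sigma}$ is a semiring map. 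In particular, taking $\sigma = \idmap$, the unit $S \to \Evc(\mathcal{M}(S)) = M_S(1)$ of the adjunction is the isomorphism $S \cong M_S(1)$, so the adjunction will in fact be a coreflection.

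In the other direction, given a semiring homomorphism $f \colon S \to T(1)$, I define $\overline{f}_X \colon M_S(X) \to T(X)$ on a formal sum by
$$
\overline{f}_X\Big(\sum_i s_i x_i\Big) \;=\; \sum_i f(s_i)\star\eta_X(x_i),
$$
where $\star \colon T(1)\times T(X) \to T(X)$ is the $T(1)$-module action on $T(X)$ from Lemma~\ref{Mnd2ModLem} and the outer sum is the commutative-monoid addition on $T(X)$ from Lemma~\ref{Mnd2MonLem2} (here additivity of $T$ is used). Conceptually, $M_S(X)$ is the free $S$-module on the set $X$, $T(X)$ is an $S$-module by restriction of scalars along $f$, and $\overline{f}_X$ is the unique $S$-linear extension of $x \mapsto \eta_X(x)$. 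Naturality of $\overline{f}$ in $X$ then follows because each $T(g)$ is $S$-linear ($\star$ is natural and $T(g)$ preserves addition by Lemma~\ref{Mnd2MonLem2}), so $\overline{f}_Y\after M_S(g)$ and $T(g)\after\overline{f}_X$ are $S$-linear maps out of a free module agreeing on generators. Commutation with the units is immediate: $\overline{f}_X(1\cdot x) = f(1)\star\eta_X(x) = \eta_1\star\eta_X(x) = \eta_X(x)$, using that $f$ preserves $1$ and that $\eta_1 = 1_{T(1)}$ acts as the identity.

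The main work — and the step I expect to be the real obstacle — is showing that $\overline{f}$ commutes with multiplication, i.e.\ $\overline{f}_X\after\mu^{M_S} = \mu^T\after T(\overline{f}_X)\after\overline{f}_{M_S(X)}$. Evaluating both sides on $\sum_i s_i\varphi_i \in M_S(M_S(X))$, the left side equals $\sum_i f(s_i) \star \overline{f}_X(\varphi_i)$ after using that $f$ preserves products together with the module identity $(s\cdot t)\star - = s\star(t\star -)$ and distributivity of $\star$ over $+$; the right side reduces to the same expression once one knows that $\mu^T$ is $S$-linear and that $\mu^T_X(s\star\eta^T_{T(X)}(t)) = s\star t$, i.e.\ that the action $\star$ is compatible with $\mu^T$. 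This last compatibility is where one must unfold $\star = \mu^T\after T(\pi_2)\after\st$ and use that $\dst$ is a monoidal natural transformation, as in the proof of Lemma~\ref{Mnd2ModLem}; it is routine but fiddly. Finally, $\overline{\overline{\sigma}} = \sigma$ follows by writing an arbitrary multiset $\sum_i s_i x_i$ as $\sum_i M_S(x_i)(s_i)$, invoking naturality of $\sigma$ and that $\sigma_X$ preserves addition, and observing $T(x_i)(\sigma_1(s_i)) = \sigma_1(s_i)\star\eta_X(x_i)$ (via naturality of $\dst$ and $s\star\eta_1 = s$); $\overline{\overline{f}} = f$ follows from $\eta_1$ being the multiplicative unit of $T(1)$; and naturality of the correspondence is immediate from the ``evaluate at $1$'' description of the forward map, since $\mathcal{M}(g)_1$ is $g$ under $M_S(1)\cong S$.
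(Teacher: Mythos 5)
Your proposal matches the paper's proof essentially verbatim: the same bijective correspondence, with $\overline{\sigma}$ given by $\sigma_1$ precomposed with $S \cong M_S(1)$ in one direction and $\overline{f}_X(\sum_i s_i x_i) = \sum_i f(s_i)\star\eta(x_i)$ in the other, and you correctly isolate the key compatibility $\mu(a\star\eta(b)) = a\star b$ on which the verification (left to the reader in the paper) turns. The only discrepancy is cosmetic: the paper describes the isomorphism $S \cong \Evc\mathcal{M}(S)$ as making the adjunction a reflection, where you say coreflection.
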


\begin{proof}
For a semiring $S$ and a commutative additive monad $T$ on \Sets there are
(natural) bijective correspondences:
$$\begin{bijectivecorrespondence}
  \correspondence[\qquad in \Cat{CAMnd}]
   {\xymatrix{\llap{$M_{S} = \;$}\mathcal{M}(S)\ar[r]^-{\sigma} & T}}
  \correspondence[\qquad in \CSRng]
   {\xymatrix{S\ar[r]_-{f} & \Evc(T)\rlap{$\;=T(1)$}}}
\end{bijectivecorrespondence}$$
Given $\sigma \colon M_S \to T$, one defines a semiring map $\overline{\sigma} \colon S \to T(1)$ by
$$
\xymatrix{
\overline{\sigma} = \Big( S \ar[rr]^-{\lambda s. (\lambda x. s)} && M_S(1) \ar[r]^-{\sigma_1} & T(1) \Big).}
$$
Conversely, given a semiring map $f \colon S \to T(1)$, one gets a monad map $\overline{f} \colon M_S \to T$ with components:
$$\xymatrix{
M_S(X) \ar[r]^-{\overline{f}_X} & T(X) 
  \quad\mbox{given by}\quad
   \textstyle{\sum_i} s_ix_i \ar@{|->}[r] &
   \textstyle{\sum_i} f(s_i) \star \eta(x_i),
}$$

\noindent where the sum on the right hand side is the addition in
$T(X)$ defined in Lemma \ref{Mnd2MonLem2} and $\star$ is the action of
$T(1)$ on $T(X)$ defined in Lemma \ref{Mnd2ModLem}.

Showing that $\overline{f}$ is indeed a monad morphism requires some
work. In doing so one may rely on the properties of the action and on
Lemma \ref{AdditiveMonadMonoidLem}. The details are left to the
reader. \QED
\end{proof}
\auxproof{
The adjunction is proved as follows:
\begin{enumerate}
	\item Given $\sigma$, $\overline{\sigma}$ is a $\CSRng$-morphism (notes p. 88):
	\begin{itemize}
		\item Preservation of 0:
		$$
		\begin{array}{rcll}
		\overline{\sigma}(0_S) &=& \sigma_1(0\cdot *) &* \, \text{is the unique element of the terminal set}\\
		&=&
		\sigma_1(M_S(!)(\emptyset)) &\text{empty function is the unique element of $M_S(0)$}\\
		&&&M_S(!) \colon M_S(0) \to M_S(1), \emptyset \mapsto 0\cdot *\\
		&=&
		T(!) \after \sigma_0(\emptyset)\\
		&=&
		0^T &M_S(0) \cong 1 \cong T(0) 
		\end{array}
		$$
		\item Preservation of the addition:
		$$
		\begin{array}{rcll}
		\overline{\sigma}(a+b) &=& \sigma_1((a+b)\cdot *)\\
		&=&
		\sigma_1 \after M_S(\nabla) \after (\bc^{M_S})^{-1}(\tuple{a\cdot *}{b\cdot*}) &\text{use definition of $\bc$ in $M_S$}\\
		&=&
		T(\nabla) \after \sigma \after (\bc^{M_S})^{-1}(\tuple{a\cdot *}{b\cdot*})\\
		&=&
		T(\nabla)\after (\bc^{T})^{-1}\after \sigma \times \sigma(\tuple{a\cdot *}{b\cdot*})\\ 
		&=&
		T(\nabla)\after (\bc^{T})^{-1}(\tuple{\sigma(a\cdot *)}{\sigma(b\cdot*)})\\
		&=&
		\overline{\sigma}(a) + \overline{\sigma}(b)
		\end{array}
		$$
		\item Preservation of 1:
		$$
		\begin{array}{rcl}
		\overline{\sigma}(1) &=& \sigma_1(1\cdot*)\\
		&=&
		\sigma_1 \after \eta^{M_S}(*)\\
		&=&
		\eta^T(*) = 1
		\end{array}
		$$ 
		\item Preservation of the multiplication
		$$
		\begin{array}{rcll}
		\overline{\sigma}(a\cdot b) &=& \sigma_1((a\cdot b)\cdot *)\\
		&=&
		\sigma_1 \after \mu \after M_S(\pi_2) \after \st^{M_S}(\tuple{a\cdot*}{b\cdot*})\\
		&=&
		\mu \after T(\pi_2)\after \st^T \after \sigma \times \sigma (\tuple{a\cdot*}{b\cdot*}) &\text{see diagram}\\
		&=&
		\overline{\sigma}(a)\cdot\overline{\sigma}(b)
		\end{array}
		$$
		$$
		\xymatrix{
		M_S(1)\times M_S(1) \ar[d]_{\st} \ar[r]^-{\sigma \times id} & T(1) \times M_S(1) \ar[d]_-{\st} \ar[r]^-{id \times \sigma} &T(1)\times T(1) \ar[d]^-{\st}
		\\
		M_S(1 \times M_S(1)) \ar[d]_-{M_S(\pi_2)} \ar[r]^-{\sigma} & T(1\times M_S(1)) \ar[d]^-{T(\pi_2)} \ar[r]^-{T(id \times \sigma)} &T(1\times T(1)) \ar[d]^-{T(\pi_2)}
		\\
		M_S^2(1) \ar[r]^-{\sigma} \ar[d]_-{\mu} &TM_S(1) \ar[r]^-{T\sigma} & T^2(1) \ar[d]^-{\mu}
		\\
		M_S(1) \ar[rr]^{\sigma} && T(1)} 
		$$ 
	 \end{itemize}
	 \item Given $f \colon S \to T(1)$, $\overline{f} \colon M_S \to T$ is a $\ACMnd$-morphism.
	\begin{itemize}
	 	\item Naturality:
		$$
		\begin{array}{rcll}
			T(g) \after \overline{f}_X 
			&=&
			T(g)(\sum^{T(X)}f(s_i) \star \eta(x_i))\\
			&=&
			\sum^{T(Y)}T(g)\big(f(s_i) \star \eta(x_i)\big) &\text{T(g) preserves +, Lemma \ref{AdditiveMonadMonoidLem}}\\
			&=& 
			\sum^{T(Y)} f(s_i) \star(T(g)(\eta(x_i)) &\text{same argument as above}\\
			&=&
			\sum^{T(Y)} f(s_i) \star \eta(g(x_i)) &\text{naturality of $\eta$}\\
			&=&
			\overline{f}_Y(\sum^{T(Y)} s_i \star g(x_i))\\
			&=&
			\overline{f}_Y \after M_S(g)(\sum s_ix_i)
		\end{array}
		$$
		\item $\overline{f}$ commutes with $\eta$:
		$$
		\begin{array}{rcll}
			\overline{f}\after\eta^{M_S}(x) 
			&=&
			\overline{f}(1\cdot x)\\
			&=&
			f(1) \star\eta^T(x)\\
			&=&
			1 \star \eta^T(x) &\text{$f$ preserves 1}\\
			&=&
			\eta^T(x)
		\end{array}
		$$
		\item $\overline{f}$ commutes with $\mu$:\\
		Let $\alpha \in M_S^2(X)$, say $\alpha = \sum_i s_i\sum_j t_{ij} x_{j}$
		$$
		\begin{array}{rcll}
		\lefteqn{\mu^T \after \overline{f}_{T(X)} \after M_S(\overline{f}_X)(\alpha)}\\
		&=&
		\mu^T \after \overline{f}_{T(X)}\big(\sum_i s_i \sum_j^{T(X)} f(t_{ij})\star\eta(x_{j})\big)\\
		&=&
		\mu^T\big(\sum_i^{T^2(X)} f(s_i) \cdot \eta(\sum_j^{T(X)} f(t_{ij})\star\eta(x_{j}))\big)\\
		&=&
		\sum_i^{T(X)} \mu \big(f(s_i) \cdot \eta(\sum_j^{T(X)} f(t_{ij})\star\eta(x_{j}))\big)
		&\text{$\mu$ commutes with $+$, Lemma \ref{AdditiveMonadMonoidLem}}\\
		&=& \sum_i^{T(X)} f(s_i) \star \sum_j^{T(X)} f(t_{ij})\star\eta(x_{j})&
		(1)\\
		&=& \sum_i^{T(X)}\sum_j^{T(X)} f(s_it_{ij})\star \eta(x_{j})&
		\text{properties action}\\
		&&&\text{$f$ pres. mult.}\\
		&=&
		\sum_j^{T(X)}\sum_i^{T(X)} f(s_it_{ij})\star \eta(x_{j})&
		\text{com. of $+$ in $T(X)$}\\
		&=&
		\sum_j^{T(X)}(\sum_i^{T(X)} f(s_it_{ij}))\star \eta(x_{j})&
		\text{property action}\\
		&=&
		\sum_j^{T(X)}(f(\sum_i^S s_it_{ij}))\star\eta(x_{j})&
		\text{$f$ preserves $+$}\\
		&=&
		\overline{f}_X(\sum_j(\sum_i^S s_it_{ij})x_{j})\\
		&=&
		\overline{f}_X \after \mu^{M_S}(\alpha)
		\end{array}
		$$
		(1) relies on the fact that for $a \in T(1), b \in T(X)$, $a\star b = \mu(a \star \eta(b))$:
		$$
		\begin{array}{rcl}
		\mu(a \star \eta(b)) &=& \mu \after T(\pi_2) \after \dst 
		\after \idmap \times \eta (a,b)\\
		&=&
		\mu \after T(\pi_2) \after \st (a,b) \\
		&=&
		T(\pi_2)\after\dst(a,b)\\
		&=&
		a\star b
		\end{array}
		$$
	\end{itemize}
	\item $\overline{\overline{f}} =  f$:
	$$
	\begin{array}{rcl}
		\overline{\overline{f}}(s) &=& \overline{f}_1(s*)\\
		&=&
		f(s)\star\eta(*)\\
		&=&
		f(s) \star 1^{T(1)} = f(s)
	\end{array}
	$$	
	\item $\overline{\overline{\sigma}} = \sigma$:
	$$
	\begin{array}{rcll}
		\overline{\overline{\sigma}}_X(\sum s_ix_i)
		&=&
		\sum^{T(X)} \overline{\sigma}(s_i) \star^T \eta^T(x_i)\\
		&=&
		\sum^{T(X)} \sigma_1(s_i*) \star^T (\sigma_X \after \eta^{M_S}) (x_i)\\
		&=&
		\sum^{T(X)} \sigma_1(s_i*) \star^T \sigma_X(1x_i)\\
		&=&
		\sum^{T(X)} \sigma_X((s_i*) \star^{M_S} (1x_i)) &\text{$\sigma$ preserves $\star$}\\
		&=&
		\sum^{T(X)} \sigma_X(s_ix_i) &\text{def. $\dst$ for $M_S$}\\
		&=&
		\sigma_X(\sum s_ix_i) &\sigma_X \text{preserves +, Lemma \ref{AdditiveMonadMonoidLem}}
	\end{array}
	$$
	\item Naturality: consider
	$$
	\begin{bijectivecorrespondence}
  \correspondence[in \Cat{CAMnd}] {\xymatrix{M_S\ar[r]^-{\mathcal{M}(f)} & M_R \ar[r]^-{\sigma} & T \ar[r]^-{\tau} & V}}
  \correspondence[in \CSRng]{\xymatrix{S\ar[r]_-{f} & R \ar[r]_-{\overline{\sigma}} & T(1) \ar[r]_-{\tau_1} & V(1)}}
\end{bijectivecorrespondence}
	$$
	$$
	\begin{array}{rcll}
	\overline{\tau \after \sigma \after \mathcal{M}(f)}(s)
	&=&
	(\tau \after \sigma \after \mathcal{M}(f))_1(s*)\\
	&=&
	(\tau_1 \after \sigma_1)(f(s)*)\\
	&=&
	(\tau_1 \after \overline{\sigma} \after f) (s)
	\end{array}
	$$	
			
\end{enumerate}
}

Notice that the counit of the above adjunction $\Ev\mathcal{M}(S) =
M_S(1) \to S$ is an isomorphism. Hence this adjunction is in fact a
reflection.

\section{Semirings and Lawvere theories}\label{Semiringcatsec}

In this section we will extend the adjunction between commutative
monoids and symmetric monoidal Lawvere theories depicted in
Figure~\ref{ComMonoidTriangleFig} to an adjunction between commutative
semirings and symmetrical monoidal Lawvere theories with biproducts,
\textit{i.e.}~between the categories $\CSRng$ and $\SMBLaw$.

\subsection{From semirings to Lawvere theories}\label{CSRng2CatSbSec}

Composing the multiset functor $\mathcal{M} \colon \CSRng \to \ACMnd$
from the previous section with the finitary Kleisli construction
$\Kl_{\NNO}$ yields a functor from \CSRng to \SMBLaw. This functor may
be described in an alternative (isomorphic) way by assigning to every
semiring $S$ the Lawvere theory of matrices over $S$, which is defined
as follows.

\begin{definition}\label{MatrCatDef}
  For a semiring $S$, the Lawvere theory $\Mat(S)$ of \emph{matrices
    over S} has, for $n, m \in \mathbb{N}$ morphisms (in $\Sets$) $n
  \times m \to S$, \textit{i.e.}~$n\times m$ matrices over $S$, as
  morphisms $n \to m$.  The identity $\id_n \colon n \to n$ is given
  by the identity matrix:
$$\begin{array}{rcl}
\id_n(i,j) & = & \left\{
\begin{array}{ll}
	1 & \text{if }\, i = j \\
	0 & \text{if }\, i \ne j.
\end{array} \right.
\end{array}$$

\noindent The composition of $g \colon n \to m$ and $h \colon m \to p$
is given by matrix multiplication:
$$
(h \after g)(i,k) = \textstyle \sum_j g(i,j) \cdot h(j,k).
$$
The coprojections $\kappa_1 \colon n \to n+m$ and $\kappa_2 \colon m
\to n+m$ are given by
$$
\kappa_1(i,j) = \left\{
\begin{array}{ll}
	1 & \text{if }\, i = j \\
	0 & \text{otherwise}.
\end{array} \right.
\quad\quad\quad
\kappa_2(i,j) = \left\{
\begin{array}{ll}
	1 & \text{if }\, j \ge n \,\text{and}\, j-n=i \\
	0 & \text{otherwise}.
\end{array} \right.
$$
\end{definition}

\begin{lemma}
\label{KleisliMatLem}
The assignment $S \mapsto \Mat(S)$ yields a functor $\CSRng \to
\Law$. The two functors $\Mat \Ev$ and $\Kl_{\NNO} \colon \ACMnd
\to \cat{Law}$ are naturally isomorphic.
\end{lemma}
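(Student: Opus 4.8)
The plan is to handle the two assertions separately: the first is a routine check that matrix algebra over a semiring obeys the Lawvere-theory axioms, and the second is obtained by identifying $\Kl_{\NNO}(T)$ with a matrix category via the biproduct structure supplied by Theorem~\ref{AMnd2BCat}.

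\textbf{That $\Mat$ is a functor $\CSRng\to\Law$.} First I would verify that $\Mat(S)$ from Definition~\ref{MatrCatDef} is a category: associativity of composition is associativity of matrix multiplication, which follows from associativity and commutativity of $+$ together with distributivity in $S$, and the identity laws are immediate from the shape of $\id_n$. The monoidal structure $(+,0)$ on $\NNO$ then gives finite coproducts in $\Mat(S)$ on the nose, with the stated coprojection matrices, the cotuple of $B\colon n\to p$ and $C\colon m\to p$ being the matrix obtained by stacking $B$ above $C$; hence $\Mat(S)\in\Law$. For a semiring homomorphism $f\colon S\to R$ one lets $\Mat(f)$ be the identity on objects and entrywise application of $f$; since $f$ preserves $0,1,+,\cdot$, it sends identity and coprojection matrices to the corresponding matrices over $R$ and commutes with matrix multiplication, so $\Mat(f)$ is a $\Law$-morphism and $\Mat$ is functorial.

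\textbf{The isomorphism $\Mat\circ\Evc\cong\Kl_{\NNO}$.} Fix $T\in\ACMnd$ and write $S=\Evc(T)=T(1)$, carrying multiplication $\mu\after T(\lambda)\after\st$ (Lemma~\ref{CAMnd2CSRngLem}, via Lemma~\ref{CMnd2CMonLem}) and addition $T(\nabla)\after\bc^{-1}$ (Lemma~\ref{AdditiveMonadMonoidLem}). Both $\Mat(S)$ and $\Kl_{\NNO}(T)$ have $\NNO$ as objects, so the comparison functor $\Phi_T$ is the identity on objects. By Theorem~\ref{AMnd2BCat} the coproduct $(+,0)$ is a biproduct in $\Kl_{\NNO}(T)$, so $\underline m$ is the $m$-fold biproduct of $\underline 1$; I define $\Phi_T$ on a Kleisli map $f\colon n\to T(\underline m)$ to be the $n\times m$ matrix with entries $A_f(i,j)=\pi_j\klafter f\klafter\kappa_i\in\Kl_{\NNO}(T)(1,1)$, using the biproduct projections~\eqref{kleisliprojdef} and coprojections. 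Unwinding definitions, $\pi_j\klafter f\klafter\kappa_i=(\mu\after T(p_j))(f(i))$, which is exactly the $j$-th component of the iterated bicartesian isomorphism $T(\underline m)\congrightarrow T(\underline 1)^m$ applied to $f(i)$ (for $m=0$ this is $T(\underline 0)\cong 1$); hence $\Phi_T$ is a bijection on each hom-set by the universal property of the biproduct. Crucially the endomorphism semiring $\Kl_{\NNO}(T)(1,1)\cong\Sets(1,T(1))\cong T(1)$ coincides with $S=\Evc(T)$: its multiplication is Kleisli composition, equal to $\mu\after T(\lambda)\after\st$ by the identification $\Ev\cong\mathcal{H}\Kl_{\NNO}$ recorded after Lemma~\ref{AdjCatMonLem}, and its addition is the biproduct hom-addition~\eqref{HomsetPlus}, equal to $T(\nabla)\after\bc^{-1}$ as noted after~\eqref{ObjMon}.

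\textbf{Composition, functoriality in $\Law$, and naturality.} The remaining work is that $\Phi_T$ preserves composition, i.e.\ Kleisli composition becomes matrix multiplication over $S$. This is the standard fact that in any category with finite biproducts, composition of maps between powers of a fixed object is matrix multiplication with respect to the endomorphism semiring (composition as product, hom-addition as sum); combined with the identification of that semiring with $\Evc(T)$ above, it gives $\Phi_T(B\klafter A)=\Phi_T(B)\after\Phi_T(A)$. Since $\Phi_T$ is the identity on objects and sends the Kleisli coprojections $\kappa_i\colon\underline n\to\underline{n+m}$ to the coprojection matrices of Definition~\ref{MatrCatDef}, it is an isomorphism in $\Law$. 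For naturality in $T$: given a monad map $\sigma\colon T\to S'$, the functor $\Kl_{\NNO}(\sigma)$ preserves biproducts (it preserves the $p_i$ and $\kappa_i$, using that $\bc$ commutes with $\sigma$, Lemma~\ref{bcproplem}.\ref{natbcprop}) and acts on $\Kl_{\NNO}(T)(1,1)$ as $\sigma_1=\Evc(\sigma)$, whence $\Phi_{S'}\after\Kl_{\NNO}(\sigma)=\Mat(\Evc(\sigma))\after\Phi_T$. The main obstacle is purely bookkeeping in the composition identity — making the iterated-$\bc$ isomorphism precise and invoking Lemma~\ref{bcproplem} so that $\mu$, $\st$ and $\bc$ interleave as matrix multiplication demands — but nothing beyond Lemma~\ref{bcproplem} and Theorem~\ref{AMnd2BCat} is required. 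Finally, restricting along $\mathcal{M}\colon\CSRng\to\ACMnd$ of Lemma~\ref{SRng2CAMndProp} and using that the counit $\Evc(\mathcal{M}(S))=M_S(1)\to S$ is an isomorphism yields $\Kl_{\NNO}\circ\mathcal{M}\cong\Mat$ as functors $\CSRng\to\Law$, the statement announced at the start of this subsection.
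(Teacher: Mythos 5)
Your proposal is correct and follows essentially the same route as the paper: your $\Phi_T$, with entries $\pi_j\klafter f\klafter\kappa_i=(\mu\after T(p_j))(f(i))$, is exactly the paper's comparison functor $\theta_T\colon\Kl_{\NNO}(T)\to\Mat(T(1))$, and the paper likewise identifies the hom-semiring $\Kl_{\NNO}(T)(1,1)$ with $\Evc(T)$ and reduces compositionality to the properties of the (generalised) $\bc$ maps. The only cosmetic difference is that the paper exhibits the inverse $\xi_T=\bc_m^{-1}\after\langle h(-,j)\rangle_j$ explicitly rather than deducing bijectivity of hom-sets from the biproduct universal property, but this is the same construction.
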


\begin{proof}
  A map of semirings $f \colon S \to R$ gives rise to a functor
  $\Mat(f) \colon \Mat(S) \to \Mat(R)$ which is the identity on
  objects and which acts on morphisms by post-composition: $h \colon n
  \times m \to S$ in $\Mat(S)$ is mapped to $f \after h \colon n
  \times m \to T$ in $\Mat(T)$. It is easily checked that $\Mat(f)$ is
  a morphism of Lawvere theories and that the assigment is functorial.

  To prove the second claim we define two natural
  transformations. First we define $\xi \colon \Mat \Ev \to
  \Kl_{\NNO}$ with components $\xi_T \colon \Mat(T(1)) \to
  \Kl_{\NNO}(T)$ that are the identity on objects and send a morphism
  $h \colon n \times m \to T(1)$ in $\Mat(T(1))$ to the morphism
  $\xi_T(h)$ in $\Kl_{\NNO}(T)$ given by
$$
\xymatrix@C2.5pc{
\xi_T(h){=}
\Big(n \ar[rr]^-{\langle h(\_,j) \rangle_{j \in m}} && T(1)^m \ar[r]^-{\bc_m^{-1}}& T(m)\Big),
}
$$
where $\bc_m^{-1}$ is the inverse of the generalised bicartesian map 
$$
\xymatrix{
\bc_m = \Big(T(m) \, = \, T(\coprod_m 1) \ar[r]& T(1)^m\Big).
}
$$

\noindent And secondly, in the reverse direction, we define $\theta
\colon \Kl_{\NNO} \to \Mat \Ev$ with components $\theta_T \colon
\Kl_{\NNO}(T) \to \Mat(T(1))$ that are the identity on objects and
send a morphism $g \colon n \to T(m)$ in $\Kl_{\NNO}(T)$ to the
morphism $\theta_T(g) \colon n \times m \to T(1)$ in $\Mat(T(1))$
given by
\begin{equation}
\label{Kl2MatEqn}
\begin{array}{rcl}
\theta_T(g)(i,j) & = & (\pi_j \after \bc_m \after g) (i).
\end{array}
\end{equation}

\noindent It requires some work, but is relatively straightforward to
check that the components $\xi_T$ and $\theta_T$ are $\Law$-maps. To
prove preservation of the composition by $\xi_T$ and $\theta_T$ one
uses the definition of addition and multiplication in $T(1)$ and
(generalisations of) the properties of the map $\bc$ listed in Lemma
\ref{bcproplem}. A short computation shows that the functors are each
other's inverses.  The naturality of both $\xi$ and $\theta$ follows
from (a generalisation of) point \ref{natbcprop} of Lemma
\ref{bcproplem}.\QED
\end{proof}

\auxproof{
$\Mat(f) \colon \Mat(S) \to \Mat(T)$ is a functor as it preserves the identity:
$$
\Mat(f)(\id_n^S) = f \after \id_n^S = \id_n^R,
$$
($f$ is a semiring homomorphism and therefore preserves $0$ and $1$) and composition:
$$
\begin{array}{rcll}
\Mat(f)(h \after g)(i,j) &=& f(\sum_k h(k,j)\cdot g(i,k))\\
&=&
\sum_k f(h(k,j))\cdot f(g(i,k)) &\text{$f$ preserves addition and multiplication}\\
&=&
\Mat(f)(h) \after \Mat(f)(g).
\end{array}
$$
$\Mat(f)$ strictly preserves finite coproducts as
$$
\begin{array}{rcll}
\Mat(f)(\kappa_i^{\Mat{S}}) &=& f \after \kappa_i^{\Mat{S}}\\
&=&
\kappa_i^{\Mat{R}},
\end{array}
$$
where the last equality follows from the fact that the matrices representing the coprojections consists of only zeros and ones and being a semiring homomorphism $f(0) = 0$ and $f(1) =1$.

Furthermore $\Mat$ itself is a functor $\CSRng \to \Law$, as $\Mat(\id_S) = \id_{\Mat(S)}$ and 
$$
\begin{array}{rcl}
	\Mat(f_2 \after f_1)(h) &=& (f_2 \after f_1) \after h = f_2 \after (f_1 \after h)\\
	&=& \Mat(f_2) \after Mat(f_1)(h)
\end{array}
$$

Now we show that the two functors $\Mat\Ev$ and $\Kl_{\NNO}$ are naturally isomorphic.
\begin{itemize}
\item $\xi_T: \Mat\Ev(T) = \Mat(T(1)) \to \Kl_{\NNO}(T)$ is a $\Law$-map.\\
	\begin{enumerate}
		\item $\xi_T$ preserves the identity:
			$$
			\begin{array}{rcll}
				\xi_T(\id_n^{\Mat(T(1))}) &=&
				\bc^{-1} \after \langle \id_n^{\Mat(T(1))} \after \lambda i. \tuple{i}{j}\rangle_{j \in n}\\
				&=&
				\bc^{-1} \after \langle p_j \rangle_{j \in n} &\text{where $p_j$ is in \eqref{kleisliprojdef}}\\
				&=&
				\eta_n = \id_n^{\Kl_{\NNO}(T)} &\text{Lemma \ref{bcproplem}\ref{comEtaMubcprop}}
			\end{array}
			$$
		\item $\xi_T$ preserves the composition:\\
		Let $g \colon n\times m \to T(1)$ and $h \colon m \times p \to T(1)$.
			$$
			\begin{array}{rcll}
			\xi(h \after g)(\_) &=& \bc^{-1} \after \langle (h \after g)(\_,k)\rangle_{k \in p}\\
			&=& 
			\bc^{-1} \after \langle \sum_{j\in m} h(j,k) \cdot g(\_,j)\rangle_{k\in p} &\text{definition of composition in $\Mat(T(1))$}	
			\end{array}
			$$
			$$
			\begin{array}{rcll}
			(\xi(h) \after \xi(g))(\_) &=& \mu \after T(\xi(h)) \after \xi(g)\\
			&=&
			\mu \after T(\bc^{-1}) \after T(\langle h(\_,k)\rangle_{k\in p}) \after \bc^{-1} \after \langle g(\_,j)\rangle_{j \in m}\\
			&=&
			\bc^{-1} \after \mu^p \after \langle T(\pi_k)\rangle_{k\in p} \after T(\langle h(\_,k)\rangle_{k\in p}) \after \bc^{-1} \after \langle g(\_,j)\rangle_{j \in m}&\text{Lemma \ref{bcproplem}\ref{comEtaMubcprop}}\\
			&=& 
			\bc^{-1} \after \langle \mu \after T(h(\_,k))\rangle_{k \in p} \after \bc^{-1} \after \langle g(\_,j)\rangle_{j \in m}
			\end{array}
			$$
			So it suffices to show that, for $k \in p$,
			$$
			\sum_{j\in m} h(j,k) \cdot g(\_,j) = \mu \after T(h(\_,k)) \after \bc^{-1} \after \langle g(\_,j)\rangle_{j \in m}
			$$
			using the definition of $+$ and $\cdot$ in $T(1)$ this is demonstrated as follows ($j \in m$ everywhere):
			$$
			\xymatrix@C1pc{
	n \ar[r]^-{\langle g(_,j)\rangle_{j}} & T(1)^m \ar[r]^-{\cong} \ar[rdd]_{\id} & (T(1) \times 1)^m \ar[d]_{\st^m} \ar[rr]^-{\prod_{j} \idmap \times \kappa_j} && (T(1) \times m)^m \ar[rr]^-{\prod_{j} \idmap \times h(\_,k)} && (T(1) \times T(1))^m \ar[d]^-{\st^m} \ar[rdd]^-{\cdot}
	\\
	&&T(1 \times 1)^m \ar[rr]^-{\prod_{j} T(\idmap \times \kappa_j)} \ar[d]^-{T(\pi_2)^m}&& T(1 \times m)^m \ar[rr]^-{\prod_{j} T(\idmap \times h(\_,k))} && T(1 \times T(1))^m \ar[d]^-{T(\pi_2)^m}
	\\
	&&T(1)^m \ar[rr]^-{\prod_{j} T(\kappa_j)} \ar[d]^-{\bc^{-1}} && T(m)^m \ar[rr]^-{\prod_{j} T(h(\_,k))} && T^2(1)^m \ar[r]^-{\mu^m} \ar[d]^-{\bc^{-1}}& T(1)^m \ar[d]^-{\bc^{-1}}
	\\
	&T(\coprod_m 1) \ar[r]^-{\cong} & T(m) \ar[rrrrd]_-{T(h(\_,k))} \ar[rr]^-{T(\coprod_j \kappa_j)} &&T(\coprod_j m) \ar[rr]^-{T(\coprod_j h(\_,k)}&&T(\coprod_j T(1)) \ar[d]^-{T(\nabla)}&T(m) \ar[d]^-{T(\nabla)}\\
	&&&&&&T^2(1) \ar[r]^-{\mu} &T(1) 
			}
			$$
	Commutation of the lower right square follows by point (iii) of Lemma \ref{Mnd2MonLem2}.
	
	\item $\xi_T$ preserves coproducts:\\
	Let $\kappa_1^{\Mat(T(1))} \colon n \to n+m$ be the coprojection in $\Mat(T(1))$. To prove:
	$$
	\xi_T(\kappa_1^{\Mat(T(1))}) = \bc^{-1} \after \langle \kappa_1^{\Mat(T(1))}(\_,j)\rangle_{j\in n+m} = \eta \after \kappa_1^{\Sets} = \kappa_1^{\Kl_{\NNO}(T)}
	$$
	This is shown as follows:
	$$
	\begin{array}{rcll}
	\bc \after \eta \after \kappa_1^{\Sets} &=& \langle p_j \rangle_{j \in n+m} \after \kappa_1^{\Sets} &\text{Lemma \ref{bcproplem}(iii)}\\
	&=&  
	\langle \kappa_1^{\Mat(T(1))}(\_,j)\rangle_{j\in n+m},
	\end{array}
	$$
	where the last equality follows from the fact that $0^{T(1)} = 0_{1,1}(*)$ and $1^{T(1)} = \eta_1(*)$
	\end{enumerate}

\item $\theta_T \colon \Kl_{\NNO}(T) \to \Mat(T(1))$ is a $\Law$-map.
\begin{enumerate}
	\item $\theta_T$ preserves the identity:
		$$
		\begin{array}{rcll}
		\theta_T(\id_n^{\Kl_{\NNO}(T)}(\_,j) &=&
		\pi_j \after \bc \after \eta_n\\ 
		&=&
		\pi_j \after \langle p_j \rangle_{j \in n} &\text{Lemma \ref{bcproplem}\ref{comEtaMubcprop}}\\
		&=&
		p_j = \id_n^{\Mat(T(1))}(\_,j)
		\end{array}
		$$
	\item $\theta_T$ preserves the composition:\\
	Let $g\colon n \to T(m)$ and $h\colon m \to T(p)$. Then $\theta_T(h \after g) \colon n \times p \to T(1)$. We fix the second coordinate and consider it as a map $n \to T(1)$.
	$$
	\begin{array}{rcll}
	\lefteqn{(\theta(h)\after\theta(g))(\_,k)}\\
	&=&
	\sum_j \theta(h)(j,k) \cdot \theta(g)(\_,j)\\
	&=&
	T(\nabla) \after \bc^{-1} \after \mu^m \after T(\pi_2)^m \after \st^m \after \prod_j (\idmap \times \theta(h)(\_,k)) \after \prod_j (\idmap \times \kappa_j) \after \\ &&(\rho^{-1})^m \after \bc \after g\\
	&=&
	T(\nabla) \after \bc^{-1} \after \mu^m \after T(\pi_2)^m \after \prod_j T(\idmap \times \theta(h)(\_,k)) \after \prod_j T(\idmap \times \kappa_j) \after \st^m \after \\ && (\rho^{-1})^m \after \bc \after g\\
	&=&
	T(\nabla) \after \bc^{-1} \after \mu^m \after \prod_j T(\theta(h)(\_,k)) \after \prod_j T(\kappa_j) \after T(\pi_2)^m \after \st^m \after \\ && (\rho^{-1})^m \after \bc \after g\\
	&=&
	T(\nabla) \after \bc^{-1} \after \mu^m \after \prod_j T(\theta(h)(\_,k)) \after \prod_j T(\kappa_j) \after \bc \after g\\
	&=&
	\mu \after T(\nabla) \after \bc^{-1} \after \prod_j T(\theta(h)(\_,k)) \after \prod_j T(\kappa_j) \after \bc \after g\\
	&=&
	\mu \after T(\nabla) \after T(\coprod_j \theta(h)(\_,k)) \after T(\coprod_j \kappa_j) \after \bc^{-1} \after \bc \after g\\
	&=&
	\mu \after T(\theta(h)(\_,k)) \after g(\_)\\
	&=&
	\mu \after T(\pi_k) \after T(\bc) \after T(h) \after g\\
	&=&
	\mu \after \pi_k \after \langle T(\pi_k)\rangle_{k\in p} \after T(\bc) \after T(h) \after g\\
	&=&
	\pi_k \after \mu^p \after \langle T(\pi_k)\rangle_{k\in p} \after T(\bc) \after T(h) \after g\\
	&=&
	\pi_k \after \bc \after \mu \after T(h) \after g\\
	&=&
	\theta(h \after g)(\_,k)  
	\end{array}
	$$
	
	\item $\theta_T$ preserves the coproduct structure\\
	Consider $\kappa_1^{\Kl_{\NNO}(T)} \colon n \to T(n+m) = \eta \after \kappa_1^{\Sets}$. Then
	$$
	\theta_T(\kappa_1^{\Kl_{\NNO}(T)}) \colon n \times (n+m) \to T(1)
	$$
	and
	$$
	\begin{array}{rcll}
	\theta_T(\kappa_1^{\Kl_{\NNO}(T)})(\_,j) &=& \pi_j \after \bc_m \after \eta \after \kappa_1^{\Sets}\\
	&=&
	\pi_j \after \langle p_j \rangle_{j \in n+m} \after \kappa_1^{\Sets} &\text{(Lemma \ref{bcproplem}(iii))}\\
	&=&
	p_j \after \kappa_1^{\Sets}\\
	&=&
	\kappa_1^{\Mat(T(1))}(\_,j)
	\end{array}
	$$
\end{enumerate}	
\item Naturality of $\xi \colon \Mat\Ev \to \Kl_{\NNO}$.\\
Let $\sigma \colon T \to V$ and $h \colon n \times m \to T(1)$ in $(\Mat\Ev)(T)$.
$$
\begin{array}{rcl}
	(\Kl_{\NNO}(\sigma) \after \xi_T)(h) 
	&=&
	\sigma_m \after \bc^{-1} \after \langle h \after \lambda i.\tuple{i}{j} \rangle_{j \in m}\\
	&=&
	\bc^{-1} \after \sigma_1^m \after \langle h \after \lambda i.\tuple{i}{j} \rangle_{j \in m}\\
	&=&
	\bc^{-1} \after \langle (\sigma_1 \after h) \after \lambda i.\tuple{i}{j} \rangle_{j \in m}\\
	&=&
	(\xi_V \after (\Mat\Ev)(\sigma))(h)
\end{array}
$$
\item Naturality of $\theta \colon \Kl_{\NNO} \to \Mat\Ev$.\\
Let $\sigma \colon T \to V$ and $g \colon n \to T(m)$ in $\Kl_{\NNO}(T)$.
$$
\begin{array}{rcl}
((\Mat\Ev)(\sigma)\after \theta_T)(g)(\_,j) 
	&=&
	\sigma_1 \after \pi_j \after \bc \after g\\
	&=&
	\pi_j \after \sigma_1^m \after \bc \after g\\
	&=&
	\pi_j \after \bc \after \sigma_m \after g \\
	&=&
	(\theta_V \after \Kl_{\NNO}(\sigma))(g)(\_,j) 
\end{array}
$$

\item $\xi_T \after \theta_T = id$.\\
Let $g \colon n \to T(m)$ in $\Kl_{\NNO}(T)$.
$$
\begin{array}{rcl}
(\xi_T \after \theta_T)(g) &=& \bc^{-1} \after \langle \theta_T(g) \after \lambda i.\tuple{i}{j}\rangle_{j \in m}\\
&=&
	\bc^{-1} \after \langle \pi_j \after \bc \after g \rangle_{j \in m}\\
&=&
	\bc^{-1} \after \bc \after g\\
&=&
	g
\end{array}
$$
\item $\theta_T \after \xi_T = id$.\\
Let $h \colon n\times m \to T(1)$ in $\Mat(T(1))$.
$$
\begin{array}{rcl}
(\theta_T \after \xi_T) (i,j) &=& \pi_j \after \bc \after \xi_T(h)(i)\\
&=&
	\pi_j \after \bc \after \bc^{-1} \after \langle h \after \lambda x.\tuple{x}{k}\rangle_{k \in m}(i)\\
	&=&
	\pi_j\big(\langle h(i,k)\rangle_{k\in m}\big)\\
	&=&
	h(i,j)
\end{array}  
$$
\end{itemize}
}

The pair of functors $\mathcal{M} \colon \CSRng \leftrightarrows
\ACMnd \colon \Ev$ forms a reflection, $\Ev\mathcal{M} \cong \id$
(Lemma~\ref{AdjCSR2ACMLem}). Combining this with the previous
proposition, it follows that also the functors $\Mat,
\Kl_{\NNO}\mathcal{M} \colon \CSRng \to \Law$ are naturally
isomorphic. Hence, the functor $\Mat \colon \CSRng \to \Law$ may be
viewed as a functor from commutative semirings to symmetric monoidal
Lawvere theories with biproducts. For a commutative semiring $S$ the
projection maps $\pi_1 \colon n+m \to n$ and $\pi_2 \colon n + m \to
m$ in $\Mat(S)$ are defined in a similar way as the coprojection maps
from Definition \ref{MatrCatDef}. For a pair of maps $g \colon m \to
p$, $h \colon n \to q$, the tensor product $g \otimes h \colon (m
\times n) \to (p \times q)$ is the map $g \otimes h \colon (m \times
n) \times (p \times q) \to S$ defined as
$$\begin{array}{rcl}
(g \otimes h)((i_0,i_1),(j_0,j_1))
& = &
g(i_0,j_0) \cdot h(i_1,j_1),
\end{array}$$ 
where $\cdot$ is the multiplication from $S$.

\subsection{From Lawvere theories to semirings}\label{Cat2CSRngSec}

In Section~\ref{ComMonoidSubsec}, just after Lemma~\ref{CMon2CMndLem},
we have already seen that the homset $\cat{L}(1,1)$ of a Lawvere
theory $\cat{L} \in \SMLaw$ is a commutative monoid, with
multiplication given by composition of endomaps on $1$. In case $\cat{L}$ also has biproducts we have, by
\eqref{HomsetPlus}, an addition on this homset, which is preserved by
composition. Combining those two monoid structures yields a semiring
structure on $\cat{L}(1,1)$. This is standard, see
\textit{e.g.}~\cite{AbramskyC04,KellyL80,Heunen10a}. The assignment of
the semiring $\cat{L}(1,1)$ to a Lawvere theory $\cat{L} \in \SMBLaw$
is functorial and we denote this functor, as in
Section~\ref{ComMonoidSubsec}, by $\mathcal{H} \colon \SMBLaw \to
\CSRng$.

\subsection{Adjunction between semirings and Lawvere theories}

Our main result is the adjunction on the right in the triangle of
adjunctions for semirings, see Figure~\ref{CSRngTriangleFig}.


\begin{lemma}
\label{CSRng2CatAdjLem}
The pair of functors $\Mat \colon \CSRng \rightleftarrows \SMBLaw
\colon \mathcal{H}$, forms an adjunction $\Mat \dashv \mathcal{H}$.
\end{lemma}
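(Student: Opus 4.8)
The plan is to prove the adjunction exactly in the style of Lemmas~\ref{AdjMndMonLem} and~\ref{AdjCatMonLem}, by exhibiting for each commutative semiring $S$ and each $\cat{L}\in\SMBLaw$ a bijective correspondence
$$\begin{bijectivecorrespondence}
  \correspondence[in \SMBLaw]{\xymatrix{\Mat(S)\ar[r]^-{F} & \cat{L}}}
  \correspondence[in \CSRng]{\xymatrix{S\ar[r]_-{f} & \mathcal{H}(\cat{L})}}
\end{bijectivecorrespondence}$$
natural in $S$ and $\cat{L}$. One direction is immediate: a $\SMBLaw$-morphism $F\colon\Mat(S)\to\cat{L}$ is the identity on objects, so its action on the endo-homset of $1$ is a function $\overline{F}$ from $\Mat(S)(1,1)$ to $\cat{L}(1,1)=\mathcal{H}(\cat{L})$, and $\Mat(S)(1,1)$ is precisely $S$ --- a $1\times 1$ matrix is an element of $S$, with composition and the identity matrix giving the multiplicative monoid of $S$. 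Functoriality of $F$ makes $\overline{F}$ multiplicative; and since $F$ strictly preserves biproducts while the additive part of both semirings $\Mat(S)(1,1)=S$ and $\mathcal{H}(\cat{L})$ is the one induced from biproducts via~\eqref{HomsetPlus}, $\overline{F}$ also preserves $(+,0)$, so $\overline{F}\in\CSRng$.

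In the other direction I would take a $\CSRng$-morphism $f\colon S\to\cat{L}(1,1)$ and define $\overline{f}\colon\Mat(S)\to\cat{L}$ to be the identity on objects, sending a matrix $h\colon n\to m$ to the unique morphism of $\cat{L}$ with $\pi_{j}\after\overline{f}(h)\after\kappa_{i}=f(h(i,j))$ for all $i<n$, $j<m$ --- equivalently $\overline{f}(h)=\bigl[\langle f(h(i,j))\rangle_{j<m}\bigr]_{i<n}$, a cotuple of tuples formed with the biproducts of $\cat{L}$. The work is then to check $\overline{f}$ is a morphism in $\SMBLaw$: it sends the identity matrix to $\idmap$ (the diagonal of $1$'s with off-diagonal $0$'s yields $\idmap$ by the biproduct equations, using $f(1)=\idmap_{1}$ and $f(0)=0$); it preserves matrix multiplication (insert $\sum_{j}\kappa_{j}\after\pi_{j}=\idmap_{m}$ into $\pi_{k}\after\overline{f}(h_{2})\after\overline{f}(h_{1})\after\kappa_{i}$ and use that $f$ preserves $+$ and $\cdot$, with commutativity of multiplication in $\cat{L}(1,1)$ reconciling the order of the factors against Definition~\ref{MatrCatDef}); it strictly preserves the biproduct coprojections and projections, whose matrices have only entries $0,1$ that are fixed by $f$; and it strictly preserves $\otimes$, where the formula $(g\otimes h)((i_{0},i_{1}),(j_{0},j_{1}))=g(i_{0},j_{0})\cdot h(i_{1},j_{1})$ must be matched with the entries of $\overline{f}(g)\otimes\overline{f}(h)$ using the coordinatisation map $\coo$ of~\eqref{CooEqn} and the distributivity of $\otimes$ over biproducts available in $\SMBLaw$.

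Finally I would verify that the two assignments are mutually inverse --- $\overline{\overline{f}}(s)=f(s)$ since the $1\times 1$ matrix $s$ is literally the endomap $s\colon 1\to 1$, and $\overline{\overline{F}}=F$ because any matrix $h$ satisfies $\pi_{j}\after h\after\kappa_{i}=h(i,j)$ in $\Mat(S)$, hence equals the cotuple of tuples of its entries, which $F$ (preserving cotupling, tupling and acting as $\overline{F}$ on $(1,1)$) sends to $\overline{\overline{F}}(h)$ --- and that the correspondence is natural, the latter being the one-line check $\overline{G\after F\after\Mat(g)}(s)=G(F(g(s)))=\mathcal{H}(G)(\overline{F}(g(s)))$ for $g$ in $\CSRng$ and $G$ in $\SMBLaw$. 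One also sees that the counit $\mathcal{H}(\Mat(S))=\Mat(S)(1,1)\to S$ is the identity, so the adjunction is in fact a reflection. I expect the only genuinely delicate point to be strict preservation of $\otimes$ by $\overline{f}$: unlike the clauses on composition and biproducts, which are just ``matrix calculus'' in a $\CMon$-enriched category, that one really uses the coordinatisation and the interaction of the tensor with the biproducts of $\cat{L}$. (A more abstract route would transport the adjunction for commutative monoids along the isomorphism $\Mat\cong\Kl_{\NNO}\mathcal{M}$ of Lemma~\ref{KleisliMatLem} together with the reflection $\mathcal{M}\dashv\Evc$ of Lemma~\ref{AdjCSR2ACMLem}, but the direct correspondence above is cleaner to state.)
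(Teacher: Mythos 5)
Your proposal is correct and follows essentially the same route as the paper: the adjunction is established via the bijective correspondence $F\mapsto\overline{F}$ (restriction to the endo-homset of $1$, using $\Mat(S)(1,1)\cong S$) and $f\mapsto\overline{f}(h)=\bigl[\langle f(h(i,j))\rangle_{j<m}\bigr]_{i<n}$, with the same supporting checks (commutativity of $\cat{L}(1,1)$ for functoriality, biproduct equations, and the interaction of $\otimes$ with the biproducts for tensor preservation). The paper likewise leaves these verifications to the reader, so your write-up is, if anything, slightly more explicit about where the delicate points lie.
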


\begin{proof}
For $S \in \CSRng$ and $\cat{L} \in \SMBLaw$ there are
(natural) bijective correspondences:
$$\begin{bijectivecorrespondence}
  \correspondence[in \SMBLaw]{\xymatrix{\Mat(S)\ar[r]^-{F} & \cat{L}}}
  \correspondence[in \CSRng]{\xymatrix{S\ar[r]_-{f} & \mathcal{H}(\cat{L})}}
\end{bijectivecorrespondence}$$
\
\noindent Given $F$ one defines a semiring map $\overline{F}
\colon S\rightarrow \mathcal{H}(\cat{L}) = \cat{L}(1,1)$ by 
$$
s \mapsto F(1 \times 1 \xrightarrow{\lam{x}{s}} S).
$$ 

\noindent Note that $1 \times 1 \xrightarrow{\lam{x}{s}} S$ is an
  endomap on $1$ in $\Mat(S)$ which is mapped by $F$ to an element of
  $\cat{L}(1,1)$.

Conversely, given $f$ one defines a \SMBLaw-map $\overline{f} \colon
\Mat(S) \rightarrow \cat{L}$ which sends a morphism $h \colon n \to m$
in $\Mat(S)$, \textit{i.e.} $h \colon n \times m \to S$ in $\Sets$, to
the following morphism $n\rightarrow m$ in $\cat{L}$, forming an
$n$-cotuple of $m$-tuples
$$\xymatrix@C+1pc{
\overline{f}(h) = \Big(n
   \ar[rr]^-{\big[\big\langle f(h(i,j))\big\rangle_{j < m}\big]_{i < n}} 
   &&m\Big)}. 
$$
It is readily checked that $\overline{F} \colon S \to \cat{L}(1,1)$ is
a map of semirings. To show that $\overline{f} \colon \Mat(S) \to
\cat{L}$ is a functor one has to use the definition of the semiring
structure on $\cat{L}(1,1)$ and the properties of the biproduct on
$\cat{L}$. One easily verifies that $\overline{f}$ preserves the
biproduct. To show that it also preserves the monoidal structure one
has to use that, for $s, t \in \cat{L}(1,1)$, $s \otimes t = t\after s
\,(=s \after t)$. \QED

\end{proof}
\auxproof{
\begin{enumerate}
	 \item $\overline{F}$ is a semiring homomorphism
	 	\begin{itemize}
	 		\item $\overline{F}$ preserves $\cdot$
	 		$$			
	 			\begin{array}{rcll}
	 			\overline{F}(s\cdot t) &=& F(1 \times 1 \xrightarrow{{\lambda x. (s\cdot t)}} S)\\
	 			&=&
	 			F((1 \times 1 \xrightarrow{\lam{x}{s}} S) \after (1 \times 1 \xrightarrow{{\lambda x. t}} S))\\
	 			&=&
	 			F(1 \times 1 \xrightarrow{\lam{x}{s}} S) \after F(1 \times 1 \xrightarrow{{\lambda x. t}} S)\\
	 			&=&
	 			 			F(1 \times 1 \xrightarrow{{\lambda x. t}} S) \cdot^{\mathcal{H}(\cat{C})} F(1 \times 1 \xrightarrow{\lam{x}{s}} S)\\
	 			&=&
	 			\overline{F}(t) \cdot \overline{F}(s)\\
	 			&=& \overline{F}(s) \cdot \overline{F}(t)
	 			&\text{commutativity}
	 			\end{array}
	 		$$
	 		\item $\overline{F}$ preserves $+$
	 		$$
	 			\begin{array}{rcll}
	 			\overline{F}(s + t) &=& F(1 \times 1 \xrightarrow{{\lambda x. (s+t)}} S)\\
	 			&=& F(1 \xrightarrow{\cotuple{s}{t}} 1 \oplus 1 \xrightarrow{\nabla} 1)\\
	 			&=& I \xrightarrow{\cotuple{F(s)}{F(t)}} I \oplus I \xrightarrow{\nabla} I &\text{$F$ preserves all structure}\\
	 			&=& F(s) + F(t) 	 			
	 			\end{array}
	 		$$
	 		\item $\overline{F}$ preserves 0 
	 		$$
	 			\begin{array}{rcl}
	 			\overline{F}(0) &=& F(1 \times 1 \xrightarrow{{\lambda x. 0}} S)\\
	 			&=& F(1 \xrightarrow{!} 0 \xrightarrow{!} 1)\\
	 			&=& F(1)\xrightarrow{F(!)} F(0) \xrightarrow{F(!)} F(1)\\
	 			&=& 1 \xrightarrow{!} 0 \xrightarrow{!} 1 = 0_{\mathcal{H}(\cat{L})} 
	 			\end{array}
	 		$$
	 		\item $\overline{F}$ preserves 1 
	 		$$
	 			\begin{array}{rcl}
	 			\overline{F}(1) &=& F(1 \times 1 \xrightarrow{{\lambda x. 1}} S)\\
	 			&=& F(id_1)\\
	 			&=& id_1\\
	 			&=& 1_{\mathcal{H}(\cat{L})} 
	 			\end{array}
	 		$$
	 	\end{itemize}
	 	\item $\overline{f}$ is a \SMBLaw-morphism
	 	\begin{itemize}
	 		\item $\overline{f}$ is a functor\\
	 			Let $n$ be an object of $\Mat(S)$,
	 				$$\xymatrix@C+1pc{
					\overline{f}(id_n) = \Big(n 
   				\ar[rr]^-{\big[\big\langle f(id_n(i,j))\big\rangle_{j\in m}\big]_{i \in n}} 
   				&&n\Big)}. 
					$$
				As $f$ is semiring homomorphism, it preserves $0$ and $1$. Hence
				$$
				f(\id_n(i,j)) = \left\{
				\begin{array}{ll}
					id_1 & \text{if }\, i = j \\
					0_{1,1} & \text{if }\, i \ne j.
				\end{array} \right.
				$$
				It follows that $\overline{f}(\id_n) = \id_{\bigoplus_n I}$ (using the properties of the biproduct).\\
				\\
	 			Let $g \colon n \to m$ and $h \colon m \to p$ in $\Mat{S}$.
	 			$$
	 			\begin{array}{rcl}
	 			\overline{f}(h \after g) &=& \big[\big\langle f(h \after g(i,k))\big\rangle_{k\in p}\big]_{i \in n}\\
	 			&=&
	 			\big[\big\langle f(\sum_j h(j,k) \cdot g(i,j))\big\rangle_{k\in p}\big]_{i \in n}\\
	 			&=&
	 			\big[\big\langle f(\sum_j g(i,j) \cdot h(j,k))\big\rangle_{k\in p}\big]_{i \in n}\\
	 			&=&
	 			\big[\big\langle \sum_j f(g(i,j)) \cdot f(h(j,k))\big\rangle_{k\in p}\big]_{i \in n}\\
	 			&=&
	 			\big[\big\langle \nabla \after \oplus_{j \in m} ((f(h(j,k)) \after f(g(i,j)))) \after \Delta \big\rangle_{k\in p}\big]_{i \in n}\\
	 			&=&
	 			\big[\big\langle \nabla \after (\oplus_{j \in m} f(h(j,k))) \after (\oplus_{j\in m} f(g(i,j))) \after \Delta \big\rangle_{k\in p}\big]_{i \in n}\\
	 			&=&
	 			\big[\big\langle [f(h(j,k))]_{j\in m} \after \big\langle f(g(i,j)) \big\rangle_{j\in m} \big\rangle_{k\in p}\big]_{i \in n}\\
	 			&=&
	 			\big[\big\langle [f(h(j,k))]_{j\in m} \big\rangle_{k\in p} \after \big\langle f(g(i,j)) \big\rangle_{j\in m} \big]_{i \in n}\\
	 			&=&
	 			\big\langle [f(h(j,k))]_{j\in m} \big\rangle_{k\in p} \after \big[\langle f(g(i,j)) \rangle_{j\in m} \big]_{i \in n}\\
	 			&=&
	 			\big[\big\langle f(h(j,k))\big\rangle_{k\in p}\big]_{j\in m} \after \big[\langle f(g(i,j)) \rangle_{j\in m} \big]_{i \in n}\\
	 			&=&
	 			\overline{f}(h) \after \overline{f}(g)
	 			\end{array}
	 			$$
	 			
	 		\item $\overline{f}$ preserves the biproduct structure\\
	 		We have to show that the canonical maps: 
	 		$$
	 		n +m = \overline{f}(n \oplus m) \xrightarrow{\tuple{\overline{f}(\pi_1)}{\overline{f}(\pi_2)}} \overline{f}(n) \oplus \overline{f}(m) = n+m
	 		$$
	 		and
	 		$$
	 		n+m = \overline{f}(n) \oplus \overline{f}(m) \xrightarrow{\cotuple{\overline{f}(\kappa_1)}{\overline{f}(\kappa_2)}} \overline{f}(n \oplus m) = n+m
	 		$$
	 		are mutually inverse.
	 		$$
	 		\begin{array}{rcl}
	 			\tuple{\overline{f}(\pi_1)}{\overline{f}(\pi_2)} &=& \tuple{\big[\langle f(\pi_1(i,x))\rangle_{i \in n}\big]_{x \in n+m}}{\big[\langle f(\pi_2(j,x))\rangle_{j \in m}\big]_{x \in n+m}}\\
	 			&=&
	 			\tuple{\big\langle [f(\pi_1(i,x))]_{x \in n+m}\big\rangle_{i \in n}}{\big\langle [f(\pi_2(j,x))]_{x \in n+m}\big\rangle_{j \in m}}\\
	 			&=&
	 			\id_{n+m}
	 		\end{array} 
	 		$$
	 		And similarly $\cotuple{\overline{f}(\kappa_1)}{\overline{f}(\kappa_2)} = \id_{\oplus_{n+m}I}$ 
	 		\item $\overline{f}$ preserves the monoidal structure\\
	 		On objects:
	 		$$
	 		\begin{array}{rcll}
	 		\overline{f}(n)\otimes \overline{f}(m) &=& n \times m\\
	 		&=&
	 		\overline{f}(n\times m)
	 		\end{array}
	 		$$	 
						
		Furthermore we have to show that for $h_1 \colon m \to p$ and $h_2 \colon n \to q$ in $\Mat(S)$, $\overline{f}(h_1 \otimes h_2) = \overline{f}(h_1) \otimes \overline{h_2}$. By way of example we consider $h_1 \colon 2 \to 1$ and $h_2 \colon 2 \to 1$,
		Consider the canonical identities:
		$$
		\beta_1 = 1 \otimes 1 + 1 \otimes 1 + 1 \otimes 1 + 1 \otimes 1 \xrightarrow{[\kappa_1 \after (\idmap \otimes \kappa_1), \kappa_1 \after (\idmap \otimes \kappa_2),\kappa_2 \after (\idmap \otimes \kappa_1), \kappa_2 \after (\idmap \otimes \kappa)]} 1 \otimes (1+1) + 1 \otimes (1+1)
		$$
		and
		$$
		\beta_2 = 1 \otimes (1+1) + 1 \otimes (1+1) \xrightarrow{[\kappa_1 \otimes \id, \kappa_2 \otimes \id]} (1+1) \otimes (1+1)
		$$
		We have to show:
		$$
		(\overline{f}(h_1) \otimes \overline{f}(h_2)) \after \beta_2 \after \beta_1 = \overline{f}(h_1 \otimes h_2)
		$$
		We just show
		$$ 
		(\overline{f}(h_1) \otimes \overline{f}(h_2)) \after \beta_2 \after \beta_1 \after \kappa_1 = \overline{f}(h_1 \otimes h_2) \after \kappa_1,
	 	$$
	 	showing equality under composition with the other three coprojections is done in a similar way.
	 	$$
	 	\begin{array}{rcll}
	 	\lefteqn{(\overline{f}(h_1) \otimes \overline{f}(h_2)) \after \beta_2 \after \beta_1 \after \kappa_1} \\
	 	&=& 
	 	 (\overline{f}(h_1) \otimes \overline{f}(h_2)) \after \beta_2 \after \kappa_1 \after (\idmap \otimes \kappa_1)\\
	 	&=&
	 	 (\overline{f}(h_1) \otimes \overline{f}(h_2)) \after (\kappa_1 \otimes \id) \after (\idmap \otimes \kappa_1)\\
	 	 &=&
	 	 (\overline{f}(h_1) \after \kappa_1) \otimes (\overline{f}(h_2) \after \kappa_1)\\
	 	 &=&
	 	 f(h_1(0,0)) \otimes f(h_2(0,0)) &\text{(def. $\overline{f}$)}\\
	 	 &=&
	 	 f(h_2(0,0)) \after f(h_1(0,0)) &(s \otimes t = t \after s)\\
	 	 &=&
	 	 f(h_2(0,0) \cdot h_1(0,0)) &(\text{$f$ pres. mult.})\\
	 	 &=&
	 	 \overline{f}(h_1 \otimes h_2) \after \kappa_1 &(\text{def. $\otimes$ in $\Mat(S)$}) 
	 	\end{array}
	 	$$
	 	\end{itemize}
	 	\item $\overline{\overline{F}} = F$\\
		Let us denote $\underline{s} = 1 \times 1 \xrightarrow{\lam{x}{s}} S$.\\
	 	Clearly $\overline{\overline{F}} = F$ on objects. Now let $h \colon n \to m$ in $\Mat(S)$, \textit{i.e} $h \colon n\times m \to S$ in \cat{Sets}, by definition
	 	$$
		\xymatrix@C+1pc{
\overline{\overline{F}}(h) = \Big(n 
   \ar[rr]^-{\big[\big\langle \overline{F}(h(i,j))\big\rangle_{j\in m}\big]_{i \in n}} 
   &&m\Big)}.
	 	$$
	 	$$
	 		\begin{array}{rcll}
	 		\big[\big\langle \overline{F}(h(i,j))\big\rangle_{j\in m}\big]_{i \in n} &=& \big[\big\langle F(\underline{h(i,j)})\big\rangle_{j\in m}\big]_{i \in n} &\text{definition $\overline{F}$}\\
	 		&=&
	 		F(\big[\big\langle \underline{h(i,j)}\big\rangle_{j\in m}\big]_{i \in n}) &\text{$F$ preserves the biproduct structure}\\
	 		&=&
	 		F(h) &\text{definition of (co)tuples in $\Mat(S)$}
	 		\end{array}	
	 	$$
	 	
	 	\item $\overline{\overline{f}} = f$\\
	 	$$
	 		\begin{array}{rcll}
	 		\overline{\overline{f}}(s) &=& \overline{f}(\underline{s})\\
	 		&=&
	 			I \xrightarrow{f(\underline{s}(*,*))} I &\text{definition $\overline{f}$}\\
	 		&=&
	 			f(s)	
	 		\end{array}
	 	$$ 
	 	\item For naturality consider
		$$\begin{bijectivecorrespondence}
  \correspondence[in \SMBLaw]{\xymatrix{\Mat(S)\ar[r]^-{\Mat(g)} & \Mat(R) \ar[r]^-{F} & \cat{L} \ar[r]^-{G} & \cat{K}}}
  \correspondence[in \CSRng]{\xymatrix{S\ar[r]_-{g} & R \ar[r]_-{\overline{F}} & \mathcal{H}(\cat{L}) \ar[r]_-{\mathcal{H}(G)} & \mathcal{H}(\cat{K})}}
\end{bijectivecorrespondence}$$
Then
  $$
  \begin{array}{rcll}
  	\overline{G \after F \after \Mat(g)}(s) &=& G \after f \after \Mat(g)(\underline{s})\\
  	&=&
  	G \after F (\underline{g(s)})\\
  	&=&
  	G \after \overline{F}(g(s)) &\text{definition $\overline{F}$}\\
  	&=&
  	\mathcal{H}(G) \after \overline{F} \after g(s) &\text{definition $\mathcal{H}(G)$}
  \end{array}
  $$
\end{enumerate}
}

The results of Section~\ref{SemiringMonadSec} and~\ref{Semiringcatsec}
are summarized in Figure \ref{CSRngTriangleFig}.

\begin{figure}
$$\xymatrix@R-.5pc@C+.5pc{
& & \CSRng\ar@/_2ex/ [ddll]_{\cal M}
   \ar@/_2ex/ [ddrr]_(0.3){\Mat \cong \Kl_{\NNO}\mathcal{M}\;\;} \\
& \dashv & & \dashv & \\
\ACMnd\ar @/_2ex/[rrrr]_{\Kl_\NNO}
   \ar@/_2ex/ [uurr]_(0.6){\;{\Ev} \cong \mathcal{H}\Kl_{\NNO}} & & \bot & &  
   \SMBLaw\ar@/_2ex/ [uull]_{\mathcal{H}}\ar @/_2ex/[llll]_{\LM}
}$$
\caption{Triangle of adjunctions starting from commutative semirings,
  with commutative additive monads, and symmetric
  monoidal Lawvere theories with biproducts.}
\label{CSRngTriangleFig}
\end{figure}
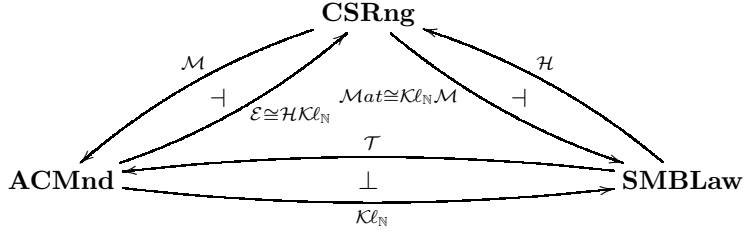

\section{Semirings with involutions}\label{InvolutionSec}

In this final section we enrich our approach with involutions.
Actually, such involutions could have been introduced for monoids
already. We have not done so for practical reasons: involutions on
semirings give the most powerful results, combining daggers on
categories with both symmetric monoidal and biproduct structure.

An involutive semiring (in \Sets) is a semiring $(S, +, 0, \cdot, 1)$
together with a unary operation $*$ that preserves the addition and
multiplication, \textit{i.e.}~$(s+t)^* = s^*+t^*$ and $0^* = 0$, and
$(s\cdot t)^* = s^* \cdot t^*$ and $1^*=1$, and is involutive,
\textit{i.e.}~$(s^*)^* = s$. The complex numbers with conjugation form
an example. We denote the category of involutive semirings, with
homomorphisms that preserve all structure, by $\ICSRng$.


The adjunction $\mathcal{M} \colon \CSRng \leftrightarrows \cat{ACMnd}
\colon \Evc$ considered in Lemma~\ref{AdjCSR2ACMLem} may be restricted
to an adjunction between involutive semirings and so-called involutive
commutative additive monads (on \Sets), which are commutative additive
monads $T$ together with a monad morphism $\zeta\colon T \to T$
satisfying $\zeta \after \zeta = id$. We call $\zeta$ an involution on
$T$, just as in the semiring setting. A morphism between such monads
$(T,\zeta)$ and $(T',\zeta')$, is a monad morphism $\sigma \colon T
\to T'$ preserving the involution, \textit{i.e.}~satisfying $\sigma
\after \zeta = \zeta' \after \sigma$. We denote the category of
involutive commutative additive monads by $\IACMnd$.

\begin{lemma}
  The functors $\mathcal{M} \colon \CSRng \leftrightarrows \ACMnd
  \colon \Evc$ from Lemma \ref{SRng2CAMndProp} and Lemma
  \ref{CAMnd2CSRngLem} restrict to a pair of functors $\mathcal{M}
  \colon \ICSRng \leftrightarrows \IACMnd \colon \Evc$. The restricted
  functors form an adjunction $\mathcal{M} \vdash \Evc$.
\end{lemma}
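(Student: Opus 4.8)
The plan is to recognise the statement as an instance of a general principle: any adjunction lifts canonically to the associated categories of \emph{involutive objects}. Here $\ICSRng$ is exactly the category of pairs $(S,*)$ with $S\in\CSRng$ and $*\colon S\to S$ an endomorphism in $\CSRng$ satisfying $*\after* = \idmap$, and $\IACMnd$ is exactly the category of pairs $(T,\zeta)$ with $T\in\ACMnd$ and $\zeta\colon T\to T$ an endomorphism in $\ACMnd$ with $\zeta\after\zeta=\idmap$; in both cases morphisms are the underlying morphisms that commute with the involutions. For any adjunction $F\dashv G$, the assignments $(S,*)\mapsto(FS,F*)$ and $(T,\zeta)\mapsto(GT,G\zeta)$ are well defined (functoriality gives $(F*)\after(F*)=F(*\after*)=F(\idmap)=\idmap$, and dually for $G\zeta$), are functorial on the involutive categories, and are again adjoint because the adjunction bijection is natural in both arguments. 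I will apply this with $F=\mathcal{M}$, $G=\Evc$ from Lemma~\ref{AdjCSR2ACMLem}.

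First I would pin down the two restricted functors concretely, so that they match the description in the statement. For $(S,*)\in\ICSRng$ put $\mathcal{M}(S,*)=(M_S,\mathcal{M}(*))$; reading off the formula for $\mathcal{M}$ on morphisms from Lemma~\ref{SRng2CAMndProp}, the monad morphism $\zeta=\mathcal{M}(*)$ is precisely $\zeta_X\big(\sum_i s_i x_i\big)=\sum_i s_i^{*}\,x_i$, and functoriality of $\mathcal{M}$ gives $\zeta\after\zeta=\mathcal{M}(*\after*)=\mathcal{M}(\idmap_S)=\idmap$, so $(M_S,\zeta)\in\IACMnd$; a morphism $f$ in $\ICSRng$ goes to $\mathcal{M}(f)$, which commutes with the involutions since $\mathcal{M}$ is a functor and $f\after* = *'\after f$. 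Dually, for $(T,\zeta)\in\IACMnd$ put $\Evc(T,\zeta)=(T(1),\zeta_1)$; by Lemma~\ref{CAMnd2CSRngLem} the map $\zeta_1=\Evc(\zeta)$ is a homomorphism of semirings, functoriality gives $\zeta_1\after\zeta_1=\Evc(\zeta\after\zeta)=\idmap$, so $(T(1),\zeta_1)\in\ICSRng$, and a monad morphism commuting with involutions goes to a semiring homomorphism commuting with involutions.

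It then remains to check that the bijective correspondence of Lemma~\ref{AdjCSR2ACMLem} restricts. Write $\overline{(\cdot)}$ for the natural transpose bijection $\Hom_{\ACMnd}(\mathcal{M}(S),T)\cong\Hom_{\CSRng}(S,T(1))$. Applying naturality in the first variable to the $\CSRng$-morphism $*\colon S\to S$ gives $\overline{\sigma\after\mathcal{M}(*)}=\overline{\sigma}\after*$, and applying naturality in the second variable to the $\ACMnd$-morphism $\zeta\colon T\to T$ gives $\overline{\zeta\after\sigma}=\zeta_1\after\overline{\sigma}$. Since $\mathcal{M}(*)=\zeta^{M_S}$, these two identities show that $\sigma\after\zeta^{M_S}=\zeta\after\sigma$ holds if and only if $\overline{\sigma}\after* = \zeta_1\after\overline{\sigma}$ holds. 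Hence the transpose bijection cuts down to a bijection between $\IACMnd$-morphisms $\mathcal{M}(S,*)\to(T,\zeta)$ and $\ICSRng$-morphisms $(S,*)\to\Evc(T,\zeta)$, and this restricted bijection is natural in both arguments because it is the restriction of a natural isomorphism to subfunctors. This is exactly the asserted adjunction $\mathcal{M}\dashv\Evc$ between $\ICSRng$ and $\IACMnd$.

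The only point requiring care is matching the abstract lifting with the explicit data of Lemma~\ref{AdjCSR2ACMLem}: that $\mathcal{M}(*)$ really is coefficientwise application of $*$, and that $\Evc(\zeta)$ really is the component $\zeta_1$; both are immediate from the formulas in Lemmas~\ref{SRng2CAMndProp} and~\ref{CAMnd2CSRngLem}, so there is no genuine obstacle. If instead one prefers the hands-on route over this abstract argument, the one calculation worth doing is that the transpose $\overline{f}$ of an involution-preserving $f\colon S\to T(1)$ preserves involutions: using $\overline{f}_X\big(\sum_i s_i x_i\big)=\sum_i f(s_i)\star\eta(x_i)$, the fact that the monad morphism $\zeta^{T}$ has additive components $\zeta^{T}_X$ on $T(X)$ (Lemma~\ref{AdditiveMonadMonoidLem}) and commutes with $\eta$, with $\dst$, and hence with the action $\star=T(\lambda)\after\dst$ (as in the treatment of $\Mod(\sigma,g)$ in Lemma~\ref{Mnd2ModLem}), one computes $\zeta^{T}_X\big(\sum_i f(s_i)\star\eta(x_i)\big)=\sum_i \zeta_1(f(s_i))\star\eta(x_i)=\sum_i f(s_i^{*})\star\eta(x_i)=\overline{f}_X\big(\zeta^{M_S}_X(\sum_i s_i x_i)\big)$, which is the required equality.
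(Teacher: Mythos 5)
Your proof is correct, but it takes a genuinely different route from the paper's. The paper proceeds concretely: it writes down the involution $\zeta_X(\sum_i s_i x_i)=\sum_i s_i^* x_i$ on $M_S$, verifies by hand that it is a monad morphism squaring to the identity, observes that $\zeta_1$ is an involution on $T(1)$, and then checks that the unit and counit of the adjunction of Lemma~\ref{AdjCSR2ACMLem} preserve the involutions. You instead recognise $\ICSRng$ and $\IACMnd$ as the categories of $\mathbb{Z}/2$-actions in $\CSRng$ and $\ACMnd$ respectively, define the lifted functors as $(S,*)\mapsto(\mathcal{M}(S),\mathcal{M}(*))$ and $(T,\zeta)\mapsto(\Evc(T),\Evc(\zeta))$, and derive the restricted adjunction purely from functoriality (which makes $\mathcal{M}(*)$ automatically a monad endomorphism with $\mathcal{M}(*)\after\mathcal{M}(*)=\idmap$, and dually for $\Evc(\zeta)=\zeta_1$) together with naturality of the transpose bijection in both variables, which shows $\sigma\after\zeta^{M_S}=\zeta\after\sigma$ iff $\overline{\sigma}\after *=\zeta_1\after\overline{\sigma}$. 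Your identification $\mathcal{M}(*)=\zeta^{M_S}$ and $\Evc(\zeta)=\zeta_1$ is immediate from the formulas in Lemmas~\ref{SRng2CAMndProp} and~\ref{CAMnd2CSRngLem}, so the reduction is legitimate. What your approach buys is economy and generality: all the verifications the paper performs explicitly (that $\zeta$ is natural, commutes with $\eta$ and $\mu$, that $\zeta_1$ preserves the semiring operations, that transposes preserve involutions) become instances of already-established facts, and the same argument would lift any of the paper's other adjunctions to involutive objects. What the paper's concrete route buys is the explicit formula for $\zeta$ on $M_S$ and the direct check on unit and counit, which keeps the presentation uniform with the surrounding lemmas; but nothing in your argument is missing or incorrect.
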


\begin{proof}
  Given a semiring $S$ with involution $*$, we may define an
  involution $\zeta$ on the multiset monad $\mathcal{M}(S) = M_S$ with
  components
$$
\zeta_X \colon M_S(X) \to M_S(X), \qquad 
   \textstyle\sum_i s_ix_i \mapsto \sum_i s_i^*x_i.
$$ 
Conversely, for an involutive monad $(T, \zeta)$, the map $\zeta_1$ gives an involution on the semiring $\Ev(T) = T(1)$.

A simple computation shows that the unit and the counit of the
adjunction $\mathcal{M} \colon \CSRng \leftrightarrows \ACMnd \colon
\Evc$ from Lemma~\ref{AdjCSR2ACMLem} preserve the involution (on
semirings and on monads respectively). Hence the restricted functors
again form an adjunction. \QED
\end{proof}

\auxproof{
\begin{itemize}
\item $\mathcal{M} \colon \CSRng \leftrightarrows \ACMnd$ restricts to a functor $\ICSRng \leftrightarrows \IACMnd$.\\
Using Lemma \ref{SRng2CAMndProp}, it is only left to show that $\zeta \colon M_S \to M_S$ is a monad morphism s.t. $\zeta \after \zeta = \id$ and that for a morphism of involutive semirings $g \colon S \to R$, $\mathcal{M}(g)$ preserves the involution.
\begin{enumerate}
	\item Naturality\\
	Let $f \colon X \to Y$, then
	$$
	\begin{array}{rcl}
(\zeta_Y \after M_S(f))(\sum_i s_i x_i) &=& \zeta_Y(\sum_i s_if(x_i))\\
	&=&
	\sum_i s_i^*f(x_i)\\
	&=&
	(M_S(f) \after \zeta_X)(\sum_i s_i x_i)
	\end{array}
	$$ 
	\item Commutativity with $\eta$:
	$$
	\zeta \after \eta (x) = \zeta(1x) = 1^*x = 1x = \eta(x)
	$$
	\item Commutativity with $\mu$:
	$$
	\begin{array}{rcll}
	\lefteqn{	(\mu \after M_S\zeta_X\after\zeta_{M_S(X)})(\sum_i s_i \sum_j t_{ij}x_{j})}\\
	&=&
	(\mu \after M_S\zeta_X)(\sum_i s_i^* \sum_j t_{ij}x_{j})\\
	&=&
	\mu(\sum_i s_i^* \sum_j t_{ij}^*x_{j})\\
	&=&
	\sum_j(\sum_i^S s_i^*t_{ij}^*)x_j &\text{definition $\mu$}\\
	&=&
	\sum_j(\sum_i^S s_it_{ij})^*x_j &\text{$*$ commutes with $\cdot$ and $+$}\\
	&=&
	(\zeta_X \after \mu)(\sum_i s_i \sum_j t_{ij}x_j)
	\end{array}
	$$
	\item $\zeta \after \zeta = \id$
	$$
	(\zeta \after \zeta)(\sum_i s_i x_i) = \zeta(\sum_i s_i^*x_i) = \sum_i s_i^{**}x_i = \sum_i s_i x_i.
	$$
	\item Let $g \colon S \to R$. To prove: $\mathcal{M}(g) \after \zeta^S = \zeta^R \after \mathcal{M}(g)$.\\
	$$
	\begin{array}{rcl}
	(\mathcal{M}(g) \after \zeta^S)(\sum_i s_i x_i) &=& \mathcal{M}(g) (\sum_i s_i^* x_i)\\
	&=&
	\sum_i g(s_i^*) x_i\\
	&=&
	\sum_i (g(s_i))^* x_i\\
	&=&
	(\zeta^R \after \mathcal{M}(g))(\sum_i s_ix_i)
	\end{array}
	$$
\end{enumerate}

\item $\Evc\colon \ACMnd \to \CSRng$ restricts to a functor $\ICSRng \to \IACMnd$:\\
Using Lemma \ref{CAMnd2CSRngLem} it is left to show that $\zeta_1$ is an involution on $T(1)$, and that, for a morphism $\sigma \colon T \to T'$, $\Ev(\sigma) = \sigma_1$ preserves the involution. This last point is trivial as, by definition of morphisms in $\IACMnd$, $\sigma \after \zeta = \zeta \after \sigma$. As for the first point:
\begin{enumerate}
\item $(a+b)^* = a^* + b^*$\\
Addition on $T(1)$ is given by $T(\nabla) \after \bc^{-1} \colon T(1) \times T(1) \to T(1)$.
$$
\begin{array}{rcll}
\zeta_1 \after T(\nabla) \after \bc^{-1} &=& T(\nabla) \after \zeta_{1+1} \after \bc^{-1} & \text{Naturality of $\zeta$}\\
&=&
T(\nabla) \after \bc^{-1} \after (\zeta_1 \times \zeta_1) & \text{Lemma \ref{bcproplem}(i)}
\end{array}
$$
\item $(a\cdot b)^* = a^*b^*$\\
Multiplication on $T(1)$ is given by $T(\pi_2) \after \dst \colon T(1) \times T(1) \to T(1)$.\
$$
\begin{array}{rcll}
\zeta_1 \after T(\pi_2) \after \dst &=& T(\pi_2) \after \zeta_{1\times 1} \after \dst & \text{Naturality of $\zeta$}\\
&=&
T(\pi_2) \after \dst \after (\zeta \times \zeta) & \text{Naturality of $\dst$}  
\end{array}
$$
\item $a^{**} = a$
$$
a^{**} = \zeta_1(\zeta_1(a)) = (\zeta_1 \after \zeta_1)(a) = \id(a) = a.
$$
\end{enumerate}

\item The restricted functors again form an adjunction.\\
  Consider the bijective correspondances defined in
  Lemma~\ref{AdjCSR2ACMLem}.
\begin{itemize}
\item $\overline{\sigma} \colon S \to T(1)$ preserves the involution.
$$
\begin{array}{rcll}
\overline{\sigma}(s^*) &=& \sigma_1(s^**)\\
&=&
\sigma_1(\zeta^{M_S}(s*))\\
&=&
\zeta^T(\sigma_1(s*))\\
&=&
(\overline{\sigma}(s))^* 
\end{array}
$$
\item $\overline{f} \colon M_S \to T$ preserves the involution.
$$
\begin{array}{rcll}
(\overline{f} \after \zeta^{M_S})(\sum_i s_ix_i) 
&=&
\overline{f}(\sum_i s_i^*x_i)\\
&=&
\sum^{T(X)}_if(s_i^*)\star\eta^T(x_i) &\text{$\star$ is the action of $T(1)$ on $T(X)$}\\
&=&
\sum^{T(X)}_if(s_i)^*\star\eta^T(x_i)\\
&=&
\sum^{T(X)}_i\zeta_1^T(f(s_i))\star\zeta^T(\eta^T(x_i)) &\text{$\zeta_1$ is the involution on $T(1)$}\\
&&&\text{and $\zeta \after \eta = \eta$}\\
&=&
\sum^{T(X)}_i\zeta^T((f(s_i))\star\eta^T(x_i))&\text{By Lemma \ref{Mnd2ModLem}, $\Mod(\zeta, \id) = (\zeta_1,\zeta)$}\\
&&&\text{is a map of modules}\\
&=&
\zeta^T(\sum^{T(X)}_ig(s_i)\star\eta^T(x_i))&\text{(1)}\\
&=&
(\zeta^T\after\overline{f})(\sum_i s_ix_i)
\end{array}
$$
(1) follows from the fact that
$$
T(\nabla) \after \bc^{-1} \after (\zeta \times \zeta) = \zeta \after T(\nabla) \after \bc^{-1},
$$
using Lemma \ref{bcproplem}(i) and naturality of $\zeta$.
\end{itemize}
\end{itemize}
}


The adjunction $\Mat \colon \CSRng \rightleftarrows \SMBLaw \colon
\mathcal{H}$ from Lemma~\ref{CSRng2CatAdjLem} may also be restricted
to involutive semirings. To do so, we have to consider dagger
categories.  A dagger category is a category $\cat{C}$ with a functor
$\dagger \colon \cat{C}\op \to \cat{C}$ that is the identity on
objects and satisfies, for all morphisms $f \colon X \to Y$,
$(f^{\dagger})^{\dagger} = f$. The functor $\dagger$ is called a
dagger on $\cat{C}$.  Combining this dagger with the categorical
structure we studied in Section \ref{Semiringcatsec} yields a
so-called dagger symmetric monoidal category with dagger biproducts,
that is, a category $\cat{C}$ with a symmetric monoidal structure
$(\otimes, I)$, a biproduct structure $(\oplus,0)$ and a dagger
$\dagger$, such that, for all morphisms $f$ and $g$, $(f \otimes
g)^{\dagger} = f^{\dagger} \otimes g^{\dagger}$, all the coherence
isomorphisms $\alpha$, $\rho$ and $\gamma$ are dagger isomorphisms
and, with respect to the biproduct structure, $\kappa_i =
\pi_i^{\dagger}$, where a dagger isomorphism is an isomorphism $f$
satisfying $f^{-1} = f^{\dagger}$. Further details may be found
in~\cite{AbramskyC04,AbramskyC09,Heunen10a}.

We will denote the category of dagger symmetric monoidal Lawvere
theories with dagger biproducts such that the monoidal structure
distributes over the biproduct structure by \DSMBLaw. Morphisms in
\DSMBLaw are maps in \SMBLaw that (strictly) commute with the daggers.

\begin{lemma}
  The functors $\Mat \colon \CSRng \rightleftarrows \SMBLaw \colon
  \mathcal{H}$ defined in Section \ref{Semiringcatsec} restrict to a
  pair of functors $\Mat \colon \ICSRng \rightleftarrows \DSMBLaw
  \colon \mathcal{H}$. The restricted functors form an adjunction,
  $\Mat \dashv \mathcal{H}$.
\end{lemma}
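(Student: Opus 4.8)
The plan is to equip $\Mat(S)$, for an involutive semiring $(S,\ast)$, with the \emph{conjugate transpose} dagger, and dually to read off an involution on $\cat{L}(1,1)$ from the dagger of $\cat{L}\in\DSMBLaw$, and then to check that the bijective correspondence of Lemma~\ref{CSRng2CatAdjLem} restricts to these subcategories. Recall from Lemma~\ref{KleisliMatLem} and the discussion following it that $\Mat(S)$ already lies in $\SMBLaw$ (distributivity of $\otimes$ over the biproduct included), so for the first half only a compatible dagger has to be supplied. First I would define, for a morphism $h\colon n\to m$ in $\Mat(S)$, i.e.\ a matrix $h\colon n\times m\to S$, the matrix $h^{\dagger}\colon m\to n$ by $h^{\dagger}(j,i) = \big(h(i,j)\big)^{\ast}$.

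Since $\ast$ is a semiring homomorphism with $\ast\after\ast=\idmap$ and $S$ is commutative, this is a dagger on $\Mat(S)$: it is the identity on objects, $(h^{\dagger})^{\dagger}=h$, $\idmap_n^{\dagger}=\idmap_n$, and $(h\after g)^{\dagger}=g^{\dagger}\after h^{\dagger}$ — the last identity being the entry-wise computation $\sum_j g(i,j)^{\ast}h(j,k)^{\ast}=\sum_j h(j,k)^{\ast}g(i,j)^{\ast}$, which uses commutativity of $S$. The coprojection and tensor matrices of Definition~\ref{MatrCatDef} have only $0$'s and $1$'s as entries, hence are fixed by $\ast$, and transposing turns a coprojection into the corresponding projection; this yields $\kappa_i=\pi_i^{\dagger}$. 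Likewise the coherence isomorphisms $\alpha,\rho,\gamma$ of $\Mat(S)$ are permutation-type $0$/$1$ matrices whose transpose is their inverse, so they are dagger isomorphisms, and $(g\otimes h)^{\dagger}=g^{\dagger}\otimes h^{\dagger}$ follows directly from the formula $(g\otimes h)\big((i_0,i_1),(j_0,j_1)\big)=g(i_0,j_0)\cdot h(i_1,j_1)$ together with $\ast$ being a homomorphism and $S$ commutative. Thus $\Mat(S)\in\DSMBLaw$; and for $g\colon S\to R$ in $\ICSRng$ the functor $\Mat(g)$, which post-composes with $g$, commutes with the daggers because $g$ preserves $\ast$. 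Hence $\Mat$ restricts to $\ICSRng\to\DSMBLaw$.

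In the other direction, for $\cat{L}\in\DSMBLaw$ I would put an involution on the semiring $\mathcal{H}(\cat{L})=\cat{L}(1,1)$ by $s\mapsto s^{\dagger}$. It is involutive by the dagger axiom; it preserves multiplication because multiplication on $\cat{L}(1,1)$ is composition, $(s\after t)^{\dagger}=t^{\dagger}\after s^{\dagger}$, and $\cat{L}(1,1)$ is commutative; and it fixes $1=\idmap_1$. Preservation of the additive structure~\eqref{HomsetPlus} and of the zero map follows from the dagger commuting with the (dagger) biproduct, i.e.\ from $\kappa_i=\pi_i^{\dagger}$, which gives $\nabla^{\dagger}=\langle\idmap,\idmap\rangle$, $(f\oplus g)^{\dagger}=f^{\dagger}\oplus g^{\dagger}$ and $0^{\dagger}=0$. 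Hence $\mathcal{H}(\cat{L})\in\ICSRng$, and $\mathcal{H}(G)$ preserves the involution whenever $G$ commutes with daggers, so $\mathcal{H}$ restricts to $\DSMBLaw\to\ICSRng$.

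Finally I would verify that the adjunction $\Mat\dashv\mathcal{H}$ of Lemma~\ref{CSRng2CatAdjLem} restricts, by showing its bijective correspondence $F\leftrightarrow\overline{F}$ does. One direction is immediate: a $1\times 1$ matrix with entry $s$ is its own transpose up to $\ast$, so the endomap $\underline{s}$ on $1$ in $\Mat(S)$ satisfies $\underline{s}^{\dagger}=\underline{s^{\ast}}$, whence $\overline{F}(s^{\ast})=F(\underline{s^{\ast}})=F(\underline{s})^{\dagger}=\overline{F}(s)^{\dagger}$ as soon as $F$ commutes with daggers. The substance is the converse: if $f\colon S\to\cat{L}(1,1)$ preserves $\ast$, then $\overline{f}\colon\Mat(S)\to\cat{L}$ commutes with daggers. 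Here $\overline{f}(h)$ is the matrix $h$ read into $\cat{L}$, presented as an $n$-cotuple of $m$-tuples, and the point is the interplay in a dagger biproduct category between $\dagger$ and (co)tupling: from $\kappa_i=\pi_i^{\dagger}$ one gets $[g_i]^{\dagger}=\langle g_i^{\dagger}\rangle$ and $\langle g_i\rangle^{\dagger}=[g_i^{\dagger}]$, and a matrix assembled as a cotuple of tuples equals the same matrix assembled as a tuple of cotuples. Unwinding, $\overline{f}(h)^{\dagger}=\big\langle[f(h(i,j))^{\dagger}]_{j<m}\big\rangle_{i<n}=\big[\langle f(h(i,j)^{\ast})\rangle_{i<n}\big]_{j<m}=\overline{f}(h^{\dagger})$, so $\overline{f}$ is a $\DSMBLaw$-morphism. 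This bookkeeping with daggers of (co)tuples is the main (if routine) obstacle; everything else reduces to entry-wise identities for $\ast$. Since the correspondence restricts and is still natural, the restricted functors form an adjunction $\Mat\dashv\mathcal{H}$.
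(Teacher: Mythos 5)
Your proposal is correct and follows essentially the same route as the paper: the conjugate-transpose dagger $h^{\dagger}(j,i)=h(i,j)^{\ast}$ on $\Mat(S)$, the involution $s\mapsto s^{\dagger}$ on $\cat{L}(1,1)$, and the verification that the unit/counit (equivalently, the bijective correspondence $F\leftrightarrow\overline{F}$) of Lemma~\ref{CSRng2CatAdjLem} respect this extra structure, including the cotuple/tuple bookkeeping for $\overline{f}(h)^{\dagger}=\overline{f}(h^{\dagger})$.
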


\begin{proof}
For an involutive semiring $S$, we may define a dagger on the Lawvere
theory $\Mat(S)$ by assigning to a morphism $f\colon n \to m$ in
$\Mat(S)$ the morphism $f^{\dagger} \colon m \to n$ given by
\begin{equation}
\label{MatDagEqn}
\begin{array}{rcl}
f^{\dagger}(i,j)
& = &
f(j,i)^{*}.
\end{array}
\end{equation}

\noindent Some short and straightforward computations show that the
functor $\dagger$ is indeed a dagger on $\Mat(S)$, which interacts
appropriately with the monoidal and biproduct structure. 

\auxproof{
\begin{enumerate}
\item $\dagger$ is a funtor\\
Clearly $\id^{\dagger} = \id$, as $1^* = 1$ and $0^* = 0$. Preservation of the composition is shown as follows:
$$
\begin{array}{rcll}
(f \after g)^{\dagger}(i,j) &=& ((g \after f)(j,i))^*\\
&=&
(\sum_l g(j,l)\cdot f(l,i))^*\\
&=&
\sum_l g(j,l)^*\cdot f(l,i)^*\\
&=&
\sum_l g(l,j)^{\dagger}\cdot f(i,l)^{\dagger}\\
&=&
\sum_l f(i,l)^{\dagger}\cdot g(l,j)^{\dagger}\\
&=&
(g^{\dagger}\after f^{\dagger})(i,j)
\end{array}
$$
\item $f^{\dagger\dagger} = f$
$$
f^{\dagger\dagger}(i,j) = (f^{\dagger}(j,i))^* = f(i,j)^{**} = f(i,j).
$$
\item $\pi_i^{\dagger} = \kappa_i$
\item $(h\otimes g)^{\dagger} = h^{\dagger}\otimes g^{\dagger}$\\
Let $h \colon n \times p \to S$ and $g \colon m \times q \to S$, then $(h\otimes g)^{\dagger} \colon (p \times q) \times (n \times m) \to S$.
$$
\begin{array}{rcll}
(h\otimes g)^{\dagger}((x_0, x_1),(y_0,y_1)) &=& ((h \otimes g)((y_0, y_1),(x_0, x_1)))^*\\
&=&
(h(y_0,x_0) \cdot g(y_1,x_1))^*\\
&=&
(h(y_0,x_0))^* \cdot (g(y_1,x_1))^*\\
&=&
h^{\dagger}(x_0, y_0) \cdot g^{\dagger}(x_1,y_1)\\
&=&
(h^{\dagger} \otimes g^{\dagger})((x_0, x_1),(y_0,y_1))
\end{array}
$$
\item $\dagger$ interacts appropriately with the other symmetric
  monoidal structure.\\ As the matrices representing the morphisms
  $\alpha \colon (n \otimes m)\otimes k \to n \otimes (m\otimes k)$,
  $\rho \colon n \otimes 1 \to n$, $\lambda \colon 1 \times n \to n$
  and $\gamma \colon n \otimes m \to n \otimes m$ only consists of
  zeros and ones, one easily sees that $\alpha^{\dagger} =
  \alpha^{-1}$ etc.

\item $Mat(f) \colon \Mat(S) \to \Mat(R)$ is a morphism in
  $\DSMBLaw$, for all semiring morphisms $f \colon S \to R$.\\ We
  have already seen that $\Mat(f)$ preserves the biproduct and
  monoidal structure. As to the dagger,
$$
\Mat(f)(h^{\dagger})(i,j) = h(f(j,i)^*) = (h(f(j,i)))^* = (\Mat(h)(f))^{\dagger}(i,j).
$$ 
\end{enumerate}
}

For a dagger symmetric monoidal Lawvere theory $\cat{L}$ with dagger
biproduct, it easily follows from the properties of the dagger that
this functor induces an involution on the semiring
$\mathcal{H}(\cat{L}) = \cat{L}(1,1)$, namely via $s\mapsto s^{\dag}$.

\auxproof{
\begin{enumerate}
\item $* (=\dagger) \colon \cat{L}(I,I) \to \cat{L}(1,1)$ is an
  involution.\\ Preservation on the addition:
$$
\begin{array}{rcll}
(a+b)^* &=& (\nabla \after (a \oplus b) \after \Delta)^{\dagger}\\
&=&
\Delta^{\dagger} \after (a \oplus b)^{\dagger} \after \nabla^{\dagger}\\
&=&
\nabla \after (a^{\dagger} \oplus b^{\dagger}) \after \Delta\\
&=&
a^*+b^*
\end{array}
$$
Here we use the fact that in a dagger biproduct category: $\nabla^{\dagger} = \Delta$, $\Delta^{\dagger} = \nabla$ and $(a \oplus b)^{\dagger} = a^{\dagger} \oplus b^{\dagger}$.

Preservation of the multiplication:
$$
(a \cdot b)^* = (b \after a)^{\dagger} = a^{\dagger} \after b^{\dagger} = b^* \cdot a^* = a^* \cdot b^*
$$  
and involutiveness:
$$
a^{**} = a^{\dagger\dagger} = a
$$
\item For $F \colon \cat{L} \to \cat{D}$, $\mathcal{H}(F) \colon
  \cat{L}(1,1) \to \cat{D}(1,1)$ preserves the involution.
$$
\mathcal{H}(F)(a^*) = F(a^*) = F(a^{\dagger}) = (F(a))^{\dagger} = F(a)^*.
$$
\end{enumerate}
} 

The unit and the counit of the adjunction $\Mat \colon \CSRng
\leftrightarrows \SMBLaw$ from Lemma~\ref{CSRng2CatAdjLem} preserve
the involution and the dagger respectively. Hence, also the restricted
functors form an adjunction. \QED
\end{proof}

\auxproof{
\begin{enumerate}
\item $\overline{F} \colon S \to \cat{L}(I,I)$ preserves the involution.
$$
\overline{F}(s^*) = F(\underline{s^*}) = F((\underline{s})^{\dagger}) = (F(\underline{s}))^{\dagger} = (F(\underline{s}))^* = (\overline{F}(s))^*,
$$
where $\underline{s} = 1 \times 1 \xrightarrow{\lam{x}{s}} S$.

\item $\overline{f} \colon \Mat(S) \to \cat{L}$ preserves the dagger.\\
Let $h \colon n \to m$ in $\Mat(S)$. Then $h^{\dagger} \colon m \to n$ 
$$
\begin{array}{rcll}
\overline{f}(h^{\dagger}) &=& \big[\big\langle f(h^{\dagger}(j,i))\big\rangle_{i\in n}\big]_{j \in m}\\
&=&
\big[\big\langle f((h(i,j))^*)\big\rangle_{i\in n}\big]_{j \in m}&\text{definition of $\dagger$ on $\Mat(S)$}\\
&=&
\big[\big\langle (f((h(i,j)))^{\dagger}\big\rangle_{i\in n}\big]_{j \in m} &\text{$\dagger$ is the involution on $\cat{L}(I,I)$}\\
&=&
\big\langle\big[ (f((h(i,j)))\big]_{i\in n}\big\rangle_{j \in m}^{\dagger} &\text{$\dagger$ interacts with the biproduct structure}\\
&=&
\big[\big\langle (f((h(i,j)))\big\rangle_{j\in m}\big]_{i \in n}^{\dagger} &\text{property of the biproduct}\\
&=&
(\overline{f}(h))^{\dagger} 
\end{array}
$$
\end{enumerate}
}

To complete our last triangle of adjunctions, recall that, for the
Lawvere theory associated with a (involutive commutative additive)
monad $T$, $\Kl_{\NNO}(T) \cong \Mat(\Evc(T))$, see
Proposition~\ref{KleisliMatLem}. Hence, using the previous two
lemmas, the finitary Kleisli construction restricts to a functor
$\Kl_{\NNO} \colon \IACMnd \to \DSMBLaw$. For the other direction we
use the following result.

\begin{lemma}
\label{InvLMCommLem}
The functor $\LM\colon\SMBLaw\rightarrow\ACMnd$ from
Lemma~\ref{LMCommLem} restricts to $\DSMBLaw \rightarrow \IACMnd$,
and yields a left adjoint to $\Kl_{\NNO}\colon \IACMnd\rightarrow
\DSMBLaw$.
\end{lemma}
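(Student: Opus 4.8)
The plan is to follow the same pattern as the proof of Lemma~\ref{LMCSRngLem} (and of Lemma~\ref{AdjMndLvTLem}): equip $T_{\cat{L}}$ with an involution for every $\cat{L}\in\DSMBLaw$, observe that $\Kl_{\NNO}$ carries involutive monads to objects of \DSMBLaw, and then verify that the adjunction $\LM\dashv\Kl_{\NNO}$ already established between \SMBLaw and \ACMnd in Lemma~\ref{LMCSRngLem} cuts down to the involutive, resp.\ dagger, subcategories.

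First I would construct, for $\cat{L}\in\DSMBLaw$, an involution $\zeta$ on the monad $T_{\cat{L}}$ of~\eqref{LMEqn}. The dagger on $\cat{L}$ cannot be applied naively to $T_{\cat{L}}(X) = \big(\coprod_{i}\cat{L}(1,i)\times X^{i}\big)/\!\sim$, because $\dagger$ sends $\cat{L}(1,i)$ into $\cat{L}(i,1)$ rather than back into $\cat{L}(1,i)$. The remedy is the biproduct structure: since $\pi_{k}^{\dagger}=\kappa_{k}$ in $\cat{L}$, a morphism $g\colon 1\to i$ can be rebuilt from the daggers of its components by setting $\overline{g} = \langle g^{\dagger}\after\kappa_{k}\rangle_{k<i}\colon 1\to i$, and I would then define $\zeta_{X}\big([\kappa_{i}(g,v)]\big) = [\kappa_{i}(\overline{g},v)]$. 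The items to discharge are: (i)~$\zeta$ is well defined modulo $\sim$, which reduces to the identity $\overline{f\after g} = f\after\overline{g}$ for $f\colon i\to m$ in $\aleph_{0}$ --- a short computation with biproduct (co)projections using $\pi^{\dagger}=\kappa$ and $(f_{0}+\cdots+f_{k-1})^{\dagger} = f_{0}^{\dagger}+\cdots+f_{k-1}^{\dagger}$; (ii)~naturality in $X$, which is immediate since $\zeta$ does not touch the $X^{i}$ factor; (iii)~commutation with $\eta$ and with $\mu$ (the latter with $\mu$ as in Lemma~\ref{GLaw2MndLem}), which come down to $\overline{\idmap_{1}}=\idmap_{1}$ and to $\overline{(g_{0}+\cdots+g_{i-1})\after g} = (\overline{g_{0}}+\cdots+\overline{g_{i-1}})\after\overline{g}$; and (iv)~$\zeta\after\zeta=\idmap$, because the $k$-th cocomponent of $(\overline{g})^{\dagger}$ is $(g^{\dagger}\after\kappa_{k})^{\dagger} = \pi_{k}\after g$, whence $\overline{\overline{g}}=g$. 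This makes $(T_{\cat{L}},\zeta)$ an object of \IACMnd, and for a \DSMBLaw-morphism $F$ the monad map $\LM(F)$, which sends $[\kappa_{i}(g,v)]$ to $[\kappa_{i}(F(g),v)]$, commutes with $\zeta$ since $F$ strictly preserves $\dagger$ and the biproduct (co)projections, so $F(\overline{g}) = \overline{F(g)}$. Hence $\LM$ restricts to $\DSMBLaw\to\IACMnd$.

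Next I would note that $\Kl_{\NNO}$ restricts to $\IACMnd\to\DSMBLaw$. By Proposition~\ref{KleisliMatLem} we have $\Kl_{\NNO}(T)\cong\Mat(\Evc(T))$, naturally in $T$; for $(T,\zeta)\in\IACMnd$ the semiring $\Evc(T)=T(1)$ is involutive via $\zeta_{1}$, so by the lemma just proved ($\Mat\colon\ICSRng\rightleftarrows\DSMBLaw$) the theory $\Mat(\Evc(T))$ lies in \DSMBLaw, and for a morphism $\sigma$ in \IACMnd the component $\sigma_{1}$ is an involutive-semiring map, so $\Mat(\sigma_{1})\cong\Kl_{\NNO}(\sigma)$ is a \DSMBLaw-morphism. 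Finally, for the adjunction, Lemma~\ref{LMCSRngLem} already gives the natural bijection between \ACMnd-maps $\LM(\cat{L})\to T$ and \SMBLaw-maps $\cat{L}\to\Kl_{\NNO}(T)$; it remains to see that it restricts to a bijection between involution-preserving monad maps and dagger-preserving Lawvere-theory maps. The cleanest route is to verify that the unit $\eta_{\cat{L}}\colon\cat{L}\to\Kl_{\NNO}(\LM(\cat{L}))$, which sends $f\colon n\to m$ to $i\mapsto[\kappa_{m}(f\after\kappa_{i},\idmap_{m})]$, strictly commutes with the daggers --- a direct comparison of $\eta_{\cat{L}}(f^{\dagger})$ with $\eta_{\cat{L}}(f)^{\dagger}$ using the definition of $\zeta$ above and of the dagger on $\Kl_{\NNO}(\LM(\cat{L}))\cong\Mat(\cat{L}(1,1))$ --- and then transport the universal property: a dagger map $F\colon\cat{L}\to\Kl_{\NNO}(T)$ factors through $\eta_{\cat{L}}$ by a unique monad map, which one checks is involution-preserving, and conversely.

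The step I expect to be the main obstacle is the construction of $\zeta$, specifically pushing the identities $\overline{f\after g}=f\after\overline{g}$ and $\overline{(g_{0}+\cdots)\after g}=(\overline{g_{0}}+\cdots)\after\overline{g}$ through the equivalence relation defining $T_{\cat{L}}$ and through the monad multiplication: because $\dagger$ reverses the variance of $\cat{L}(1,i)$ one is forced into this componentwise reconstruction, and keeping the biproduct bookkeeping straight is where the real work lies. Everything concerning $\Kl_{\NNO}$ and the adjunction is then routine given Proposition~\ref{KleisliMatLem} and the two preceding lemmas.
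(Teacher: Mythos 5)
Your proposal is correct and follows essentially the same route as the paper: your $\overline{g}=\langle g^{\dagger}\after\kappa_{k}\rangle_{k<i}$ coincides with the paper's $\langle (\pi_{k}\after g)^{\dagger}\rangle_{k<i}$ since $\pi_{k}^{\dagger}=\kappa_{k}$, and the restriction of $\Kl_{\NNO}$ and of the adjunction are handled identically, via $\Kl_{\NNO}(T)\cong\Mat(\Evc(T))$ and dagger-preservation of the unit. One small caveat: the identity $\overline{(g_{0}+\cdots+g_{i-1})\after g}=(\overline{g_{0}}+\cdots+\overline{g_{i-1}})\after\overline{g}$ governing commutation with $\mu$ is not purely biproduct bookkeeping --- it ultimately requires commutativity of composition in the homset $\cat{L}(1,1)$, which is exactly the point the paper's proof flags.
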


\begin{proof}
To start, for a Lawvere theory $\cat{L}\in\DSMBLaw$ with dagger $\dag$
we have to define an involution $\zeta\colon T_{\cat{L}} \rightarrow
T_{\cat{L}}$. For a set $X$ this involves a map
$$\xymatrix@R-1.8pc{
\llap{$T_{\cat{L}}(X) =\;$}
   \big(\coprod_{i\in\NNO}\cat{L}(1,i)\times X^{i}\Big)/\!\sim
      \ar[r]^-{\zeta_X} & 
   \big(\coprod_{i\in\NNO}\cat{L}(1,i)\times X^{i}\Big)/\!\sim
   \rlap{$\;=T_{\cat{L}}(X)$} \\
[\kappa_{i}(g,v)]\ar@{|->}[r] & 
   [\kappa_{i}(\langle g_{0}^{\dag}, \ldots, g_{i-1}^{\dag}\rangle, v)],
}$$

\noindent where $g\colon 1\rightarrow i$ is written as $g = \langle
g_{0}, \ldots, g_{i-1}\rangle$ using that $i = 1+\cdots+1$ is not only
a sum, but also a product. Clearly, $\zeta$ is natural, and satisfies
$\zeta\after\zeta = \idmap{}$.  This $\zeta$ is also a map of monads;
commutatution with multiplication $\mu$ requires commutativity of
composition in the homset $\cat{L}(1,1)$.

The unit of the adjunction $\eta\colon \cat{L} \rightarrow
\Kl_{\NNO}(T_{\cat{L}}) \cong \Mat(T_{\cat{L}}(1))$ commutes with
daggers, since for $f\colon n\rightarrow m$ in \cat{L} we get
$\eta(f)^{\dag} = \eta(f^{\dag})$ via the following argument in
$\Mat(T_{\cat{L}}(1))$. For $i<n$ and $j<m$,
$$\begin{array}[b]{rcll}
\eta(f)^{\dag}(i,j)
& = &
\eta(f)(i,j)^{*}
   & \mbox{by~\eqref{MatDagEqn}} \\
& = &
\pi_{j}\bc_{m}(\kappa_{m}(f \after \kappa_{i}, \idmap_{m}))^{*} 
   & \mbox{by~\eqref{Kl2MatEqn}} \\
& = &
\kappa_{1}(\pi_{j} \after f \after \kappa_{i}, \idmap_{1})^{*}
   & \mbox{by definition of $\bc$, see~\eqref{LawMndbcEqn}} \\
& = & 
\kappa_{1}(\pi_{j} \after f \after \kappa_{i}, \idmap)^{\dag} 
   & \mbox{since $(-)^{*} = (-)^{\dag}$ on $T_{\cat{L}}(1)$} \\
& = &
\kappa_{1}((\pi_{j} \after f \after \kappa_{i})^{\dag}, \idmap) \\
& = &
\kappa_{1}(\kappa_{i}^{\dag} \after f^{\dag} \after \pi_{j}^{\dag}, \idmap) \\
& = &
\kappa_{1}(\pi_{i} \after f^{\dag} \after \kappa_{j}, \idmap) \\
& = &
\eta(f^{\dag})(i,j).
\end{array}\eqno{\QEDbox}$$

\auxproof{
We check commutation with $\eta$ and $\mu$.
$$\begin{array}{rcl}
\lefteqn{\big(\zeta \after \eta\big)(x)} \\
& = &
\zeta(\kappa_{1}(\idmap_{1}, x) \\
& = &
\kappa_{1}((\idmap_{1})^{\dag}, x) \\
& = &
\kappa_{1}(\idmap_{1}, x) \\
& = &
\eta(x) \\
\lefteqn{\big(\mu \after T_{\cat{L}}(\zeta) \after 
   \zeta\big)(\kappa_{i}(g, v))} \\
& = &
\big(\mu \after T_{\cat{L}}(\zeta)\big)
   (\kappa_{i}(\langle (\pi_{a}\after g)^{\dag}\rangle_{a<i}, v)) \\
& = &
\mu(\kappa_{i}(\langle (\pi_{a}\after g)^{\dag}\rangle_{a<i}, 
   \zeta \after v)) \\
& = &
\mu(\kappa_{i}(\langle (\pi_{a}\after g)^{\dag}\rangle_{a<i}, 
   \lam{a<i}{\kappa_{j_a}(\langle (\pi_{b} \after h_{a,b})^{\dag} \rangle_{b<j_a}, 
    w_a)})) \\
& & \qquad \mbox{if }v(a) = \kappa_{j_a}(h_{a}, w_{a}) \\
& = &
\kappa_{j}((\langle (\pi_{b} \after h_{0})^{\dag} \rangle_{b<j_0} + \cdots +
   \langle (\pi_{b} \after h_{i-1})^{\dag} \rangle_{b<j_i-1}) \after
   \langle (\pi_{a}\after g)^{\dag}\rangle_{a<i}, \\
& & \qquad [w_{0}, \ldots, w_{i-1}]), \qquad 
   \mbox{where }j = j_{0} + \cdots + j_{i-1} \\
& \smash{\stackrel{(*)}{=}} &
\kappa_{j}(\langle (\pi_{b} \after \pi_{a} \after 
   (h_{0}+\cdots+h_{i-1})\after g)^{\dag}
   \rangle_{a<i, b<j_{a}}, [w_{0}, \ldots, w_{i-1}]) \\
& = &
\zeta(\kappa_{j}((h_{0}+\cdots+h_{i-1})\after g, [w_{0}, \ldots, w_{i-1}])) \\
& = &
\big(\zeta \after \mu)(\kappa_{i}(g, v)).
\end{array}$$

\noindent The marked equation holds by commutativity of composition
in $\cat{L}(1,1)$, see:
$$\begin{array}{rcl}
\lefteqn{\pi_{b} \after \pi_{a} \after 
   \langle (\pi_{b} \after \pi_{a} \after 
      (h_{0}+\cdots+h_{i-1})\after g)^{\dag} \rangle_{a<i, b<j_{a}}} \\
& = &
(\pi_{b} \after \pi_{a} \after (h_{0}+\cdots+h_{i-1})\after g)^{\dag} \\
& = &
(\pi_{b} \after h_{a} \after \pi_{a} \after g)^{\dag} \\
& = &
(\pi_{a} \after g)^{\dag} \after (\pi_{b} \after h)^{\dag} \\
& \smash{\stackrel{\textrm{comm}}{=}} &
(\pi_{b} \after h)^{\dag} \after (\pi_{a} \after g)^{\dag} \\
& = &
\pi_{b} \after \langle (\pi_{b} \after h_{a})^{\dag} \rangle_{b<j_a} \after 
   \pi_{a} \after \langle (\pi_{a}\after g)^{\dag}\rangle_{a<i} \\
& = &
\pi_{b} \after \pi_{a} \after 
   (\langle (\pi_{b} \after h_{0})^{\dag} \rangle_{b<j_0} + \cdots +
   \langle (\pi_{b} \after h_{i-1})^{\dag} \rangle_{b<j_i-1}) \after
   \langle (\pi_{a}\after g)^{\dag}\rangle_{a<i}.
\end{array}$$
}
\end{proof}

In the definition of the involution $\zeta$ on the monad $T_{\cat{L}}$
in this proof we have used that $+$ is a (bi)product in the Lawvere
theory \cat{L}, namely when we decompose the map $g\colon 1\rightarrow
i$ into its components $\pi_{a} \after g\colon 1\rightarrow 1$ for
$a<i$. We could have avoided this biproduct structure by first taking
the dagger $g^{\dag} \colon i\rightarrow 1$, and then precomposing
with coprojections $g^{\dag} \after \kappa_{a} \colon 1 \rightarrow
1$. Again applying daggers, cotupling, and taking the dagger one gets
the same result. This is relevant if one wishes to consider
involutions/daggers in the context of monoids, where products in the
corresponding Lawvere theories are lacking.

\auxproof{
These two approaches are the same since if we have biproducts:
$$\begin{array}{rcl}
\big[(g^{\dag} \after \kappa_{0})^{\dag}, \ldots, 
   (g^{\dag} \after \kappa_{i-1})^{\dag}\big]^{\dag}
& = &
\big[\kappa_{0}^{\dag} \after g^{\dag\dag}, \ldots, 
   \kappa_{i-1}^{\dag} \after g^{\dag\dag}\big]^{\dag} \\
& = &
\big[\pi_{0} \after g, \ldots, \pi_{i-1} \after g\big]^{\dag} \\
& = &
\langle (\pi_{0}\after g)^{\dag}, \ldots, (\pi_{i-1}\after g)^{\dag} \rangle.
\end{array}$$
}

By combining the previous three lemmas we obtain another triangle of
adjunctions in Figure~\ref{ICSRngTriangleFig}. This concludes our
survey of the interrelatedness of scalars, monads and categories.

\begin{figure}
$$\xymatrix@R-.5pc@C+.5pc{
& & \ICSRng\ar@/_2ex/ [ddll]_{\cal M}
   \ar@/_2ex/ [ddrr]_(0.3){\Mat \cong \Kl_{\NNO}\mathcal{M}\;\;} \\
& \dashv & & \dashv & \\
\IACMnd\ar @/_2ex/[rrrr]_{\Kl_\NNO}
   \ar@/_2ex/ [uurr]_(0.6){\;{\Ev} \cong \mathcal{H}\Kl_{\NNO}} & & \bot & &  
   \DSMBLaw\ar@/_2ex/ [uull]_{\mathcal{H}}\ar @/_2ex/[llll]_{\LM}
}$$
\caption{Triangle of adjunctions starting from involutive commutative
  semi\-rings, with involutive commutative additive monads, and dagger
  symmetric monoidal Lawvere theories with dagger biproducts.}
\label{ICSRngTriangleFig}
\end{figure}
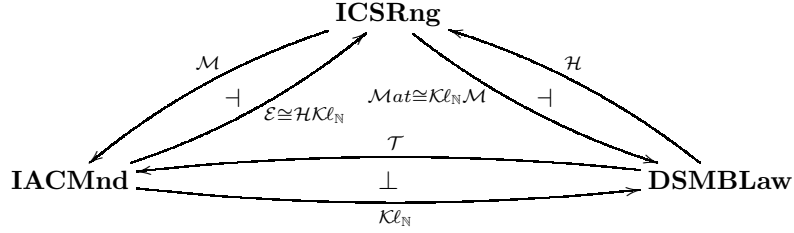

\bibliographystyle{plain}

\end{document}